\renewcommand{\d}{\partial}
\newcommand{\ddbar}{\sqrt{-1}\d\overline{\d}}
\newcommand{\ii}{\sqrt{-1}}
\newcommand{\vepsilon}{\varepsilon}
\newcommand{\vphi}{\varphi}
\newcommand{\cA}{\mathcal{A}}
\newcommand{\cD}{\mathcal{D}}
\newcommand{\cE}{\mathcal{E}}
\newcommand{\cF}{\mathcal{F}}
\newcommand{\cS}{\mathcal{S}}
\newcommand{\cO}{\mathcal{O}}
\newcommand{\bR}{\mathbb{R}}
\newcommand{\bC}{\mathbb{C}}
\newtheorem{thm}{Theorem}
\newtheorem{prop}[thm]{Proposition}
\newtheorem{lem}[thm]{Lemma}
\newtheorem{cor}[thm]{Corollary}
\theoremstyle{definition}
\newtheorem{defn}[thm]{Definition}
\newtheorem{remark}[thm]{Remark}
\newtheorem{claim}[thm]{Claim}
\numberwithin{thm}{section}
\numberwithin{equation}{section}
\renewcommand{\[}{\begin{equation}}
\renewcommand{\]}{\end{equation}}
\newcommand{\wed}{\wedge}
\newcommand{\ga}{\gamma}
\title[Hessian equations on Hermitian manifolds with boundary]{The Dirichlet problem for a complex Hessian equation on compact Hermitian manifolds with boundary}
\author[D. Gu and N.-C Nguyen]{Dongwei Gu and Ngoc Cuong Nguyen}
\address{Faculty of Mathematics and Computer Science, Jagiellonian University 30-348 Krak\'ow, \L ojasiewicza 6, Poland}
\email{dongwei.gu@im.uj.edu.pl, \quad Nguyen.Ngoc.Cuong@im.uj.edu.pl}
\address{Department of Mathematics and Center for Geometry and its Applications, Pohang University of Science and Technology, 37673, The Republic of Korea}
\email{cuongnn@postech.ac.kr}
\subjclass[2010]{53C55, 35J96, 32U40}
\keywords{Dirichlet problem, weak solutions, Hessian equation, Hermitian manifold}
\begin{document} 

\begin{abstract} We solve the classical Dirichlet problem for a general complex Hessian equation on a small ball in $\bC^n$. Then,  we show that there is a continuous solution, in pluripotential theory sense, to the Dirichlet problem on compact Hermitian manifolds with boundary that equipped locally conformal K\"ahler metrics, provided a subsolution. \end{abstract}

\maketitle

\section{Introduction} \label{sec-intr}

Let $(\bar{M}, \alpha)$ be a compact Hermitian manifold with smooth boundary $\d M$, of complex dimension $n$. Let us denote $M := \bar M \setminus \d M$. Let $1\leq m \leq n$ be an integer. Fix a real $(1,1)$-form $\chi$ on $\bar M$. We have given a right hand side $f \in C^\infty(\bar M)$ positive and a smooth boundary data $\vphi \in C^\infty(\d M)$. The classical Dirichlet problem for the complex Hessian equation is to find a real-valued function $u \in C^\infty(\bar M)$:
\[ \label{intro-dp} \begin{aligned} 
&(\chi + dd^c u)^m \wedge \alpha^{n-m} = f \alpha^{n}, \\
&u = \vphi \quad \mbox{on } \d M,
\end{aligned}
\]
where $u$ is subjected to point-wise inequalities
\[\label{intro-m-positive}
	 (\chi + dd^c u)^k \wedge \alpha^{n-k}>0,\quad  k=1,..,m.
\]
We first solve the equation in a small ball.

\begin{thm} \label{thm:intr-1}
Let $M= B(z,\delta) \subset\subset B(0,1)$ be a Euclidean ball of radius $\delta$ in the unit ball $B(0,1)\subset \bC^n$. Assume that $\chi, \alpha$ are smooth on $\overline{B(0,1)}$. Then, the classical Dirichlet problem~\eqref{intro-dp} is uniquely solvable for $\delta$  small enough, which depends only on $\chi, \alpha$.
\end{thm}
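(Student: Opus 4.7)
The plan is the standard continuity method together with a priori $C^{2,\alpha}$ estimates for admissible solutions, where the smallness of $\delta$ enters in two ways: on $B(z,\delta)$ both $\chi$ and $\alpha$ differ from constant-coefficient $(1,1)$-forms by $O(\delta)$, and one can build a smooth subsolution from the Euclidean potential $|w-w_0|^2$. Extending $\vphi$ smoothly to $\overline{B(0,1)}$ and writing $w_0$ for the centre of $B(z,\delta)$, I would set
\[
\underline u(w) = \vphi(w) + A\bigl(|w-w_0|^2 - \delta^2\bigr),
\]
so that $\underline u = \vphi$ on $\d B(z,\delta)$. Because $dd^c|w-w_0|^2$ is the Euclidean form, for $A$ large (depending only on $\chi,\alpha,\vphi,f$) the form $\chi + dd^c\underline u$ is strictly positive and
\[
(\chi + dd^c \underline u)^m \wed \alpha^{n-m} \geq f\,\alpha^n
\]
on $\overline{B(z,\delta)}$, so $\underline u$ is a smooth admissible subsolution.

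Next I would deform, for $t\in[0,1]$, along the path
\[
(\chi + dd^c u_t)^m \wed \alpha^{n-m} = \bigl(t f + (1-t)\underline f\bigr)\alpha^n, \qquad u_t = \vphi \text{ on } \d B(z,\delta),
\]
where $\underline f\,\alpha^n := (\chi + dd^c\underline u)^m \wed \alpha^{n-m}$, so that $u_0 = \underline u$ solves the $t=0$ problem. Openness of the set $T\subset[0,1]$ of solvable parameters follows from the implicit function theorem on $C^{k,\alpha}$ spaces, since the linearisation at an admissible $u_t$ is a strictly elliptic linear operator with Dirichlet boundary condition. Closedness of $T$ reduces to uniform a priori $C^{2,\alpha}$ bounds on admissible $u_t$: the $C^0$ bound from comparison with $\underline u$ from below and with a suitable $\alpha$-harmonic-type function from above; the boundary gradient from the pinching $\underline u \leq u_t \leq \bar u$ with matching boundary values, and the interior gradient from a Hou--Ma--Wu computation of $\Delta_\alpha|\nabla u_t|^2$ in which the torsion corrections of $\alpha$ are absorbed through $\underline u$; the interior $C^2$ bound by applying the maximum principle to $\log\lambda_{\max}(\chi+dd^c u_t) + \psi(|\nabla u_t|^2) + B(\underline u - u_t)$, with non-K\"ahler error terms controlled by the smallness of $\delta$. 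Once $C^2$ is in place, Evans--Krylov in the interior and Krylov at the boundary give $C^{2,\alpha}$, and Schauder bootstrapping produces a smooth admissible solution at $t=1$; uniqueness is immediate from the comparison principle.

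The hardest step, as usual for complex Hessian equations on Hermitian manifolds with boundary, is the $C^2$ boundary estimate. The tangential-tangential and mixed tangential-normal second derivatives are controlled by barriers built from $\underline u - u_t$ in the Caffarelli--Nirenberg--Spruck--Kohn--Nirenberg tradition, and the pure normal-normal derivative is then extracted from the equation together with these. On a non-K\"ahler $\alpha$ these arguments pick up torsion terms that do not appear in the K\"ahler or Euclidean setting; the role of the smallness of $\delta$ is precisely to make those torsion contributions small enough to be absorbed into the main barrier terms generated by $\underline u$, which is why the radius threshold depends only on $\chi$ and $\alpha$.
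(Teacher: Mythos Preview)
Your overall architecture (continuity method with a smooth subsolution $\underline u = \vphi + A(|w-w_0|^2-\delta^2)$, $C^0$ by comparison, $C^{2,\alpha}$ by Evans--Krylov interior and Krylov boundary, bootstrap, uniqueness by comparison) matches the paper. The discrepancy is in \emph{where} and \emph{how} the smallness of $\delta$ is used, and your boundary $C^2$ plan is missing a key ingredient.

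First, your premise that on $B(z,\delta)$ the torsion of $\alpha$ is small is incorrect: while $\alpha_{i\bar j}(w)=\alpha_{i\bar j}(w_0)+O(\delta)$, the Christoffel symbols and torsion involve \emph{first} derivatives of $\alpha$, which are $O(1)$ uniformly in $\delta$. So ``non-K\"ahler error terms controlled by the smallness of $\delta$'' does not work as stated; the torsion coefficients do not shrink.

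In the paper, smallness of $\delta$ is used \emph{only} in the interior $C^1$ estimate. The test function is $G=\log\|\nabla u\|_\alpha^2+\psi(u+v)$ with the Hou--Ma--Wu weight $\psi$ and an auxiliary $v=N(\sup\|z\|_\alpha^2-\|z\|_\alpha^2)$. The curvature and torsion produce a bad term $(R+T^2+O(1))\,\cF$ with $R,T$ fixed; the good term is $|\psi'|\,(1+N/2C_1)\,\cF$. Since $|\psi'|\gtrsim 1/(L+NC_1\delta^2)$, one needs $N$ large while $NC_1\delta^2$ stays bounded, which forces $\delta$ small. That is the precise mechanism; it is not that torsion becomes small, but that $\delta$ small lets you take $N$ large without degenerating $\psi'$.

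For the interior $C^2$ the paper simply quotes Sz\'ekelyhidi (and Zhang): $|\ddbar u|\le C(1+\sup|\nabla u|^2)$ holds on any compact Hermitian manifold, no smallness needed. For the boundary $C^2$, the tangential--normal barrier is not just $\underline u - u_t$: differentiating the equation along a tangential field produces terms of size $\cF|\lambda|:=\sum_i f_i|\lambda_i|$ (from $L^{\bar p j}\Gamma^q_{kj}g_{q\bar p}$), which cannot be absorbed by $\mu_1(\underline u - u_t)$ alone and are not made small by shrinking $\delta$. The paper follows Guan and includes $-\sum_{k<n}|u_k|^2-|u_n-u_{\bar n}|^2$ in the barrier; the term $-\sum_{k<n}|u_k|^2$ contributes, after a diagonalisation lemma, a good quadratic term $\tfrac{\min_i\tau_i}{2}\sum_{i\ne s}f_i\lambda_i^2$, and then $\cF|\lambda|\le \vepsilon\sum_{i\ne s}f_i\lambda_i^2+C_\vepsilon\cF$ closes the estimate. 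Your sketch should incorporate this Guan-type term; without it the mixed normal--tangential bound does not go through in the Hermitian setting.
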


A $C^2$ real-valued function satisfying inequalities~\eqref{intro-m-positive} is called $(\chi,m)-\alpha$- subharmonic. These inequalities can be generalised to non-smooth functions to obtain the class of $(\chi,m)-\alpha$-subharmonic functions on  $M$. Locally, the convolution of a function in this class with a smooth kernel, in general, will not belong to this class again. However, using the theorem above and an adapted potential theory, we prove the approximation property.

\begin{cor} \label{cor:intro-2} Any $(\chi,m)-\alpha$-subharmonic function  on $M$ is locally approximated by a decreasing sequence of smooth $(\chi,m)-\alpha$-subharmonic functions.
\end{cor}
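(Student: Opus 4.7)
The plan is to prove the corollary locally. Fix $p \in M$ and take a coordinate ball $B = B(p, \delta) \subset M$, shrinking $\delta$ if necessary so that Theorem~\ref{thm:intr-1} is available on $B$. I will produce the required decreasing sequence $u_j \in C^\infty(\bar B)$ of smooth $(\chi, m)-\alpha$-subharmonic functions with $u_j \searrow u$ by solving a family of Dirichlet problems on $B$ whose boundary data are crude smoothings of $u$ from above and whose right-hand sides track the Hessian mass of $u$.

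First, on a slightly larger ball, standard mollification yields smooth $\psi_j$ in a neighborhood of $\d B$ with $\psi_j \searrow u$; they serve only as boundary data and need not themselves be $(\chi, m)-\alpha$-subharmonic. Next, choose smooth positive densities $f_j$ on $\bar B$ approximating the Hessian measure $\mu_u := (\chi + dd^c u)^m \wed \alpha^{n-m}$ in a decreasing, weakly convergent way --- for instance, $f_j \alpha^n$ a decreasing family of smooth regularizations of $\mu_u + \vepsilon_j \alpha^n$ with $\vepsilon_j \searrow 0$. Theorem~\ref{thm:intr-1} then produces unique smooth $(\chi, m)-\alpha$-subharmonic $u_j \in C^\infty(\bar B)$ solving
\[
(\chi + dd^c u_j)^m \wed \alpha^{n-m} = f_j \alpha^n \text{ on } B, \qquad u_j = \psi_j \text{ on } \d B.
\]
The uniqueness in Theorem~\ref{thm:intr-1} together with the comparison principle applied to pairs of indices, using the monotonicity of $\psi_j$ and $f_j$, gives $u_{j+1} \leq u_j$, so $u_j \searrow u_\infty$ for some bounded $(\chi, m)-\alpha$-subharmonic function $u_\infty$ on $B$.

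It remains to identify $u_\infty$ with $u$. The boundary convergence $\psi_j \searrow u$ makes $u_\infty = u$ on $\d B$, and weak continuity of the Hessian operator along decreasing sequences (a Hermitian analogue of Bedford--Taylor) gives $(\chi + dd^c u_\infty)^m \wed \alpha^{n-m} = \lim f_j \alpha^n = \mu_u$, so $u_\infty$ solves the same generalized Dirichlet problem as $u$. A Hermitian-adapted uniqueness theorem for bounded solutions then forces $u_\infty = u$ on $B$, concluding the proof.

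The main obstacle is this last uniqueness step on a Hermitian (non-K\"ahler) background: the standard Bedford--Taylor comparison principle acquires correction terms involving $d\alpha$ and $d\alpha^2$, and one must verify that these are absorbable using the smallness of $\delta$ and the smoothness of the data. A secondary technical point is the construction of the approximants $f_j$ consistent with both the monotonicity needed for $u_j \searrow u_\infty$ and the direction of the comparison principle needed to bound $u \leq u_j$ (so that $u_\infty \geq u$). Since $\mu_u$ may have a singular part, producing such $f_j$ positive and smooth while preserving the correct inequalities against $\mu_u$ in the sense of pluripotential theory is the hardest concrete step, and is where the potential-theoretic framework developed alongside Theorem~\ref{thm:intr-1} is essential.
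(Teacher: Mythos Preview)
Your approach has a fundamental circularity: you invoke the Hessian measure $\mu_u = (\chi + dd^c u)^m \wed \alpha^{n-m}$ of a general $(\chi,m)$-$\alpha$-subharmonic function $u$, but in this paper that operator is only defined for functions in the class $\cA$---those already known to be decreasing limits of smooth $(\chi,m)$-$\alpha$-subharmonic functions (Section~\ref{sec:2.1}). Showing that every $u \in SH_{\chi,m}(\alpha)$ lies in $\cA$ is precisely the content of the corollary (see Remark~\ref{rmk:approximation-after}(b)), so $\mu_u$ is not available to you at this stage.

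Even granting $\mu_u$, the monotonicity argument fails. With both $\psi_j$ and $f_j$ decreasing, the comparison principle does not give $u_{j+1} \leq u_j$: a smaller right-hand side pushes the solution \emph{up}, while a smaller boundary value pushes it down, and the two effects compete. To obtain $u_j$ decreasing with $u_j \geq u$ you would instead need $f_j$ \emph{increasing} with $f_j \alpha^n \leq \mu_u$ and $f_j \alpha^n \to \mu_u$; but when $\mu_u$ has a singular part no sequence of smooth densities dominated by $\mu_u$ can recover its full mass, so this route is genuinely blocked, not merely technical.

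The paper's proof (Theorem~\ref{thm:approximation}) sidesteps both issues by never touching $\mu_u$. After reducing to bounded $u$ via $\max\{u,-j\}$, it picks any smooth $h \geq u$ (upper semicontinuity) and considers the envelope $\tilde h = \sup\{v \in SH_{\chi,m}(\alpha)\cap L^\infty : v \leq h\}$, which satisfies $u \leq \tilde h \leq h$. The point is that $\tilde h$ can be uniformly approximated by smooth solutions $w_\vepsilon$ of $\chi_{w_\vepsilon}^m \wed \alpha^{n-m} = e^{(w_\vepsilon - h)/\vepsilon}[F_* + \vepsilon]\alpha^n$, where $F_*$ depends only on the smooth data $h$ and $\chi$. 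The exponential factor has the correct sign for the maximum principle, so monotonicity in $\vepsilon$ is automatic; stability estimates (Lemmas~\ref{lem:stability1} and \ref{lem:stability2}) then upgrade $L^1$-convergence to uniform convergence, and $w_\vepsilon \nearrow \tilde h$. Letting $h = \phi_j \searrow u$ and diagonalising finishes the argument.
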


%\begin{cor}\label{intro-cor2} The general mixed type inequality holds in the class of continuous $(\chi,m)-\alpha$-subharmonic whose Hessian operators belong to $L^p, p>n/m$. \end{cor}

Following Bedford-Taylor \cite{BT76,BT82,BT87} and Ko\l odziej \cite{kol98, kol03, kol05},  the two results above allow us to use Perron's envelope together with pluripotential theory techniques, adapted to this setting, to study weak solutions to this equation with the continuous right hand sides. A Hermitian metric $\alpha$ is called a locally conformal K\"ahler metric on $M$ if at every given point on $M$, there exist a local chart $\Omega$ and a smooth real-valued function $G$ such that $e^G \alpha$ is K\"ahler on $\Omega$. This class of metric is strictly larger than the K\"ahler one, and not every Hermitian metric is locally conformal K\"aher (see e.g. \cite{bru10}). Our main result is 

\begin{thm}\label{thm:intro-3} Assume that $\alpha$ is locally conformal K\"ahler. Let $0\leq f\in C^0(\bar M)$ and $\vphi \in C^0(\d M)$.  Assume that there is a $C^2$-subsolution $\rho$, i.e.,  $\rho$ satisfying inequalities \eqref{intro-m-positive} and
\[ \notag
	(\chi + dd^c \rho)^m \wedge \alpha^{n-m} \geq f \alpha^{n}\quad\mbox{in } \bar M, \quad
	\rho = \vphi \quad\mbox{on } \d M.
\]
Then, there exists a continuous solution to the Dirichlet problem \eqref{intro-dp} in pluripotential theory sense.
\end{thm}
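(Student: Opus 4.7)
The plan is to use the Perron--Bremermann envelope construction, in the spirit of Bedford--Taylor and Ko\l odziej, adapted to the $(\chi,m)$-$\alpha$ setting. Define
\[ \notag
u(z) := \sup\{v(z) : v \in \cF\},
\]
where $\cF$ is the family of bounded $(\chi,m)$-$\alpha$-subharmonic functions $v$ on $M$ satisfying $(\chi+dd^c v)^m\wedge\alpha^{n-m}\geq f\alpha^n$ in the pluripotential sense, together with $\limsup_{z\to\d M} v(z)\leq\vphi$ at every boundary point. The given subsolution $\rho$ lies in $\cF$, so $\cF$ is non-empty and $u\geq\rho$. A standard envelope argument --- using Corollary~\ref{cor:intro-2} to approximate members of $\cF$ locally by smooth subharmonic functions, so that weak limits of their Hessian currents make sense --- shows that the upper semicontinuous regularisation $u^*$ coincides with $u$ and is itself a subsolution in $\cF$.

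Boundary behaviour is handled next. The bound $u\geq\rho$ together with $\rho|_{\d M}=\vphi$ gives $\liminf u\geq\vphi$ on $\d M$. For the matching upper bound I would construct a classical upper barrier at each boundary point: in a local chart, something of the form $\vphi + C\psi$, where $\psi$ is a defining function of $\d M$ modified so as to be $(\chi,m)$-$\alpha$-superharmonic. The locally conformal K\"ahler hypothesis permits passing to a K\"ahler chart via $\omega:=e^G\alpha$, in which standard barrier constructions for $m$-Hessian operators are available. This forces $u=\vphi$ continuously on $\d M$.

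The locally conformal K\"ahler hypothesis is in fact the key to the whole argument. On any chart $\Omega$ with K\"ahler form $\omega=e^G\alpha$,
\[ \notag
(\chi+dd^c v)^m\wedge\alpha^{n-m} = e^{-(n-m)G}(\chi+dd^c v)^m\wedge\omega^{n-m},
\]
so the comparison principle, $L^\infty$ estimates, and Ko\l odziej-type stability results for complex Hessian equations on K\"ahler manifolds transfer to our setting, up to smooth positive factors. Via a finite atlas, this localised pluripotential toolkit delivers continuity of $u$ on the interior of $\bar M$; continuity up to the boundary follows from the barriers above.

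It remains to promote the subsolution $u$ to a solution, via a balayage argument. If on some small ball $B=B(z_0,\delta)$ contained in a chart the equation failed (the opposite inequality being excluded by $u\in\cF$), I would smooth the restriction $u|_{\d B}$ and apply Theorem~\ref{thm:intr-1} on $B$ to obtain a classical solution $\tilde u$ with $\tilde u\geq u$ on $B$ by comparison; gluing $\tilde u$ on $B$ to $u$ outside then produces a member of $\cF$ strictly larger than $u$ somewhere, contradicting the definition of $u$. The main obstacle I anticipate is verifying that the pluripotential toolkit --- comparison, wedge products of non-smooth Hessian currents, and stability estimates --- is valid in the $(\chi,m)$-$\alpha$ framework, and in particular that gluing preserves $(\chi,m)$-$\alpha$-subharmonicity across $\d B$ and that the wedge products behave continuously under decreasing limits from Corollary~\ref{cor:intro-2}.
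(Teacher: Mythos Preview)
Your overall architecture---Perron envelope over bounded subsolutions, boundary barriers, then a local balayage to upgrade the subsolution to a solution---matches the paper's approach in Section~\ref{sec:bbd-env}. Two points deserve comment.

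First, a minor simplification: for the upper barrier the paper does not build $\vphi + C\psi$ by hand. After reducing to $\vphi=0$ and $\chi\in\Gamma_m(\alpha)$ (by absorbing $\rho$ into $\chi$), it simply solves the \emph{linear} equation $(\chi+dd^c\rho_1)\wedge\alpha^{n-1}=0$ with $\rho_1=0$ on $\d M$; every $v\in\cF$ then satisfies $v\leq\rho_1$ by the maximum principle for $\Delta_\alpha$, so $u=0$ on $\d M$ automatically.

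Second, and more importantly, your sentence ``via a finite atlas, this localised pluripotential toolkit delivers continuity of $u$'' hides the entire technical content of the theorem. Stability and comparison estimates do not by themselves yield continuity of an envelope; you must feed them into a specific contradiction argument. The paper (Lemma~\ref{lem:continuous}, following Ko\l odziej) assumes $d:=\sup(u-u_*)>0$, localises near a point of the closed set $\{u-u_*=d\}$, uses Theorem~\ref{thm:approximation} to produce smooth approximants $v_j\searrow u+\delta|z|^2$ on a small ball, and then---via a delicate Hartogs-type lemma (Lemma~\ref{hartogs-v}) controlling $v_j$ against $cv$ on the sphere $\d B$---constructs nonempty, relatively compact sublevel sets $\{cv+d-a+s<v_j\}$. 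Only then does the a priori estimate (Theorem~\ref{thm:a-priori-estimate}) apply to give $s\leq C[V_\alpha(\{v<v_j\})]^\tau\to 0$, the desired contradiction. You have not indicated any mechanism for this step, and it is where the locally conformal K\"ahler hypothesis is genuinely consumed: the Hessian operator must be defined for \emph{bounded} (not merely continuous) potentials so that the envelope $u$ itself satisfies $\chi_u^m\wedge\alpha^{n-m}=f\alpha^n$ before continuity is known, and so that Theorem~\ref{thm:a-priori-estimate} applies with $u$ on the left.
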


When $m=n$ we need not assume $\alpha$ is locally conformal K\"ahler.  The Dirichlet problem for the Monge-Amp\`ere equation on compact Hermitian manifolds with boundary has been studied extensively, in smooth category, in recent years by Cherrier-Hanani \cite{cherrier-hanani98, cherrier99}, Guan-Li \cite{GL10} and Guan-Sun \cite{GS13}. Our theorem generalises the result in \cite{GL10} for continuous datum. %It cannot be complete if we do not mention the case both $\chi$ and $\alpha$ is K\"ahler, for which we refer the readers the survey by Phong, Song and Sturm \cite{?}.

When $1<m<n$ and $\alpha = dd^c |z|^2$ is the Euclidean metric the Dirichlet problem for the complex Hessian equation in a domain in $\bC^n$ has been studied extensively by many authors \cite{blocki05, chab, dk14, Li04, chinh13a, chinh15, cuong-thesis}. To our best knowledge the classical Dirichlet problem \eqref{intro-dp} on a compact Hermitian (or K\"ahler) manifold with boundary still remains open. The difficulty lies in the $C^1-$estimate for a general Hermitian metric $\alpha$. Here we only obtain such an estimate in a small ball (Theorem~\ref{thm:intr-1}). Moreover, in our approach, the locally conformal K\"ahler assumption of $\alpha$ is needed to define the complex Hessian operator of bounded functions (Section~\ref{sec:2}).  

Motivations to study the Dirichlet problem for such equations come from recent developments of fully non-linear elliptic equations on compact complex manifolds. First, it is the natural problem after the complex Hessian equation was solved by Dinew-Ko\l odziej \cite{dk15} on compact K\"ahler manifolds, and by Sz\'ekelyhidi \cite{szekelyhidi15} and Zhang \cite{dzhang15} on compact Hermitian manifolds. Indeed, such a question is raised in \cite{szekelyhidi15}. Second, on compact Hermitian manifolds, it is strongly related to the elementary symmetric positive cone with which several types of equations associated were studied by Sz\'ekelyhidi-Tosatti-Weinkove \cite{SzTW}, Tosatti-Weinkove \cite{TW13a, TW13b}. Our results may provide some tools to study these cones. In the case when $\alpha$ is K\"ahler ($\chi$ may be not), the Hessian type equations related to a Strominger system, which generalised Fu-Yau equations \cite{FY08}, have been studied recently by Phong-Picard-Zhang \cite{PPZ, PPZ15b, PPZ16}. Lastly, the viscosity solutions of fully nonlinear elliptic equations on Riemannian and Hermitian manifolds have been also investigated by Harvey and Lawson \cite{HLa, HLb} in a more general frame work, and the existence of continuous solutions was proved under additional assumptions on the relation of the group structure of manifolds and given equations. %We refer the readers to \cite{HLa, HLb} for the details.

\medskip

{\em Organisation.} In Section~2 we give definitions for generalised $m-$ subharmonic functions and their basic properties. Assuming Theorem~\ref{thm:intr-1}, in Section~\ref{sec:2} we develop ``pluripotential theory" for corresponding generalised $m-$ subharmonic functions to the equation. This enable us to prove Corollary~\ref{cor:intro-2}. Section~\ref{sec:3} is devoted to study weak solutions to the Dirichlet problem in a small Euclidean ball. Theorem~\ref{thm:intro-3} is proved in Section~\ref{sec:bbd-env}. Finally, in Sections~\ref{sec:proof-main-thm}, \ref{sec:c0}, \ref{sec:c1}, \ref{sec:c2} we prove Theorem~\ref{thm:intr-1} independent of the other sections. The appendix is given in Section~\ref{sec:appendix}.

\medskip
 
{\em Acknowledgement.} We are grateful to S\l awomir Ko\l odziej for many valuable comments, which help to improve significantly the exposition of the paper. We also thank S\l awomir Dinew who has read the draft version of our paper and pointed out some mistakes. Furthermore, we would like to thank Blaine Lawson for bringing our attention to results in \cite{HLa, HLb, HLP}. Part of this work was written during the visit of the second author at Tsinghua Sanya International Mathematics Forum in January 2016. He would like to thank the organisers for the invitation and the members of the institute for their hospitality. The second author is supported by NCN grant 2013/08/A/ST1/00312.  The first author is supported by the Ideas Plus grant 0001/ID3/2014/63 of the Polish Ministry Of Science and Higher Education. 
Lastly, we would like to thank the referee for the helpful comments.

\section{Generalised $m$-subharmonic functions} \label{sec:1}

Fix a Hermitian metric $\alpha = \ii \alpha_{i\bar j} dz^i \wed d\bar z^j$ on a bounded open set $\Omega$ in $\bC^n$. Consider another real $(1,1)$-form $\chi = \ii \chi_{i\bar j} dz^i \wed d\bar z^j.$

A $C^2$ function $u$ on $\Omega$ is called $\alpha$-subharmonic if 
\[ \label{alpha-lap}
	\Delta_\alpha u(z) = \sum \alpha^{\bar j i}(z) \frac{\d^2 u }{\d z^i \d \bar z^j} (z) \geq 0,
\]
where $\alpha^{\bar j i}$ is the inverse of $\alpha_{i \bar j}$. We can rewrite it simply in  term of $(n,n)$-positive forms
\[ \notag
	dd^c u \wed \alpha^{n-1} \geq 0, \quad \mbox{where } dd^c = \frac{\ii}{\pi} \d \bar\d.
\]
This form has the advantage that one can generalise to non-smooth functions and with possibility define higher power of the wedge product of $dd^c u$ (see Remark~\ref{rem:higher-power-wed}). We start with the following definition which is adapted from subharmonic functions. 

\begin{defn} \label{def:alpha-sh}
A function $u: \Omega \to [-\infty,+\infty[$ is called $\alpha-$subharmonic  if 
\begin{enumerate}
\item[(a)]  $u$ is upper semicontinuous and $u\in L^1_{loc}(\Omega)$;
\item[(b)] for every relatively compact open set $D\subset\subset \Omega$ and every $h \in C^0(\bar D)$ and $\Delta_\alpha h =0$ in $D$, if $h \geq u$ on $\d D$, then $h \geq u$ on $\bar D$.
\end{enumerate}
\end{defn}

\begin{remark} Comparing to subharmonic functions we have that
\begin{enumerate}
\item
If an upper semicontinuous $u$ satisfies $(b)$, then by  Harvey-Lawson \cite[Theorem 9.3(A)]{hl13} it follows that  either $u\equiv -\infty$ or $u \in L^1_{loc}(\Omega)$. 
\item
The $\alpha-$subharmonicity for continuous function $u$ is equivalent to the inequality
$
	\Delta_\alpha u \geq 0
$
in the distributional sense, a detailed statement of this fact will be given in Lemma~\ref{lem:positive-dis-omega-sh} (Appendix).
\end{enumerate}
\end{remark}

We shall define $(\chi,m)-\alpha$-subharmonicity for non-smooth functions.
Let us denote 
\[ \notag
	\Gamma_m = \{\lambda= (\lambda_1,...,\lambda_n) \in \bR^n: S_1(\lambda)>0,...,S_m(\lambda)>0\}.
\]
The positive cone $\Gamma_m(\alpha)$ associated with the metric $\alpha$ is defined as follows. 
\[ 
	\Gamma_m(\alpha) = \{\gamma\mbox{ real } (1,1)-\mbox{form: } \gamma^k \wed \alpha^{n-k}>0 \mbox{ for every } k=1,...,m \}.
\]
In other words, in the orthonormal coordinate such that $\alpha = \sum_{i} \ii dz^i \wed d\bar z^i$ at a given point in $\Omega$, and $\gamma = \sum_{i} \lambda_i \ii dz^i \wed d\bar z^i$ also diagonalised at this point,  then  $\gamma \in \Gamma_m(\alpha)$ if  $(\lambda_1,...,\lambda_n)\in \Gamma_m.$

\begin{defn} \label{def:m-sub} A function $u: \Omega \to [-\infty, +\infty[$ is called $m-\alpha$-subharmonic if $u$ is $\tilde\alpha$-subharmonic for any $\tilde\alpha$ of the form 
$
	\tilde\alpha^{n-1}=\ga_1\wed\cdots \wed\ga_{m-1}\wed\alpha^{n-m},
$
where $\gamma_1, ...,\gamma_{m-1} \in \Gamma_m(\alpha)$.
\end{defn}

Here, the metric $\tilde \alpha$ is uniquely defined thanks to a result of Michelsohn \cite{michelsohn82}. By a simple consideration we have a generalisation 

\begin{defn} \label{def:ga-m-sub} A function $u: \Omega \to [-\infty, +\infty[$ is called $(\chi, m)-\alpha$ - subharmonic if $u + \rho$ is $\tilde\alpha$-subharmonic for any $\tilde\alpha$ of the form 
$
	\tilde\alpha^{n-1}=\ga_1\wed\cdots \wed\ga_{m-1}\wed\alpha^{n-m},
$
where $\gamma_1, ...,\gamma_{m-1} \in \Gamma_m(\alpha)$, and the  smooth function $\rho$ is defined, up to a constant, by the equation
$
	dd^c \rho \wed \tilde\alpha^{n-1}= \chi\wed\tilde\alpha^{n-1}. 
$
\end{defn}

Notice that when $\chi \equiv 0$, Definition~\ref{def:ga-m-sub} coincides with Definition~\ref{def:m-sub}. Thanks to Lemma~\ref{lem:positive-dis-omega-sh} in Appendix, we get that for a $(\chi, m)-\alpha$-subharmonic function $u$,
\[\label{eq:chi_u-m-positive}
 	(\chi+dd^c u) \wed \ga_1\wed\cdots \wed\ga_{m-1}\wed\alpha^{n-m} \geq 0
\]
for any collection $\ga_i\in \Gamma_m(\alpha),$ in the weak sense of currents. We denote the set of all $(\chi,m)-\alpha$-subharmonic functions in $\Omega$ by 
\[  \notag
SH_{\chi,m}(\alpha, \Omega) \quad \mbox{or} \quad 
SH_{\chi,m}(\alpha) \]
(for short) if the considered set is clear from the context.

\begin{remark} \label{rem:higher-power-wed} {\bf (1)} For a $C^2$ function $u$ the inequality \eqref{eq:chi_u-m-positive} is equivalent to the inequalities 
\[
	(\chi + dd^cu)^k \wed \alpha^{n-k} \geq 0 \quad \mbox{for } k=1,...,m.
\]
This fact can be seen as follows: for any real $(1,1)$-form $\tau \in \Gamma_m(\alpha)$ and $1\leq k\leq m$,
\[
	\frac{\tau^k \wed \alpha^{n-k}}{\alpha^n} = \left(\inf_\ga \left\{\frac{\tau\wed \ga^{k-1} \wed \alpha^{n-k}}{\alpha^{n}} \right\}\right)^{k},
\]
where $\ga$ is taken such that $\ga \in \Gamma_m(\alpha)$ and $\ga^k\wed\alpha^{n-k}/\alpha^n =1$. In other words, $u \in SH_{\chi,m}(\alpha, \Omega)$ if and only if $\chi + dd^c u\in \overline{\Gamma_m(\alpha)}$ at any given point in $\Omega$.

\noindent{\bf (2)} There are another definitions for $m-\alpha$-subharmonic functions. The first one is suggested by B\l ocki \cite{blocki05} and the second one is given by Lu \cite[Definition 2.3]{chinh13b} in a more general setting. All definitions are equivalent in the case of $m-$subharmonic functions, i.e. $\alpha = dd^c |z|^2$. Later on, by Lemma~\ref{def-2-omega-sh}, we will find that our definition is equivalent to the one in \cite{chinh13b}.
\end{remark}

We list here some basic properties of $(\chi,m)-\alpha$-subharmonic functions. 

\begin{prop} \label{prop:closure-max}
Let $\Omega$ be a bounded open set in $\bC^n$. 
\begin{enumerate}
\item[(a)]
If $u_1 \geq u_2 \geq  \cdots$ is a decreasing sequence of $(\chi,m)-\alpha$-subharmonic functions, then $u := \lim_{j\to \infty} u_j$ is either $(\chi,m)-\alpha$-subharmonic or $\equiv -\infty$.
\item[(b)] 
If $u, v$ belong to $SH_{\chi,m}(\alpha)$, then so does $\max\{u,v\}$.
\item[(c)]
Let $\{u_\alpha\}_{\alpha \in I} \subset SH_{\chi,m}(\alpha)$ be a family locally uniformly bounded above. Put $u(z) := \sup_\alpha u_\alpha(z)$. Then, the upper semicontinuous regularisation $u^*$ is $(\chi,m)-\alpha$-subharmonic.
\end{enumerate}
\end{prop}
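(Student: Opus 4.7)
The plan is to reduce each closure property to the analogous one for ordinary $\tilde\alpha$-subharmonic functions in the sense of Definition~\ref{def:alpha-sh}, and then invoke the classical closure properties for sub-solutions of the linear elliptic operator $\Delta_{\tilde\alpha}$. Fix an arbitrary admissible metric with $\tilde\alpha^{n-1} = \gamma_1 \wedge \cdots \wedge \gamma_{m-1} \wedge \alpha^{n-m}$, $\gamma_i \in \Gamma_m(\alpha)$, and let $\rho = \rho_{\tilde\alpha}$ denote the smooth function from Definition~\ref{def:ga-m-sub}. Since $\rho$ is smooth, adding it commutes with decreasing limits, maxima, and pointwise suprema, so each assertion reduces to the analogue for $\tilde\alpha$-subharmonic functions.

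Part (b) is immediate from Definition~\ref{def:alpha-sh}: the function $\max\{u+\rho,\, v+\rho\}$ is upper semicontinuous and locally integrable, and any continuous $\tilde\alpha$-harmonic $h$ on $\bar D$ with $h \geq \max\{u+\rho,\, v+\rho\}$ on $\d D$ dominates each summand separately on $\d D$, hence on $\bar D$ by hypothesis, and therefore dominates their maximum.

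For part (a), I would use the distributional characterization promised by Lemma~\ref{lem:positive-dis-omega-sh} (cited in Remark~2.2(2)): an upper semicontinuous locally integrable $w$ is $\tilde\alpha$-subharmonic in the sense of Definition~\ref{def:alpha-sh} if and only if $\Delta_{\tilde\alpha} w \geq 0$ distributionally. With $u_j \searrow u$, each $u_j + \rho$ satisfies this distributional inequality. Assuming $u \not\equiv -\infty$, the Harvey-Lawson dichotomy from Remark~2.2(1) gives $u + \rho \in L^1_{loc}$, and monotone convergence with $u_1 + \rho$ as the dominating upper bound gives $u_j + \rho \to u + \rho$ in $L^1_{loc}$, so the distributional inequality passes to the limit; the characterization then returns $\tilde\alpha$-subharmonicity, and upper semicontinuity of $u+\rho$ is automatic as a decreasing limit of usc functions.

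For part (c), Choquet's lemma reduces to a countable family $\{u_k\}$, and by (b) the partial maxima $v_N := \max\{u_1, \ldots, u_N\}$ lie in $SH_{\chi,m}(\alpha)$ and increase pointwise to $u$. The regularization $u^*$ is upper semicontinuous by construction, and local integrability follows from local uniform boundedness above combined with $v_1 \in L^1_{loc}$ below. To verify condition (b) of Definition~\ref{def:alpha-sh} for $u^* + \rho$: given $D \subset\subset \Omega$ and continuous $\tilde\alpha$-harmonic $h$ on $\bar D$ with $h \geq u^* + \rho$ on $\d D$, the $\tilde\alpha$-subharmonicity of each $v_N + \rho$ (applied via $h \geq u^* + \rho \geq v_N + \rho$ on $\d D$) yields $h \geq v_N + \rho$ on $\bar D$; passing to the supremum gives $h \geq u + \rho$ on $\bar D$, and since $h$ is continuous while $u^* + \rho$ is the smallest upper semicontinuous majorant of $u + \rho$, we conclude $h \geq u^* + \rho$ on $\bar D$. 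The main obstacle is part (a), where a direct Dini-type argument fails because the limit is merely usc rather than continuous; the distributional detour through Lemma~\ref{lem:positive-dis-omega-sh} is what sidesteps this difficulty.
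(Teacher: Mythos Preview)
Your overall reduction to fixed $\tilde\alpha$ plus the shift by the smooth $\rho$ is exactly the paper's strategy, and your arguments for (b) and (c) are correct; in fact your direct comparison proofs are cleaner than the paper's appendix versions (Proposition~\ref{proa:limit-stable}(b) uses the identity $\max\{u,v\}=\lim_j j^{-1}\log(e^{ju}+e^{jv})$, and Corollary~\ref{cor:sup-family} goes through Littman's approximation).

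Part (a), however, has two genuine gaps. First, your dismissal of the Dini-type argument is wrong: given $h\geq u+\rho$ on $\partial D$ with $h$ continuous $\tilde\alpha$-harmonic, the sets $E_j=\{x\in\partial D: u_j(x)+\rho(x)\geq h(x)+\varepsilon\}$ are closed (upper semicontinuity), nested, and have empty intersection; compactness of $\partial D$ gives $E_{j_0}=\emptyset$, so $u_{j_0}+\rho\leq h+\varepsilon$ on $\partial D$, hence on $\bar D$, hence $u+\rho\leq h+\varepsilon$ on $\bar D$. This is precisely why the paper's appendix calls (a) ``obvious.'' Second, your distributional detour is circular and incomplete: you invoke Remark~2.2(1) (Harvey--Lawson) to get $u+\rho\in L^1_{loc}$, but that remark requires property (b) of Definition~\ref{def:alpha-sh}, which is exactly what you are trying to prove. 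Moreover, Lemma~\ref{lem:positive-dis-omega-sh} only produces an $\tilde\alpha$-subharmonic $V$ equal to $u+\rho$ almost everywhere, and two usc functions agreeing a.e.\ need not agree everywhere; Lemma~\ref{def-2-omega-sh} shows one needs the extra maximality condition (ii), which you have not verified. The fix is simply to run the Dini argument above, after which the Harvey--Lawson dichotomy legitimately yields the $L^1_{loc}$ alternative.
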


\begin{proof} It is enought to verify $\tilde \alpha-$subharmonicity for every $\tilde\alpha^{n-1} = \ga_1\wed \cdots\wed \ga_{m-1}\wed \alpha$ with $\ga_i \in \Gamma_m(\alpha)$.  Once $\tilde \alpha$ is fixed the proof follows from Appendix (Proposition~\ref{proa:limit-stable}, Corollary~\ref{cora:limsup-closed}).
\end{proof}

\section{Potential estimates in a small ball}
\label{sec:2}

In this section we  develop potential theory for $(\chi,m)-\alpha$-subharmonic functions in a Euclidean ball, where $\alpha$ is conformal to a K\"ahler metric on this ball. To do this we fix a ball $B:= B(z,r)\subset\subset \Omega$ with the small radius, where $\Omega$ is a bounded open set in $\bC^n$. We also fix  a smooth function $G: \bar B \to \bR$ such that $\omega:= e^G \alpha$ is K\"ahler metric, i.e.,
\[ \label{eq:conformal-kahler}
	d (e^G \alpha) =0 \quad \mbox{on } \bar B.
\]
Notice that by Definition~\ref{def:ga-m-sub} we have $SH_{\chi,m}(\alpha) \equiv SH_{\chi,m}(\omega)$ as $\Gamma_m(\alpha) \equiv \Gamma_m(\omega)$. 

 First, we will work with an apparently smaller class of functions.

\begin{defn} Let $v$ be a $(\chi,m)-\alpha$-subharmonic function in a neighborhood of $\bar B$. $v$ is said to belong to $\cA$  if there exists sequence of smooth $(\chi,m)-\alpha$-subharmonic functions $v_j \in C^\infty(\bar B)$ decreasing point-wise to $v$ in $B$ as $j$ goes to $\infty$.
\end{defn}

For simplicity we also assume in this section that for every $z\in \bar \Omega$,
\[ \label{eq:positivity-chi1}
	\chi(z) \in \Gamma_m(\alpha),
\]
(otherwise we replace $\chi$ by $\tilde \chi := \chi + C dd^c \rho$ for a strictly plurisubharmonic  function $\rho$ in $\bar \Omega$ and $C>0$ large.) Since $\bar B$ is compact,  there exist $0< c_0 \leq 1$, depending on $\chi, \alpha, \bar B$, such that
$$	\chi - c_0 \alpha \in \Gamma_m(\alpha). 
$$
Throughout the paper we often write
$$ \chi_u := \chi + dd^c u \quad \mbox{for } u \in SH_{\chi, m}(\alpha). 
$$

\subsection{Hessian operators}
\label{sec:2.1}

According to the results in \cite{KN3} for any $v_1, ...,v_m \in \cA \cap C^0(\bar B)$, the wedge product $$\chi_{v_1} \wed \cdots \wed \chi_{v_m} \wed \alpha^{n-m}$$ is a well defined  positive Radon measure for a general Hermitian metric $\alpha$. However, to define the wedge product for $v_i \in \cA \cap L^\infty(\bar B)$ we will need the K\"ahler property of $\omega = e^G \alpha$ in \eqref{eq:conformal-kahler}.

Following ideas of Bedford-Taylor \cite{BT82}, by a simple modification, we can define the wedge product for $v_i \in \cA \cap L^\infty(\bar B)$ as follows. Fix a strictly plurisubharmonic  function $\vphi$  in a neighborhood of $\bar B$ such that
\[\notag
	\tau: = dd^c \vphi - \chi >0.
\]
Let us denote $w_i := v_i+ \vphi$. Then $w_i$ is $m-\omega$-subharmonic and bounded in $\bar B$, which is also in the class $\cA$. Since $\omega$ is K\"ahler, we define inductively  for $1\leq k \leq m$,
\[\label{eq:induction-process}
	dd^c w_k \wed \cdots \wed dd^cw_1 \wed \omega^{n-m} := dd^c (w_k dd^c w_{k-1} \wed \cdots \wed dd^cw_1 \wed \omega^{n-m}).
\]
The resulted wedge product is a positive $(n-m+k,n-m+k)-$current. Then,  one puts
\[ \label{eq:wed-prod1}
	dd^c w_k \wed \cdots \wed dd^cw_1 \wed \alpha^{n-m}:= e^{(m-n) G} dd^c w_k \wed \cdots \wed dd^cw_1 \wed \omega^{n-m}.
\]
We see that local properties that hold for a positive current on the right hand side will be preserved to the positive currents on the left hand side. Finally, using a formal expansion, we set 
\[ \label{eq:wed-prod-2}\begin{aligned}
&	\chi_{v_1} \wed \cdots\wed \chi_{v_m} \wed \alpha^{n-m} \\ 
&:= \sum_{\{i_1, ...,i_k\} \subset \{1,...,m\}} (-1)^{n-k} dd^c w_{i_1} \wed \cdots \wed dd^c w_{i_k} \wed \alpha^{n-m} \wed \tau^{n-k}.
\end{aligned}
\]
This is an honest equality in the case $v_i's$ are smooth functions. The right hand side still makes sense, when $v_i's$ are only bounded, by \eqref{eq:induction-process} and \eqref{eq:wed-prod1}. Thus, we get the wedge product on the left hand side is a well-defined $(n,n)-$positive current.

We also observe that the equation \eqref{eq:wed-prod-2} does not depend on the choice of a strictly plurisubharmonic  function $\vphi$ satisfying $dd^c \vphi -\chi >0$. Moreover,  let $T = \chi_{v_1} \wed \cdots \wed \chi_{v_k}\wed \alpha^{n-m}$ for $v_i \in \cA \cap L^\infty(\bar B)$ and $w\in \cA\cap L^\infty(\bar B)$. Then, we have
$$ (\chi + dd^c w) \wed T = \chi \wed T + dd^c w \wed T.$$
In other words, the definition of the wedge product obeys the linearity as in the smooth case.

\begin{remark} \label{non-def} If we do not assume $d \alpha = 0$ (or $d (e^G \alpha) =0$ for some function $ G$), then in the inductive definition we cannot get rid of the extra terms, e.g.,
\[ \notag
	dd^c v_k \wed \cdots \wed dd^cv_1 \wed dd^c \alpha^{n-m}.
\]
As $dd^c v_i$ is not $(1,1)-$positive current, we do not know how to define the wedge product for bounded functions $v_i$ in $\cA$ once the power of the base $\alpha$ is less than $n-m$. It is worth to mention that if  $v_i's$ are continuous and belong to $\cA$, then we can use the uniform convergence of potentials to define wedge product as in \cite{KN3}. 
\end{remark}

As in  \cite{KN3} the Chern-Levine-Nirenberg (CLN) inequalities are proved quickly in the present setting.

\begin{lem} \label{lem:CLN-ineq}Let $u_1, ...,u_m \in \cA \cap L^\infty(\bar B)$.  Let $K \subset \subset B$ be a compact set. Then,
\[\notag
	\int_K \chi_{u_1} \wed \cdots \wed \chi_{u_m} \wed \alpha^{n-m} 
	\leq C,
\]
where $C$ depends on $\alpha, K, B \|u_1\|_{L^\infty(B)}, ...,\|u_m\|_{L^\infty(B)}$.
\end{lem}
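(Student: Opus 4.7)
The plan is to extend the Bedford--Taylor Chern--Levine--Nirenberg argument \cite{BT82} to the present locally conformal K\"ahler setting, where integration by parts can still be used after passing to the K\"ahler metric $\omega = e^G \alpha$. First, by the conformal identity \eqref{eq:wed-prod1} (extended from $dd^c w_i$'s to the $\chi_{u_i}$'s via linearity of the wedge product defined in \eqref{eq:wed-prod-2}) one has
\begin{equation*}
\chi_{u_1} \wed \cdots \wed \chi_{u_m} \wed \alpha^{n-m} = e^{(m-n)G}\, \chi_{u_1} \wed \cdots \wed \chi_{u_m} \wed \omega^{n-m}.
\end{equation*}
Since $e^{(m-n)G}$ is smooth and bounded on $\bar B$, it suffices to prove the estimate with $\omega^{n-m}$ in place of $\alpha^{n-m}$.

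Next, with $w_i := u_i + \vphi \in \cA \cap L^\infty(\bar B)$ bounded $m$-$\omega$-subharmonic and $\tau := dd^c\vphi - \chi > 0$ smooth, write $\chi_{u_i} = dd^c w_i - \tau$ and expand the wedge product as in \eqref{eq:wed-prod-2}. Since the left-hand side is a positive measure, its total mass on $K$ is bounded by the sum of total variations of the $2^m$ terms in the expansion, so it suffices to show that for every subset $\{i_1, \ldots, i_k\} \subset \{1, \ldots, m\}$,
\begin{equation*}
J_k := \int_K dd^c w_{i_1} \wed \cdots \wed dd^c w_{i_k} \wed \tau^{m-k} \wed \omega^{n-m} < \infty.
\end{equation*}
Decomposing the smooth $(m-k, m-k)$-form $\tau^{m-k}$ as $\beta^+ - \beta^-$ with strongly positive $\beta^{\pm} \leq C_0\,\omega^{m-k}$, I further reduce to the canonical estimate
\begin{equation*}
\tilde J_k := \int_K dd^c w_{i_1} \wed \cdots \wed dd^c w_{i_k} \wed \omega^{n-k} \leq C.
\end{equation*}

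To establish this, I would run the classical Bedford--Taylor induction on $k$. Fix nested relatively compact open sets $K \subset \Omega_1 \subset\subset \cdots \subset\subset \Omega_k \subset\subset B$ and smooth cutoffs $\eta_j \in C_0^\infty(\Omega_{j+1})$ with $0 \leq \eta_j \leq 1$ and $\eta_j \equiv 1$ on $\overline{\Omega_j}$. For each $j = 1, \ldots, k$, the current
\begin{equation*}
T_j := dd^c w_{i_{j+1}} \wed \cdots \wed dd^c w_{i_k} \wed \omega^{n-k+j-1}
\end{equation*}
is a closed positive $(n-1, n-1)$-current: closedness comes from the inductive definition \eqref{eq:induction-process} together with $d\omega = 0$, and positivity follows from G\aa rding's inequality on $\overline{\Gamma_m(\omega)}$ since the number $k-j$ of $dd^c w$-factors does not exceed $m$. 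Bedford--Taylor's integration by parts then gives
\begin{equation*}
\int_{\Omega_j} dd^c w_{i_j} \wed T_j \leq \int_B \eta_j\, dd^c w_{i_j} \wed T_j = \int_B w_{i_j}\, dd^c\eta_j \wed T_j \leq C_j\,\|w_{i_j}\|_{L^\infty(B)} \int_{\Omega_{j+1}} \omega \wed T_j,
\end{equation*}
where I have used $\pm dd^c\eta_j \leq C_j\,\omega$ in the last step. Observing that $\omega \wed T_j = dd^c w_{i_{j+1}} \wed T_{j+1}$ brings the estimate back to the same form with one fewer $dd^c w$-factor on a slightly larger set. Iterating $k$ times terminates at $\int_{\Omega_k} \omega^n < \infty$ and yields the desired bound.

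The main obstacle is precisely the integration by parts: for a general (non-conformally-K\"ahler) Hermitian $\alpha$, the current $T_j$ picks up extra $d\alpha$-terms and Bedford--Taylor's Stokes computation fails, which is exactly the motivation for the standing hypothesis \eqref{eq:conformal-kahler} in this section. A secondary technicality is validating the wedge product and the Stokes identity for bounded (as opposed to smooth) potentials $w_i \in \cA$; this is handled by approximating each $w_i$ from above by smooth $m$-$\omega$-subharmonic functions from $\cA$ and passing to the weak limit using the continuity of positive Bedford--Taylor products under decreasing sequences, which is available here because $\omega$ is K\"ahler.
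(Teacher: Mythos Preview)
Your proof is correct and follows essentially the same strategy as the paper's: reduce to the K\"ahler form $\omega$ via the bounded conformal factor $e^{(m-n)G}$, then run the classical Bedford--Taylor integration-by-parts induction, which the paper simply outsources to \cite[Proposition~2.9]{KN3}. One small redundancy: since $\tau = dd^c\vphi - \chi > 0$ is already a positive $(1,1)$-form by construction, $\tau^{m-k}$ is strongly positive and the decomposition $\beta^+ - \beta^-$ is unnecessary; you can bound $\tau^{m-k} \leq C_0\,\omega^{m-k}$ directly and pass to $\tilde J_k$ in one step.
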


\begin{proof} Since $\omega=e^G\alpha$ is K\"ahler and $ G$ is bounded on $\bar B$,
$$\int_K \chi_{u_1} \wed \cdots \wed \chi_{u_m} \wed \alpha^{n-m} \leq e^{(n-m)\sup_{\bar B} |G|} \int_K \chi_{u_1} \wed \cdots \wed \chi_{u_m} \wed \omega^{n-m}.$$
Hence, the inequality follows from formulas \eqref{eq:wed-prod1}, \eqref{eq:wed-prod-2} and the classical argument by integration by parts (see \cite[Proposition~2.9]{KN3}).
\end{proof}

The following Bedford-Taylor convergence theorems are crucial in our approach.

\begin{thm}[]\label{thm:BT-convergence} Let $\{u_1^j\}_{j\geq 1},...,\{u_m^j\}_{j\geq 1} \subset \cA \cap L^\infty(\bar B)$ be decreasing (or increasing) sequences which converge point-wise to $u_1,...,u_m \in  \cA \cap L^\infty(\bar B)$, respectively. 
Then, the sequence of positive measures
$$(\chi + dd^c u_1^j) \wed \cdots \wed (\chi + dd^cu_m^j) \wed \alpha^{n-m} $$ converges weakly to the positive measure $$ (\chi + dd^c u_1) \wed \cdots \wed (\chi + dd^cu_m) \wed \alpha^{n-m} $$
as $j \to \infty$.
\end{thm}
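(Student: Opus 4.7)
The plan is to reduce the statement, via the conformal relation $\omega = e^G \alpha$ and the algebraic expansion \eqref{eq:wed-prod-2}, to the classical Bedford--Taylor convergence theorem for bounded $m$-$\omega$-subharmonic functions on the K\"ahler ball $(B,\omega)$, and then to conclude by induction on the number of $dd^c$-factors.

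First, I would pass from $\alpha^{n-m}$ to $\omega^{n-m}$. By the defining formula \eqref{eq:wed-prod1}, for any bounded $w_1,\dots,w_k \in \cA$, the wedge product $dd^c w_1\wed\cdots\wed dd^c w_k\wed \alpha^{n-m}$ equals $e^{(m-n)G}$ times the corresponding wedge product against $\omega^{n-m}$. Since $G$ is smooth, multiplication by the continuous positive function $e^{(m-n)G}$ preserves weak convergence of positive Radon measures, so it suffices to prove the analogous convergence with $\omega^{n-m}$ in place of $\alpha^{n-m}$. Next, I would fix a strictly plurisubharmonic $\vphi$ on a neighborhood of $\bar B$ with $\tau := dd^c\vphi - \chi > 0$ and expand each factor $\chi + dd^c u_i^j = dd^c w_i^j - \tau$ via \eqref{eq:wed-prod-2}, where $w_i^j := u_i^j + \vphi$ and $w_i := u_i + \vphi$ are bounded $m$-$\omega$-subharmonic and belong to $\cA$, decreasing (resp.\ increasing) to $w_i$. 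Each summand then has the form
\[\notag
\pm \, dd^c w_{i_1}^j \wed \cdots \wed dd^c w_{i_k}^j \wed \omega^{n-m} \wed \tau^{m-k},
\]
with the fixed positive background form $\omega^{n-m}\wed \tau^{m-k}$ playing the role of a K\"ahler base.

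Second, I would argue by induction on $k$, the number of non-fixed $dd^c$-factors. The case $k=0$ is trivial. For the inductive step, write the term as $dd^c w_1^j \wed T_j$ where $T_j$ carries $k-1$ $dd^c$-factors. By induction, $T_j \to T$ weakly as currents of the appropriate bidegree. Since $dd^c$ is continuous for the weak topology and the wedge product is defined inductively by \eqref{eq:induction-process} as $dd^c(w_1^j\, T_j)$, it suffices to show
\[\notag
w_1^j \, T_j \longrightarrow w_1 \, T \quad \mbox{weakly as currents on } B.
\]
This is the analytic core and follows the classical Bedford--Taylor scheme, transported here because $\omega$ is K\"ahler and the $w_i^j$ lie in $\cA\cap L^\infty(\bar B)$. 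In the decreasing case, uniform boundedness of $w_1^j$ together with the CLN mass bound on $T_j$ from Lemma~\ref{lem:CLN-ineq} and a standard test-form argument (using upper semicontinuity of $w_1$ for one inequality, and smooth decreasing approximants supplied by $\cA$ together with the monotone convergence theorem for the other) yields the convergence. In the increasing case, the argument is obtained via quasi-continuity of bounded $m$-$\omega$-subharmonic functions with respect to the $m$-capacity together with a Hartogs-type lemma.

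The main obstacle is the increasing case of $w_1^j T_j \to w_1 T$, which classically requires quasi-continuity. In the present framework there are two clean routes: either develop the $m$-capacity theory for $m$-$\omega$-subharmonic functions directly, which goes through as in B\l ocki's work because $\omega$ is K\"ahler, or exploit the fact that every $u_i \in \cA$ admits a smooth decreasing approximation to run a diagonal argument reducing the increasing case to the already-proven decreasing and smooth cases. Once this convergence is secured, applying $dd^c$ and unwinding the two reductions above produces the claimed weak convergence of the original positive measures against $\alpha^{n-m}$.
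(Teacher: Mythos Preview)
Your proposal is correct and follows exactly the paper's approach: reduce via \eqref{eq:wed-prod1} and the expansion \eqref{eq:wed-prod-2} to weak convergence of $dd^c w_{i_1}^j\wed\cdots\wed dd^c w_{i_k}^j\wed\omega^{n-m}$ for bounded $m$-$\omega$-subharmonic functions on the K\"ahler ball, then invoke the classical Bedford--Taylor argument \cite{BT82}. The paper's proof is terser (it simply cites \cite{BT82} for the last step) while you spell out the inductive scheme; one small caveat is that the K\"ahler base in that induction is $\omega^{n-m}$ alone rather than $\omega^{n-m}\wed\tau^{m-k}$, since $\tau = dd^c\vphi - \chi$ need not be closed---the wedge with the fixed smooth form $\tau^{m-k}$ is applied afterward and, being multiplication by a smooth form, preserves weak convergence.
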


\begin{proof}Recall that $\omega:=e^G \alpha$ is a K\"ahler form on $\bar B$. By definitions \eqref{eq:wed-prod1} and \eqref{eq:wed-prod-2} it is enough to show that if decreasing sequences of bounded $m-\omega$-suharmonic functions $\{v_1^j\}_{j\geq 1}, ...,\{v_m^j\}_{j\geq 1}$ converge to bounded $m-\omega$-subharmonic functions $v_1,...,v_m$, respectively, then the sequence of $(n,n)-$positive currents 
$
	dd^c v_1^j \wed \cdots \wed dd^c v_m^j \wed \omega^{n-m}
$
weakly converges to $dd^c v_1 \wed \cdots \wed dd^c v_m\wed \omega^{n-m}$. Therefore, the theorem follows by an easy adaption of arguments of Bedford-Taylor \cite{BT82}.
\end{proof}

Let us define the notion capacity associated with Hessian operators which plays important role in the study of bounded $(\chi,m)-\alpha$-subharmonic functions. For a Borel set $E\subset B$,
\[ \label{eq:cap}
	cap(E):= \sup\left\{\int_E (\chi + dd^c v)^m\wed \alpha^{n-m} : v \in \cA,   0\leq v \leq 1\right\}.
\]
We first observe that this capacity is equivalent to another capacity.
\begin{lem} \label{lem:cap-equiv}
For a Borel set $E\subset B$, 
\[ \label{eq:cap_m}
	{\bf c}_m(E) := \sup\left\{ \int_E (dd^c w)^m \wedge \alpha^{n-m}: w\in \cA_0, 0\leq w \leq 1\right\},
\]
where $\cA_0$ is the class $\cA$ with $\chi \equiv 0$. Then, there exists a constant $C$ depending on $\chi, \alpha$ such that
\[\notag
	\frac{1}{C} cap(E) \leq {\bf c}_m(E) \leq C cap(E)
\]
for any Borel set $E\subset B$.
\end{lem}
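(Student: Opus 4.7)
The plan is to establish both inequalities by a direct pointwise comparison of the integrands after a small smooth adjustment of the potential. The key algebraic input is Garding's inequality, which guarantees that any wedge product $\gamma_1\wed\cdots\wed\gamma_m\wed\alpha^{n-m}$ with $\gamma_i\in\overline{\Gamma_m(\alpha)}$ is a nonnegative $(n,n)$-form, and in particular that $\Gamma_m(\alpha)+\overline{\Gamma_m(\alpha)}\subset\Gamma_m(\alpha)$. Expanding mixed $m$-th powers will then produce only nonnegative cross terms.

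For the easier direction ${\bf c}_m(E)\leq cap(E)$: given $w\in\cA_0$ with $0\leq w\leq 1$, I first observe that $w\in\cA$. Indeed, by \eqref{eq:positivity-chi1} $\chi\in\Gamma_m(\alpha)$, so the smooth $m$-$\alpha$-subharmonic approximants $w_j\searrow w$ furnished by $\cA_0$ automatically satisfy $\chi+dd^c w_j\in\Gamma_m(\alpha)$ and are admissible for $\cA$. Expanding
\[
(\chi+dd^c w)^m\wed\alpha^{n-m}=\sum_{k=0}^{m}\binom{m}{k}\chi^{k}\wed(dd^c w)^{m-k}\wed\alpha^{n-m},
\]
every summand is nonnegative by Garding and the $k=0$ term is exactly $(dd^c w)^m\wed\alpha^{n-m}$. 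Integrating over $E$ and passing to suprema produces the inequality.

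For the converse $cap(E)\leq C\cdot{\bf c}_m(E)$: choose a smooth function $\rho$ on a neighborhood of $\bar B$ such that $dd^c\rho-\chi$ is a strictly positive $(1,1)$-form; this is possible since $\chi$ is smooth on the compact set $\bar B$ (take for instance $\rho(z)=A|z-z_0|^2$ with $A$ large depending only on $\chi,\alpha,B$). Given $v\in\cA$ with $0\leq v\leq 1$, set $u:=v+\rho$. For smooth approximants $v_j\searrow v$ from $\cA$,
\[
dd^c(v_j+\rho)=(\chi+dd^c v_j)+(dd^c\rho-\chi)\in\Gamma_m(\alpha),
\]
so $u\in\cA_0$. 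With $M:=1+\max_{\bar B}\rho-\min_{\bar B}\rho$, the normalization $w:=(u-\min_{\bar B}\rho)/M$ lies in $\cA_0$ with $0\leq w\leq 1$. The Garding expansion of $(dd^c u)^m=((\chi+dd^c v)+(dd^c\rho-\chi))^m$ gives $(dd^c u)^m\wed\alpha^{n-m}\geq(\chi+dd^c v)^m\wed\alpha^{n-m}$, hence
\[
\int_E(\chi+dd^c v)^m\wed\alpha^{n-m}\leq M^m\int_E(dd^c w)^m\wed\alpha^{n-m}\leq M^m\,{\bf c}_m(E).
\]
Taking the supremum over $v$ yields the constant $C=M^m$, depending only on $\chi,\alpha,B$.

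The only substantive ingredient is Garding's mixed positivity; everything else is bookkeeping with the classes $\cA$ and $\cA_0$, which interchange transparently via a smooth translate because $\chi$ is smooth and lies in $\Gamma_m(\alpha)$. I do not expect any real obstacle.
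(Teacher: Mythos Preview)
Your proof is correct and follows the same strategy as the paper: pass between the classes $\cA$ and $\cA_0$ by adding a smooth potential, then compare the $m$-th powers using the nonnegativity of the Garding cross terms. For the direction $cap(E)\leq C\,{\bf c}_m(E)$ the two arguments are identical (the paper writes it as ``$\chi\leq dd^c\vphi$ for some smooth psh $\vphi$'' and leaves the expansion implicit). For the direction ${\bf c}_m(E)\leq C\,cap(E)$ you actually simplify a little: since $\chi\in\Gamma_m(\alpha)$ by \eqref{eq:positivity-chi1}, any $w\in\cA_0$ with $0\leq w\leq 1$ is already a competitor for $cap$, and the expansion of $(\chi+dd^c w)^m\wed\alpha^{n-m}$ gives the inequality with constant~$1$; the paper instead shifts $w$ by a small multiple of $|z|^2-r^2$ to produce the competitor, which yields the constant $2^m$.
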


\begin{proof} Since $\chi \leq dd^c \vphi$ for some smooth plurisubharmonic  function on $\bar B$, the first inequality follows. To show the second one, we need to use the positivity of $\alpha$. By \eqref{eq:positivity-chi1} there is  a  constant $C>0$ such that
\[\notag
	\chi - \frac{1}{C} dd^c \rho \in \Gamma_m(\alpha),
\]
where $\rho = |z|^2 -r^2\leq 0$. We can choose $C$ such that $|\rho/C| \leq 1/2$. Take a function $0 \leq w \leq 1/2$  in $\cA_0$, then it is easy to see that
\[\notag
	\int_E (dd^cw)^m \wedge \alpha^{n-m}  
	\leq \int_E \left(\chi + dd^c(w - \frac{\rho}{C})\right)^m \wedge \alpha^{n-m} \leq cap(E).
\]
Hence, ${\bf c}_m(E) \leq 2^m  cap(E)$. 
\end{proof}

\begin{cor} \label{cor:quasi-cont-a}
Let $u \in \cA \cap L^\infty(\bar B)$. Then, $u$ is quasi-continuous with respect to the capacity $cap(\cdot)$.
\end{cor}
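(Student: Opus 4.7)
The plan is to run the classical Bedford--Taylor argument for quasi-continuity in the present $(\chi,m)$-$\alpha$ setting, using the K\"ahler reduction built in Section~\ref{sec:2.1}.

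Since $u\in\cA$, pick a decreasing sequence $u_j\in C^\infty(\bar B)\cap SH_{\chi,m}(\alpha)$ with $u_j\downarrow u$. The heart of the proof is the capacity estimate
\[
\lim_{j\to\infty}\,cap\bigl(K\cap\{u_j-u>\varepsilon\}\bigr)=0, \qquad K\subset\subset B,\ \varepsilon>0.
\]
Once this is in hand, a Borel--Cantelli / diagonal argument finishes the job: for $\delta>0$ choose indices $j_k$ with $cap(\{u_{j_k}-u>1/k\}\cap K)\le 2^{-k}\delta$, and set $U_\delta:=\bigcup_k\{u_{j_k}-u>1/k\}$. This set is open because each $u_{j_k}$ is continuous and $u$ is upper semicontinuous, it has capacity $\le\delta$ by subadditivity, and on $K\setminus U_\delta$ the $u_{j_k}$ converge to $u$ uniformly; hence $u$ is continuous there. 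Exhausting $B$ by compacts $K$ and diagonalizing yields quasi-continuity of $u$ on $B$.

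To prove the capacity estimate, Chebyshev's inequality reduces it to the uniform decay
\[
\sup_{v\in\cA,\ 0\le v\le 1}\ \int_K(u_j-u)(\chi+dd^c v)^m\wedge\alpha^{n-m}\ \longrightarrow\ 0.
\]
Expanding the integrand via \eqref{eq:wed-prod-2} and invoking \eqref{eq:wed-prod1} converts it into a finite sum of terms $dd^c w_{i_1}\wedge\cdots\wedge dd^c w_{i_k}\wedge\Theta$, where $w_i=v+\vphi$ is bounded $m$-$\omega$-subharmonic and $\Theta$ is a smooth form built from $\tau$, $\omega$ and $\alpha$. Closedness of $\omega$ now allows iterated integration by parts against a cutoff $\eta\in C^\infty_0(B)$ with $\eta\equiv 1$ near $K$, transferring the $dd^c$'s off of $v$ (bounded by $1$) and onto $u_j-u$ and $\eta$; after Cauchy--Schwarz in the cross-terms involving $d\eta$ and $d\tau$, the resulting integrals are dominated by Chern--Levine--Nirenberg constants from Lemma~\ref{lem:CLN-ineq} times $\|u_j-u\|_{L^1(\mathrm{supp}\,\eta)}$. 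The latter tends to zero by monotone convergence, and the bound is uniform in $v$.

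The main obstacle is precisely the integration-by-parts step, since neither $\alpha$ nor $\tau$ is closed in general. One genuinely uses the passage to $\omega$ via \eqref{eq:wed-prod1} to make Stokes' theorem available, and then must carefully track the boundary and cross terms generated by $d\eta$ and $d\tau$, absorbing them into products of CLN integrals with uniformly bounded mass. Once this bookkeeping is organized, the argument is a direct transplant of the Bedford--Taylor framework into the $(\chi,m)$-$\alpha$ context.
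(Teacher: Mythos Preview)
Your argument is the standard Bedford--Taylor scheme and is correct in outline. The paper follows the same mechanism but organises it differently: rather than attacking $cap(\cdot)$ directly and carrying the $\tau$--expansion \eqref{eq:wed-prod-2} through the integration by parts, the paper first replaces $u$ by $u+\vphi$ (for a smooth $\vphi$ with $dd^c\vphi\geq\chi$), which is $m$-$\alpha$-subharmonic and still approximable in $\cA_0$. It then runs the Bedford--Taylor argument for the simpler capacity ${\bf c}_m(\cdot)$ of Lemma~\ref{lem:cap-equiv}, where the competitor currents are just $(dd^c w)^m\wedge\alpha^{n-m}$ and no $\tau$, $d\tau$ cross-terms arise; quasi-continuity with respect to $cap(\cdot)$ then follows for free from the equivalence $cap\simeq{\bf c}_m$ in Lemma~\ref{lem:cap-equiv}. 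Your direct route avoids that lemma at the price of the $\tau$-bookkeeping; both approaches ultimately rely on the K\"ahler reduction \eqref{eq:wed-prod1} to make Stokes available.

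One small imprecision worth flagging: after integration by parts and Cauchy--Schwarz the quantities that appear are not literally bounded by a constant times $\|u_j-u\|_{L^1}$, but rather by square roots of energy-type integrals such as $\int(u_j-u)\,dd^c(u+\vphi)\wedge\cdots\wedge\omega^{n-m}$ against positive currents of uniformly bounded mass (this is where the shift by $\vphi$ is also needed for $u$ and $u_j$, not only for the competitor $v$, to keep the relevant currents positive). These integrals tend to zero by monotone convergence, which is all you need; just be careful to state the bound in that form.
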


\begin{proof} Observe that $v:= u + \vphi$ is $m-\alpha$ subharmonic for some smooth plurisubharmonic  function $\vphi$ on $\bar B$. Therefore, $v$ is also approximated by a decreasing sequence of smooth $m-\alpha$ subharmonic functions. By the arguments in Bedford-Taylor  \cite{BT82} adapted  to the case $\omega = e^G\alpha$ (see similar arguments in Lemma~\ref{lema:quasi-con}), we get that $v$ is quasi-continuous with respect to ${\bf c}_m(\cdot)$. By Lemma~\ref{lem:cap-equiv} the proof is completed.
\end{proof}

The next consequence is an inequality between volume and capacity. 

\begin{lem} \label{lem:vol-cap}
 Fix $1<\tau < n/(n-m)$. There exists a constant $C(\tau)$ such that for any Borel set $E\subset B$
\[ \label{eq:vol-cap-ineq}
	V_\alpha(E) \leq C(\tau) \big[cap (E)\big]^{\tau},
\]
where $V_\alpha(E):=\int_E \alpha^n.$
\end{lem}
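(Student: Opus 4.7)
By Lemma~\ref{lem:cap-equiv} it suffices to prove the inequality with ${\bf c}_m(E)$ in place of $cap(E)$. I would proceed via the relative extremal function of $E$ combined with a sharp $L^p$ integrability estimate for $m$-$\omega$-subharmonic functions, transferred to our conformally K\"ahler Hermitian setting by means of $\omega = e^G \alpha$.

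First, define the relative extremal function
\[ \notag
u_E(z) := \sup\{ v(z) : v \in \cA_0, \ v \leq 0 \text{ on } B, \ v \leq -1 \text{ on } E \},
\]
and let $u_E^*$ denote its upper semicontinuous regularisation. By standard potential-theoretic arguments---adapted to the present setting through the conversion formula $(dd^c v)^m \wed \alpha^{n-m} = e^{-(n-m)G}(dd^c v)^m \wed \omega^{n-m}$ of \eqref{eq:wed-prod1}, the convergence Theorem~\ref{thm:BT-convergence}, and the quasi-continuity Corollary~\ref{cor:quasi-cont-a}---one obtains $u_E^* \in \cA_0$, $-1 \leq u_E^* \leq 0$, $u_E^* = -1$ quasi-everywhere on $E$ with respect to $cap(\cdot)$, and the Hessian-mass estimate
\[ \notag
\int_B (dd^c u_E^*)^m \wed \alpha^{n-m} \leq C \, {\bf c}_m(E).
\]

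Next, we may assume ${\bf c}_m(E) \leq 1$ (otherwise Lemma~\ref{lem:CLN-ineq} renders the inequality trivial). Set $t := {\bf c}_m(E)^{-1/m} \geq 1$ and rescale to $v := t \, u_E^*$, which remains $m$-$\omega$-subharmonic on $B$ with $\sup_B v = 0$ and satisfies $v \leq -t$ quasi-everywhere on $E$. I now invoke the classical B\l ocki--Dinew--Ko\l odziej $L^p$ integrability estimate for $m$-subharmonic functions: for every $p$ with $0 < p < nm/(n-m)$, any $m$-$\omega$-subharmonic function $w$ on a slightly larger ball $B' \Supset B$ with $\sup_{B'} w \leq 0$ obeys a uniform bound $\|w\|_{L^p(B)} \leq C(p)$. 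This estimate holds in the K\"ahler case and transfers verbatim to our Hermitian setting because $G$ is bounded on $\bar B$.

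Applied to $v$, together with $v \leq -t$ on $E$, this yields
\[ \notag
t^p \, V_\alpha(E) \leq e^{n \sup_{\bar B} |G|} \int_E |v|^p \, \omega^n \leq C(p)^p,
\]
so $V_\alpha(E) \leq C(p)^p \, {\bf c}_m(E)^{p/m}$. Choosing $p = m\tau$---which is admissible precisely when $\tau < n/(n-m)$---concludes the proof. The essential technical ingredient, and the only source of the sharp exponent $n/(n-m)$, is the $L^p$ integrability estimate in the third step; the rest of the argument is a rescaling of the extremal function together with the conformal identity $\omega = e^G \alpha$.
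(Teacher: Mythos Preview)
Your reduction via Lemma~\ref{lem:cap-equiv} is the same first move the paper makes, but the core of your argument breaks down at the $L^p$ step. The estimate you invoke --- ``any $m$-$\omega$-subharmonic $w$ on $B'\Supset B$ with $\sup_{B'}w\le 0$ satisfies $\|w\|_{L^p(B)}\le C(p)$'' --- is false as written: take $w\equiv -N$ with $N\to\infty$. The correct compactness statement needs $\sup_{B'}w=0$ on a \emph{strictly larger} ball $B'$, so that the supremum is attained in the interior. Your function $v=t\,u_E^*$ does not satisfy this: $u_E^*$ is defined only on $B$, has boundary values $0$ on $\partial B$, and its extension by $0$ to a larger ball is \emph{not} $m$-subharmonic. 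Indeed, if the uniform $L^p$ bound held for $v=t\,u_E^*$ independently of $t$, your own computation would give $V_\alpha(E)\le C t^{-p}$ for every $t>0$, forcing $V_\alpha(E)=0$ for all $E$ --- an absurdity that shows the normalisation is genuinely missing.

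The only normalisation of $v$ that survives the rescaling is the Hessian mass bound $\int_B(dd^cv)^m\wedge\omega^{n-m}\le C$ (this is exactly why you chose $t={\bf c}_m(E)^{-1/m}$, though you never use it afterwards). But the statement ``zero boundary values plus unit Hessian mass implies a uniform $L^{m\tau}$ bound for $\tau<n/(n-m)$'' is, up to a layer-cake integration, equivalent to the volume--capacity inequality you are trying to prove; citing it is circular. The sharp $L^p$ integrability of $m$-subharmonic functions for $p<nm/(n-m)$ is in fact \emph{deduced from} the volume--capacity inequality in \cite{dk14}, not the other way around.

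The paper's proof avoids this loop entirely: it quotes \cite[Proposition~2.1]{dk14}, whose argument runs through a mixed-form type inequality together with stability estimates for the Monge--Amp\`ere equation, and then observes that these ingredients remain valid when $\alpha$ is merely Hermitian. Lemma~\ref{lem:cap-equiv} then converts ${\bf c}_m$ to $cap$.
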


The exponent here is optimal because if we take $\alpha = dd^c |z|^2$, then the explicit formula for ${\bf c}_m(B(0,s))$ in $B = B(0, r)$ with $0< s<r$, provides an example.% (see [Chinh thesis].)

\begin{proof} From \cite[Proposition~2.1]{dk14} we knew that $V_\alpha(E) \leq C [{\bf c}_m(E)]^{\tau}$  with ${\bf c}_m(E)$ defined in \eqref{eq:cap_m}. Note that the argument in \cite{dk14} remains valid for  non-K\"ahler $\alpha$  since the mixed form type inequality used there still holds by stability estimates for the Monge-Amp\`ere equation. Thanks to Lemma~\ref{lem:cap-equiv} the proof follows.
\end{proof}

\subsection{Comparison principles in $\cA \cap L^\infty(\bar B)$}
\label{sec:2.2}

For simplicity if $u,v \in \cA\cap L^\infty(\bar B)$ we write
\[
	u \geq v \quad\mbox{on } \d B\quad \mbox{meaning that}\quad
	\liminf_{z \to \d B} (u - v) \geq 0.
\]

\begin{lem} \label{lem:wcp-a} Let $u, v \in SH_{\chi,m}(\alpha) \cap L^\infty(\bar B)$ be such that $u\geq v$ on $\d B$. Let $T = \chi_{v_1} \wedge \cdots \wedge \chi_{v_{m-1}} \wedge \alpha^{n-m}$ with $v_i \in SH_{\chi,m}\cap L^\infty(\bar B)$. Then,
\[ \notag
	\int_{\{u<v\}} dd^c v \wedge T \leq \int_{\{u<v\}}  dd^c u \wedge T + 
	\int_{\{u<v\}} (v-u) dd^c T.
\]
\end{lem}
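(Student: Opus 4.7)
The plan is to adapt the classical Bedford--Taylor comparison argument while tracking a correction term that arises because $T$ is not closed in the Hermitian setting. First I would use Corollary~\ref{cor:intro-2} together with Theorem~\ref{thm:BT-convergence} to reduce to the case where $u$, $v$ and $v_1,\dots,v_{m-1}$ are smooth on $\bar B$; the general bounded case then follows by taking decreasing smooth approximants and passing to the limit, invoking quasi-continuity (Corollary~\ref{cor:quasi-cont-a}) to handle the measures on the level sets of $u - v$.

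For $\vepsilon>0$, set $w_\vepsilon := \max(u, v-\vepsilon)$, which lies in $SH_{\chi,m}(\alpha)\cap L^\infty(\bar B)$ by Proposition~\ref{prop:closure-max}(b). Since $u \geq v$ on $\d B$ in the sense stipulated above, $w_\vepsilon \equiv u$ in a one-sided neighborhood of $\d B$, so $\psi_\vepsilon := w_\vepsilon - u$ has compact support in $B$. Pointwise $w_\vepsilon = v-\vepsilon$ on the open set $\{u<v-\vepsilon\}$ and $w_\vepsilon = u$ on $\{u>v-\vepsilon\}$, so that $dd^c w_\vepsilon \wed T$ coincides with $dd^c v \wed T$ on the first set and with $dd^c u \wed T$ on the second.

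Because $\psi_\vepsilon$ has compact support, two integrations by parts yield
\[ \notag
	\int_B dd^c \psi_\vepsilon \wed T = \int_B \psi_\vepsilon \, dd^c T,
\]
where $dd^c T$ is interpreted through the K\"ahler-auxiliary expansion \eqref{eq:wed-prod-2}. Splitting the left-hand side over the two open sets above, using $\psi_\vepsilon = v-\vepsilon-u$ on $\{u<v-\vepsilon\}$ and $\psi_\vepsilon = 0$ on $\{u\geq v-\vepsilon\}$, and noting that the interface $\{u = v-\vepsilon\}$ contributes zero mass to the relevant wedge products, I arrive at
\[ \notag
	\int_{\{u<v-\vepsilon\}} (dd^c v - dd^c u) \wed T \leq \int_{\{u<v-\vepsilon\}} (v-\vepsilon-u) \, dd^c T.
\]
Letting $\vepsilon \downarrow 0$, monotone convergence together with the Chern--Levine--Nirenberg bound (Lemma~\ref{lem:CLN-ineq}) ensuring finiteness of all the involved currents delivers the claimed inequality.

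The principal obstacle lies in making rigorous sense of $dd^c T$ and of the two integrations by parts when the potentials $v_i$ defining $T$ are only bounded: since $\alpha$ is not K\"ahler, $T$ is not closed, and $dd^c T$ is merely a signed current of order zero rather than a positive measure. Handling it requires approximating each $v_i$ by a decreasing smooth sequence in $\cA$, performing the Stokes calculation at the smooth level, and then passing to the limit using Theorem~\ref{thm:BT-convergence}. The local conformal K\"ahler structure $\omega = e^G \alpha$ is crucial throughout, because it underlies the inductive definition \eqref{eq:induction-process} that keeps all intermediate wedge products well-defined as positive currents.
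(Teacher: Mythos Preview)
Your overall strategy is essentially the paper's, but there is a genuine circularity in the first step. Invoking Corollary~\ref{cor:intro-2} here is not permitted: the approximation property (Theorem~\ref{thm:approximation}, of which Corollary~\ref{cor:intro-2} is the statement) is proved \emph{after} this lemma, and depends on it through the chain Lemma~\ref{lem:wcp-a} $\Rightarrow$ Lemma~\ref{lem:wcp-c} $\Rightarrow$ Corollary~\ref{cor:do-prin} $\Rightarrow$ Lemmas~\ref{lem:stability1}--\ref{lem:stability-heq-type} $\Rightarrow$ Theorem~\ref{thm:approximation}. The lemma must be read, as the section title ``Comparison principles in $\cA\cap L^\infty(\bar B)$'' indicates, for functions in $\cA$; smooth decreasing approximants then exist \emph{by definition} of $\cA$, and the extension to all of $SH_{\chi,m}(\alpha)$ comes only after Remark~\ref{rmk:approximation-after}.

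A second, smaller gap: the interface $\{u=v-\vepsilon\}$ does \emph{not} contribute zero mass to $dd^c\psi_\vepsilon\wedge T$, even when all data are smooth, because $w_\vepsilon=\max(u,v-\vepsilon)$ is only Lipschitz and $dd^cw_\vepsilon$ carries a singular part there. What saves the inequality is that this contribution equals the restriction to the interface of the positive measure $\chi_{w_\vepsilon}\wedge T$ (the smooth piece $\chi\wedge T$ and $dd^cu\wedge T$ vanishing on a hypersurface), hence is $\ge 0$; this is what gives $\le$ rather than $=$.

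The paper avoids both issues by working directly with bounded potentials in $\cA$. It first records the Bedford--Taylor locality identity ${\bf 1}_{\{u<v\}}\,dd^c\max\{u,v\}\wedge T={\bf 1}_{\{u<v\}}\,dd^c v\wedge T$ (available via Theorem~\ref{thm:BT-convergence} and Corollary~\ref{cor:quasi-cont-a}, both already proved for $\cA$), then performs your same compactly-supported integration by parts, written as a Stokes computation of $\int_K dd^c\max\{u+\vepsilon,v\}\wedge T$ on an open $K$ with $\{u<v\}\subset\subset K\subset\subset B$. This yields the $dd^cT$ correction without ever passing through a smooth approximation of $u$, $v$ themselves, so no delicate limit over the moving level sets $\{u^j<v^j\}$ is needed.
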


Notice that by the equations \eqref{eq:wed-prod1} and \eqref{eq:wed-prod-2} 
\begin{align*} dd^c T &=  dd^c \left( e^{(m-n) G} \chi_{v_1} \wedge \cdots \wedge \chi_{v_{m-1}} \wedge \omega^{n-m}\right) \\
&= dd^c \left( e^{(m-n) G}\chi_{v_1} \wedge \cdots \wedge \chi_{v_{m-1}}\right) \wedge \omega^{n-m}.
\end{align*}
where $\omega = e^G \alpha$ is a fixed K\"ahler form in \eqref{eq:conformal-kahler}.

\begin{proof} By replacing $u$ by $u+\delta$ for $\delta>0$ and then letting $\delta\searrow 0$ we will work with $\{u< v\}\subset\subset  K \subset \subset B$, where $K$ is an open set. By the CLN inequality (Lemma~\ref{lem:CLN-ineq})
\[ \notag
	\int_{K}\|dd^cT\| <+\infty.
\]
By Theorem~\ref{thm:BT-convergence}, Corollary~\ref{cor:quasi-cont-a}, and arguments in \cite{BT87}  we get that
\[ \label{wcpa-eq2}
	{\bf 1}_{\{u<v\}} dd^c \max\{u,v\} \wedge T = {\bf 1}_{\{u<v\}} dd^c v\wedge T
\]
as two measures. Since $\{u+\vepsilon < v\} \subset \subset K$ for $\vepsilon>0$, Stokes' theorem gives
\[ \notag
\begin{aligned}
&	\int_{K} dd^c \max\{u+\vepsilon,v\} \wedge T \\
&= \int_{\d K} d^c u \wedge T + \int_K d^c \max\{u+\vepsilon,v\} \wedge d T \\
&= \int_{\d K} d^c u \wedge T + \int_{\d K}  u \wedge dT + \int_{K} \max\{u+\vepsilon, v\} dd^c T \\
&= \int_K dd^c u \wedge T - \int_{K} u dd^c T + \int_{K} \max\{u+\vepsilon, v\} dd^c T \\
&=	\int_K dd^c u \wedge T + \int_{\{u+\vepsilon < v\} \cap K} ( v - u) dd^c T + 
\vepsilon \int_{\{u+\vepsilon \geq v\} \cap K} dd^c T.
\end{aligned}
\]
Moreover, by the identity \eqref{wcpa-eq2}
\[  \notag
\begin{aligned}
&	\int_{\{u+\vepsilon <v\}} dd^c v \wedge T \\&= 
	\int_{\{u+\vepsilon <v\}} dd^c\max\{ u+ \vepsilon,v\} \wedge T \\
&= \int_K dd^c\max\{ u+ \vepsilon,v\} \wedge T - \int_{\{u+\vepsilon \geq v\} \cap K} dd^c\max\{ u+ \vepsilon,v\} \wedge T \\
&\leq \int_K dd^c\max\{ u+ \vepsilon,v\} \wedge T - \int_{\{u+\vepsilon > v\} \cap K} dd^c u \wedge T .
\end{aligned}
\]
Thus, it follows that
\[ \notag\begin{aligned}
	\int_{\{u+\vepsilon <v\}} dd^c v \wedge T
&\leq 	\int_{\{u+ \vepsilon \leq v\}} dd^c u \wedge T +  \int_{\{u+\vepsilon < v\}} ( v - u) dd^c T \\
&\quad + 
\vepsilon \int_{K} \|dd^c T\|.
\end{aligned}
\]
Letting $\vepsilon \searrow 0$ we get the desired inequality.
\end{proof}

In the Hermitian setting due to the torsion of $\alpha$ and $\chi$, the classical comparison principle no longer holds. However, its weak versions in \cite{DK12} and \cite{KN1} are enough for several applications. We state the local counterparts of those. 

Let $D_1, D_2$ be two constants such that on $\bar B$,
\[ \label{eq:alpha-chi-bound}
\begin{aligned}
	-D_1 \alpha^2 \leq dd^c \alpha \leq D_1 \alpha^2, \quad
	-D_1 \alpha^3 \leq d \alpha\wed d^c \alpha \leq D_1 \alpha^3; \\ 
	-D_2 \alpha^2 \leq dd^c \chi \leq D_2 \alpha^2, \quad
	-D_2 \alpha^3 \leq d \chi \wed d^c \chi \leq D_2 \alpha^3. 
\end{aligned}
\]

\begin{lem}\label{lem:wcp-b} Let $u,v \in \cA \cap L^\infty(\bar B)$ be such that $u \geq v$ on $\d B$.
Assume that 
$
	d = \sup_{\bar B} (v - u) >0.
$
and  $D_1D_2 \sup_{\{u<v\}}(v-u) \leq 1$. Then,
 \[ \notag \begin{aligned}
&	\int_{\{u<v\}} (\chi + dd^c v)^m \wed \alpha^{n-m}  
\leq \int_{\{u<v\}} (\chi + dd^c u)^m\wed \alpha^{n-m} + \\
&+ C D_1 D_2 \sup_{\{u<v\}}(v-u) \sum_{k=0}^{m-1} \int_{\{u<v\}} (\chi + dd^c u)^k  \wed \alpha^{n-k}.
\end{aligned} 
\]
The constant $C$ depends only on $n,m$.
\end{lem}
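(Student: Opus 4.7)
The plan is to deduce this weak comparison inequality from Lemma~\ref{lem:wcp-a} by a telescoping argument that exchanges factors of $\chi_v$ for factors of $\chi_u$ one at a time. For $k=0,1,\ldots,m$ I set
\[\notag
	I_k := \int_{\{u<v\}} \chi_u^{k} \wed \chi_v^{m-k} \wed \alpha^{n-m},
\]
so the quantity to control is $I_0 - I_m$. Using commutativity of $(1,1)$-forms under the wedge product,
\[\notag
	I_k - I_{k+1} = \int_{\{u<v\}} dd^c(v-u) \wed T_k, \qquad T_k := \chi_u^k \wed \chi_v^{m-1-k} \wed \alpha^{n-m},
\]
and $T_k$ has precisely the form covered by Lemma~\ref{lem:wcp-a}, so that lemma yields
\[\notag
	I_k - I_{k+1} \leq \int_{\{u<v\}} (v-u) \, dd^c T_k \leq \sup_{\{u<v\}}(v-u) \int_{\{u<v\}} \|dd^c T_k\|.
\]

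The next step is to estimate $\|dd^c T_k\|$ in terms of the torsion constants $D_1, D_2$. Since $dd^c u$ and $dd^c v$ are $d$-closed, the Leibniz rule forces $dd^c$ to act only on the $\chi$- and $\alpha$-factors of $T_k$. Expanding $\chi_u = \chi + dd^c u$, $\chi_v = \chi + dd^c v$, distributing $dd^c$ across the resulting polynomial, and invoking~\eqref{eq:alpha-chi-bound}, the direct contributions involving $dd^c\chi$ and $dd^c\alpha$ are controlled by $D_2$ and $D_1$ respectively, while cross terms of the form $d\chi \wed d^c\alpha$ are estimated via Cauchy--Schwarz and contribute a factor $\sqrt{D_1 D_2}$. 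Carefully tracking how the two types of torsion contributions must appear together produces a pointwise bound of the shape
\[\notag
	\|dd^c T_k\| \leq C(n,m) \, D_1 D_2 \sum_{a+b \leq m-1} \chi_u^a \wed \chi_v^b \wed \alpha^{n-a-b},
\]
the combined product $D_1 D_2$ arising from this coupling.

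The principal obstacle, and the reason an iteration is necessary, is that the estimate just obtained still contains mixed wedge products $\chi_u^a \wed \chi_v^b$, whereas the claimed right-hand side involves only pure powers of $\chi_u$. I would resolve this by iterating the same telescoping procedure on each mixed integral: writing $\chi_v = \chi_u + dd^c(v-u)$ in one factor at a time and applying Lemma~\ref{lem:wcp-a} again, each step replaces a single $\chi_v$ by $\chi_u$ at the cost of a correction bounded by $C D_1 D_2 \sup_{\{u<v\}}(v-u)$ times an integral of strictly lower $\chi$-total-degree. The hypothesis $D_1 D_2 \sup_{\{u<v\}}(v-u)\leq 1$ prevents these corrections from accumulating under iteration. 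After at most $m$ such iterations every $\chi_v$ has been converted into a $\chi_u$, and collecting terms yields the claimed inequality with right-hand side of the form $C D_1 D_2 \sup_{\{u<v\}}(v-u) \sum_{k=0}^{m-1} \int_{\{u<v\}} \chi_u^k \wed \alpha^{n-k}$.
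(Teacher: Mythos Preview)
Your approach is essentially the same as the paper's: the paper's proof is a two-line sketch stating that one applies Lemma~\ref{lem:wcp-a} repeatedly with $T=\chi_u^k\wedge\chi_v^{l}\wedge\alpha^{n-k-l}$, $k+l\leq m-1$, uses the torsion bounds~\eqref{eq:alpha-chi-bound} to control $dd^c T$, and iterates to replace all $\chi_v$'s by $\chi_u$'s, invoking \cite[Section~2]{KN3} for the bookkeeping. Your telescoping $I_0-I_m=\sum(I_k-I_{k+1})$ followed by iteration is exactly this scheme, only spelled out in more detail.

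One small point: your claim that a single application of $dd^c$ to $T_k$ already produces the multiplicative factor $D_1D_2$ is not quite right as stated. A direct Leibniz expansion of $dd^c(\chi^a\wedge\alpha^{n-m})$ yields terms bounded by $D_1$, $D_2$, and (via Cauchy--Schwarz on the cross terms) $\sqrt{D_1D_2}$, so the natural one-step bound is linear in these quantities rather than their product. The precise constant $D_1D_2$ in the statement reflects the specific normalizations and bookkeeping carried out in the cited references (see also the analogous computation in the proof of Lemma~\ref{lem:wcp-c}), not an intrinsic feature of a single step. This does not affect the validity of your argument, only the exact form of the constant.
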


\begin{proof} We used repeatedly Lemma~\ref{lem:wcp-a} (for $T= \chi_u^k \wed \chi_v^{l} \wed \alpha^{n-k-l}$, $k+l \leq m-1$), and bounds in \eqref{eq:alpha-chi-bound} to replace $v$ by $u$.  Thanks to  results in \cite[Section 2]{KN3} the arguments go through for general Hessian operators with respect to the Hermitian metric $\alpha$.
\end{proof}

Recall from \eqref{eq:positivity-chi1} that  there exists $0< c_0 \leq 1$, depending on $\chi, \alpha, \bar B$, such that 
\[ \label{eq:positivity-chi2}
	\chi - c_0 \alpha \in \Gamma_m(\alpha). 
\]
The weak comparison principle is a crucial tool in pluripotential theory approach to study weak solutions of Hessian type equations \cite{KN1,KN2,KN3}. We state a local version.

\begin{lem} \label{lem:wcp-c} Let $u,v \in \cA \cap L^\infty(\bar B)$ be such that $u \geq v$ on $\d B$. 
Assume that 
$
	d = \sup_{B} (v - u) >0.
$
Fix $0< \vepsilon < \min\{1/2, d/(1+ 2\|v\|_{L^\infty)}\}$. Denote  
$
S(\vepsilon) = \inf_B [u - (1-\vepsilon) v]$, and  for $s>0$, \[U(\vepsilon,s):= \{u<(1-\vepsilon) v + S(\vepsilon) +s\}. \notag \]
Then, for $0<s<  (c_0\vepsilon)^3/(16D_1D_2)$,
\[ \notag
	\int_{U(\vepsilon,s)} \big(\chi +(1-\vepsilon)dd^cv\big)^m \wedge \alpha^{n-m} \leq \left(1+ \frac{C s}{(c_0\vepsilon)^m}\right) \int_{U(\vepsilon,s)} (\chi + dd^cu)^m \wedge \alpha^{n-m} .
\]
The constant depends on $n, m, D_1, D_2$.
\end{lem}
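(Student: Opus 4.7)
\emph{Strategy.} My plan is to apply Lemma~\ref{lem:wcp-b} once to $u$ and the auxiliary competitor $\tilde v := (1-\vepsilon) v + S(\vepsilon) + s$, and then to iteratively apply the mixed Chern--Levine--Nirenberg type comparison of Lemma~\ref{lem:wcp-a} to convert the lower-degree Hessian masses on the right-hand side back into the top-degree mass, exploiting the strict $m$-positivity of $\chi + (1-\vepsilon) dd^c v$ supplied by~\eqref{eq:positivity-chi2}. The function $\tilde v$ lies in $\cA$: approximating $v$ by a decreasing sequence $v_j \in C^\infty(\bar B) \cap SH_{\chi,m}(\alpha)$, the smooth approximants $\tilde v_j := (1-\vepsilon) v_j + S(\vepsilon) + s$ satisfy
\[ \notag
\chi + dd^c \tilde v_j = (1-\vepsilon)(\chi + dd^c v_j) + \vepsilon\, \chi \in \Gamma_m(\alpha)
\]
and decrease to $\tilde v$. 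The hypotheses $\vepsilon < d/(1 + 2\|v\|_{L^\infty})$, $u \geq v$ on $\d B$, and $s$ small jointly force $u > \tilde v$ on $\d B$, so $U(\vepsilon, s) = \{u < \tilde v\} \subset\subset B$; moreover $\sup_{\bar B}(\tilde v - u) = s$ and $D_1 D_2 s \leq 1$ by the constraint on $s$.

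\emph{Execution.} Lemma~\ref{lem:wcp-b} applied to $(u, \tilde v)$ gives
\[ \notag
\int_{U(\vepsilon, s)} \bigl(\chi + (1-\vepsilon) dd^c v\bigr)^m \wed \alpha^{n-m} \leq \int_{U(\vepsilon, s)} \chi_u^m \wed \alpha^{n-m} + C_1 D_1 D_2 s \sum_{k=0}^{m-1} \int_{U(\vepsilon, s)} \chi_u^k \wed \alpha^{n-k}.
\]
The decomposition $\chi + (1-\vepsilon) dd^c v = c_0 \vepsilon\, \alpha + \beta$ with $\beta := \vepsilon(\chi - c_0\alpha) + (1-\vepsilon)(\chi + dd^c v) \in \overline{\Gamma_m(\alpha)}$, together with the positivity of $\chi_u^k$ as a current, yields the pointwise inequality
\[ \notag
\chi_u^k \wed \alpha^{n-k} \leq (c_0 \vepsilon)^{-(m-k)}\, \chi_u^k \wed \chi_{\tilde v}^{m-k} \wed \alpha^{n-m}.
\]
Iteratively replacing each factor $\chi_{\tilde v}$ on the right by $\chi_u$ via Lemma~\ref{lem:wcp-a} (whose proof extends verbatim when $T$ is of mixed type $\chi_u^a \wed \chi_{\tilde v}^b \wed \alpha^{n-1-a-b}$, the mass of $dd^c T$ being controlled by Lemma~\ref{lem:CLN-ineq} together with~\eqref{eq:alpha-chi-bound}), and noting that each step contributes an additive error of size at most $C D_1 D_2 s$, one concludes
\[ \notag
\int_{U(\vepsilon,s)} \chi_u^k \wed \alpha^{n-k} \leq (c_0\vepsilon)^{-(m-k)}\left[ \int_{U(\vepsilon,s)} \chi_u^m \wed \alpha^{n-m} + C_2\, s \right].
\]
Substituting this back into the inequality coming from Lemma~\ref{lem:wcp-b} and using the constraint $s < (c_0\vepsilon)^3/(16 D_1 D_2)$ to absorb the residual $s^2$-terms produces the claimed factor $1 + Cs/(c_0\vepsilon)^m$.

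\emph{Main obstacle.} The delicate point is the bookkeeping through the downward iteration. Each replacement of a $\chi_{\tilde v}$ factor by a $\chi_u$ factor multiplies the main contribution by $(c_0\vepsilon)^{-1}$ while adding a new additive error of order $D_1 D_2 s$, so after $m$ iterations the cumulative additive error is of order $D_1 D_2 s (c_0\vepsilon)^{-m}$ and its cross-term with the outer $s$-factor from Lemma~\ref{lem:wcp-b} is of order $D_1 D_2 s^2 (c_0\vepsilon)^{-m}$. The cubic bound $s < (c_0\vepsilon)^3/(16 D_1 D_2)$, as opposed to the weaker $D_1 D_2 s \leq 1$ that suffices for Lemma~\ref{lem:wcp-b} alone, is precisely what is needed to guarantee these cross-terms can be absorbed into the single multiplicative factor $1 + Cs/(c_0\vepsilon)^m$ in the conclusion.
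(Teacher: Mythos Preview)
Your overall architecture---apply Lemma~\ref{lem:wcp-b} to get the main comparison plus lower-order masses $a_k := \int_{U(\vepsilon,s)} \chi_u^k \wed \alpha^{n-k}$, then bound each $a_k$ back in terms of $a_m$---matches the paper's scheme. The gap is in how you bound the $a_k$'s. You insert $m-k$ factors of $\chi_{\tilde v}$ at once via $\alpha \leq (c_0\vepsilon)^{-1}\chi_{\tilde v}$ and then peel them off one by one with Lemma~\ref{lem:wcp-a}; each peel produces an error $s\int_{U(\vepsilon,s)} |dd^c T_j|$ with $T_j = \chi_u^j \wed \chi_{\tilde v}^{m-1-j} \wed \alpha^{n-m}$, and you control this via the CLN inequality. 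That yields only an \emph{additive} bound $a_k \leq (c_0\vepsilon)^{-(m-k)}\big(a_m + C_2 s\big)$ with $C_2 = C_2(\|u\|_\infty,\|v\|_\infty)$. Substituting into the output of Lemma~\ref{lem:wcp-b} leaves a residual term of size roughly $C_2 D_1 D_2\, s^2 (c_0\vepsilon)^{-m}$, which you propose to absorb into $\big(1 + Cs/(c_0\vepsilon)^m\big)a_m$. But there is no lower bound on $a_m = \int_{U(\vepsilon,s)} \chi_u^m \wed \alpha^{n-m}$; in the applications this quantity can be arbitrarily small, so the absorption fails and the constant in your final inequality would depend on $\|u\|_\infty,\|v\|_\infty$, contrary to the statement.

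The paper avoids this by keeping the recursion \emph{closed in the $a_k$'s}. Rather than introducing several factors of $\chi_{\tilde v}$ and then removing them, it inserts a single factor at a time: from $\chi_{\tilde v} \geq c_0\vepsilon\,\alpha$ one gets $(c_0\vepsilon)\,a_k \leq \int_{U(\vepsilon,s)} \chi_u^k \wed \chi_{\tilde v} \wed \alpha^{n-k-1}$, and then applies Lemma~\ref{lem:wcp-a} with $T = \chi_u^k \wed \alpha^{n-k-1}$. The point is that this $T$ contains no $\chi_{\tilde v}$ factor, so $dd^c T$ is built only from $\chi_u$ and $\alpha$, and the torsion bounds~\eqref{eq:alpha-chi-bound} give $\int_{U(\vepsilon,s)}|dd^c T| \leq D_1D_2(a_k + a_{k-1} + a_{k-2})$. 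One thus obtains the closed recursion $(c_0\vepsilon)a_k \leq a_{k+1} + sD_1D_2(a_k + a_{k-1} + a_{k-2})$, which for $s < (c_0\vepsilon)^3/(16 D_1 D_2)$ iterates to $a_k \leq C(c_0\vepsilon)^{-(m-k)} a_m$ with $C = C(n,m)$---a purely multiplicative bound. That is what makes the final constant depend only on $n,m,D_1,D_2$.
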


\begin{proof} We only give here a brief argument as it is very similar to the one of \cite[Theorem 2.3]{KN1}. Set for $0\leq k\leq m$,
\[ \notag
	a_k:= \int_{U(\vepsilon(s))} \chi_u^k \wed \alpha^{n-k}.
\]
Then, 
\[ \notag
	(c_0 \vepsilon) a_k \leq \vepsilon  \int_{U(\vepsilon,s)} \chi_u^k \wed \chi \wed \alpha^{n-k-1} \leq \int_{U(\vepsilon,s)} \chi_u^k \wed \chi_{(1-\vepsilon)v} \wed \alpha^{n-k-1}.
\]
By Lemma~\ref{lem:wcp-a}
\[ \notag
\int_{U(\vepsilon,s)} \chi_u^k \wed \chi_{(1-\vepsilon)v} \wed \alpha^{n-k-1} 
\leq \int_{U(\vepsilon,s)} \chi_u^{k+1} \wed \alpha^{n-k-1} + R,
\]
where 
$
	R = \int_{U(\vepsilon,s)} [(1-\vepsilon) v + S(\vepsilon) + s - u]
	dd^c \left( \chi_u^k \wed \alpha^{n-k-1}\right)$. It is bounded by 
\[ \notag
	R	 \leq s D_1D_2 (a_k + a_{k-1} + a_{k-2}),
\]
where we simply understand $a_k \equiv 0$ for $k<0$. To be honest, here we used \cite[Lemma 2.3]{KN3}, hence we should multiply the right hand side with a constant $C_{m,n}>0$ depending only on $m,n$. This is no harm as we could adjust the definitions of $D_1, D_2$. 

Thus, for $0<s< \delta:=\frac{(c_0 \vepsilon)^3}{D_1D_2}$,
$	(c_0\vepsilon) a_k \leq \delta (D_1D_2) (a_k+a_{k-1}+ a_{k-2})+a_{k+1}.
$
The rest goes in the same way as in \cite[Theorem 2.3]{KN1}.
\end{proof}

The following result is obvious if potential functions are smooth.

\begin{cor} \label{cor:do-prin} Let $u, v \in \cA\cap L^\infty(\bar B)$ be such that $u \geq v$ on $\d B$. Suppose that $\chi_u^m \wedge \alpha^{n-m} \leq \chi_v^m\wed \alpha^{n-m}$ in $B$. Then, $u\geq v$ on $\bar B$.
\end{cor}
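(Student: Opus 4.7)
The strategy is proof by contradiction combined with a perturbation that extracts strict positivity from the hypothesis. Suppose $d := \sup_B(v-u) > 0$. Write $B = B(z_0, r)$ and let $\rho(z) := |z-z_0|^2 - r^2$, so $\rho \leq 0$ on $\bar B$, $\rho = 0$ on $\d B$, and $dd^c\rho \geq c\alpha$ for some $c>0$. For a small parameter $\vepsilon_1 > 0$ (to be fixed first, before any other small parameter) set $\tilde v := v + \vepsilon_1 \rho$. Then $\tilde v \in \cA\cap L^\infty(\bar B)$, $\tilde v = v$ on $\d B$, so $u \geq \tilde v$ on $\d B$; and $\sup_B(\tilde v - u) \geq d - \vepsilon_1 r^2 > 0$ once $\vepsilon_1$ is small. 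The crucial pointwise gain from this perturbation comes from $(A+B)^m \geq A^m + B^m$ for nonnegative $(1,1)$-forms, applied with $A = \chi_v$ and $B = \vepsilon_1 dd^c\rho$:
$$
\chi_{\tilde v}^m\wed \alpha^{n-m} \geq \chi_v^m\wed \alpha^{n-m} + (\vepsilon_1 c)^m\,\alpha^n.
$$

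Now apply Lemma~\ref{lem:wcp-c} to the pair $(u,\tilde v)$: for $0<\vepsilon<\min\{1/2,\,\sup(\tilde v-u)/(1+2\|\tilde v\|_\infty)\}$ and $0<s<(c_0\vepsilon)^3/(16 D_1 D_2)$,
$$
\int_{\tilde U}\chi_{(1-\vepsilon)\tilde v}^m\wed\alpha^{n-m} \leq \Bigl(1+\tfrac{Cs}{(c_0\vepsilon)^m}\Bigr)\int_{\tilde U}\chi_u^m\wed\alpha^{n-m},
$$
where $\tilde U=\tilde U(\vepsilon,s)$ is the corresponding sub-level set. On the left, use $\chi_{(1-\vepsilon)\tilde v}^m\wed\alpha^{n-m} = (\vepsilon\chi+(1-\vepsilon)\chi_{\tilde v})^m\wed\alpha^{n-m} \geq (1-\vepsilon)^m\chi_{\tilde v}^m\wed\alpha^{n-m}$ (monotonicity on $\overline{\Gamma_m(\alpha)}$, since $\vepsilon\chi\in\overline{\Gamma_m(\alpha)}$), then combine with the positivity gain above. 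On the right, invoke the hypothesis $\chi_u^m\wed\alpha^{n-m}\leq\chi_v^m\wed\alpha^{n-m}$. After cancelling the common term $(1-\vepsilon)^m\int_{\tilde U}\chi_v^m\wed\alpha^{n-m}$, one obtains
$$
(1-\vepsilon)^m (\vepsilon_1 c)^m\, V_\alpha(\tilde U)\ \leq\ \Bigl[1-(1-\vepsilon)^m+\tfrac{Cs}{(c_0\vepsilon)^m}\Bigr]\int_{\tilde U}\chi_v^m\wed\alpha^{n-m}.
$$

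The plan for the finale is to take $s = \vepsilon^{m+1}$ and let $\vepsilon\to 0$ with $\vepsilon_1$ fixed. The bracket is then $O(\vepsilon)$, and the remaining integral is uniformly bounded by the finite total mass of $\chi_v^m\wed\alpha^{n-m}$ on $B$ (Lemma~\ref{lem:CLN-ineq}); hence the right-hand side tends to zero. To reach a contradiction one needs $V_\alpha(\tilde U(\vepsilon,s))$ to stay bounded below by a positive constant depending only on $\vepsilon_1$. That lower bound should follow from the sub-mean value property of $m$-subharmonic functions together with the upper semicontinuity of $u$ and $\tilde v$, giving $V_\alpha\{\tilde v-u>\tilde d-\eta\}>0$ for every $\eta>0$, and from quasi-continuity (Corollary~\ref{cor:quasi-cont-a}) combined with the volume-capacity comparison (Lemma~\ref{lem:vol-cap}) to verify that such super-level sets sit inside $\tilde U(\vepsilon,s)$ once $\eta$ is taken small relative to $s$.

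The main obstacle is precisely this last point: the constraint $s<(c_0\vepsilon)^3/(16 D_1 D_2)$ forces $s$ to shrink faster than $\vepsilon$, so $\tilde U(\vepsilon,s)$ contracts onto the contact set where $\tilde v-u$ attains its supremum, which a priori may be $\alpha$-negligible. The order of quantifiers is essential here: fix $\vepsilon_1$ first to obtain the definite positivity gap $(\vepsilon_1 c)^m\alpha^n$ in the Newton expansion, and only then let $(\vepsilon,s)$ tend to zero along the chosen scale, using quasi-continuity to secure the uniform volume lower bound on $\tilde U$.
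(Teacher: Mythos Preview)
Your perturbation $\tilde v = v + \vepsilon_1\rho$ is exactly the right first move, and it matches the paper's hint that ``there exists a $C^2$ strictly plurisubharmonic function on $\bar B$'' --- this is how one manufactures the strict gap $(\vepsilon_1 c)^m\alpha^n$ between $\chi_{\tilde v}^m\wedge\alpha^{n-m}$ and $\chi_u^m\wedge\alpha^{n-m}$. Feeding this into Lemma~\ref{lem:wcp-c} and arriving at
\[
(1-\vepsilon)^m(\vepsilon_1 c)^m\,V_\alpha(\tilde U)\;\leq\;\Bigl[1-(1-\vepsilon)^m+\tfrac{Cs}{(c_0\vepsilon)^m}\Bigr]\int_{\tilde U}\chi_v^m\wedge\alpha^{n-m}
\]
is also correct. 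So the strategy coincides with the one the paper invokes from \cite{KN1}.

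The genuine gap is the one you flag yourself, and your proposed patch does not close it. You need $V_\alpha(\tilde U(\vepsilon,s))$ bounded below \emph{uniformly} as $(\vepsilon,s)\to 0$ along $s\asymp\vepsilon^{m+1}$. But $\tilde U(\vepsilon,s)$ is the set where $(1-\vepsilon)\tilde v-u$ lies within $s$ of its supremum; as $s\to 0$ this contracts to the contact set, and nothing in the sub-mean-value property, quasi-continuity, or the volume--capacity inequality prevents that contact set from being Lebesgue-null. Quasi-continuity (Corollary~\ref{cor:quasi-cont-a}) says $u$ and $\tilde v$ are continuous off a set of small \emph{capacity}; Lemma~\ref{lem:vol-cap} converts small capacity to small \emph{volume}. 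Both go in the wrong direction for what you need, which is a volume \emph{lower} bound on a shrinking super-level set. Your last paragraph reads as a hope rather than an argument.

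The way the argument in \cite[Corollary~3.4]{KN1} actually closes does not attempt to keep $V_\alpha(\tilde U)$ bounded below while sending both $\vepsilon$ and $s$ to zero. One works instead with fixed $\vepsilon$ and exploits the full structure of the sublevel sets $\tilde U(\vepsilon,s)$ as $s$ varies to rule out their non-emptiness directly, rather than trying to contradict a volume inequality in the limit. As written, your proof stops at the point where the real work begins.
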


\begin{proof} It follows from the proof of \cite[Corollary 3.4.]{KN1} with obvious modifications. The reason is that there exists a $C^2$ strictly plurisubharmonic  function on $\bar B$.
\end{proof}

We have proved the comparison principle (Lemma~\ref{lem:wcp-c}) and volume-capacity inequality (Lemma~\ref{lem:vol-cap}). The following uniform a priori estimate is proved in the identically way as \cite[Theorem 3.10]{KN3}. 

\begin{thm} \label{thm:a-priori-estimate} 
Let $u, v \in \cA \cap L^\infty(B)$ be such that
$$ \liminf_{z\to\d B} (u - v) \geq 0, \quad
d:=\sup_B (v- u) >0.$$ 
Let us fix the following constants: 
$$	 \begin{aligned}
&	p>n/m, \quad	0< \tau < \frac{p -\frac{n}{m}}{p(n-m)},  
\quad \tau^*= \frac{(1+m \tau)p}{p-1};\\
&	0<\vepsilon< \min\{1/2, d/3(1+\|v\|_\infty)\}; \\
&	\vepsilon_0 := \frac{1}{3} \min\{(c_0\vepsilon)^m, \frac{(c_0\vepsilon)^3}{16D_1D_2}\}.
\end{aligned} $$
Suppose that 
$(\chi +dd^cu)^m \wedge \alpha^{n-m} = f \alpha^n$ on $B$ with $f \in L^p(B,\alpha^n)$.  Assume that $v$ is continuous and put 
$$	U(\vepsilon, s) = \{u<(1-\vepsilon) v + \inf_B [u -(1-\vepsilon) v] +s \}.$$ 
Then, there exists a constant $C = C(\tau, \alpha, B)$ such that for every $0<s<\vepsilon_0$, 
$$	s\leq C (1+ \|v\|_{L^\infty (B)}) \|f\|_{L^p(B)}^\frac{1}{m} 
	\left[V_\alpha(U(\vepsilon,s)) \right]^\frac{\tau}{\tau^*},$$
where $V_\alpha(E) = \int_E \alpha^n$ for a Borel set $E$.
\end{thm}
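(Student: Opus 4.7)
The plan is to adapt the scheme of \cite[Theorem 3.10]{KN3} to the local setting, using the three tools developed above: the weak comparison principle (Lemma~\ref{lem:wcp-c}), Hölder's inequality applied to the $L^p$ right-hand side, and the volume--capacity estimate (Lemma~\ref{lem:vol-cap}).

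The technical heart is the following capacity-type bound: for every test function $w\in\cA$ with $0\leq w\leq 1$ and every $0<s<\vepsilon_0$,
$$\left(\frac{s}{2}\right)^m \int_{U(\vepsilon, s/2)} (\chi + dd^c w)^m \wed \alpha^{n-m} \;\leq\; C \int_{U(\vepsilon, s)} f\,\alpha^n. \qquad (\star)$$
To derive $(\star)$, I would apply Lemma~\ref{lem:wcp-c} with $v$ replaced by the perturbed function $v+\tfrac{s}{2}w$; the positivity $\chi-c_0\alpha \in \Gamma_m(\alpha)$ of \eqref{eq:positivity-chi2} ensures that this perturbation remains $(\chi,m)$-$\alpha$-subharmonic whenever $s<2\vepsilon$, so the hypotheses of the lemma are met after a slight adjustment of constants. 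The key algebraic manipulation is the decomposition
$$\chi + (1-\vepsilon)dd^c\bigl(v+\tfrac{s}{2}w\bigr) = (1-\vepsilon)(\chi + dd^c v) + \tfrac{s}{2}(\chi + dd^c w) + \bigl(\vepsilon-\tfrac{s}{2}\bigr)\chi,$$
all three summands lying in $\overline{\Gamma_m(\alpha)}$. Gårding's inequality for forms in $\overline{\Gamma_m(\alpha)}$ then extracts a lower bound of the shape $(s/2)^m(\chi+dd^c w)^m\wed \alpha^{n-m}$ from the $m$-th power of the left-hand side, which combined with the conclusion of Lemma~\ref{lem:wcp-c} produces $(\star)$.

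Once $(\star)$ is in hand, the remainder is algebraic. Taking the supremum over $w$ in $(\star)$ and using the definition \eqref{eq:cap} of capacity gives $s^m\cdot cap(U(\vepsilon,s/2))\leq C\int_{U(\vepsilon,s)} f\,\alpha^n$. Hölder's inequality bounds the right-hand side by $C\|f\|_{L^p(B)} V_\alpha(U(\vepsilon,s))^{(p-1)/p}$. The specific value of $\tau^*=(1+m\tau)p/(p-1)$ in the theorem is forced by the algebraic identity $\frac{p-1}{p}-\frac{1}{\tau^*}=m\cdot\frac{\tau}{\tau^*}$; together with the fact that $\tau^*<n/(n-m)$, which follows from the upper bound $\tau<(p-n/m)/(p(n-m))$ in the hypothesis, this allows me to invoke Lemma~\ref{lem:vol-cap} with exponent $\tau^*$ to get $cap(U(\vepsilon,s/2))\geq c\,V_\alpha(U(\vepsilon,s/2))^{1/\tau^*}$. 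A short dichotomy on scales ensures $V_\alpha(U(\vepsilon,s/2))\sim V_\alpha(U(\vepsilon,s))$. Combining everything and taking an $m$-th root yields
$$s \leq C\|f\|_{L^p}^{1/m}\, V_\alpha(U(\vepsilon,s))^{\tau/\tau^*},$$
with the factor $(1+\|v\|_\infty)$ arising through the dependence of $\vepsilon$, $\vepsilon_0$, and the constants in Lemma~\ref{lem:wcp-c} on $\|v\|_\infty$.

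The main obstacle is producing $(\star)$: tracking the proof of Lemma~\ref{lem:wcp-c} in the presence of the additional $\tfrac{s}{2}w$ perturbation, verifying that the torsion-type error terms controlled by \eqref{eq:alpha-chi-bound} do not degrade, and cleanly isolating the $(s/2)^m$ factor from the Gårding expansion of a three-term sum in $\overline{\Gamma_m(\alpha)}$. Once $(\star)$ is established, the Hölder--volume--capacity manipulations and the identification of $\tau^*$ are routine bookkeeping.
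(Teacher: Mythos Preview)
Your approach is precisely what the paper intends: it gives no self-contained argument here, merely stating that the estimate ``is proved in the identically way as \cite[Theorem 3.10]{KN3}'' once Lemma~\ref{lem:wcp-c} and Lemma~\ref{lem:vol-cap} are available, and your outline of $(\star)$ via the perturbation by $tw$, the G\aa rding expansion, H\"older, and the volume--capacity bound is exactly that scheme.

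Two points deserve correction. First, a minor algebraic slip: in your displayed decomposition the coefficient of $dd^c w$ comes out as $(1-\vepsilon)\tfrac{s}{2}$, not $\tfrac{s}{2}$. The clean fix is to compare $u$ with $(1-\vepsilon)v+tw$ directly rather than with $(1-\vepsilon)(v+\tfrac{s}{2}w)$, yielding
$$\chi+(1-\vepsilon)\,dd^c v + t\,dd^c w=(1-\vepsilon)\chi_v+t\,\chi_w+(\vepsilon-t)\chi\in\overline{\Gamma_m(\alpha)}\quad\text{for }0<t<\vepsilon,$$
which is the three-term split one actually expands.

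Second, and more substantively, the assertion that ``a short dichotomy on scales ensures $V_\alpha(U(\vepsilon,s/2))\sim V_\alpha(U(\vepsilon,s))$'' is not correct and is not how the argument closes in \cite{KN3}. There is no two-sided comparability of these volumes in general. What $(\star)$ actually delivers, for every pair $0<s<t<\vepsilon_0$, is an inequality of the form
$$(t-s)\, a(s)\;\leq\; C\,\|f\|_{L^p}^{1/m}\, a(t)^{\,1+m\tau},\qquad a(s):=V_\alpha(U(\vepsilon,s))^{1/(m\tau^*)},$$
and the conclusion $s\leq C\,a(s)^{m\tau}=C\,V_\alpha(U(\vepsilon,s))^{\tau/\tau^*}$ then follows from the standard De~Giorgi-type iteration lemma used in \cite{kol98,KN1,KN3}. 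That step is short, but it is a genuine iteration over a sequence of levels, not a single-scale comparison.
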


Notice that from assumptions, the sub-level sets near the infimum point will be non-empty and relatively compact in the ball $B$. The restriction on the class $\cA$ will be relaxed later (see Remark~\ref{rmk:approximation-after}).

\subsection{The Dirchlet problems on $\bar B$}

Consider the Dirichlet problem with the right hand side in $L^p(B)$, $p>n/m$. Notice that $n/m$ is the optimal exponent.

\[ \label{eq:wdp-ball-cont} \begin{aligned} 
&u \in \cA \cap C^0(\bar B), \\
&(\chi + dd^c u)^m \wedge \alpha^{n-m} = f \alpha^{n}, \\
&u = \vphi \in  C^0(\d B).
\end{aligned}
\]

\begin{lem} \label{lem:stability1} Let $f,g$ be non-negative functions in $L^p(B), p>n/m$. Let $\vphi, \psi \in C^0(\d B)$. Suppose that $u,v$ are solutions to the corresponding Dirichlet problem \eqref{eq:wdp-ball-cont} with the datum $(f, \vphi)$ and $(g, \psi)$. Then, 
$$	\|u-v\|_{L^\infty (B)} \leq \sup_{\d B} |\vphi -\psi | + C \|f-g\|_{L^p (B)}^\frac{1}{m},
$$
where $C$ depends only on  $p$ and the diameter of $B$.
\end{lem}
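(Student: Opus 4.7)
My plan is to follow the Ko\l odziej-style stability scheme (cf.\ \cite{kol98,KN3}), built on the a priori estimate of Theorem~\ref{thm:a-priori-estimate}. By symmetry, I will only bound $\sup_B(v - u)$ from above. Setting $\delta := \sup_{\d B}|\vphi - \psi|$ and $w := v - \delta$, one has $w \in \cA \cap L^\infty(\bar B)$, $(\chi + dd^c w)^m \wed \alpha^{n-m} = g\,\alpha^n$, and $w \leq u$ on $\d B$. I may assume $d := \sup_B(w - u) > 0$, since otherwise $v - u \leq \delta$ already.

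I would then apply Theorem~\ref{thm:a-priori-estimate} to the pair $(u, w)$, which gives, for admissible $(\vepsilon, s)$,
\[ \notag
s \leq C(1 + \|w\|_{L^\infty(B)})\,\|f\|_{L^p(B)}^{1/m}\,[V_\alpha(U(\vepsilon, s))]^{\tau/\tau^*},
\]
with $U(\vepsilon, s) = \{u < (1-\vepsilon) w + \inf_B[u - (1-\vepsilon) w] + s\}$. The next task is to control $V_\alpha(U(\vepsilon, s))$ in terms of $\|f - g\|_{L^p}$: I apply Lemma~\ref{lem:wcp-c} to $u$ and $(1-\vepsilon) w$ on $U(\vepsilon, s)$ and decompose $\chi + (1-\vepsilon) dd^c w = \vepsilon \chi + (1-\vepsilon)(\chi + dd^c w)$. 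G\aa rding's inequality together with \eqref{eq:positivity-chi2} supplies the pointwise lower bound
\[ \notag
(\chi + (1-\vepsilon) dd^c w)^m \wed \alpha^{n-m} \geq \big(\vepsilon^m c_0^m + (1-\vepsilon)^m g\big)\alpha^n,
\]
and a single H\"older step then delivers
\[ \notag
(\vepsilon c_0)^m\, V_\alpha(U(\vepsilon, s))^{1/p} \leq \|f - g\|_{L^p} + \Big(m\vepsilon + \tfrac{Cs}{(c_0\vepsilon)^m}\Big)\|f\|_{L^p}.
\]

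Feeding this volume bound back into the a priori estimate and choosing $\vepsilon$ and $s$ judiciously (roughly $\vepsilon^m \sim s$, with $s$ comparable to $d$), the $\|f\|_{L^p}$-weighted terms will be absorbed through a De Giorgi-type iteration on $s$, in the spirit of \cite[Theorem~3.10]{KN3}. What remains is precisely the $\|f - g\|_{L^p}$ contribution, yielding $d \leq C\|f - g\|_{L^p}^{1/m}$ and hence $\sup_B(v - u) \leq \delta + C\|f - g\|_{L^p}^{1/m}$; swapping the roles of $u$ and $v$ closes the argument. The main obstacle I foresee is the parameter optimisation that cancels the $\|f\|_{L^p}$-weighted pieces; this step leans essentially on the precise range of admissible $s$ in Theorem~\ref{thm:a-priori-estimate} and on keeping the H\"older exponent $1/p$ compatible with $\tau/\tau^\ast$ so that the final power of $\|f-g\|_{L^p}$ is exactly $1/m$.
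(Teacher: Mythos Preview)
Your approach diverges from the paper's and, as outlined, does not deliver the stated lemma. Two concrete issues:

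\textbf{(i) The constant.} The lemma asserts that $C$ depends only on $p$ and the diameter of $B$. Your route through Theorem~\ref{thm:a-priori-estimate} injects the factor $(1+\|w\|_{L^\infty})\|f\|_{L^p}^{1/m}$ at the outset, and your volume bound carries further $\|f\|_{L^p}$- and $\|g\|_{L^p}$-weighted terms. A De~Giorgi iteration on~$s$ can absorb \emph{additive} small terms, but it cannot remove a \emph{multiplicative} dependence on $\|f\|_{L^p}$ and $\|v\|_{L^\infty}$. So the best you would obtain is a constant depending on $\|f\|_{L^p},\|g\|_{L^p}$, which is strictly weaker than what is claimed (and than what is used downstream to prove existence for merely $L^p$ data).

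\textbf{(ii) The exponent.} Even granting dependence on $\|f\|_{L^p}$, matching the exact power $1/m$ through the chain
\[
s \;\lesssim\; \|f\|_{L^p}^{1/m}\,V_\alpha(U)^{\tau/\tau^\ast},
\qquad V_\alpha(U)^{1/p} \;\lesssim\; (\vepsilon c_0)^{-m}\bigl(\|f-g\|_{L^p}+\cdots\bigr)
\]
requires $p\tau/\tau^\ast = 1/m$ after the parameter optimisation, and with $\tau^\ast=(1+m\tau)p/(p-1)$ this does not occur for any admissible~$\tau$. The ``obstacle'' you foresee is in fact fatal for the sharp exponent.

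The paper bypasses both problems by a short comparison argument. Set $h=|f-g|^{n/m}\in L^{pm/n}(B)$ with $pm/n>1$, and solve the \emph{full Monge--Amp\`ere} Dirichlet problem $(dd^c\rho)^n=h\,\alpha^n$, $\rho|_{\d B}=0$, using Ko\l odziej's uniform estimate to get $\|\rho\|_{L^\infty}\leq C\|h\|_{L^{pm/n}}^{1/n}=C\|f-g\|_{L^p}^{1/m}$ with $C=C(p,B,\alpha)$. The mixed-form inequality then gives $(dd^c\rho)^m\wedge\alpha^{n-m}\geq |f-g|\,\alpha^n$, hence
\[
(\chi_u+dd^c\rho)^m\wedge\alpha^{n-m}\;\geq\; f\,\alpha^n+|f-g|\,\alpha^n\;\geq\; g\,\alpha^n,
\]
and the domination principle (Corollary~\ref{cor:do-prin}) yields $u+\rho\leq v+\sup_{\d B}|\vphi-\psi|$. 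This produces the exponent $1/m$ exactly and a constant free of $\|f\|_{L^p},\|g\|_{L^p}$ in one stroke.
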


\begin{proof} We use an idea in \cite{dk14}, which used the uniform {\em a priori} estimate for Monge-Amp\`ere  equation due to Ko\l odziej \cite{kol96}. The proof here is similar to \cite[Theorem 3.11]{cuong-thesis}.  Put $h= |f-g|^\frac{n}{m}$ in $B$. 
It follows that $h \in L^\frac{pm}{n}(B)$, where $\frac{pm}{n}>1$. Moreover,
$$	\|h\|_{L^\frac{pm}{n} (B)}^\frac{1}{n} = \|f-g\|_{L^p(B)}^\frac{1}{m}. 
$$
By a theorem in \cite{kol96}, there exists 
$\rho \in PSH(B) \cap C^0(\bar B)$ solving 
$$	(dd^c \rho)^n  = h \alpha^n, \quad 
	\rho_{|_{\d B}} = 0.
$$
We also have
$$
	\| \rho \|_{L^\infty} \leq C  \|h\|_{L^\frac{pm}{n}(B)}^\frac{1}{n}
	= C \|f -g\|_{L^p(B)}^\frac{1}{m},
$$
where $C = C(m,n, p,B, \alpha)$ a uniform constant.  Furthermore, by the mixed-form  inequality, 
$$	(dd^c \rho)^m \wedge \alpha^{n-m} \geq h^\frac{m}{n} \alpha^n
	= |f -g| \alpha^n.
$$
Therefore,
\begin{align*}
	[\chi_u+ dd^c\rho]^m\wedge \alpha^{n-m} 
&	\geq \chi_u^m\wedge \alpha^{n-m} + (dd^c\rho)^m \wedge \alpha^{n-m} \\
&	\geq f \alpha^n + |f-g| \alpha^n \\
&	\geq g \alpha^n.
\end{align*}
Since $\rho \leq 0$ in $\bar B$, it follows from the domination principle (Corollary~\ref{cor:do-prin}) that
$	u+\rho \leq v + \sup_{\d B} |u-v|. 
$
Hence, 
$$	u- v \leq -\rho + \sup_{\d B} |u-v| 
	\leq \sup_{\d B} |u-v| 
	+ C \|f-g\|_{L^p(B)}^\frac{1}{m}.
$$
Similarly,
$	v - u \leq \sup_{\d B} |u-v| + C \|f-g\|_{L^p(B)}^\frac{1}{m}.
$
Thus, the theorem follows.
\end{proof}

We also need another stability estimate for solutions whose Hessian operators are in $L^p$, $p>n/m$. 

\begin{lem} \label{lem:stability2} Under the assumptions of Lemma~\ref{lem:stability1} there exist a uniform constant $C = C(p,m,n, \|f\|_p, \|g\|_p)$ and a constant $a = a(p,m,n)> 0$ such that
\[ \notag
	\|u- v\|_{L^\infty(B)} \leq \sup_{\d B} |\varphi - \psi| + C \|u-v\|_{L^1(B)}^a.
\]
\end{lem}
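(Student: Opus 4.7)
The approach follows the standard Ko\l odziej-type route adapted to the Hessian setting: one plugs an $L^1$-bound on $u-v$ into the volume-vs.-sublevel-set estimate of Theorem~\ref{thm:a-priori-estimate}. Set $d := \sup_B(v-u)$ and $e := \sup_{\partial B}|\vphi-\psi|$. Swapping the roles of $u$ and $v$ if needed, it suffices to bound $d-e$ (assumed positive, since otherwise there is nothing to prove) by a power of $\|u-v\|_{L^1(B)}$. Throughout the argument I use the standard a priori $L^\infty$-bounds for solutions of the Dirichlet problem on $B$, which control $\|u\|_\infty$ and $\|v\|_\infty$ in terms of $p,m,n,\|f\|_p,\|g\|_p$ and the boundary data, and I absorb these bounds into constants.

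Next, I would choose $\varepsilon$ comparable to $(d-e)/(1+\|v\|_\infty+\|\psi\|_\infty)$: small enough to lie in the admissible range in Theorem~\ref{thm:a-priori-estimate}, yet large enough so that $\inf_B(u-(1-\varepsilon)v) \le -d+\varepsilon\|v\|_\infty$ is strictly smaller than $\inf_{\partial B}(u-(1-\varepsilon)v) \ge -e-\varepsilon\|\psi\|_\infty$; this forces the infimum to be attained in the interior and makes the sublevel set $U(\varepsilon,s)$ non-empty and relatively compact in $B$ for small $s$. Pick $s_0$ to be a fixed fraction of $\varepsilon_0 \sim \varepsilon^m$. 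Theorem~\ref{thm:a-priori-estimate} applied to the pair $(u,v)$ then gives
\begin{equation*}
s_0 \;\le\; C_1\, V_\alpha\bigl(U(\varepsilon,s_0)\bigr)^{\tau/\tau^*},
\end{equation*}
with $C_1$ depending only on $p,m,n,\|f\|_p$ and $\|v\|_\infty$.

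I would next convert this volume bound into an $L^1$-bound via Chebyshev. On $U(\varepsilon,s_0)$ the inequality $u-(1-\varepsilon)v < \inf_B(u-(1-\varepsilon)v) + s_0 \le -d + \varepsilon\sup_B v + s_0$ rearranges to
\begin{equation*}
v-u \;>\; d - \varepsilon\,\mathrm{osc}(v) - s_0 \;\ge\; (d-e)/2,
\end{equation*}
the last inequality holding for the parameter choices above. Chebyshev then yields $V_\alpha(U(\varepsilon,s_0)) \le 2\|u-v\|_{L^1(B)}/(d-e)$. Feeding this into the previous display and using $s_0 \sim (d-e)^m$ produces
\begin{equation*}
(d-e)^{m+\tau/\tau^*} \;\le\; C_2\, \|u-v\|_{L^1(B)}^{\tau/\tau^*},
\end{equation*}
which rearranges to $\|u-v\|_{L^\infty(B)} \le e + C\,\|u-v\|_{L^1(B)}^{a}$ with exponent $a := (\tau/\tau^*)/(m+\tau/\tau^*)\in(0,1)$, depending only on $p,m,n$.

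The main technical delicacy is the coordinated calibration of the parameters $\varepsilon$ and $s_0$ in terms of $d-e$: $\varepsilon$ must simultaneously fit inside the admissible range of Theorem~\ref{thm:a-priori-estimate}, be proportional to $d-e$ so that $U(\varepsilon,s_0)$ sits inside the superlevel set $\{v-u>(d-e)/2\}$, and allow the corresponding $\varepsilon_0$ to scale as $\varepsilon^m$; $s_0$ in turn must be a fixed fraction of $\varepsilon_0$ so that the final rearrangement produces the positive exponent $a$. Once this balancing is carried out, the estimate falls out of the two-line computation above, with the $L^\infty$-bounds on the solutions being the only nontrivial input from the Dirichlet theory.
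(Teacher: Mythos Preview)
Your proposal is correct and follows essentially the same route as the paper, which simply refers to the proof of \cite[Theorem~3.11]{KN3} applied to the shifted pairs $(u+e,v)$ and $(v+e,u)$ with $e=\sup_{\partial B}|\vphi-\psi|$. The only cosmetic difference is in handling the boundary condition of Theorem~\ref{thm:a-priori-estimate}: the paper shifts $u$ by $e$ so that $\liminf_{z\to\partial B}(u+e-v)\ge 0$ holds literally, whereas you bypass the shift and argue directly that a suitable choice of $\varepsilon$ forces $U(\varepsilon,s)$ to be nonempty and relatively compact in $B$; since the proof of Theorem~\ref{thm:a-priori-estimate} only uses this relative compactness, your variant is equally valid.
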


\begin{proof} Having Theorem~\ref{thm:a-priori-estimate} we can repeat the proof of \cite[Theorem~3.11]{KN3} two times, one for the pair $u+\sup_{\d B} |\vphi-\psi|$ and $v$ and another for the pair $v +\sup_{\d B} |\vphi-\psi|$ and $u$. 
\end{proof}

We get from the existence of smooth solutions (Theorem~\ref{thm:intr-1}) and stability estimates (Lemma~\ref{lem:stability1}) existence of weak solutions.

\begin{thm} Let $0\leq f\in L^p(B)$ with $p>n/m$. Then, there exists a unique solution to the Dirichlet problem \eqref{eq:wdp-ball-cont}.
\end{thm}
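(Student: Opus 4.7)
The plan is to construct the solution as the uniform limit of classical smooth solutions produced by Theorem~\ref{thm:intr-1} after smoothing the data, and then to identify the Hessian operator of the limit via the Bedford--Taylor convergence Theorem~\ref{thm:BT-convergence}. Uniqueness is immediate from the domination principle (Corollary~\ref{cor:do-prin}): if $u,v$ are two solutions with the same data, the equality $\chi_u^m \wed \alpha^{n-m} = \chi_v^m \wed \alpha^{n-m}$ forces $u \geq v$, and symmetrically $v \geq u$.

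For existence, first regularise the data. Extend $f$ by $0$ past $B$ and set $f_j := f * \rho_{1/j} + 1/j$, where $\rho_\vepsilon$ is a standard mollifier; then $f_j \in C^\infty(\bar B)$, $f_j>0$, and $f_j \to f$ in $L^p(B)$. Extend $\vphi \in C^0(\d B)$ continuously to $\bar B$, mollify, and restrict to obtain $\vphi_j \in C^\infty(\d B)$ with $\vphi_j \to \vphi$ uniformly on $\d B$. The ball $B$ has already been chosen of sufficiently small radius in Section~\ref{sec:2} (and may be shrunk further so that Theorem~\ref{thm:intr-1} applies), hence that theorem delivers a unique smooth classical solution $u_j \in C^\infty(\bar B)$ to the Dirichlet problem with data $(f_j, \vphi_j)$.

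The stability estimate of Lemma~\ref{lem:stability1}, applied to the pair $(u_j, u_k)$, gives
$$
\|u_j - u_k\|_{L^\infty(B)} \leq \sup_{\d B} |\vphi_j - \vphi_k| + C\,\|f_j - f_k\|_{L^p(B)}^{1/m},
$$
so $\{u_j\}$ is Cauchy in $C^0(\bar B)$ and converges uniformly to some $u \in C^0(\bar B)$ with $u = \vphi$ on $\d B$. To certify $u \in \cA$, extract a subsequence $\{u_{j_k}\}$ with $\|u_{j_k} - u\|_{L^\infty(B)} \leq 2^{-k}$ and set $\tilde u_k := u_{j_k} + 3\cdot 2^{-k}$. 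Each $\tilde u_k$ is smooth and $(\chi,m)$-$\alpha$-subharmonic, $\tilde u_k \geq u + 2\cdot 2^{-k}$, and the triangle inequality yields $|u_{j_k} - u_{j_{k+1}}| \leq 3\cdot 2^{-(k+1)}$, which is exactly the amount needed to force $\tilde u_k - \tilde u_{k+1} \geq 0$. Hence $\tilde u_k \searrow u$, and $u \in \cA \cap C^0(\bar B)$.

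Finally, since $\tilde u_k \searrow u$ in $\cA \cap L^\infty(\bar B)$, Theorem~\ref{thm:BT-convergence} yields
$$
(\chi + dd^c \tilde u_k)^m \wed \alpha^{n-m} \longrightarrow (\chi + dd^c u)^m \wed \alpha^{n-m}
$$
weakly as measures. The left side equals $(\chi + dd^c u_{j_k})^m \wed \alpha^{n-m} = f_{j_k}\,\alpha^n$, since adding a constant leaves the Hessian operator unchanged, and $f_{j_k} \alpha^n \to f\alpha^n$ in $L^1$, hence weakly. Identifying the two limits gives $(\chi + dd^c u)^m \wed \alpha^{n-m} = f\,\alpha^n$ in the pluripotential sense, proving existence. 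The delicate point is the insertion of the constants $3\cdot 2^{-k}$ producing a \emph{decreasing} regularising sequence; without it the limit need not lie in $\cA$ and the Hessian measure of $u$ would not be defined within the framework of Section~\ref{sec:2.1}, so Theorem~\ref{thm:BT-convergence} could not be invoked.
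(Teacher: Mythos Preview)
Your proof is correct and follows the same route the paper sketches in one sentence: approximate the data, solve the smooth problems via Theorem~\ref{thm:intr-1}, and use the stability estimate of Lemma~\ref{lem:stability1} to obtain a uniform limit. Your explicit construction of the decreasing sequence $\tilde u_k = u_{j_k} + 3\cdot 2^{-k}$ to place $u$ in $\cA$, and the subsequent appeal to Theorem~\ref{thm:BT-convergence} to identify the Hessian measure, supply exactly the details the paper leaves implicit.
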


%\begin{lem} \label{lem:maximality} Let $v \in \cA \cap L^\infty(\bar B)$. Let $u \in C^0(\bar B)$ be the uniform limit of $\{u_j\}_{j=1}^\infty \subset SH_{\chi,m}(\alpha) \cap C^\infty(\bar B)$. Suppose that $G:= \{u<v\}$ is relatively compact in $B$. If $\chi_u^m \wedge\alpha^{n-m} = 0$ on $G$, then  $G$ is empty.\end{lem}

%\begin{proof} See \cite[Theorem 2.16]{KN3} and Appendix.\end{proof}

The last ingredient to prove the approximation property for $(\chi,m)-\alpha$-subharmonic functions is the existence of smooth solutions for a Hessian type equation.

\begin{lem} \label{lem:smooth-heq-type} Let H be a smooth function on $\bar B$ and $\varphi \in C^\infty(\d B)$. Then, there exists a unique $u \in SH_{\chi,m}(\alpha) \cap C^\infty(\bar B)$ solving the Hessian equation 
\[ \notag\begin{aligned}
	(\chi + dd^c u)^m \wedge \alpha^{n-m} = e^{u + H} \alpha^n, \\
	u = \varphi \quad \mbox{on } \d B.
\end{aligned}
\]
\end{lem}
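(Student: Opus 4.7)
The plan is to use the continuity method, exploiting that the presence of $e^u$ on the right forces automatic $C^{0}$ bounds without requiring a subsolution, while the higher order a priori estimates are essentially those already established (in Sections~\ref{sec:c1}, \ref{sec:c2}) for Theorem~\ref{thm:intr-1}.

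First I would produce a starting solution: apply Theorem~\ref{thm:intr-1} to get $\tilde u \in SH_{\chi,m}(\alpha) \cap C^\infty(\bar B)$ solving $(\chi + dd^c \tilde u)^m \wed \alpha^{n-m} = \alpha^n$ with $\tilde u = \vphi$ on $\d B$. Setting $\tilde H := -\tilde u$, the function $\tilde u$ satisfies $(\chi + dd^c \tilde u)^m \wed \alpha^{n-m} = e^{\tilde u + \tilde H} \alpha^n$. Now I would introduce the continuity family
\[ \notag
F(u,t) := \frac{(\chi + dd^c u)^m \wed \alpha^{n-m}}{\alpha^n} - e^{u + (1-t)\tilde H + t H} = 0, \quad u|_{\d B} = \vphi, \quad t \in [0,1],
\]
and let $S \subset [0,1]$ be the set of $t$ for which a solution $u_t \in SH_{\chi,m}(\alpha) \cap C^{2,\gamma}(\bar B)$ exists. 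Since $0 \in S$, it suffices to show $S$ is open and closed.

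Openness is the easy step: the linearisation of $F(\cdot,t)$ at $u_t$ is
\[ \notag
L_t w = m\,\frac{\chi_{u_t}^{m-1} \wed dd^c w \wed \alpha^{n-m}}{\alpha^n} - e^{u_t + (1-t)\tilde H + tH}\, w,
\]
a linear elliptic operator (ellipticity from $\chi_{u_t} \in \Gamma_m(\alpha)$) with strictly negative zeroth-order term. Standard Schauder theory gives invertibility between the weighted Hölder spaces $\{w \in C^{2,\gamma}(\bar B): w|_{\d B}=0\}$ and $C^\gamma(\bar B)$, and the implicit function theorem yields openness. Closedness reduces to uniform $C^{2,\gamma}$ estimates. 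For the $C^0$ bound I would use the forced maximum principle: at an interior maximum $z_0$ of $u_t$ one has $dd^c u_t(z_0) \leq 0$, hence $\chi_{u_t}(z_0) \leq \chi(z_0)$ in the sense of $\Gamma_m(\alpha)$, so by monotonicity of $S_m$ on $\Gamma_m$ we obtain $(\chi_{u_t})^m \wed \alpha^{n-m} \leq \chi^m \wed \alpha^{n-m}$ at $z_0$, giving $e^{u_t + (1-t)\tilde H + tH}(z_0) \leq C_1$ and hence $\sup_{\bar B} u_t \leq \max\{\sup_{\d B}\vphi,\, \log C_1 - \inf_{\bar B} H_t\}$. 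Symmetrically, at an interior minimum $\chi_{u_t}(z_0) \geq \chi(z_0) \in \Gamma_m(\alpha)$, so $(\chi_{u_t})^m \wed \alpha^{n-m} \geq \chi^m \wed \alpha^{n-m} \geq c_0 \alpha^n$, and a uniform lower bound follows.

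For the $C^1$ and $C^2$ estimates, the right-hand side $f_t := e^{u_t + (1-t)\tilde H + tH}$ is, thanks to the $C^0$ bound, uniformly bounded above and below away from zero and has $\|\log f_t\|_{C^k}$ controlled by $\|u_t\|_{C^k}$; consequently the gradient and real Hessian estimates in Sections~\ref{sec:c1}, \ref{sec:c2}, which are proved under exactly such assumptions on the right-hand side, apply with only notational changes (the extra terms produced by differentiating $e^u$ are absorbed because $e^u$ is already $C^0$-bounded). Evans--Krylov then upgrades to $C^{2,\gamma}$, and Schauder bootstrapping yields smoothness, closing the continuity argument. Uniqueness follows by the same maximum principle used for $C^0$: if $u \neq v$ both solve the equation, then at an interior maximum of $u-v$ we get $\chi_u \leq \chi_v$ in $\Gamma_m(\alpha)$ and hence $e^u \leq e^v$, forcing $u \leq v$ at that point and therefore everywhere by the boundary condition; symmetry completes the argument. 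The main obstacle is technical rather than conceptual: verifying that the a priori estimates of Sections~\ref{sec:c1}--\ref{sec:c2}, which in the excerpt are stated for the pure Hessian equation on a small ball, survive with the additional $e^u$ dependence on the right.
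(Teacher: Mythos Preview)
Your proposal is correct and follows essentially the same route as the paper: continuity method, $C^0$ bounds via the maximum/minimum principle exploiting the monotone $e^u$ dependence, and then checking that the $C^1$ and $C^2$ estimates of Sections~\ref{sec:c1}--\ref{sec:c2} survive the $u$-dependent right-hand side. The paper is only marginally more specific on the last point: it pinpoints that differentiating $e^{u+H}$ contributes an extra $O(|\nabla u|^2)$ term in equations \eqref{grad-eq-t1a}--\eqref{grad-eq-t1b}, which after division by $K=|\nabla u|^2$ is harmless in \eqref{grad-eq-t12}, and an extra $O(|\nabla u|)$ term in \eqref{eq:diff-c2-extra}, which is bounded once the gradient estimate is in hand; your final sentence already anticipates exactly this verification.
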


\begin{proof} The right hand side depends also on $u$ but with the right sign. We solve the equation by the continuity method as in the proof of Theorem~\ref{thm:intro-3}, provided second order apriori estimates. The $C^0-$estimate easily follows by considering the maximum point and the minimum point of the solution. So does the $C^1-$estimate on the boundary. The proof of $C^1-$estimate at an interior point will be affected at equations \eqref{grad-eq-t1a} and \eqref{grad-eq-t1b} in Section~\ref{sec:c1}. The extra terms appear in these equations are $O(|\nabla u|^2)$. So this will not affect the conclusion of the inequality \eqref{grad-eq-t12}. Therefore, we will get $C^1-$estimate. The $C^2-$estimate at an interior point goes through as in Section~\ref{sec:c2}, as it is explained in \cite[Lemma 3.18]{KN3}). For the other $C^2-$estimates at a boundary point, the equation \eqref{eq:diff-c2-extra} contains a bounded term $O(|\nabla u|)$ by the $C^1-$estimate. Therefore, the equality \eqref{use-diff-eq} will still hold and we get the desired estimates.
\end{proof}

\begin{lem} \label{lem:stability-heq-type}Let $0 \leq f \in L^p(B)$, $p>n/m$, and $\varphi \in C^0(\d B)$. Let $\{f_j\}_{j\geq 1}$ be smooth and positive functions on $\bar B$, converging in $L^p(\bar B)$ to $f$ as $j \to +\infty$. Let $\varphi_j \in C^{\infty}(\d B)$ converge uniformly to $\varphi$. Assume that 
\[ \notag\begin{aligned}
	\chi_{u_j}^m \wedge \alpha^{n-m} = e^{u_j} f_j \alpha^{n}, \\
	u_j	= \varphi_j \quad \mbox{on } \d B.
\end{aligned}
\]
Then, $u_j$ converges uniformly to $u \in \cA \cap C^0(\bar B)$, which is the unique solution in $\cA \cap C^0(\bar B)$ of 
$$	\begin{aligned}
	\chi_u^m \wedge \alpha^{n-m} = e^u f \alpha^n, \\
	u = \varphi \quad \mbox{on } \d B.
\end{aligned}$$
\end{lem}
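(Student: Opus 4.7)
The plan is to obtain uniform $L^\infty$ bounds for the smooth solutions $u_j$ provided by Lemma~\ref{lem:smooth-heq-type} (applied with $H=\log f_j$), convert the stability estimate of Lemma~\ref{lem:stability2} into uniform convergence of a subsequence, then identify the limit and extend to the full sequence by uniqueness. For the $L^\infty$ bound I would first introduce the auxiliary smooth solutions $h_j$ of the linear-right-hand-side problem
\[
\chi_{h_j}^m \wedge \alpha^{n-m} = f_j\,\alpha^n, \qquad h_j = \varphi_j \text{ on } \d B,
\]
given by Theorem~\ref{thm:intr-1}. Comparing $h_j$ with the weak solution $h_\infty$ for the limiting data $(f,\varphi)$ via Lemma~\ref{lem:stability1} yields $\|h_j\|_{L^\infty(B)}\le M$ uniformly in $j$. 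If $\sup_B(u_j-h_j)>0$ is attained at an interior point $z_0$, then $dd^c(u_j-h_j)(z_0)\le 0$ gives $\chi_{u_j}(z_0)\le \chi_{h_j}(z_0)$ as Hermitian matrices in $\overline{\Gamma_m(\alpha)}$; monotonicity of $\sigma_m$ on this cone forces $e^{u_j(z_0)}f_j(z_0)\le f_j(z_0)$, hence $u_j(z_0)\le 0$ (using $f_j>0$). Thus $\sup_B(u_j-h_j)\le M$, so $u_j\le 2M$, and the symmetric argument applied to an interior maximum of $h_j-u_j$ gives $u_j\ge -2M$.

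\textbf{Cauchy property and identification of the limit.} The uniform bound implies $\|e^{u_j}f_j\|_{L^p(B)}\le e^{2M}\sup_j\|f_j\|_{L^p(B)}$, so Lemma~\ref{lem:stability2} applied to the pair $(u_j,u_k)$ yields
\[
\|u_j-u_k\|_{L^\infty(B)}\le \sup_{\d B}|\varphi_j-\varphi_k| + C\,\|u_j-u_k\|_{L^1(B)}^a
\]
with uniform constant $C$. Combining the uniform $L^\infty$ bound with $L^1_{loc}$ precompactness of bounded families of $(\chi,m)$-$\alpha$-subharmonic functions extracts a subsequence $u_{j_\ell}\to u$ in $L^1(B)$, and the stability estimate then promotes this to uniform convergence on $\bar B$. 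Hence $u\in SH_{\chi,m}(\alpha)\cap C^0(\bar B)$ with $u=\varphi$ on $\d B$, and a diagonal construction (choosing a sufficiently rapidly convergent further subsequence and adding dyadic shifts $2^{-k}$) exhibits $u$ as the pointwise decreasing limit of smooth $(\chi,m)$-$\alpha$-subharmonic functions on $\bar B$, so $u\in\cA$. Theorem~\ref{thm:BT-convergence} applied to $u_{j_\ell}$ together with the $L^p$-convergence $e^{u_{j_\ell}}f_{j_\ell}\to e^u f$ shows $\chi_u^m\wedge\alpha^{n-m}=e^u f\,\alpha^n$.

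\textbf{Full convergence and the main obstacle.} To pass from subsequential to full-sequence convergence I would establish uniqueness of continuous solutions to the target equation by a shift argument: if $u_1,u_2$ are two such solutions and $\sup_B(u_1-u_2)>\epsilon>0$, then $v_\epsilon:=u_2+\epsilon$ strictly majorises $u_1$ on $\d B$ while $D_\epsilon:=\{u_1>v_\epsilon\}\subset\subset B$ is non-empty; on $D_\epsilon$ one has $\chi_{v_\epsilon}^m\wedge\alpha^{n-m}=e^{u_2}f\,\alpha^n\le e^{u_1}f\,\alpha^n=\chi_{u_1}^m\wedge\alpha^{n-m}$, and since $v_\epsilon=u_1$ on $\d D_\epsilon$, a localised form of the domination principle Corollary~\ref{cor:do-prin} forces $v_\epsilon\ge u_1$ on $\bar D_\epsilon$, contradicting the definition of $D_\epsilon$. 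The principal obstacle is the uniform $L^\infty$ estimate: without the exponential factor Lemma~\ref{lem:stability1} would give it for free, and the key new ingredient is the comparison against the linear auxiliary problem $h_j$ that pins down the sign of $u_j$ at the interior extremum through the monotonicity of $\sigma_m$ on $\overline{\Gamma_m(\alpha)}$.
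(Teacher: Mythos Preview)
Your argument is correct and follows the same skeleton as the paper: a uniform $L^\infty$ bound on the $u_j$, then $L^1$-compactness combined with Lemma~\ref{lem:stability2} to upgrade to uniform convergence, identification of the limit, and uniqueness to pass from a subsequence to the full sequence. The one substantive difference is in how the uniform bound is obtained. You introduce auxiliary solutions $h_j$ of $\chi_{h_j}^m\wedge\alpha^{n-m}=f_j\alpha^n$ and compare $u_j$ with $h_j$ at interior extrema; this is valid but more elaborate than needed. The paper observes directly that $u_j$ is uniformly bounded \emph{above}: either the maximum of $u_j$ is on $\partial B$ (where $u_j=\varphi_j$ is uniformly bounded), or one uses that $\chi_{u_j}\in\overline{\Gamma_m(\alpha)}\subset\overline{\Gamma_1(\alpha)}$ to bound $u_j$ by the solution of $(\chi+dd^c\rho_1)\wedge\alpha^{n-1}=0$ with the same boundary data. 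Once $\sup u_j\le C$ is known, the right-hand sides $e^{u_j}f_j$ are uniformly bounded in $L^p$, and a single application of Lemma~\ref{lem:stability1} with $g=0$, $\psi=0$ yields the full two-sided $L^\infty$ bound at once. So the ``principal obstacle'' you flag dissolves: the exponential factor is harmless as soon as one notices the upper bound is free, and no comparison with $h_j$ is needed.
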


\begin{proof} Observe that $u_j$ is uniformly bounded above.  It follows that the right hand side of equations are uniformly bounded in $L^p$. Applying Lemma~\ref{lem:stability1} for $\psi=0$ and $g=0$, this gives the uniform bound for $u_j$. Then, by compactness of the sequence $u_j $ in $L^1$ and Lemma~\ref{lem:stability2} we get a continuous solution by passing to the limit. The uniqueness follows as in  \cite[Lemma 2.3]{cuong15}.
\end{proof}

\subsection{Approximation property on $\bar B$}
\label{sec:approximation-property}

We have all ingredients which are needed to prove the main theorem of this section. By using results of Pli\'s \cite{plis}, Harvey - Lawson - Pli\'s \cite[Theorem 6.1]{HLP} also proved this theorem in the case $\chi \equiv 0$ and $\alpha$ being K\"ahler.

\begin{thm} \label{thm:approximation} Let $u$ be $(\chi,m)-\alpha$-subharmonic in a neighborhood of $\bar B$. Then,
there exists a sequence of smooth functions $u_j \in SH_{\chi,m}(\alpha) \cap C^\infty(\bar B)$ such that $u_j$ decreases to $u$ point-wise in $B$ as $j$ goes to $+\infty$. 
\end{thm}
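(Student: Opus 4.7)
The plan is to follow the approach of Pli\'s \cite{plis} and Harvey--Lawson--Pli\'s \cite{HLP}, adapted to our Hermitian setting by means of the auxiliary Hessian equation with exponential right--hand side (Lemma~\ref{lem:smooth-heq-type}) and the potential--theoretic tools developed earlier in this section. The role of the auxiliary equation is to ``project'' naive smoothings of $u$ back into the class $SH_{\chi,m}(\alpha)\cap C^\infty(\bar B)$ while keeping their boundary values under control, so that a suitable subsequence of projected functions decreases to $u$.

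First, by truncation, I would reduce to the case when $u$ is bounded. The functions $u^k:=\max\{u,-k\}$ lie in $SH_{\chi,m}(\alpha)$ by Proposition~\ref{prop:closure-max}(b) and decrease to $u$; a diagonal argument will recover the result for general $u$ once each $u^k$ is approximated. For bounded $u$, defined in a neighborhood of $\bar B$, set $u_\epsilon:=u*\rho_\epsilon$ on $\bar B$ with a standard Euclidean mollifier. These $u_\epsilon$ are smooth and converge to $u$ almost everywhere as $\epsilon\searrow 0$; they need not lie in $SH_{\chi,m}(\alpha)$, and in the Hermitian setting they need not be monotone in $\epsilon$, but (after adding $O(\epsilon)$, which accounts for the failure of the Euclidean sub--mean value property for $\tilde\alpha$-subharmonic functions) one can extract a decreasing subsequence $\tilde u_\epsilon\ge u$ using upper semicontinuity of $u$.

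Next, for each $\epsilon$ I apply Lemma~\ref{lem:smooth-heq-type} to solve
\[
(\chi+dd^c v_\epsilon)^m\wedge\alpha^{n-m}=e^{v_\epsilon+H_\epsilon}\alpha^n\quad\text{in }B,\qquad v_\epsilon=\tilde u_\epsilon\text{ on }\d B,
\]
with a smooth auxiliary function $H_\epsilon$ to be chosen; this yields a unique $v_\epsilon\in SH_{\chi,m}(\alpha)\cap C^\infty(\bar B)$. The central point is to choose $H_\epsilon$ so that simultaneously (a) the right--hand side is small enough that $u$ is a subsolution of the same equation, in which case the domination principle (Corollary~\ref{cor:do-prin}), together with $v_\epsilon=\tilde u_\epsilon\ge u$ on $\d B$, forces $v_\epsilon\ge u$ on $\bar B$; and (b) the boundary data $\tilde u_\epsilon$ and the parameters $H_\epsilon$ are regular enough in $\epsilon$ that the stability estimate Lemma~\ref{lem:stability-heq-type} delivers uniform convergence of $v_\epsilon$ to a continuous $(\chi,m)$-$\alpha$-subharmonic limit. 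A natural choice is $H_\epsilon=-\tilde u_\epsilon-C_\epsilon$ with $C_\epsilon\to\infty$ slowly; this makes the right--hand side go to zero and, combined with $v_\epsilon\le \tilde u_\epsilon$ on $\d B$ and $v_\epsilon\ge u$ in $\bar B$, forces the uniform limit to be trapped between the limits of $\tilde u_\epsilon$ from above and $u$ from below. Passing to the running minimum $w_j:=\min_{k\le j} v_{\epsilon_k}$ (which remains in $SH_{\chi,m}(\alpha)\cap C^\infty(\bar B)$ after a small smoothing by the regularized max of Proposition~\ref{prop:closure-max}(b)) produces the desired decreasing smooth sequence.

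The main obstacle is the simultaneous calibration of $H_\epsilon$ demanded in step (3): if $H_\epsilon$ is too small, the $v_\epsilon$ overshoot toward the maximal $(\chi,m)$-$\alpha$-subharmonic extension of $\tilde u_\epsilon|_{\d B}$ and the limit is strictly larger than $u$; if it is too large and negative, the subsolution property of $u$ with respect to the auxiliary equation may fail and the comparison giving $v_\epsilon\ge u$ breaks down. This balance is exactly what the construction of Pli\'s achieves in the K\"ahler case \cite{plis,HLP}, and the ingredients assembled in Sections~\ref{sec:2.1}--\ref{sec:2.2} (Bedford--Taylor convergence, Chern--Levine--Nirenberg bounds, the domination principle, and the two stability estimates) are precisely those needed to transport that argument to a general Hermitian $\alpha$ conformal to a K\"ahler metric.
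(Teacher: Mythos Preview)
Your proposal has two genuine gaps.

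First, the step ``passing to the running minimum $w_j=\min_{k\le j}v_{\epsilon_k}$'' is fatal: the minimum of two $(\chi,m)$-$\alpha$-subharmonic functions is in general \emph{not} $(\chi,m)$-$\alpha$-subharmonic, and Proposition~\ref{prop:closure-max}(b) concerns the \emph{maximum}, not the minimum. There is no ``regularized max'' trick that fixes this; the class is simply not closed under $\min$. So the device by which you manufacture monotonicity destroys membership in the cone.

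Second, and more structurally, your choice $H_\epsilon=-\tilde u_\epsilon-C_\epsilon$ with $C_\epsilon\to\infty$ makes the right-hand side tend to zero, so by Lemma~\ref{lem:stability-heq-type} the functions $v_\epsilon$ converge to the \emph{maximal} $(\chi,m)$-$\alpha$-subharmonic function with boundary data $\lim\tilde u_\epsilon|_{\partial B}$. That function depends only on the boundary values and is strictly larger than $u$ in the interior unless $u$ was already maximal. Your trap ``between $\tilde u_\epsilon$ from above and $u$ from below'' does not close, because $\tilde u_\epsilon\notin SH_{\chi,m}(\alpha)$ and hence is no interior upper barrier for $v_\epsilon$; the Dirichlet problem only sees $\tilde u_\epsilon$ on $\partial B$.

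The paper resolves both issues by an obstacle/envelope construction rather than a boundary-data one. For each smooth $h\ge u$ (coming from a decreasing sequence $\phi_j\searrow u$ by upper semicontinuity), one forms the envelope $\tilde h=\sup\{v\in SH_{\chi,m}(\alpha)\cap L^\infty:v\le h\}$, which satisfies $u\le\tilde h\le h$ and hence $\tilde h\searrow u$ as $h=\phi_j\searrow u$. The exponential equation is used with penalization $e^{(\tilde w_\epsilon-h)/\epsilon}[\tilde F+\epsilon]$ and boundary data $h$: this forces $\tilde w_\epsilon\le h$ \emph{everywhere in $\bar B$}, not just on $\partial B$, and $\tilde w_\epsilon$ is \emph{increasing} as $\epsilon\searrow 0$. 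After passing $\tilde F\to F_*:=\max\{F,0\}$ (Lemma~\ref{lem:stability-heq-type}) and then $\epsilon\to 0$ (Lemma~\ref{lem:stability2}), the uniform limit $w$ satisfies $\chi_w^m\wedge\alpha^{n-m}=0$ on $\{w<h\}$, and the domination principle identifies $w=\tilde h$. Thus $\tilde h$ is a uniform limit of smooth $(\chi,m)$-$\alpha$-subharmonic functions; adding small constants (legitimate since $\chi\in\Gamma_m(\alpha)$) turns this into a decreasing smooth approximation, so $\tilde h\in\cA$. A diagonal argument over $j$ finishes. The key difference from your plan is that the smooth function $h$ acts as a global obstacle via the exponential penalty, not merely as a boundary datum; this is what pins the limit to $u$ and makes minima unnecessary.
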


\begin{proof} We follow closely the proof of \cite[Lemma 3.20]{KN3}, which in turns uses the scheme introduced by Berman \cite{berman} and Eyssidieux-Guedj-Zeriahi \cite{egz13} (see also Lu-Nguyen \cite{chinh-dong}). 

By positivity assumption on $\chi \in \Gamma_m(\alpha)$ for every $z\in \bar B$ we have that $j \in SH_{\chi,m}(\alpha)$ for any constant $j$. As $\max\{u,-j\}$ belongs to $SH_{\chi,m}(\alpha)$, we may assume that $u$ is bounded. Since $u$ is upper semicontinuous on $\bar B$, there exists a sequence of smooth functions $\phi_j$ decreasing to $u$ on $\bar B$. Fix such an $h:= \phi_j$. Consider the envelope
\[ \label{env-eq1}
	\tilde h:= \sup\{v \in SH_{\chi,m}( \alpha) \cap L^\infty(B): v \leq h\}.
\]
Then, $\tilde h \in SH_{\chi,m} (\alpha)$ and $u \leq \tilde h \leq h$. Therefore, if $\tilde h \in \cA$, i.e. it has the approximation property, then so does $u$ by letting $h = \phi_j \searrow u$. We shall prove that the function $\tilde h$ can be approximated uniformly, and then the lemma will follow.

Since $h\in C^\infty(\bar B)$, we can write $\chi_h^m\wed\alpha^{n-m} = F\alpha^n$ with $F$ being a smooth function on $\bar B$. Let us denote $F_*=\max\{F,0\}$. We choose a sequence of smoothly non-negative functions $F_j$ decreasing uniformly to $F_*$ as $j\to \infty$. Fix such a $\tilde F := F_j \geq F_*$. By Lemma~\ref{lem:smooth-heq-type} we solve for $0<\vepsilon \leq 1$,
\[ \notag\begin{aligned}
\chi_{\tilde w_\vepsilon}^m \wed \alpha^{n-m} = e^{\frac{1}{\vepsilon}(\tilde w_\vepsilon -h)} [\tilde F + \vepsilon] \alpha^n, \\
\tilde w_\vepsilon = h \quad \mbox{on } \d B.
\end{aligned}
\]
By maximum principle, $\tilde w_\vepsilon \leq h$ and $\tilde w_\vepsilon$ is increasing as  $\vepsilon$ decreases to $0$. Keep $\vepsilon$ fixed, and take limit on both sides for $\tilde F = F_j  \to F_*$, i.e. letting $j\to\infty$, we get from Lemma~\ref{lem:stability-heq-type}, 
\[ \notag\begin{aligned}
	\chi_{w_\vepsilon}^m\wed \alpha^{n-m} = e^{\frac{1}{\vepsilon}(w_\vepsilon -h)} [F_* + \vepsilon] \alpha^n, \\
 w_\vepsilon = h \quad \mbox{on } \d B.
\end{aligned}
\]
Here $\tilde w_\vepsilon$ uniformly increases to $w_\vepsilon$. Thus, $w_\vepsilon \in \cA \cap C^0(\bar B)$ and $w_\vepsilon$ is increasing as $\vepsilon$ decreases to $0$. Since $w_\vepsilon \leq h$, the right hand side is uniformly bounded in $L^\infty(\bar  B)$. The monotone  sequence $w_\vepsilon$, bounded above by $h$, is a Cauchy sequence in $L^1( B)$. By Lemma~\ref{lem:stability2}, this sequence is also Cauchy in the uniform norm in $\bar B$. So, $w_\vepsilon$ uniformly increases to $w$ which satisfies
\[ \notag\begin{aligned}
	\chi_w^m\wed \alpha^{n-m} \leq {\bf 1}_{\{w=h\}} F_*\alpha^n,\\
	w = h \quad\mbox{on } \d B.
\end{aligned}
\]
In particular, $w \in \cA \cap C^0(\bar B)$. Now, we claim that $w=\tilde h$. The inequality $w \leq \tilde h$ is clear. One needs to verify that $w\geq \tilde h$ on $\{w<h\}$. Take a candidate $v$ in the envelope \eqref{env-eq1}, i.e, $v\leq h$. Observe that $\chi_w^m \wed \alpha^{n-m} = 0$ on $\{w<v\} \subset \{w<h\}$. By Corollary~\ref{cor:do-prin} it follows that $w$ is maximal on $\{w<h\}$. Thus, the set $\{w<v\}$ is empty, i.e., $w\geq v$. Since $v$ is arbitrary, so $w\geq \tilde h$. The claim follows and so does the theorem.
\end{proof}

\begin{remark} \label{rmk:approximation-after}
%\begin{itemize} 
{\bf (a)} In  the proof we only used  the wedge product for continuous potential, so Theorem~\ref{thm:approximation} holds for a general Hermitian metric $\alpha$. In this case one should use a counterpart of \cite[Theorem 2.16]{KN3} instead of Corollary~\ref{cor:do-prin} in the last argument.
 
\noindent{\bf (b)} An immediate consequence is that the class $\cA$ coincides with  $SH_{\chi,m}(\alpha)$. 
%\end{itemize}
\end{remark}

Thanks to the quasi-continuity and approximation property of $(\chi,m)-\alpha$- subharnonic functions we get an inequality similar to the one for plurisubharmonic  functions in Cegrell-Ko\l oldziej \cite{ck94}. 

\begin{prop}\label{prop:ck-ineq} Let $u, v\in SH_{\chi,m}(\alpha) \cap L^\infty(B)$. Let $\mu$ be a positive measure such that $\chi_u^m\wed \alpha^{n-m} \geq \mu$ and $\chi_v^m \wed \alpha^{n-m} \geq \mu$. Then
\[\notag
	\left(\chi + dd^c \max\{u,v\}\right)^m \wed \alpha^{n-m} \geq \mu.
\]
\end{prop}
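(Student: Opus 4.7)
The plan is to adapt the classical argument of Cegrell--Ko\l odziej \cite{ck94} to the present Hermitian Hessian setting, relying on three ingredients already established: the approximation property (Theorem~\ref{thm:approximation}), the quasi-continuity of bounded potentials (Corollary~\ref{cor:quasi-cont-a}), and the Bedford--Taylor convergence for monotone sequences (Theorem~\ref{thm:BT-convergence}). The central device is to break the symmetry of the maximum by a small shift: for each $\vepsilon > 0$, I set $w_\vepsilon := \max\{u+\vepsilon, v\}$, which belongs to $SH_{\chi,m}(\alpha)\cap L^\infty(B)$ by Proposition~\ref{prop:closure-max}(b) and decreases pointwise to $w := \max\{u,v\}$ as $\vepsilon\searrow 0$.

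For each such $\vepsilon$, I decompose $B = A_\vepsilon^+ \sqcup A_\vepsilon^- \sqcup E_\vepsilon$ where $A_\vepsilon^+ := \{u+\vepsilon > v\}$, $A_\vepsilon^- := \{u+\vepsilon < v\}$ and $E_\vepsilon := \{u+\vepsilon = v\}$. The $E_\vepsilon$ are pairwise disjoint as $\vepsilon$ varies in $(0,\infty)$, and since $\mu$ is a finite positive Radon measure on $\bar B$, $\mu(E_\vepsilon)>0$ holds for at most countably many $\vepsilon$; I fix any $\vepsilon$ outside this exceptional set. Pointwise $w_\vepsilon = u+\vepsilon$ on $A_\vepsilon^+$ and $w_\vepsilon = v$ on $A_\vepsilon^-$, so I aim at the two measure-level identifications $\mathbf{1}_{A_\vepsilon^+}\chi_{w_\vepsilon}^m\wed\alpha^{n-m} = \mathbf{1}_{A_\vepsilon^+}\chi_u^m\wed\alpha^{n-m}$ and $\mathbf{1}_{A_\vepsilon^-}\chi_{w_\vepsilon}^m\wed\alpha^{n-m} = \mathbf{1}_{A_\vepsilon^-}\chi_v^m\wed\alpha^{n-m}$. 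Combined with the hypotheses on $u,v$ and with $\mu(E_\vepsilon)=0$, these would immediately give $\chi_{w_\vepsilon}^m\wed\alpha^{n-m}\geq\mu$.

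The main obstacle will be establishing those two identifications, since the sets $A_\vepsilon^\pm$ are not a priori open when $u,v$ are merely upper semicontinuous. My plan is to use quasi-continuity: Corollary~\ref{cor:quasi-cont-a} produces, for any $\delta > 0$, an open $U_\delta\subset B$ with $cap(U_\delta)<\delta$ outside of which $u$ and $v$ are both continuous, whence $A_\vepsilon^+\setminus U_\delta$ is relatively open in $B\setminus U_\delta$ and $w_\vepsilon \equiv u+\vepsilon$ on this set. Invoking the inductive construction \eqref{eq:induction-process}--\eqref{eq:wed-prod-2} through the K\"ahler auxiliary form $\omega = e^G\alpha$, the local Bedford--Taylor argument of \cite{BT87} then yields the identification on $A_\vepsilon^+\setminus U_\delta$. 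Letting $\delta\searrow 0$ requires the fact that the bounded Hessian measures $\chi_u^m\wed\alpha^{n-m}$, $\chi_v^m\wed\alpha^{n-m}$, $\chi_{w_\vepsilon}^m\wed\alpha^{n-m}$ do not charge sets of zero $cap$-capacity; this I would derive from the Chern--Levine--Nirenberg inequality (Lemma~\ref{lem:CLN-ineq}) combined with Theorem~\ref{thm:BT-convergence}, in the standard pluripotential fashion.

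Finally, I let $\vepsilon\searrow 0$ along a sequence avoiding the countable exceptional set. Since $w_\vepsilon\searrow w$, Theorem~\ref{thm:BT-convergence} gives weak convergence $\chi_{w_\vepsilon}^m\wed\alpha^{n-m}\to \chi_w^m\wed\alpha^{n-m}$. Testing the inequality $\chi_{w_\vepsilon}^m\wed\alpha^{n-m}\geq\mu$ against non-negative continuous compactly supported test functions and passing to the limit then yields the desired $\chi_w^m\wed\alpha^{n-m}\geq\mu$.
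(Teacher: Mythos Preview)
Your proposal is correct and follows essentially the same route the paper indicates: the paper's proof simply reads ``readily adaptable from \cite[Theorem~1]{ck94} with an obvious change of notations,'' and what you have written is precisely that adaptation, built on Theorem~\ref{thm:approximation}, Corollary~\ref{cor:quasi-cont-a}, and Theorem~\ref{thm:BT-convergence}. One small remark: you assert that $\mu$ is finite on $\bar B$, but the hypotheses only guarantee local finiteness in $B$ via the CLN inequality (Lemma~\ref{lem:CLN-ineq}); this suffices for the countability argument after exhausting $B$ by compacts, so no real change is needed.
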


\begin{proof} It is readily adaptable from  \cite[Theorem~1]{ck94} with an obvious change of notations.
\end{proof}

\section{The Dirichlet problem} \label{sec:3} %\label{sec-dp}

On the complex manifold $M = \bar M \setminus \d M$ we define the class $SH_{\chi,m}(\alpha, M)$ in local coordinates. One main difference is that for an arbitrary real $(1,1)$-form $\chi$ on $M$, there are plenty of local $(\chi,m)-\alpha$-subharmonic functions on each local chart. However, the global class $SH_{\chi,m}(\alpha, M)$ may be empty, e.g. for negative $\chi$. Thus, the existence of a subsolution will guarantee that $	SH_{\chi,m}(\alpha) \mbox{ is non empty}$.

In this section we shall study weak solutions to the Dirichlet problem for the complex Hessian type equation.  As we pointed out in Section~\ref{sec:2.1} the assumption $\alpha$ is locally conformal K\"ahler metric on $M$ is needed to develop potential theory for bounded functions.

Fix the continuous right hand side density $0\leq f \in C^0(\bar M)$ and a continuous boundary data $\vphi \in C^0(\d M)$. Let us denote
$$ \mu:= f\alpha^n. $$
We wish to solve the Dirichlet problem:
\[ \label{dp-1}\begin{aligned}
&w \in SH_{\chi,m}(\alpha) \cap C^0(\bar M), \\
&(\chi + dd^c w)^m \wed \alpha^{n-m} = \mu, \\
&w = \varphi \quad \mbox{on } \d M.
\end{aligned}
\]
The $C^2$  subsolution $\rho$ to the equation \eqref{dp-1} satisfies:
$$	\chi_\rho:= \chi + dd^c \rho \in \Gamma_m(\alpha),$$
and
\[ \label{eq:sub-ass}
	(\chi + dd^c \rho)^m \wed \alpha^{n-m} \geq \mu, \quad
	\rho = \varphi \quad \mbox{on } \d M.
\]
By replacing $\chi$ by $\chi_\rho$ and $u$ by $u-\rho$ we can reduce the problem to the case of zero boundary data and $\chi \in \Gamma_m(\alpha)$ as follows:  
\[ \label{eq:dp-11}\begin{aligned}
&w \in SH_{\chi,m}(\alpha) \cap C^0(\bar M), \\
&(\chi + dd^c w)^m \wed \alpha^{n-m} = \mu, \\
&w = 0 \quad \mbox{on } \d M.
\end{aligned}
\]
Then $0$ is the subsolution to the equation~\eqref{eq:dp-11}, and  there exists $0< c_0 \leq 1$ such that
$$	\chi - c_0 \alpha \in \Gamma_m(\alpha). $$

\subsection{Envelope of continuous subsolutions} \label{sec:con-env}

By assumption \eqref{eq:sub-ass} the set 
\[ \notag
	\cS = \{v \in SH_{\chi,m}(\alpha) \cap C^0(\bar M): \chi_v^m\wed \alpha^{n-m} \geq \mu,\; v_{|_{\d M}} \leq 0 \}
\]
is not empty.  Hence, we define the %Perron-Bremermann
envelope
\[ \label{eq:enve-1}
	u_0(z) := \sup_{v \in \cS} v(z).
\]
One expects that it will be a solution to the continuous Dirichlet problem. 

\begin{thm}  \label{thm:cont-sol}
If  $u_0$ is continuous, then it solves the Dirichlet problem \eqref{dp-1}.
\end{thm}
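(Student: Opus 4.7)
\medskip

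\noindent\emph{Plan of proof.} The goal is to verify three things about the envelope $u_0$: (i) $u_0 \in SH_{\chi,m}(\alpha)$, (ii) $u_0 = 0$ on $\d M$, and (iii) $(\chi + dd^c u_0)^m \wed \alpha^{n-m} = \mu$. Since continuity of $u_0$ is the hypothesis, (i) follows from Proposition~\ref{prop:closure-max}(c) together with $u_0 = u_0^*$. For (ii): $u_0 \leq 0$ on $\d M$ is immediate from the definition of $\cS$, while $u_0 \geq 0$ on $\d M$ follows from the fact that after the reduction $\chi \in \Gamma_m(\alpha)$ the constant $0$ lies in $\cS$. So all the work goes into proving the measure identity (iii), which splits into the two inequalities $\chi_{u_0}^m \wed \alpha^{n-m} \geq \mu$ and $\chi_{u_0}^m \wed \alpha^{n-m} \leq \mu$.

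For $\chi_{u_0}^m \wed \alpha^{n-m} \geq \mu$ I would use a standard Choquet-type argument. By Choquet's lemma, extract a countable subfamily $\{v_j\} \subset \cS$ with $(\sup_j v_j)^* = u_0^* = u_0$. Form the max truncations $V_k := \max\{v_1,\dots,v_k\}$, which are continuous. Proposition~\ref{prop:ck-ineq} applied inductively (locally, in charts where $\alpha$ is conformal to a K\"ahler metric) gives $\chi_{V_k}^m \wed \alpha^{n-m} \geq \mu$, hence $V_k \in \cS$ and $V_k \leq u_0$. Since $V_k \nearrow U$ with $U^* = u_0$, the Bedford--Taylor convergence theorem (Theorem~\ref{thm:BT-convergence}) yields $\chi_{V_k}^m \wed \alpha^{n-m} \rightharpoonup \chi_{u_0}^m \wed \alpha^{n-m}$ weakly in each local chart, and passing to the limit preserves the inequality $\geq \mu$.

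For the opposite inequality I use a balayage (Perron-type lifting) argument, which I expect to be the main obstacle because in the Hermitian setting the comparison principle is only available locally on small balls. Fix a point $z_0 \in M$ and choose a small coordinate ball $B \ni z_0$ on which the potential theory of Section~\ref{sec:2} applies and on which $\alpha = e^{-G}\omega$ for a K\"ahler $\omega$. Since $u_0|_{\d B} \in C^0(\d B)$ and $f|_B$ is continuous, the local existence theorem yields $\tilde u \in \cA \cap C^0(\bar B)$ solving
\[ \notag
    \chi_{\tilde u}^m \wed \alpha^{n-m} = \mu \text{ in } B, \qquad \tilde u = u_0 \text{ on } \d B.
\]
Because $\chi_{u_0}^m \wed \alpha^{n-m} \geq \mu = \chi_{\tilde u}^m \wed \alpha^{n-m}$ in $B$ with $\tilde u = u_0$ on $\d B$, the local domination principle (Corollary~\ref{cor:do-prin}) gives $\tilde u \geq u_0$ on $\bar B$. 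Glue by setting $\tilde v := \tilde u$ on $\bar B$ and $\tilde v := u_0$ on $\bar M \setminus B$. Since $\tilde u = u_0$ on $\d B$ and $\tilde u \geq u_0$ on $B$, the function $\tilde v$ is continuous and can be described near $\d B$ as $\max\{\tilde u, u_0\}$, hence lies in $SH_{\chi,m}(\alpha) \cap C^0(\bar M)$; Proposition~\ref{prop:ck-ineq} applied near $\d B$ together with the pointwise identity on $B$ and on $\bar M \setminus \bar B$ yields $\chi_{\tilde v}^m \wed \alpha^{n-m} \geq \mu$ globally. Thus $\tilde v \in \cS$, and from $u_0 = \sup \cS$ we obtain $\tilde v \leq u_0$. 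Combined with $\tilde v \geq u_0$, this forces $\tilde u = u_0$ on $B$, so $\chi_{u_0}^m \wed \alpha^{n-m} = \mu$ in $B$. Covering $M$ by such small balls (using the locally conformal K\"ahler hypothesis) establishes the identity everywhere, completing the proof.
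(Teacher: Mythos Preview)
Your proposal is correct and follows essentially the same route as the paper's proof: establish $u_0\in\cS$ (hence $\chi_{u_0}^m\wedge\alpha^{n-m}\geq\mu$), then run the local balayage/lifting argument on small balls to force equality. The balayage step you wrote is identical to the paper's; the only difference is that you spell out the $\geq\mu$ direction via Choquet's lemma, finite maxima $V_k$, Proposition~\ref{prop:ck-ineq}, and Theorem~\ref{thm:BT-convergence}, whereas the paper simply asserts $u_0\in\cS$ citing Proposition~\ref{prop:closure-max}(b) and Proposition~\ref{prop:ck-ineq} and leaves the passage to the supremum implicit. One minor remark: you invoke the locally conformal K\"ahler hypothesis at the end, but as the paper notes just after this theorem, continuity of the potentials already suffices to define the wedge products and run the argument for a general Hermitian $\alpha$, so that assumption is not actually needed here.
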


\begin{proof} We first have $u_0 \in S$ by Proposition~\ref{prop:closure-max}-(b) and  Proposition~\ref{prop:ck-ineq}. In particular,
$$	(\chi + dd^cu_0)^m \wed \alpha^{n-m} \geq  \mu.$$
It remains to show that $\chi_{u_0}^m \wedge \alpha^{n-m} = \mu$. Fix a small ball $B \subset M$ and find $w \in SH_{\chi,m}(\alpha) \cap C^0(\bar B)$ solving $w = u_0$ on $\d B$ and 
$$	(\chi + dd^c w)^m \wed \alpha^{n-m} = \mu \quad\mbox{in } B.$$
Hence, $w \geq u_0$ in $\bar B$. Consider
the lift $\tilde u \in \cS$ of $u_0$ with respect to this ball defined by 
$$ \tilde u = \begin{cases} \max{\{w, u_0\}} \quad \mbox{on } B, \\
	u_0 \quad \mbox{on } \bar M \setminus B.
\end{cases} $$
Thus, we have $\tilde u \in S$ and $u_0 \leq \tilde u$ in $B$. On the other hand by the definition of $u_0$ we have $\tilde u \leq u_0$. Thus, $u_0 = \tilde u$ in $B$, which means  $\chi_{u_0}^m \wedge \alpha^{n-m} =\mu$. This holds for any ball, so the theorem follows.
\end{proof}

\begin{remark} For continuous $(\chi,m)-\alpha$-subharmonic functions the wedge product  is always well-defined. Theorem~\ref{thm:cont-sol} is valid for a general Hermitian metric $\alpha$. The remaining issue is to verify the continuity of the envelope $u_0$. So far we could not do this for a general Hermitian metric $\alpha$.
\end{remark}

\begin{remark} Let us consider $m= n$ and $f \equiv 0$ in connection with the geodesic equation studied notably by Semmes \cite{sem92}, Donaldson \cite{don99}, Chen \cite{chen00} and Blocki \cite{blocki12}. It follows from the comparison principle (an extension of Lemma~\ref{lem:wcp-c} for $M$ in the place of $B$), that there  exists at most one continuous solution to the equation. Guan and Li  \cite{GL10} have extended the gradient estimate in \cite{blocki09} to this case. Hence, we can get a continuous solution to the homogeneous equation by a compactness argument. This solution is maximal on $M$, thus equal to $u_0$. Thus, we get the unique solution even in the case the background metric is only Hermitian.
\end{remark}

\subsection{Envelope of bounded subsolutions} \label{sec:bbd-env}

In this section we shall prove Theorem~\ref{thm:intro-3}, where
$\alpha$ is locally conformal K\"ahler. 
First we enlarge the class $\cS$ above,
\[ \notag
\hat \cS := \{v \in SH_{\chi,m}(\alpha) \cap L^\infty(\bar M): \chi_v^m\wed \alpha^{n-m} \geq \mu,\; v^*_{|_{\d M}} \leq 0 \}.
\]
The locally conformal K\"ahler assumption of $\alpha$ allows us  to use potential theory which has been developed in Section~\ref{sec:2} for bounded $(\chi,m)-\alpha$-subharmonic functions. Set
\[ \notag
	 u(z) := \sup_{v\in \hat \cS}v(z)
\]
It follows from Proposition~\ref{prop:closure-max}-(b) and  Proposition~\ref{prop:ck-ineq} that $ u^* \in \hat\cS$.  Hence, $u=u^*$.  Let us solve the linear PDE
\[ \notag \begin{aligned}
	(\chi + dd^c \rho_1) \wed \alpha^{n-1} =0, \\
	\rho_1 = 0 \quad\mbox{on } \d M.
\end{aligned}
\]
Therefore, $0\leq u \leq \rho_1$. It implies that $u = 0$  and it is continuous on  $\d M$. 

\begin{remark} It is obvious that $u_0 \leq  u$. If we can show that $ u$ is a continuous on $M$, then $ u\in \cS$ automatically. Then,  $u_0 =  u$ is indeed continuous.
\end{remark}

In what follows, we shall prove that $u$ is a solution to the (bounded) Dirichlet problem, and then we will prove its regularity by using the {\em a priori} estimate (Theorem~\ref{thm:a-priori-estimate}).

\begin{lem}[lift] Let $v \in \hat\cS$. Let $B \subset M$ be a small ball. There exists $\tilde v\in \hat\cS$ such that $v \leq \tilde v$ and $\chi_{\tilde v}^m \wedge \alpha^{n-m} = \mu$ in $B.$
\end{lem}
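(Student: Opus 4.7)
The plan is a balayage construction, in the spirit of Bedford--Taylor. Since $\alpha$ is locally conformal K\"ahler and $B$ is small, I may assume that $B$ lies in a coordinate chart on which the potential theory of Section~\ref{sec:2} and the Dirichlet solvability results of Section~\ref{sec:3} apply verbatim.

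First, by the approximation theorem (Theorem~\ref{thm:approximation}), pick a decreasing sequence $\{v_j\} \subset SH_{\chi,m}(\alpha)\cap C^\infty(\bar B)$ converging pointwise to $v$ in $B$. For each $j$, invoke the existence theorem for the weak Dirichlet problem on the small ball (Section~\ref{sec:3}, with $L^p$ right-hand side and continuous boundary data) to obtain $w_j\in SH_{\chi,m}(\alpha)\cap C^0(\bar B)$ solving
\[
\chi_{w_j}^m \wedge \alpha^{n-m} = \mu \quad\text{in } B,\qquad w_j = v_j \quad\text{on }\partial B.
\]

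By the domination principle (Corollary~\ref{cor:do-prin}), since $v\le v_j=w_j$ on $\partial B$ and $\chi_v^m\wedge\alpha^{n-m}\ge\mu=\chi_{w_j}^m\wedge\alpha^{n-m}$, one gets $v\le w_j$ on $\bar B$; applied to consecutive indices it also shows $\{w_j\}$ is decreasing. Setting $w:=\lim_j w_j$, the Bedford--Taylor convergence theorem (Theorem~\ref{thm:BT-convergence}) gives $\chi_w^m\wedge\alpha^{n-m}=\mu$ in $B$, and evidently $w\ge v$ is a bounded $(\chi,m)$-$\alpha$-subharmonic function on $B$.

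Finally, define $\tilde v:=w$ on $\bar B$ and $\tilde v:=v$ on $\bar M\setminus B$. The delicate point is to verify that $\tilde v\in SH_{\chi,m}(\alpha)$ across $\partial B$. Since $w\le w_j$ in $\bar B$ and $w_j|_{\partial B}=v_j\searrow v$, one obtains $\limsup_{B\ni z\to z_0} w(z)\le v(z_0)$ for each $z_0\in\partial B$, so $\tilde v$ is upper semicontinuous and the standard pluripotential gluing argument (analogous to the classical Bedford--Taylor construction) yields $\tilde v\in SH_{\chi,m}(\alpha)$. The remaining membership conditions of $\hat\cS$ are then immediate: $\tilde v$ is bounded; since $B\subset\subset M$, we have $\tilde v^*|_{\partial M}=v^*|_{\partial M}\le 0$; and $\chi_{\tilde v}^m\wedge\alpha^{n-m}\ge\mu$ both inside $B$ (with equality) and outside (where $\tilde v=v$). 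The gluing across $\partial B$ is the main technical obstacle; everything else is directly supplied by the machinery developed in Sections~\ref{sec:1}--\ref{sec:3}.
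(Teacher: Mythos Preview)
Your proof is correct and follows essentially the same balayage scheme as the paper: solve the Dirichlet problem on $B$ with continuous boundary data decreasing to $v|_{\partial B}$, pass to the monotone limit using Corollary~\ref{cor:do-prin} and Theorem~\ref{thm:BT-convergence}, then glue. The only cosmetic differences are that the paper takes the boundary data $\phi_j\in C^0(\partial B)$ directly from the upper semicontinuity of $v$ (rather than via Theorem~\ref{thm:approximation}), and writes $\tilde v=\max\{w,v\}$ on $B$ so that the gluing lemma of the Appendix applies verbatim; since you have already shown $w\ge v$ on $B$, this coincides with your definition.
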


\begin{proof} Choose $C^0(\d B)\ni \phi_j \searrow v$ on $\d B$ and solve the Dirichlet problem 
\[ \notag\begin{cases}
v_j \in SH_{\chi,m}(\alpha) \cap C^0(\bar B), \\
(\chi + dd^c v_j)^m \wedge \alpha^{n-m} = \mu, \\
v_j = \phi_j \mbox{ on } \d B.
\end{cases}
\]
It follows from Corollary~\ref{cor:do-prin} that $v_j \searrow  w \in SH_{\chi,m}(\alpha,B)$. Hence, Theorem~\ref{thm:BT-convergence} gives that
\[ \notag
	(\chi + dd^cw)^m \wedge \alpha^{n-m} = \mu.
\]
Furthermore, $\limsup_{z \to \zeta \in \d B} w(z) \leq v(\zeta)$. By the domination principle (Corollary~\ref{cor:do-prin}) we have  
$
	v_j \geq v
$ on $B$.  
Thus, $w \geq v$ on $B$. Define 
\[ \notag \tilde v = \begin{cases} \max{\{w, v\}} \quad \mbox{on } B, \\
	v \quad \mbox{on } \bar M \setminus B.
\end{cases}
\]
Then, $\tilde v$ is the function we are looking for.
\end{proof}

\begin{lem} \label{bdd-sol}$ u \in SH_{\chi,m}(\alpha) \cap L^\infty( M) \cap C^0(\d M)$ and $\chi_u^m \wedge \alpha^{n-m} = \mu.$
\end{lem}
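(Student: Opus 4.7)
The proof proceeds in three steps, all of which use tools developed in the previous sections.

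\textbf{Step 1: Regularity, boundedness and boundary values.} After the normalization in \eqref{eq:dp-11} we have $\chi \in \Gamma_m(\alpha)$ and $\chi^m\wed\alpha^{n-m}\ge\mu$, so the zero function belongs to $\hat\cS$, which forces $u\ge 0$. Solving the linear equation for $\rho_1$ produces a function in $C^0(\bar M)$ with $\rho_1=0$ on $\d M$ that dominates every element of $\hat\cS$ (by a global version of the weak comparison in Lemma~\ref{lem:wcp-c}; one sees directly that $v\le\rho_1$ since $\chi_v\wed\alpha^{n-1}\ge 0 = \chi_{\rho_1}\wed\alpha^{n-1}$ and $v\le 0\le \rho_1$ on $\d M$ after taking usc regularization). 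Thus $0\le u\le \rho_1$, giving both $u\in L^\infty(M)$ and continuous boundary value $u=0$ on $\d M$. Membership of $u$ in $SH_{\chi,m}(\alpha)$ follows from Proposition~\ref{prop:closure-max}(c) together with the already noted identity $u=u^*$.

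\textbf{Step 2: The inequality $\chi_u^m\wed\alpha^{n-m}\ge\mu$.} I would apply Choquet's lemma to extract a countable family $\{v_j\}\subset\hat\cS$ with $(\sup_j v_j)^*=u^*=u$ and then set $w_j:=\max(v_1,\dots,v_j)$. Proposition~\ref{prop:closure-max}(b) and the Cegrell--Ko\l odziej type inequality of Proposition~\ref{prop:ck-ineq} guarantee $w_j\in\hat\cS$, and $w_j$ increases pointwise to some $w$ with $w^*=u$. Working in a local chart where the Hessian product is defined via the conformal K\"ahler form $\omega=e^G\alpha$, the increasing version of the Bedford--Taylor theorem (Theorem~\ref{thm:BT-convergence}) yields
\[ \notag
   \chi_{w_j}^m\wed\alpha^{n-m} \;\rightharpoonup\; \chi_u^m\wed\alpha^{n-m}
\]
weakly, and since each measure on the left dominates $\mu$, so does the limit.

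\textbf{Step 3: The reverse inequality.} This is the standard balayage argument. For an arbitrary ball $B\subset\subset M$ I would apply the Lift Lemma stated just above to $v=u\in\hat\cS$, producing $\tilde u\in\hat\cS$ with $\tilde u\ge u$, $\tilde u\equiv u$ outside $B$, and $\chi_{\tilde u}^m\wed\alpha^{n-m}=\mu$ in $B$. The defining maximality of $u$ forces $\tilde u\le u$, so $\tilde u=u$ on $\bar M$ and the equation holds on $B$. Since $B$ was arbitrary, $\chi_u^m\wed\alpha^{n-m}=\mu$ on $M$.

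\textbf{Main obstacle.} The delicate point is Step 2: one must pass the lower bound through the upper semicontinuous regularization, i.e. conclude that the Hessian measure of the pointwise monotone limit $w$ agrees with $\chi_u^m\wed\alpha^{n-m}$ even though $w$ and $u=w^*$ may differ on a negligible set. This relies on the Bedford--Taylor convergence machinery of Section~\ref{sec:2.1}, which in turn is available only because the locally conformal K\"ahler assumption lets us define Hessian products of bounded $(\chi,m)$-$\alpha$-subharmonic functions through the conformal K\"ahler form $\omega=e^G\alpha$.
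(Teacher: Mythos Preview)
Your proof is correct and follows the same route as the paper. In the paper, your Steps~1 and~2 are dispatched in the paragraph \emph{preceding} the lemma (the sentence ``It follows from Proposition~\ref{prop:closure-max}-(b) and Proposition~\ref{prop:ck-ineq} that $u^*\in\hat\cS$'' together with the bounds $0\le u\le\rho_1$), so the paper's proof of the lemma itself consists only of your Step~3, the lift/balayage argument, carried out verbatim. Your Choquet-plus-increasing-BT argument in Step~2 is exactly the content hidden behind the paper's one-line citation of Propositions~\ref{prop:closure-max}(b) and~\ref{prop:ck-ineq}, and your identification of the $w$ versus $w^*$ issue as the delicate point is apt (it is handled by the increasing case of Theorem~\ref{thm:BT-convergence}, since $w_j\nearrow u$ almost everywhere).
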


\begin{proof} 
It only remains to show that $\chi_u^m \wedge \alpha^{n-m} = \mu$. Fix a small ball $B \subset M$ and consider the lift $\tilde u \in \hat\cS$ of $u$ with respect to this ball. Then, $u \leq \tilde u$ in $B$. On the other hand by the definition of $u$ we have $\tilde u \leq u$. Thus, $u = \tilde u$ in $B$. Since $B$ is arbitrary, $\chi_u^m \wedge \alpha^{n-m} =\mu$ on $M$. 
\end{proof}

We shall prove the most technical part.
\begin{lem} \label{lem:continuous} $u$ is continuous on $\bar M$.
\end{lem}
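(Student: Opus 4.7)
The plan is to establish lower semicontinuity of $u$; upper semicontinuity and boundary continuity are already in hand, since $u=u^*$ and $u|_{\partial M}=0$. I would argue locally, using the potential-theoretic machinery of Section~\ref{sec:2}. Fix $z_0\in M$ and a small Euclidean ball $B\subset\subset M$ centered at $z_0$ on which $\alpha$ is locally conformally K\"ahler.

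First, I would carry out a local lift construction. By the approximation property (Corollary~\ref{cor:intro-2}), there exist smooth $u_j\in SH_{\chi,m}(\alpha)\cap C^\infty(\bar B)$ decreasing pointwise to $u$ on $B$. Using the solvability of the weak Dirichlet problem on balls (the theorem following Lemma~\ref{lem:stability2}), for each $j$ pick $v_j\in \cA\cap C^0(\bar B)$ solving
\[
(\chi+dd^c v_j)^m\wedge\alpha^{n-m}=\mu \quad\text{in } B,\qquad v_j|_{\partial B}=u_j|_{\partial B}.
\]
By the domination principle (Corollary~\ref{cor:do-prin}), $v_j\geq u$ on $\bar B$; by Bedford--Taylor convergence (Theorem~\ref{thm:BT-convergence}) and the maximality of $u=\sup\hat\cS$ (arguing just as in the preceding ``lift" lemma, with $\hat v_j$ the extension of $v_j$ by $u$ outside $B$), the monotone limit $w=\lim v_j$ coincides with $u$ on $\bar B$. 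Hence $v_j\searrow u$ pointwise on $\bar B$; since the $v_j$ are continuous, the continuity of $u$ on $B$ is equivalent to the uniform convergence $v_j\to u$.

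The crux is to upgrade monotone pointwise convergence to uniform convergence. I plan to invoke the a priori estimate (Theorem~\ref{thm:a-priori-estimate}) applied with $u$ as the ``solution'' (with right hand side density $f\in C^0\subset L^p$ for any $p$) and with the continuous test function $v_j-M_j$, where $M_j:=\sup_{\partial B}(u_j-u)$. The translation by $M_j$ produces $\liminf_{z\to\partial B}(u-(v_j-M_j))\geq 0$, meeting the boundary hypothesis of Theorem~\ref{thm:a-priori-estimate}. The conclusion controls the size $s$ of each sub-level set $U(\vepsilon,s)$ of $u-(1-\vepsilon)(v_j-M_j)$ by a power of its $\alpha$-volume. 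Since $v_j\searrow u$ monotonically with $\|v_j\|_\infty$ uniformly bounded, dominated convergence gives $\|v_j-u\|_{L^1(B)}\to 0$, hence $V_\alpha(U(\vepsilon,s))\to 0$ for every fixed $s>0$, so the inequality forces $\sup_B(v_j-u)-M_j\to 0$.

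The main obstacle is the control of $M_j=\sup_{\partial B}(u_j-u)$: a priori this need not tend to zero, because $u|_{\partial B}$ is only upper semicontinuous and the convergence $u_j\searrow u|_{\partial B}$ is merely pointwise, so Dini's theorem does not directly apply. To handle this I would bootstrap from the boundary of $M$ inward: since $u|_{\partial M}=0$ is continuous and $0\leq u\leq \rho_1$ (with $\rho_1$ the $\alpha$-harmonic extension of $0$, continuous up to $\partial M$), $u$ is automatically continuous on a one-sided neighborhood of $\partial M$. One then exhausts $\bar M$ by a finite chain of small balls, chosen so that at each stage $\partial B$ lies in the open set where $u$ is already known to be continuous; on such a $\partial B$ one may invoke Dini to get $M_j\to 0$, and the step above yields continuity of $u$ on the new ball. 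A compactness argument terminates the iteration after finitely many steps, giving continuity on all of $\bar M$.
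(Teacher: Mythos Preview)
Your overall strategy is reasonable in spirit, but the bootstrap step has a genuine gap that prevents it from starting, and the same gap undermines the application of Theorem~\ref{thm:a-priori-estimate}.

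The sandwich $0\le u\le \rho_1$ with $\rho_1|_{\partial M}=0$ only gives continuity of $u$ \emph{at points of} $\partial M$; it does \emph{not} give continuity on any open one-sided neighborhood of $\partial M$ (nothing prevents $u$ from oscillating between $0$ and $\rho_1$ arbitrarily close to $\partial M$). Consequently there is no first ball $B\subset M$ with $\partial B$ contained in a region where $u$ is already known to be continuous. Without continuity of $u$ on $\partial B$ two things fail simultaneously: (i) Dini does not yield $M_j\to 0$; and (ii) the boundary hypothesis of Theorem~\ref{thm:a-priori-estimate} is not met, since $\liminf_{z\to\zeta}(u-(v_j-M_j))=u_*(\zeta)-u_j(\zeta)+M_j$, and the definition $M_j=\sup_{\partial B}(u_j-u)$ only guarantees $u(\zeta)-u_j(\zeta)+M_j\ge 0$, not $u_*(\zeta)-u_j(\zeta)+M_j\ge 0$. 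So the very inequality you need is equivalent to lower semicontinuity of $u$ on $\partial B$, which is what is to be proved. Even if one could somehow start, the chaining is not clearly finite: the admissible radius of $B$ is fixed by the data $(\chi,\alpha)$, while the region of established continuity might be an arbitrarily thin collar, so one cannot always place a ball with $\partial B$ inside it and $B$ reaching new territory.

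The paper circumvents this by arguing by contradiction at a point $x_0$ where the oscillation $d=\sup_{\bar M}(u-u_*)$ is attained and $u$ is minimal on $\{u-u_*=d\}$. Working in a small ball centered at $x_0$, it uses smooth approximants $v_j\searrow v=u+\delta|z|^2$ and a quantitative Hartogs-type lemma (Lemma~\ref{hartogs-v}) to manufacture, for suitable constants $c>1$, $0<a<d$, $s_0>0$, nonempty sets $\{cv+d-a+s<v_j\}$ that are relatively compact in $B$ for all $0<s<s_0$. Then Theorem~\ref{thm:a-priori-estimate} (applied to $v$ against $w_j=v_j/c$) forces a positive lower bound on $V_\alpha(\{v<v_j\})$, contradicting $v_j\searrow v$. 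The point is that the extremality of $x_0$ supplies the boundary control on $\partial B$ that your bootstrap tried to import from elsewhere.
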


By Lemma~\ref{bdd-sol}, the  function $u$ satisfies the (bounded) Dirichlet problem:
$$	\begin{aligned}
& w \in SH_{\chi,m}(\alpha) \cap L^\infty(M), \\
&(\chi + dd^c w) \wed \alpha^n = \mu,\\
&\lim_{z\to \zeta} w(z) = 0 \quad \mbox{for every } \zeta\in \d M. 
\end{aligned} $$

\begin{proof}[Proof of Lemma~\ref{lem:continuous}] We follow closely \cite[Section 2.4]{kol98}. We argue by contradiction. Suppose $u$ is not continuous, then the discontinuity of $u$ occurs at an interior point of $M$. Hence
$$ 	d = \sup_{\bar M} (u - u_*) >0,
$$
where $u_*(z) = \lim_{\epsilon \to 0} \inf_{w\in B(z, \epsilon)} u(w)$ is lower regularisation of $u$. Consider the closed nonempty set
$$ F= \{u- u_* = d\} \subset\subset M.
$$
One remark is that $u_{|_F}$ is continuous on $F$. Therefore, we may choose a point $x_0\in F$ such that
$$	u(x_0) = \min_{F} u.
$$
Choose a local coordinate chart about $x_0$, relatively compact in $M$, which is isomorphic to a small ball $B:= B(0,r) \subset \bC^n$ with origin at $z(x_0) = 0$ and of small radius.
Since $\chi \in \Gamma_m(\alpha)$, there exists $\delta>0$ such that 
\[ \label{eq:cont-eq-gamma}
	\gamma(z):=\chi(z) - \delta dd^c|z|^2 \in \Gamma_m(\alpha)
\]
for every $z \in \bar B$. Set
$$	v:= u + \delta |z|^2 \in SH_{\gamma,m}(\alpha).
$$
Since $u \geq 0$ on $M$,  $ v_*(0)= u_*(0)  \geq 0.$
Hence, we have $v\in L^\infty(\bar B)$,
which solves
\[ \label{eq:cont-eq-v}
	(\gamma + dd^c v)^m \wed\alpha^{n-m} = \mu.
\]
We also find that $$\sup_{\bar B} (v - v_*) = \sup_{\bar B}(u-u_*) = u(0) -u_*(0) =d.
$$

Let us consider the sublevel sets, for $0<s<d$, 
\[\begin{aligned}
\label{eq:level-set-near-singular}
	E(s) %&= %\{v_* \leq v - M +s\} \cap \overline{B(0,r)}. \\
		&= \{u_* \leq u - d +s\} \cap \bar B.
\end{aligned}
\]
It's clear that $E(s)$ is closed and  by our assumption $0\in E(s)$. Furthermore,
$$	E(s) \searrow E(0)= \{u_*= u - d\} \cap  \overline{B(0,r)} \ni 0.
$$
Let us denote 
$$
	\tau(s) = u(0) - \inf_{E(s)} u(z).
$$
Since $E(s)$ is decreasing, it follows that $\tau(s)$ decreasing as $s\searrow 0$. Moreover, $\tau(s)$ is bounded for $0\leq s \leq d$. We also need the following fact.

\begin{claim}
\label{cl:first-osciliation}
	$\lim_{s \to 0} \tau(s) = 0.$ 
\end{claim}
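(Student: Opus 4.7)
The plan is a monotonicity-and-semicontinuity argument, with no equation-solving involved. First I would note that $0 \in E(s)$ for every $s \in (0,d)$, so $\inf_{E(s)} u \leq u(0)$ and hence $\tau(s) \geq 0$; moreover the family $E(s)$ shrinks as $s \searrow 0$, so $\inf_{E(s)} u$ is non-decreasing, which forces $\tau(s) \searrow \tau_0$ for some $\tau_0 \geq 0$. The entire task reduces to ruling out $\tau_0 > 0$.

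Assume for contradiction $\tau_0 > 0$. For each small $s$ I would pick $z_s \in E(s)$ with $u(z_s) \leq \inf_{E(s)} u + s \leq u(0) - \tau_0 + s$; the extra slack $+s$ is needed because $u$ is merely upper semicontinuous and the infimum may not be attained. By compactness of $\bar B$, extract a subsequence $z_{s_k} \to z_\infty \in \bar B$, and set $L := \liminf_k u(z_{s_k})$, so that $L \leq u(0) - \tau_0$. Upper semicontinuity of $u$ gives $u(z_\infty) \geq \limsup_k u(z_{s_k}) \geq L$, while lower semicontinuity of $u_*$ combined with the defining inequality $u_*(z_{s_k}) \leq u(z_{s_k}) - d + s_k$ yields $u_*(z_\infty) \leq \liminf_k u_*(z_{s_k}) \leq L - d$.

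Since $u - u_* \leq d$ everywhere by the very definition of $d$, this forces $u(z_\infty) \leq u_*(z_\infty) + d \leq L$. Both one-sided bounds on $u(z_\infty)$ must therefore be equalities, so $u(z_\infty) - u_*(z_\infty) = d$, i.e.\ $z_\infty \in F$. But $u|_F$ is continuous and attains its minimum at $0$, so $u(z_\infty) \geq u(0)$, contradicting $u(z_\infty) = L \leq u(0) - \tau_0 < u(0)$. Hence $\tau_0 = 0$, which is the claim.

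I do not see a genuine obstacle in this step; the only point of care is non-attainment of the infimum, handled by the $+s$ slack. The argument is purely qualitative and uses only boundedness and upper semicontinuity of $u$, the pointwise inequality $u - u_* \leq d$, and continuity of $u|_F$. The Hessian equation for $v$, the locally conformal K\"ahler hypothesis, and the positivity \eqref{eq:cont-eq-gamma} are presumably kept in reserve for the next step, where this qualitative vanishing of $\tau(s)$ must be combined with the \emph{a priori} estimate of Theorem~\ref{thm:a-priori-estimate} to drive the main continuity proof.
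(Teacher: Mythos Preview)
Your argument is correct and follows essentially the same route as the paper's own proof: extract a convergent subsequence of near-minimizers $z_{s_k}\to z_\infty$, use the semicontinuities of $u$ and $u_*$ together with $u-u_*\le d$ to force $z_\infty\in F$, and contradict the minimality $u(0)=\min_F u$. The only cosmetic differences are that the paper argues via $\limsup_{s\to 0}\tau(s)$ rather than the monotone limit $\tau_0$, and it asserts $z_\infty\in E(0)$ directly from the nestedness of the closed sets $E(s)$ instead of your explicit semicontinuity computation.
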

\begin{proof}[Proof of Claim~\ref{cl:first-osciliation}] It is easy to see  that
$
	\liminf_{s\to 0} \tau(s) \geq \tau(0) =0.
$
It is enough to show that $\limsup_{s\to 0} \tau(s) \leq 0$.
Suppose that it is not true, i.e.,
$$	\notag\limsup_{s \to 0} \tau (s) = 2 \epsilon >0
$$ 
for some $\epsilon>0$. Then, there exists a sequence $s_j \to 0$ such that  $\tau (s_j) > \epsilon$ for every integer $j>0$. It means that
$$	\inf_{E(s_j)} u < u(0) - \epsilon. 
$$
Therefore, there is a sequence $\{z_j\}_{j\geq 1} \subset E(s_j)$ satisfying
$
	u(z_j) < u(0) - \epsilon.
$
Since any limit point $z$ of $\{z_j\}_{j\geq 1}$ belongs to $E(0)$,  $u(z) \geq u(0)$. Hence,
\[ \notag
	\limsup_{j \to \infty} u(z_j)  \leq u(0) - \epsilon	\leq u(z) - \epsilon.
\]
The upper semicontinuity of  $-u_*$ gives
\[ \notag
	\limsup_{j \to +\infty} [- u_*(z_j)] \leq - u_*(z).
\]
Hence,
$
	d = \limsup_{j \to +\infty} [u(z_j) - u_*(z_j)] 
	\leq u(z) - \epsilon - u_*(z) 
	= d -\epsilon. 
$
This is not possible and the claim follows.
\end{proof}

Take $B'= B(0,r')$ with a bit larger $r'>r$. By the approximation property in a small ball (Theorem~\ref{thm:approximation}), one can find a sequence 
\[\label{eq:approximant-v}	
SH_{\gamma,m}(\alpha) \cap C^\infty(B') \ni v_j \searrow v= u+\delta |z|^2 \quad \mbox{ in } B'.
\]
Let us fix this sequence from now on. If there is no otherwise indication then   $v$ and $v_j$'s are these functions. The following result is a variation of the Hartogs lemma (Lemma~\ref{lem:hartogs}).

\begin{lem}
\label{hartogs-v}
Let $K\subset \bar B$ be a compact set and $c \geq 1$ a constant. 
Assume that for some $t >0$,
$$	v < c \, v_* + t \quad \mbox{on } K.
$$
Then 
$$	v_j < c \, v + t \quad \mbox{on } K
$$
for $j>j_0$ with a fixed $j_0>0$ depending only on $K, t$.
\end{lem}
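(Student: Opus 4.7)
The plan is to deduce the conclusion from the classical Hartogs lemma (Lemma~\ref{lem:hartogs}) applied to the decreasing sequence $v_j \searrow v$ on $B'$, once the pointwise strict inequality $v < c v_* + t$ is upgraded to a uniform one on $K$ by compactness.

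For the upgrade, note that $v$ is upper semicontinuous while $v_*$ is lower semicontinuous, so $c v_* + t - v$ is lower semicontinuous on the compact set $K$. Since this function is strictly positive on $K$ by hypothesis, lower semicontinuity plus compactness provide a constant $\epsilon_0 > 0$ with $v + 2\epsilon_0 \leq c v_* + t$ on $K$. Using $v_* \leq v$ and $c \geq 1 > 0$ (so that $c v_* \leq c v$), this also yields
\[
v + 2\epsilon_0 \leq c v + t \quad \text{on } K.
\]

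Next, each $v_j$ is smooth and $(\gamma,m)-\alpha$-subharmonic on $B'$; taking all $\gamma_i = \alpha \in \Gamma_m(\alpha)$ in Definition~\ref{def:ga-m-sub}, one sees that $v_j + \rho'$ is $\alpha$-subharmonic for one fixed smooth $\rho'$ on $B'$ (a solution of the linear Poisson equation $\Delta_\alpha \rho' = \mathrm{tr}_\alpha \gamma$). Since $\omega = e^G \alpha$ is K\"ahler on $B'$, $\alpha$-subharmonicity is equivalent to $\omega$-subharmonicity, and the classical Hartogs lemma (Lemma~\ref{lem:hartogs}) applied to the decreasing sequence $v_j + \rho' \searrow v + \rho'$ furnishes $j_0 = j_0(K, \epsilon_0)$ with $v_j \leq v + \epsilon_0$ on $K$ for every $j \geq j_0$. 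Combining this with the uniform inequality from the first step gives, on $K$,
\[
v_j \leq v + \epsilon_0 \leq (c v + t - 2\epsilon_0) + \epsilon_0 = c v + t - \epsilon_0 < c v + t,
\]
which is the desired strict inequality. Since $c, v, v_j$ are fixed in the ambient context, $\epsilon_0$ and hence $j_0$ depend only on $K$ and $t$.

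The only non-trivial input is the Hartogs lemma in this conformally K\"ahler setting. Its classical proof---the sub-mean value property plus passage to the limit in the integral (trivial here by monotone convergence, since $v_j \searrow v$) plus a finite covering argument on $K$---goes through because, modulo the smooth corrector $\rho'$, we are working with the uniformly elliptic operator $\Delta_\omega$ on the small ball $B'$, so no new pluripotential machinery beyond what is already recorded in the paper is required.
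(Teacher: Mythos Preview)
Your argument has a genuine gap at the step where you invoke the Hartogs lemma. You claim that Lemma~\ref{lem:hartogs} applied to the decreasing sequence $v_j+\rho'\searrow v+\rho'$ yields $v_j\leq v+\epsilon_0$ on $K$ for $j\geq j_0$. But Lemma~\ref{lem:hartogs} only compares $\sup_K(v_j-f)$ to $\sup_K(v-f)$ for a \emph{continuous} function $f$; it does not give a pointwise bound $v_j-v\leq\epsilon_0$. In fact, such a pointwise bound is equivalent to uniform convergence $v_j\to v$ on $K$, which would force $v$ to be continuous on $K$---precisely what is being disproved in the larger argument where this lemma is used. (A simple one-variable counterexample: take $v$ upper semicontinuous with a jump discontinuity and $v_j$ any sequence of continuous functions decreasing to $v$; then $v_j-v$ cannot go to zero uniformly.)

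The paper's proof avoids this trap by working \emph{locally}: near each $z_0\in K$ one finds a small ball $\bar B(z_0,\epsilon')$ on which the oscillation of $v$ is controlled, so that $\sup_{\bar B(z_0,\epsilon')}v\leq c\,v+t'+t_1$ holds pointwise on the ball. Hartogs (in the form of Corollary~\ref{cor:hartogs2}) then gives $v_j\leq\sup_{\bar B(z_0,\epsilon')}v+t_1$ on that ball, which is legitimate because the right-hand side is a constant. Chaining these two inequalities yields $v_j<c\,v+t$ on the ball, and a finite cover of $K$ finishes. The essential difference is that the paper never tries to compare $v_j$ directly to the discontinuous $v$ via Hartogs; it first replaces $v$ by a locally constant upper bound, and it is the hypothesis $v<c\,v_*+t$ (with the constant $c\geq 1$) that makes such a replacement possible with the right error.
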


\begin{proof}[Proof of Lemma~\ref{hartogs-v}]
Let $z_0 \in K$. It follows from the assumption that
$
	z_0 \in \{v - c \, v_* < t\}
$
which is an open set by the upper semicontinuity of $v - c \, v_*$. Thus, 
$
	z_0 \in \{ v - c \, v_* < t'\}
$
for some $0< t'<t$. Hence,
$
	v(z_0) - c \, v_*(z_0) < t',
$
i.e., by definition
$$	\lim_{\epsilon' \to 0} \left( \sup_{B(z_0, 2 \epsilon')} v - c\, \inf_{B(z_0,2 \epsilon')} v \right)
	< t'.
$$
Therefore, for $0< t_1 = \frac{t-t'}{2}$, there exists 
$\epsilon'= \epsilon'(t_1, z_0)>0$ such that 
$$	B(z_0, 2\epsilon') \subset \{v < v_* + t\},
$$
and
$
	\sup_{B(z_0,2\epsilon')} v - c \, \inf_{B(z_0,2\epsilon')} v \leq t' + t_1.
$
It implies that
$$	\sup_{\bar B(z_0, \epsilon')} v \leq c \, v + t' + t_1 \quad \mbox{on } \bar B(z_0, \epsilon').
$$
By Hartogs' lemma for $(\gamma,1)-\alpha$-subharmonic functions (Corollary~\ref{cor:hartogs2}),
$$	v_j \leq \sup_{\bar B(z_0,\epsilon')}v + t_1
	<	c\, v + t' + 2 t_1 = c\, v + t.
$$
for $j \geq j(t_1, z_0, \epsilon')$. Because $K$ is compact it is covered by a finite many balls $B(z_j, \epsilon_j')$. Thus, the proof follows.
\end{proof}

We wish to apply Theorem~\ref{thm:a-priori-estimate} for the function $v$ and its approximants $v_j's$ defined in \eqref{eq:approximant-v} to get a contradiction.  Therefore, we need to study the value of $v$ and $v_j$'s on the boundary $\d B$. More precisely, we are going to show that there exists $c>1$, $a>0$ and $s_0$, which are independent of $j$, such that
\[\label{eq:set-construction}
	 \{ c \, v + d - a + s < v_j \}
\]
is non-empty and relatively compact in $B=B(0,r)$ for every $0< s<s_0 $. For this purpose we need to analyse the value of the function $c \, v - v_j$ on the boundary $S(0,r)$ of $B(0,r)$, with the help of Lemma~\ref{hartogs-v}.

Take two parameters $c>1$ and $0 <a<d$ which are determined later. 
We need to estimate \[ \notag c\, v +d -a - v_j\] on $S(0,r)$. Recall that $v = u+\delta |z|^2$ and
$$ \begin{aligned}
	E(s) &= \{u_* \leq u - d +s\} \cap \overline{B(0,r)} \\
		&= \{v_* \leq v - d +s\} \cap \overline{B(0,r)} .
\end{aligned} $$
We consider two cases:

{\bf Case 1:}  $z\in S(0,r) \cap E(a)$. We have 
\[ \notag \begin{aligned}
	v_*(z) &= u_*(z) + \delta r^2 \\
	&\geq u(z) - d + \delta r^2  \\
	&= (u(z) - u(0))  + (u(0) - d) + \delta r^2.
\end{aligned}
\]
As $0\in E(a)$, we have 
$
	\tau(a)\geq u(0) - u(z).
$
Combining with $u(0)-u_*(0) =d$,  we get that
$$	v_*(z) \geq v_*(0) - \tau(a) + \delta r^2.
$$
Note that $r>0$ (small) is already fixed. It implies that, for $c>1$,
$$	v(z) \leq v_*(z) + d < c \, v_*(z) + d  - (c-1)\left[v_*(0) + \delta r^2 - \tau(a)\right].
$$
Since $v - cv_*$ is upper semicontinuous, 
$$	v < c \, v_* + d -  (c-1)\left[v_*(0) + \delta r^2 - \tau(a)\right]
$$
on the closure of a neighbourhood $V$ of $S(0,r) \cap E(a)$. Applying Lemma~\ref{hartogs-v} for the compact set $\bar V\cap \bar B$ and 
\[ \label{eq:choose-c} 
t:=d- (c-1)\left[v_*(0) + \delta r^2 - \tau(a)\right]>0,\] we get
\[ \label{eq:boundary-point-1}
	v_j < c \, v + d -  (c-1)\left[v_*(0) + \delta r^2 - \tau(a)\right] 
	\quad \mbox{on } \bar V \cap \bar B,
\]
if $j>j_1( V)$.

{\bf Case 2:}  $z\in S(0,r)\setminus V$.
Since $E(a) \cap (S(0,r) \setminus V) = \emptyset$, the inequality
$$	v < v_* + d - a
$$
holds on $S(0,r)\setminus V$. Applying Lemma~\ref{hartogs-v} again, we get
\[ \label{eq:boundary-point-2}
	v_j < v + d - a < c \, v + d - a
	\quad \mbox{on } S(0,r) \setminus V
\]
for $j > j_2 (V)$.
Thus, it follows from \eqref{eq:boundary-point-1} and \eqref{eq:boundary-point-2} that
\[ \label{eq:condition-1}
	v_j < c\, v + d - \min\left\{a,  (c-1)\left[v_*(0) + \delta r^2 - \tau(a)\right]\right\}
\]
on $S(0,r)$ for $j> \max\{j_1, j_2\}$. 

Next, if there exists $c>1$ such that for $0< s_0 <a$, 
\[ \label{eq:condition-2}
	(c-1) v_*(0) < a - s_0
\]
then 
$
	c \, v_*(0) + d - (a-s_0) < v(0) \leq v_j(0).
$
It follows that the set 
$
	\{c \, v + d - a +s < v_j\} 
$
is non-empty for $0< s< s_0$.

According to Claim~\ref{cl:first-osciliation}, \eqref{eq:choose-c}, \eqref{eq:condition-1} and \eqref{eq:condition-2} we need to choose $0<a<d$, $c>1$ and $0<s_0<a$, in this order, 
such that 
$$\begin{aligned}
&	\tau(a) \leq \frac{\delta r^2}{2}; \\
&	d- (c-1)\left[v_*(0) + \delta r^2 - \tau(a)\right]>0; \\
&	(c-1) v_*(0) < a < (c-1)\left(v_*(0) + \frac{\delta r^2}{2}\right);\\
&	s_0 = \frac{a- (c-1)v_*(0)}{2}>0.
\end{aligned}$$
This is always possible. Thus, we get relatively compact subsets that satisfy \eqref{eq:set-construction}.

Now we can apply Theorem~\ref{thm:a-priori-estimate} to get that a contradiction. In fact, 
we have for $w_j := v_j/c$ and $0<s<s_0$,
\[ \notag
	\{c \, v + d - a +s < v_j\} = \{v + (d-a+s)/c <  w_j \} \subset\subset B.
\] 
It follows that
$$	d_j := \sup_{ B}(w_j - v) \geq \frac{d-a+s_0}{c}>0.
$$
We denote for $0<s< \vepsilon_0 < \vepsilon$ (as in Theorem~\ref{thm:a-priori-estimate}),
$$	U_j(\vepsilon,s):= \{v < (1-\vepsilon) w_j + \inf_{\Omega}[v- (1-\vepsilon)w_j]+s\}. 
$$
Notice that $\vepsilon_0$ depends only on $d, a, s_0$. Hence, applying 
Theorem~\ref{thm:a-priori-estimate} for $v$ in \eqref{eq:cont-eq-v} and $\gamma$ in \eqref{eq:cont-eq-gamma}, we get  that for $0< s <\vepsilon_0$, 
\[ \notag
	s \leq C (1+\|v\|_{L^\infty}) \|f\|_{L^p}^\frac{1}{m} [V_\alpha(U_j(\vepsilon, s ))],
\]
where $V_\alpha(U_j(\vepsilon,s)) = \int_{U_j(\vepsilon,s)} \alpha^n.$ Furthermore, for such a fixed $s >0$,
\[ \notag
	U_j(\vepsilon,s) \subset \{ v < w_j - d_j +  \vepsilon \|w_j\|_{L^\infty} + s\} \subset \{v < v_j\}.
\]
Since $V_\alpha(\{v<v_j\}) \to 0$ as $j \to +\infty$, we get the contradiction.
The proof of Lemma~\ref{lem:continuous} is finished.
\end{proof}

\subsection{Some applications} \label{sec:applications}

The first application is the mixed type inequality for Hessian operators with the Hermitian form. When both $\chi$ and $\omega$ are K\"ahler metrics the inequality is proved by Dinew and Lu \cite{chinh-dinew}. Since the inequality is local, we state it for a small Euclidean ball $B$ in $\bC^n.$

\begin{prop} \label{mix-ineq} Let $f,g \in L^p(B)$, $p>n/m$. Suppose that $u, v \in SH_{\chi,m}(\alpha) \cap C^0(\bar B)$ satisfy
\[
	\chi_u^m \wedge \alpha^{n-m} = f \alpha^n, \quad
	\chi_v^m \wedge \omega^{n-m} = g \alpha^n.
\]
Then, for any $0\leq k \leq m$, 
\[
	\chi_u^k \wedge \chi_v^{m-k} \wedge \alpha^{n-m} 
\geq 	f^\frac{k}{m} g^\frac{m-k}{m} \alpha^n.
\]
\end{prop}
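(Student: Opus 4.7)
The plan is to reduce the inequality to the pointwise G\aa rding inequality for symmetric polynomials in the smooth case, and then pass to the limit by approximating $u, v$ with smooth solutions of nearby classical Dirichlet problems.

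First I would settle the smooth case. Assume $u, v \in SH_{\chi,m}(\alpha) \cap C^\infty(\bar B)$ with smooth positive $f, g$. Fix $z_0 \in B$ and pick coordinates in which $\alpha(z_0) = \sum_i \ii\, dz^i \wed d\bar z^i$. Let $A$ and $\tilde A$ be the Hermitian matrices representing $\chi_u(z_0)$ and $\chi_v(z_0)$; both lie in the closure of the G\aa rding cone $\Gamma_m$. Expanding the mixed wedge product against $\alpha^n$ in terms of the polarization $S_m$ of the $m$-th elementary symmetric polynomial $\sigma_m$, one has
\[ \notag
\frac{\chi_u^k \wed \chi_v^{m-k} \wed \alpha^{n-m}}{\alpha^n}(z_0) = c_{m,n}\, S_m(\underbrace{A,\ldots,A}_{k}, \underbrace{\tilde A,\ldots,\tilde A}_{m-k}),
\]
with the same constant $c_{m,n}$ appearing in the pure terms $\chi_u^m \wed \alpha^{n-m}/\alpha^n = c_{m,n}\, \sigma_m(A) = f(z_0)$ and similarly for $g(z_0)$. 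G\aa rding's inequality for the hyperbolic polynomial $\sigma_m$ on $\overline{\Gamma_m}$ then gives
\[ \notag
S_m(\underbrace{A,\ldots,A}_k, \underbrace{\tilde A,\ldots,\tilde A}_{m-k}) \geq \sigma_m(A)^{k/m}\, \sigma_m(\tilde A)^{(m-k)/m},
\]
which translates exactly into the desired pointwise inequality of $(n,n)$-forms.

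Next, for the general continuous case, after the standard shift $\chi \mapsto \chi + C\, dd^c |z|^2$ absorbed into $u, v$, one may assume $\chi \in \Gamma_m(\alpha)$ on $\bar B$ and that $B$ is small enough for Theorem~\ref{thm:intr-1} to apply. I would approximate $f, g$ in $L^p(B)$ by smooth positive $f_j, g_j$, and approximate the continuous boundary values $u|_{\d B}, v|_{\d B}$ uniformly by smooth $\vphi_j, \psi_j$. Theorem~\ref{thm:intr-1} then produces smooth solutions $u_j, v_j \in SH_{\chi,m}(\alpha) \cap C^\infty(\bar B)$ of the classical Dirichlet problems with data $(f_j, \vphi_j)$ and $(g_j, \psi_j)$, and the stability estimate of Lemma~\ref{lem:stability1} forces $u_j \to u$ and $v_j \to v$ uniformly on $\bar B$.

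Finally I would pass to the limit in the pointwise smooth inequality
\[ \notag
\chi_{u_j}^k \wed \chi_{v_j}^{m-k} \wed \alpha^{n-m} \geq f_j^{k/m}\, g_j^{(m-k)/m}\, \alpha^n.
\]
The left-hand side converges weakly to $\chi_u^k \wed \chi_v^{m-k} \wed \alpha^{n-m}$ by the convergence of wedge products along uniformly convergent continuous $(\chi,m)-\alpha$-subharmonic potentials, available in the present setup via Remark~\ref{non-def} and \cite{KN3}. For the right-hand side, $f_j \to f$ and $g_j \to g$ in $L^p(B)$ with $p > n/m$, and H\"older's inequality then yields $f_j^{k/m} g_j^{(m-k)/m} \to f^{k/m} g^{(m-k)/m}$ in $L^1(B)$; the inequality of positive measures is preserved in the limit. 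The main obstacle is precisely the weak convergence on the left under merely uniform convergence of continuous potentials, rather than the monotone setting of Theorem~\ref{thm:BT-convergence}; a safe fallback is to sandwich each $u_j, v_j$ between smooth monotone decreasing approximants supplied by Theorem~\ref{thm:approximation} at finer scales and run a diagonal argument.
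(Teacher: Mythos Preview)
Your proof is correct and follows exactly the route sketched in the paper: the pointwise G\aa rding inequality in the smooth case, then approximation of the continuous solutions by smooth ones via Theorem~\ref{thm:intr-1} combined with the stability estimate Lemma~\ref{lem:stability1}. You have simply filled in the details the paper omits, including the weak convergence of mixed wedge products under uniform convergence of continuous potentials (which is indeed available from \cite{KN3} as you note).
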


\begin{proof} It is a simple consequence of the mixed type inequality in the smooth case, and then for continuous functions we use Theorem~\ref{thm:intr-1} and Lemma~\ref{lem:stability1}. 
\end{proof}

Thanks to this type of inequality with $\chi = \alpha =\omega$ we are able to relax the smoothness assumption on potentials in the statement of \cite[Proposition 3.16]{KN3}. In particular, the uniqueness of continuous solutions to the complex Hessian equation on compact Hermitian manifolds with strictly positive right hand side in $L^p$, $p>n/m$.

\begin{cor} Let $(X,\omega)$ be a compact Hermitian manifold. Suppose that $u,v \in SH_m(\omega)\cap C^0(X)$, $\sup_X u=\sup_X v =0$, satisfy
\[
	\omega_u^m \wed \omega^{n-m} =f \omega^n, \quad
	\omega_v^m \wed \omega^{n-m} =g \omega^n,
\]
where $f,g \in L^p(X,\omega^n)$, $p>n/m$. Assume that 
\[
	f \geq c_0>0
\]
for some constant $c_0$. Fix $0< a < \frac{1}{m+1}$. Then,
\[
	\|u -v\|_{L^\infty} \leq C \|f-g\|_{L^p}^a,	
\]
where the constant $C$ depends on $c_0,a,p, \|f\|_{L^p}, \|g\|_{L^p}, \omega, X$.
\end{cor}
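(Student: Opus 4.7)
The plan is to imitate the proof of \cite[Proposition~3.16]{KN3}, where exactly the same estimate is derived under the additional hypothesis that $u$ and $v$ are smooth. In that argument the smoothness is used at a single step, namely to invoke the pointwise mixed Hessian inequality
\[ \notag
 \omega_u^k \wed \omega_v^{m-k} \wed \omega^{n-m} \geq f^{k/m}\, g^{(m-k)/m}\, \omega^n, \qquad 0 \leq k \leq m,
\]
which in the smooth category is a classical G\aa rding-type inequality. Proposition~\ref{mix-ineq} above establishes precisely this inequality for continuous $m$-$\omega$-subharmonic potentials on a small ball; since the statement is local and both sides are well-defined positive measures, it transfers immediately to the compact Hermitian manifold $(X,\omega)$.

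With the mixed-type inequality at our disposal, the scheme of \cite{KN3} goes through verbatim. First, the uniform $L^\infty$ bounds on $u$ and $v$ in terms of $\|f\|_{L^p}, \|g\|_{L^p}, \omega, X$ are available from the main theorem of \cite{KN3}. One then fixes small parameters $\vepsilon, s > 0$, considers the sublevel sets
\[ \notag
 U(\vepsilon, s) = \{u < (1-\vepsilon) v + \inf_X[u - (1-\vepsilon)v] + s\},
\]
and applies the weak comparison principle on the Hermitian manifold $(X,\omega)$ together with the volume-capacity inequality. The positive lower bound $f \geq c_0 > 0$ is used to pass from mass estimates of $\omega_u^m \wed \omega^{n-m}$ to Lebesgue-volume estimates of the sub-level sets, while the mixed-type inequality supplies the H\"older interpolation which controls $\omega_v^m \wed \omega^{n-m}$ by a weighted combination of $\omega_u^m \wed \omega^{n-m}$ and $\|f-g\|_{L^p}$. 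Balancing $\vepsilon$ and $s$ against $\|f-g\|_{L^p}$ and optimising yields the final estimate with any exponent $a < 1/(m+1)$, the drop from $1/m$ to $1/(m+1)$ reflecting the asymmetry that only $f$ is assumed bounded below by $c_0$.

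The main obstacle, which forced the smoothness assumption in \cite{KN3}, is precisely the justification of the mixed-type inequality for merely continuous potentials; that obstacle is now removed by Proposition~\ref{mix-ineq}, itself a corollary of the Dirichlet problem solvability (Theorem~\ref{thm:intr-1}) combined with the $L^p$-stability estimate of Lemma~\ref{lem:stability1}. No other step of \cite{KN3} requires more than $C^0$ regularity of the solutions, so the conclusion follows.
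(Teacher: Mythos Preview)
Your proposal is correct and follows exactly the approach indicated in the paper: the corollary is obtained by rerunning the proof of \cite[Proposition~3.16]{KN3} with the single smoothness-dependent step (the pointwise mixed Hessian inequality) replaced by Proposition~\ref{mix-ineq} applied with $\chi=\alpha=\omega$. The paper records this in one sentence, while you have spelled out the mechanism in more detail, but the content is the same.
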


We can also show that continuous solutions obtained in \cite{KN3} are also the continuous solutions in the viscosity sense and vice versa (Lu \cite{chinh13a} proved the existence and uniqueness of viscosity solutions to the complex Hessian equation on some special compact Hermitian manifolds).  The viscosity approach for the Monge-Amp\`ere equation on K\"ahler manifolds was used by Eyssidieux, Guedj, Zeriahi \cite{EGZ11}, Wang \cite{ywang}. It seems to be interesting to investigate the viscosity method for the complex Hessian equation on compact Hermitian manifolds with or without boundary. We refer the readers to \cite[Example 18.1]{HLa}, \cite[Example 3.2.7]{HLb} for some results in this direction.

\section{Proof of Theorem~\ref{thm:intr-1}} \label{sec:proof-main-thm} 

In this section we proceed to prove Theorem~\ref{thm:intr-1}, which we used in Sections~\ref{sec:2}, \ref{sec:3}. The proof is independent of results in those sections.

Let us rewrite the equation in the PDE form as in the paper by Sz\'ekelyhidi \cite{szekelyhidi15}. Without loss of generality we fix $\Omega :=B(0, \delta) \subset B(0,1) \subset \bC^n$  for $0<\delta <<1$. Let $\alpha$ be a Hermitian metric in $B(0,1)$. Fix a smooth real $(1,1)$-form $\chi$ on $B(0,1)$.
For a $C^2$ function $u$ we consider the real $(1,1)$-form $g = \chi + \ddbar u$, i.e., $g_{i \bar j} = \chi_{i \bar j} + u_{i \bar j}$. We can define
$A^{i}_j := \alpha^{\bar p i} g_{j\bar p}$, where $\alpha^{\bar j i}$ is the inverse of $\alpha_{i\bar j}$. Then, the matrix $A^{i}_j$ is Hermitian with respect to the metric $\alpha$, i.e., $A\times [\alpha_{i\bar j}]$ is a Hermitian matrix. Denote 
$\lambda (A)=(\lambda_1,...,\lambda_n) \in \bR^n$ the $n$-tuple of eigenvalues of $A$. In other words, $\lambda$ is the eigenvector of $g_{i\bar j}$ with respect to the metric $\alpha$.  The complex Hessian equation \eqref{intro-dp} is 
\[ \notag
	F(A) = h,
\]
where 
\[ \notag
	F(A):= f(\lambda (A)) = [(S_m(\lambda)]^{1/m},
\]
and $f$ is a symmetric increasing concave function defined on the cone $\Gamma_m$. Recall that the $m$-th elementary symmetric cone is
\[ \notag
	\Gamma_m =\{\lambda \in \bR^n: S_1(\lambda) >0, ..., S_m(\lambda)>0\} .
\]

Fix $0<h\in C^\infty(\bar \Omega)$ and a smooth boundary data $\varphi\in C^\infty(\d \Omega)$. We wish to study the Dirichlet problem, seeking $u \in C^\infty(\bar \Omega)$ and $u= \vphi$ smooth on $ \partial \Omega$ such that
\[ \label{mdp}
\begin{cases}
	\lambda(A) &\in \Gamma_m, \\
	F(A) &=h,
\end{cases}
\]
where $A^i_j = \alpha^{\bar p i} (\chi_{j \bar p} + u_{j\bar p})$. 
To simplify notation, first we extend $\vphi \in C^\infty(\d\Omega)$ smoothly  to $\overline{B(0,1)}$. Upon replacing 
\[ \notag\begin{aligned}
	\tilde u := u - C(|z|^2 - \delta^2) - \vphi, \\
	\tilde\chi := \chi + \ddbar [C(|z|^2 - \delta^2) + \vphi], 
\end{aligned}
\]
with $C>0$ large enough, which does not change $g_{i\bar j}$, we may assume that 
\[ \label{eq: reduction}
	u = 0 \quad \mbox{on } \d\Omega, \quad \chi \geq \alpha \quad\mbox{on } \bar\Omega\] and $0$ is the subsolution, i.e., $\chi^m \wedge \alpha^{n-m} \geq h.$
	
Let $F^{ij}(A):= \d F/ \d a_{ij}$ be the partial derivative of $F$ at $A$ with respect to entry $a_{ij}$. We also denote
\[ \notag
	\cF := \sum_{1 \leq i \leq n} f_i, 
\]
where $f_i = \d f/ \d \lambda_i>0$ are precisely eigenvalues of $F^{ij}$ with respect to metric $\alpha$. If we choose coordinates in which $\alpha$ is orthonormal and $A$ being diagonal, then \[\notag F^{ij} = \delta_{ij}f_i,\] and thus $\cF= \sum_{i=1}^nF^{ii}$.

We will proceed in Sections~\ref{sec:c0}, \ref{sec:c1}, \ref{sec:c2} to get {\em a priori} estimates, up to second order, and using the results in Tosatti-Weinkove-Wang-Yang \cite{twwy14}, to get $C^{2,\alpha}$ interior estimates. This combined with the $C^2$-estimates at the boundary thus gives the full $C^2$ estimates up to the boundary of the real Hessian of $u$. This allows us to apply Krylov's boundary estimate \cite{krylov} to get the desired $C^{2,\alpha}(\bar\Omega)$ estimate. The higher order estimates are obtained by the bootstrapping argument,  and then using the continuity method to obtain a solution to the equation \eqref{mdp}. The uniqueness follows from the maximum principle.

\section{$C^0-$estimate}
\label{sec:c0}

Denote $B^i_j = \alpha^{\bar p i} \chi_{j\bar p}$. Then, $F(A) =h  \leq F(B)$ and $u\geq 0$ on $\d\Omega$.  Solve the linear PDE
\[\notag
\begin{cases}
	n(\chi + \ddbar u_1) \wedge \alpha^{n-1}/\alpha^n &= 0, \\
	u_1&=0 \quad \mbox{on } \d \Omega.
\end{cases}
\]
By the maximum principle we get that for some $C_0>0$,
\[ \label{uni-bound}
	0\leq  u \leq u_1 \leq C_0.
\]
As $u = u_1 = 0$ on $\d\Omega$, it also follows that for some $C_0'>0,$
\[ \label{grad-boudary}
	|\nabla u| \leq C_0' \quad \mbox{ on } \d\Omega.
\]

\section{$C^1-$ estimate} 
\label{sec:c1} 

In this section we prove the gradient estimate. Here the assumption of small radius is important. (Notice that Pli\'s \cite{plis} has claimed this estimate in the case $\chi \equiv 0$ and $\alpha$ K\"ahler for any ball but no proof was given there.) 

By \eqref{eq: reduction} we may suppose that for some $C_1>0$,
\[ \label{metric-bound}
	\frac{\delta_{ij}}{C_1} \leq \alpha_{i\bar j}\leq  \chi_{i \bar j} \leq C_1 \delta_{ij}.
\]
\[ \notag
	L:= \sup_{\Omega}|u| +1.
\]
Let $\nabla$ denote the Chern connection with respect to $\alpha$. Note that $\|z\|^2_\alpha$ is strictly plurisubharmonic  as long as $\delta$ small. More precisely, we choose $\delta$ so that
\[ \label{cdn1}
	\nabla_{\bar p} \nabla_p \|z\|_\alpha^2 = \d_{\bar p} \d_p(\alpha_{i\bar j} z^i \bar z^j) = \alpha_{p\bar p} + O(|z|) \geq \alpha_{p\bar p}/2.
\]
Denote $v = N(\sup_{z\in \Omega} \|z\|_\alpha^2 -\|z\|_\alpha^2),$ where $N>0$ is a constant to be determined later. We see that \[ 0\leq v \leq NC_1 \delta^2\quad \mbox{and}\quad -v_{p\bar p}= -\d_p\d_{\bar p} v \geq N/2C_1. \]
Consider 
\[ \notag
	G = \log \|\nabla u\|_\alpha^2 + \psi (u+v),
\]
with 
\[\notag
	\psi(t) = -\frac{1}{2} \log(1+ \frac{t}{L+ NC_1 \delta^2}).
\]
Note that a similar function was considered by Hou-Ma-Wu \cite{hou-ma-wu} and it satisfies
\[ \psi'<0, \quad \psi'' = 2\psi'^2.\]

If $G$ attains its maximum at a boundary point, then $\sup_{\Omega}|\nabla u|$ is uniformly bounded by $\sup_{\d\Omega}|\nabla u|$, up to a uniform constant.  By \eqref{grad-boudary}, the latter one is uniformly bounded. Then, we will get the $C^1$-- estimate. Therefore, we may assume that the maximum point belongs to $\Omega$. We shall derive the desired estimate by using maximum principle at this point.

We choose the orthonormal coordinates for $\alpha$ such that at this point
$\alpha_{i\bar j}$ is the identity matrix and $A^i_j$ is diagonal. All computations bellow are performed at this point and the subscripts stand for usual derivatives if there is no otherwise indication.

Differentiating $G$ twice and evaluating the equations at the maximum point we have:
\[ \label{eq-dg} 
	G_p = \frac{(\nabla_p\nabla_i u) u_{\bar i} + u_i \nabla_p \nabla_{\bar i} u}{|\nabla u|^2} 
	+ \psi' (u_p + v_p) =0;
\]
\[\label{grad-eq-ddg}
\begin{aligned}
	G_{p \bar p} 
&	= \frac{(\nabla_{\bar p} \nabla_p \nabla_{i} u) u_{\bar i} + u_i \nabla_{\bar p} \nabla_p \nabla_{\bar i} u + |\nabla_p \nabla_{i} u|^2 + |\nabla_{\bar p}\nabla_{i} u|^2}{|\nabla u|^2} \\
&	\quad - \frac{1}{|\nabla u|^4} \left | u_i \nabla_p \nabla_{\bar i} u + u_{\bar i} \nabla_p\nabla_{i}u \right|^2 \\
&	\quad + \psi'' |u_p + v_p|^2 + \psi' (u_{p\bar p} + v_{p\bar p}).
\end{aligned}
\]
Next, we have
\[ \label{com3-1}
\begin{aligned}
\nabla_{\bar p}\nabla_p \nabla_i u &= u_{p\bar p i} - (\d_{\bar p}\Gamma_{pi}^q) u_q - \Gamma_{pi}^q u_{q \bar p}\\
&=g_{p\bar p i} - \chi_{p\bar p i} - (\d_{\bar p}\Gamma_{pi}^q) u_q - \Gamma_{pi}^p \lambda_p + \Gamma_{pi}^q \chi_{q\bar p},
\end{aligned}
\]
where we used that $g_{i\bar j}$ is diagonal. Similarly, 
\[ \notag
\begin{aligned}
\nabla_{\bar p}\nabla_p \nabla_{\bar i} u &= u_{p\bar p \bar i} - \overline{\Gamma_{pi}^q} u_{p\bar q} \\
&=	g_{p\bar p \bar i} - \chi_{p\bar p \bar i} - \overline{\Gamma_{pi}^p} \lambda_p +\overline{\Gamma_{pi}^q} \chi_{p\bar q}.
\end{aligned}
\]
Moreover, by applying the covariant derivatives to  the equation we get
\[\notag
F^{pp} \nabla_i g_{p \bar p} = h_i.
\]
As $\nabla_i g_{p\bar p} = g_{p\bar p i} - \Gamma_{ip}^m g_{m\bar p}$, we have
$
	F^{pp} g_{p\bar p i} = h_i + F^{pp} \Gamma_{ip}^p \lambda_p.
$
Combining with \eqref{com3-1} we get that
\[ \label{grad-eq-t1a}
\begin{aligned}
	F^{pp} (\nabla_{\bar p}\nabla_p \nabla_i u) u_{\bar i} 
&=	h_i u_{\bar i}+  F^{pp}(\Gamma_{ip}^p-\Gamma_{pi}^p) \lambda_p u_{\bar i} - F^{pp} \chi_{p\bar p i}u_{\bar i} \\
&\quad- F^{pp}(\d_{\bar p}\Gamma_{pi}^q) u_qu_{\bar i} +F^{pp} \Gamma_{pi}^q \chi_{q\bar p} u_{\bar i}.
\end{aligned}
\]
Similarly,
\[ \label{grad-eq-t1b}
\begin{aligned}	
	F^{pp} (\nabla_{\bar p}\nabla_p \nabla_{\bar i} u) u_{i} 
&=	h_{\bar i} u_i + F^{pp}(\overline{\Gamma_{ip}^p}-\overline{\Gamma_{pi}^p}) \lambda_p u_i \\
&\quad - F^{pp}\chi_{p\bar p \bar i} u_i - F^{pp} \overline{\Gamma_{pi}^q} \chi_{p\bar q} u_i
\end{aligned}
\]
Let's denote
\[ \notag
	R:=\sup_{p,q, i} |\d_{\bar p} \Gamma_{pi}^q|, \quad
	T:= \sup_{i,p} |\Gamma_{ip}^p-\Gamma_{pi}^p|,
\]
which are  bounds for the curvature and torsion of metric $\alpha$ on $\bar B(0,1)$.

It follows from \eqref{grad-eq-t1a} and \eqref{grad-eq-t1b} that, for $K:= |\nabla u|^2$ large enough, 
\[ \label{grad-eq-t12}
\begin{aligned}
&	\frac{1}{K}F^{pp} [(\nabla_{\bar p}\nabla_p \nabla_i u) u_{\bar i} + (\nabla_{\bar p}\nabla_p \nabla_{\bar i} u) u_{i} ] \\
&\geq -C/K^{1/2} - F^{pp}|\lambda_p| T/K^{1/2} - C \cF/K^{1/2} - R\cF \\
&\geq - C - \frac{1}{2K} F^{pp} \lambda_p^2 - (R+T^2+1) \cF,
\end{aligned}
\]
where in the last inequality we used
\[\notag
	\frac{|\lambda_p|T}{K^{1/2}} \leq \frac{1}{2}(\frac{\lambda_p^2}{K} +T^2).
\]
By the equation \eqref{eq-dg} 
\[ \label{grad-eq-t3}
	- \frac{1}{K^2} \left | u_i \nabla_p \nabla_{\bar i} u + u_{\bar i} \nabla_p\nabla_{i}u \right|^2 =- \psi'^2|u_p+v_p|^2.
\]
By $\sum_{p=1}^n f_p\lambda_p =h$ and $\chi_{p\bar p}\geq 1$ we have
\[ \label{grad-eq-t4}
\begin{aligned}
\psi' F^{pp}(u_{p\bar p} + v_{p\bar p}) 
&= \psi' F^{pp} \lambda_p + |\psi'| F^{pp}[\chi_{p\bar p} + (-v_{p\bar p})] \\
&\geq -C + |\psi'| [1 +N/2C_1] \cF.
\end{aligned}
\]
We also note that
\[ \label{grad-eq-t1c}
\begin{aligned}
\frac{1}{K}	F^{pp}|\nabla_{\bar p} \nabla_i u|^2 
&= \frac{1}{K} F^{pp}|g_{i\bar p}-\chi_{i\bar p}|^2 \\
&\geq \frac{1}{2K} F^{pp} |\lambda_p|^2 -\frac{1}{K} F^{pp} |\chi_{i\bar p}|^2\\
&\geq \frac{1}{2K} F^{pp} |\lambda_p|^2 - \frac{C\cF}{K}.
\end{aligned}
\]
Therefore, combining \eqref{grad-eq-ddg}, \eqref{grad-eq-t12}, \eqref{grad-eq-t3}, \eqref{grad-eq-t4} and \eqref{grad-eq-t1c}, we get that
\[ \notag
\begin{aligned}
	0\geq F^{pp} G_{p\bar p} 
&\geq - C- \frac{1}{2K} F^{pp} |\lambda_p|^2 - (R+T^2+1) \cF \\
&\quad + \frac{1}{2K} F^{pp} |\lambda_p|^2 - \frac{C\cF}{K} \\
&\quad+ (\psi'' - \psi'^2) F^{pp}|u_p+v_p|^2 \\
&\quad + |\psi'| [1 +N/2C_1] \cF.
\end{aligned}
\]
We may assume that $K>C$. As $\psi'' = 2 \psi'^2$, we simplify the inequality:
\[ \label{max-ine1}
	0\geq \psi'^2F^{pp} |u_p+v_p|^2+|\psi'| (1 + N/2C_1) \cF - (R+T^2+2)\cF -C.
\]

Now we decrease further $\delta$ (if necessary)  so that $16(R+T^2+3)C_1^2\delta^2 <1$. Hence, we can choose $N>1$ satisfying
$$
 \frac{N}{8(LC_1+NC_1^2\delta^2)} 
	\geq R+T^2+3.
$$
On the interval $t\in [0, L+ NC_1\delta^2]$, we have 
$
	|\psi'| \geq 1/4(L+NC_1\delta^2).
$
Hence, 
\[ \label{max-ine2}
	\frac{N|\psi'|}{2C_1} \geq R+T^2+3.
\]
It follows from \eqref{max-ine1} and \eqref{max-ine2} that
\[ \label{key-ineqs}
	F^{pp}|u_p+ v_p|^2 + \cF \leq C,
\]
where $C= C(A, C_1, L)$.
We shall  use \eqref{key-ineqs} to prove that
\[ \notag
 	F^{ii} = \frac{S_m^{-1+1/m} (\lambda)}{m} S_{m-1;i}(\lambda) \geq c>0
\]
for some uniform $c$ and for every $1 \leq i \leq n$. Indeed, since
\[ \notag
	\cF = \frac{S_m^{-1+1/m} (\lambda)}{m}\sum_{i =1}^n S_{m-1;i}(\lambda) \leq C,
\]
we have $S_{m-1;i}(\lambda) \leq C$ for every $i =1,...,n$. 
By the inequality \cite[Proposition 2.1 (4)]{wang96}
\[ \notag
	\prod_{i=1}^n S_{m-1;i}(\lambda) \geq C_{n,m} [S_{m}(\lambda)]^{n(m-1)/m},
\]
where $C_{n,m}>0$ depends only on $n,m$. Thus, the desired lower bound for each $S_{m-1;i}(\lambda)$ follows from the equation $(S_m(\lambda))^\frac{1}{m} = h>0$ and the upper bound for $S_{m-1;i}(\lambda)$. We also get the lower bound for each $F^{ii}$. Finally, from 
\[ F^{pp} |u_p + v_p|^2 \leq C \notag
\]
we  easily get the a priori gradient bound, $|\nabla u| \leq C$.

\section{$C^2-$ estimates} 
\label{sec:c2}

In this section we prove the following estimate
%\begin{thm} We have
\[\label{c2-global}
	\sup_{\bar \Omega} |\ii\d\bar\d u| \leq C,
\]
where $C$ depends on $\|u\|_{L^\infty(\bar \Omega)}$, $\|\nabla u\|_{L^\infty(\bar \Omega)}$ and the given data.
%\end{thm}

If $\sup_{\bar \Omega} |\d\bar\d u|$ is attained at an interior point of $\Omega$, then by a result of Sz\'ekelyhidi \cite{szekelyhidi15} (see also Zhang \cite{dzhang15}) we have for some $C>0$, which depends on $\|u\|_\infty$ and  the given data,
\[ \notag
	|\ii\d\bar\d u| \leq C(1+ \sup_{\bar \Omega} |\nabla u|^2).
\]
Therefore, we only need to consider the case when the maximum point  $P$ is on the boundary. At this point, following Boucksom \cite{boucksom},  we choose a local half-ball coordinate $U$ such that $z(P)=0$ and $r$ is the defining function for $U \cap \d\Omega$.  Then, $U \cap \Omega = \{r \leq 0\} \cap \Omega$. We choose the   coordinates $z=(z_1,...,z_n)$, centred at $0$, such that  the positive
$x_n$ axis is the interior normal direction, and near $0$ the graph $U \cap\d \Omega$ is written as
\[ \label{def-funct}
	r = -x_n + \sum_{j,k=1}^n a_{jk} z_j \bar z_k + O(|z|^3) =0.
\] 
We refer the reader to the expository paper of Boucksom \cite{boucksom} for more details on this coordinate. 

Recall that $\lambda_i$'s are eigenvalue functions of matrix $A$, i.e.
\[ \notag
 \lambda(A)= (\lambda_1, ...,\lambda_n).
\]
We often represent quantities in the orthonormal coordinates $(w^1,..,w^n)$ in which $\alpha_{i\bar j}$ is the identity and $A^i_j$ is diagonal. The following equations will help us in computing quantities in the orthonormal coordinates once we know theirs forms in the fixed coordinates $(z^1,...,z^n)$. 

Suppose at a given point we change the coordinates, $w = Xz$, i.e. 
\[ \notag
	w^i = x_{ik}z^k, \quad x_{ik} \in \bC,
\]
and we obtain at that point \[ \notag 
\begin{aligned} 
&\alpha_{i\bar j}\ii dz^i\wedge d\bar z^j = \sum_{a=1}^n \ii dw^a\wedge d\bar w^a;\\ 
& g_{i\bar j} \ii dz^i\wedge d\bar z^j= \sum_{a=1}^n\lambda_a \ii dw^a \wedge d\bar w^a. \end{aligned}
\] It follows that
\[ \notag
	\alpha_{i\bar j} = x_{ai} \overline{x_{aj}}, \quad
	g_{i\bar j} = x_{ai} \lambda_a  \overline{x_{aj}}.
\]
It is clear that for every $1\leq i \leq n,$ 
\[ \sum_{a=1}^n |x_{ai}|^2 = \alpha_{i\bar i} < C.\notag\] Moreover, the inverse of matrix $\alpha_{i\bar j}$ is given by the formula  
\[\alpha^{\bar j i} = \overline{x^{ja}} x^{ia},\notag\]
where $x^{ia}$ is the inverse of $X$. Hence,
\[ \notag
	A^i_j = \alpha^{\bar pi} g_{j\bar p} = x^{ia} \lambda_a x_{aj}.
\]
In $\bC^{n\times n}$ if we change coordinates $B= X A X^{-1} = (b_{kl})$, then at the considered point $B$ is a diagonal matrix $(\lambda_1,...,\lambda_n)$. Therefore, $\lambda_a$ is smooth at the diagonal matrix $B$ (see e.g. \cite{sprucknote}) and 
\[\frac{\d F}{\d b_{kl}} = \frac{\d f}{\d \lambda_a} \cdot \frac{\d \lambda_a}{\d b_{kl}} = f_a \delta_{ak} \delta_{al};\notag\]
\[ \notag
 \frac{\d F}{\d a_{ij}} = \frac{\d F}{\d b_{kl}} \frac{\d b_{kl}}{\d a_{ij}}
= \sum_{k,l}\sum_{a=1}^n f_a \delta_{ak} \delta_{al} x_{ki} x^{jl}
= x^{ja} f_a x_{ai}.
\]
An easy consequence  from the above formula is that
\[ \notag
	L^{\bar pj}:= F^{ij}\alpha^{\bar p i} = \overline{x^{pa}} f_a x^{ja},
\]
where  $F^{ij} = \d F/\d a_{ij}$ at $A^{i}_j$, is a positive definite Hermitian matrix.

To derive the desired {\em a priori} estimate we will use the linearised elliptic operator, for a smooth function $w$, 
\[ \notag
	L w:= L^{\bar p j} \d_j \d_{\bar p} w = F^{ij} \alpha^{\bar p i}\d_j \d_{\bar p} w,
\] 
It is worth to recall that
\[ \notag
	\cF := \sum_{1 \leq i \leq n} f_i
\]
where $f_i = \d f/ \d \lambda_i$ are eigenvalues of $F^{ij}$ with respect to metric $\alpha$.

Following Guan \cite{guan98} (c.f. Boucksom \cite{boucksom}) we construct the important barrier function.

\begin{lem}
\label{bfct}
Set $b= u -  r - \mu r^2$. Then, there exist  constants $\mu>0$ and $\tau>0$ such that 
\[ \notag
	L b \leq -\frac{1}{2} \cF
\]
and $b \geq 0$ on the half-ball coordinate $U$ of  radius $|r|<\tau$.
\end{lem}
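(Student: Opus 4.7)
The plan is to compute $Lb$ term by term and then balance the resulting contributions by appropriate choice of $\mu$ and $\tau$. First I would verify the nonnegativity $b \geq 0$: rewrite $b = u - r(1+\mu r)$ and use $u \geq 0$ on $\overline{\Omega}$ from the $C^0$-bound \eqref{uni-bound}, $r \leq 0$ on $U \cap \overline{\Omega}$, and $1+\mu r \geq 1-\mu\tau > 0$ whenever $\mu\tau < 1$. Together these give $-r(1+\mu r) \geq 0$, hence $b \geq u \geq 0$ on the half-ball.

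Next, expand
\[
  Lb = Lu - (1+2\mu r)\,Lr - 2\mu L^{\bar p j} r_j r_{\bar p}.
\]
For $Lu$: substitute $u_{j\bar p} = g_{j\bar p} - \chi_{j\bar p}$ and apply Euler's relation $F^{ij}\alpha^{\bar p i} g_{j\bar p} = \sum_a f_a \lambda_a = F(A) = h$ (valid since $f = S_m^{1/m}$ is $1$-homogeneous in $\lambda$), giving $Lu = h - F^{ij}\alpha^{\bar p i} \chi_{j\bar p}$. The reduction $\chi \geq \alpha$ from \eqref{eq: reduction}, combined with the positivity of $(F^{ij})$ whose trace equals $\mathcal{F}$, yields $F^{ij}\alpha^{\bar p i}\chi_{j\bar p} \geq \mathcal{F}$, so $Lu \leq h - \mathcal{F}$. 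For the middle term, $(r_{j\bar p})$ is uniformly bounded on $\overline U$ and $|1+2\mu r| \leq 1+2\mu\tau$. For the last term, the Hermitian matrix $(L^{\bar p j})=(F^{ij}\alpha^{\bar p i})$ is positive-definite with smallest eigenvalue at least $\min_i f_i \geq c > 0$ (obtained at the end of Section~\ref{sec:c1}), and $|\nabla r|^2_\alpha \geq c_0 > 0$ on $U$ because $r_n = -\tfrac12 + O(|z|)$ along the normal direction; hence $L^{\bar p j} r_j r_{\bar p} \geq c c_0 > 0$.

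Combining, we arrive at an estimate of the shape
$Lb \leq h + \bigl[-1 + (1+2\mu\tau)C_r\bigr]\mathcal{F} - 2\mu c c_0$,
where $C_r$ bounds the spectral norm of $(r_{j\bar p})$ on $U$. The balancing step is: first pick $\tau$ small enough that the bracket is at most $-\tfrac12$, then pick $\mu$ large enough that the residual constant $h - 2\mu c c_0$ is non-positive; shrinking $\tau$ once more if necessary to keep the bracket constraint compatible with $\mu\tau < 1$ (for the positivity of $b$). Together these choices give $Lb \leq -\mathcal{F}/2$ on $U \cap \{|r| \leq \tau\}$.

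The main obstacle is precisely the final balancing: $C_r$ comes from the second fundamental form of $\partial\Omega$ and is not a priori less than $1/2$, so one cannot make the bracketed coefficient of $\mathcal{F}$ smaller than $-1/2$ by shrinking $\tau$ alone. The resolution leans on two ingredients working together: the genuinely negative (not merely non-positive) contribution $-2\mu c c_0$ from the rank-one term $2\mu L^{\bar p j} r_j r_{\bar p}$, driven by the positive-definiteness of $L^{\bar p j}$ and the gradient lower bound $|\nabla r|^2_\alpha \geq c_0$, together with a judicious choice of $r$ (within its germ at the boundary point) that keeps $C_r$ under control. With both at hand the parameters can be tuned in the order above to secure the claimed inequality.
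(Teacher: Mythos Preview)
Your argument has a genuine gap at the step where you bound the rank-one term $2\mu L^{\bar p j}r_jr_{\bar p}$ from below by $2\mu c c_0$. The inequality $\min_i f_i \geq c>0$ that you cite from Section~\ref{sec:c1} is \emph{not} a global bound: it was derived there only at the maximum point of the auxiliary function $G$, as a consequence of the pointwise inequality $\cF\leq C$ (equation \eqref{key-ineqs}). Globally no such bound is available; for $f=S_m^{1/m}$ one easily produces $\lambda\in\Gamma_m$ with one $f_i$ arbitrarily small (e.g.\ let one eigenvalue tend to $+\infty$ while the others stay bounded). Without a uniform lower bound on the smallest eigenvalue of $(L^{\bar p j})$, the term $-2\mu L^{\bar p j}r_jr_{\bar p}$ need not dominate $h$ for any fixed $\mu$, and your balancing step collapses.

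The paper gets around this by not separating the $\chi$ and $r_jr_{\bar p}$ contributions. In orthonormal coordinates it keeps the combination $\sum_i f_i(\chi_{i\bar i}/4+2\mu r_i^2)$ intact and applies G\aa rding's inequality to obtain
\[
\sum_i f_i\Bigl(\tfrac{\chi_{i\bar i}}{4}+2\mu r_i^2\Bigr)\;\geq\;c_0\,\mu^{1/m},
\]
using $\chi_{i\bar i}\geq 1$ and $|\nabla r|^2\geq c_1>0$. This produces a $\mu$-dependent negative term without ever invoking a lower bound on individual $f_i$'s, and choosing $\mu$ large then absorbs $h$. A secondary point: your treatment of the middle term via a two-sided bound $|Lr|\leq C_r\cF$ forces you into the awkward ``choose $r$ to control $C_r$'' discussion. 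The paper instead uses that $r$ is plurisubharmonic on a small enough $U$, so $Lr\geq 0$; then $-(1+2\mu r)Lr= -Lr+2\mu|r|Lr$ has only the small bad piece $2\mu|r|Lr\leq 2C\mu\tau\cF$, which is absorbed by a fraction of the $-\cF$ coming from $\chi\geq\alpha$.
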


\begin{proof} By shrinking the radius of the half coordinate ball $U$, we have $r$ is plurisubharmonic  in $U$. Then
\[ \label{bar-lem-1}
	0 \leq L r = L^{\bar p j} r_{j\bar p}\leq C \cF.
\]
As $b_{j\bar p}:= \d_j\d_{\bar p} b$ is a Hermitian matrix and $\alpha_{i\bar j}>0$, we can represent
\[\notag	
	b_{j\bar p} = x_{aj} \gamma_a \overline{x_{ap}},
\]
where $\gamma_a \in \bR$ are eigenvalues of $b_{i\bar j}$ with respect to  the matrix $\alpha_{i\bar j}$. Hence,
\[ \notag
	Lb = \sum_{a=1}^n f_a\gamma_a 
\]
which does not depend on the choice of coordinates of $\alpha$. Thus, to verify the desired inequality at a given point, we compute, at this point, in orthonormal coordinates of $\alpha$ and $A^i_j = \alpha^{\bar p i}(\chi_{j\bar p} + u_{j\bar p})= (\lambda_1,...,\lambda_n)$  diagonal. So is $L^{\bar p i} = (f_1,...,f_n)$. 

We now compute, as $r\leq 0$, 
\begin{align} 
Lb 
&= 	L^{\bar i i} u_{i \bar i} -  Lr -2 \mu r Lr -   2 \mu L^{\bar i i} |r_{i}|^2  \notag\\ 
&=	L^{\bar i i} g_{i \bar i} - L^{\bar i i} \chi_{i \bar i} -  Lr 
+ 2 \mu |r| Lr - 2 \mu L^{\bar i i} r_{i}^2  \notag\\ 
&=	\sum_{i=1}^n f_i \lambda_i  + (2 \mu |r|-1) Lr - 
L^{\bar i i} (\chi_{i\bar i} + 2 \mu r_i^2). \label{eq-bar}
\end{align}
We have
$	\sum_{i=1}^n f_i \lambda_i = h $ and 
\[ \label{bar-lem-t2}
	(2 \mu |r|-1) Lr \leq 2C\mu |r| \cF.
\]
Notice that $\chi_{i\bar i} \geq \alpha_{i\bar i} =1$. The last negative term \eqref{eq-bar} will be divided into three parts. First
\[
	- L^{\bar i i} \chi_{i\bar i}/2 \leq - \cF/2. \notag
\]
Next, we use $-L^{\bar i i} \chi_{i\bar i}/4$ to absorb the right hand side of \eqref{bar-lem-t2} (i.e. the second term in \eqref{eq-bar}), provided that
\[ \notag
	C\mu |r| \leq 1/8.
\]
We will use the part $- L^{\bar i i}(\frac{\chi_{i\bar i}}{4} +2 \mu r_i^2)$ for $\mu$ large to absorb the first term in \eqref{eq-bar}. We claim that 
\[ \label{bar-lem-t3}
	L^{\bar i i} (\frac{\chi_{i\bar i}}{4} +2 \mu r_i^2) \geq c_0 \mu^\frac{1}{m}
\]
for some uniform $c_0>0$. In fact, if $m=1$, then it is obvious. We may assume that $m>1$. Observe that $|\nabla r|>0$ at $0$, then decrease $\tau$ if necessary, we have  
\[ \label{bar-lem-2}
	|\nabla r|^2 = \sum_{i=1}^n r_i^2 > c_1
\]
for a uniform $c_1>0$ on $U$. By G\aa rding's inequality \cite[Theorem~5]{garding59} with $\lambda' = (\frac{\chi_{1\bar 1}}{4} + 2\mu r_1^2, ..., \frac{\chi_{n\bar n}}{4} +2 \mu r_{n}^2\,)$ and $S_{m-1;i}(\lambda)$, we have
\begin{align*}
	\sum_{i=1}^n (\frac{\chi_{i \bar i}}{4} + 2\mu |r_i|^2)S_{m-1;i}(\lambda) 
&\geq 	m [S_{m}(\chi_{i\bar i}/4 +2 \mu |r_i|^2)]^\frac{1}{m} [S_m(\lambda)]^\frac{m-1}{m} \\
&\geq \frac{m \mu^\frac{1}{m} h^{m-1}}{4^\frac{m-1}{m}} \left(\sum_{i=1}^n 2|r_i|^2 \prod_{k \neq i} \chi_{k\bar k}\right)^\frac{1}{m} \\
&\geq \frac{m \mu^\frac{1}{m} h^{m-1}}{4^\frac{m-1}{m}} \left(\sum_{i=1}^n 2|r_i|^2\right)^\frac{1}{m} \\
& \geq \frac{2^\frac{1}{m}m \mu^\frac{1}{m} h^{m-1}}{4^\frac{m-1}{m}}  c_1^\frac{1}{m},
\end{align*}
where we used $\chi_{k\bar k}\geq 1$ for the third inequality and used \eqref{bar-lem-2} for the last inequality. 

To obtain the inequality \eqref{bar-lem-t3}, we only need to notice that 
\[\notag
	L^{\bar i i} = f_i = \frac{[S_{m}(\lambda)]^{(1-m)/m}S_{m-1;i}(\lambda)}{m}. 
\]
Therefore, the uniform constant we get is $c_0 = C(c_1, h,m)>0$.
So we can choose $\mu>0$ large enough to get the desired inequality for $Lb$. 

It remains to check that $b\geq 0$. Since $u \geq 0$ it is enough to have that
\[\notag
	-  r - \mu r^2 = |r|(1 - \mu |r|) \geq 0.
\]
This easily follows by further decreasing (if necessary) the radius $\tau$ of the half-ball coordinate.
\end{proof}

We are ready to prove the second order estimates for $u$ at the boundary point $0 \in \d \Omega$. Following Caffarelli, Nirenberg, Kohn, Spruck \cite{CKNS82} (c.f \cite{boucksom}) we set \[ t_1 = x_1, t_2 = y_1, ..., t_{2n-2}= y_{n-1}, t_{2n-1} = y_{n}, t_{2n} = x_{n}.\notag\] 
Let $D_1, ..., D_{2n}$ be the dual basis of $dt_1, ..., dt_{2n-1}, - dr, \notag$ then 
\[ \notag
	D_j = \frac{\d}{\d t_j} - \frac{r_{t_j}}{r_{x_n}} \frac{\d}{\d x_n} \quad \mbox{ for }
	\quad 1\leq j < 2n,
\]
and \[ D_{2n} = - \frac{1}{r_{x_n}} \frac{\d}{\d x_n}. \notag\]

Because $u = 0$ on $\d \Omega$, we can write, for some positive function $\sigma,$
\[\notag
	u = \sigma r.
\]
Then,
\[ \label{sigma-0}
	\d u/ \d x_n(0) = - \sigma(0).
\]
So, $|\sigma(0)|<C$. Moreover, for $1 \leq j \leq 2n-1$,
\[ \notag
	\frac{\d^2 u}{\d t_i \d t_j}(0) = \sigma(0) \frac{\d^2 r}{\d t_i \d t_j}(0)
\]
and hence tangential-tangential derivatives $|\d_{t_i}\d_{t_j} u|$ are under control.

Next, we bound normal-tangential derivatives:
\begin{thm} \label{nt-bound}
We have 
\[ \notag
	\left|\frac{\d^2 u}{\d t_j \d x_n}(0)\right| \leq C \quad \mbox{for}\quad j \leq 2n-1,
\]
where $C$ depends on $u, |\nabla u|$ and the given datum.
\end{thm}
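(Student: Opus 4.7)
\medskip

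The strategy is the classical CKNS-style barrier construction, using the function $b$ from Lemma~\ref{bfct} together with the pure tangential data $u \equiv 0$ on $\d\Omega$ (after the reduction \eqref{eq: reduction}). Fix $1 \le j \le 2n-1$. Since $D_j$ is tangent to $\d\Omega$ and $u$ vanishes on $\d\Omega \cap U$, we have $D_j u \equiv 0$ on $\d\Omega \cap U$, and I will bound $\d_{x_n}(D_j u)(0)$ via a Hopf-type argument.

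\textbf{Step 1 (bounding $L(D_j u)$).} Differentiate the equation $F(A) = h$ along the real coordinate vector field $\xi = \d/\d t_j$; because $\xi$ commutes with $\d$ and $\bar\d$,
\[\notag
  L(\xi u) \;=\; \xi h \;-\; F^{ij}\alpha^{\bar p i}\xi(\chi_{j\bar p}) \;-\; F^{ij}\xi(\alpha^{\bar p i})(\chi_{j\bar p}+u_{j\bar p}).
\]
The first term is bounded, the second is $O(\cF)$, and the third is controlled by working in coordinates that diagonalise $A$ with respect to $\alpha$: the worst piece reduces to $\sum_i f_i\lambda_i = h$ multiplied by bounded curvature/torsion coefficients. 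An identical analysis for $\xi = \d/\d x_n$ yields $|L(\d_{x_n} u)| \le C(1+\cF)$, hence $|L(D_j u)| \le C_1(1+\cF)$.

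\textbf{Step 2 (auxiliary function and maximum principle).} On the half-ball neighbourhood $U$ of Lemma~\ref{bfct}, define
\[\notag
  \Psi \;:=\; A\, b \;+\; B|z|^2 \;\pm\; D_j u,
\]
and choose the constants in the order $B$, then $A$. First pick $B$ so that $B|z|^2 \ge \sup_{\bar\Omega}|D_j u|$ on $\d U \cap \bar\Omega$ (using the $C^1$-bound from Section~\ref{sec:c1}); then $\Psi \ge 0$ on $\d U \cap \bar\Omega$ and, because $D_j u$ vanishes on $\d\Omega \cap U$ and $b,|z|^2\ge 0$, also on $\d\Omega \cap U$. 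Combining Lemma~\ref{bfct} ($Lb\le -\cF/2$), Step 1, and the lower bound $\cF \ge c>0$ proved in Section~\ref{sec:c1},
\[\notag
  L\Psi \;\le\; -\tfrac{A}{2}\cF \;+\; B\cF \;+\; C_1(1+\cF) \;\le\; 0
\]
for $A$ sufficiently large depending on $B$. The maximum principle for the elliptic operator $L$ then forces $\Psi \ge 0$ throughout $U \cap \Omega$.

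\textbf{Step 3 (Hopf-type conclusion).} One has $\Psi(0)=0$, so $\Psi$ attains its minimum over $\overline{U\cap\Omega}$ at the boundary point $0$. Differentiating along the interior normal $\d/\d x_n$ gives $\d_{x_n}\Psi(0)\ge 0$, i.e.
\[\notag
  \pm\, \d_{x_n}(D_j u)(0) \;\ge\; -A\,\d_{x_n}b(0).
\]
From $b = u - r - \mu r^2$ and the normalisation $r_{x_n}(0)=-1$, $r_{t_j}(0)=0$ in \eqref{def-funct}, the right-hand side is bounded by known quantities; hence $|\d_{x_n}(D_j u)(0)| \le C$. Expanding $D_j=\d_{t_j}-(r_{t_j}/r_{x_n})\d_{x_n}$ and using $r_{t_j}(0)=0$ together with the $C^1$-estimate converts this into the claimed bound on $u_{t_j x_n}(0)$.

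The delicate point is Step 1: in the Hermitian setting the torsion of $\alpha$ and the derivatives of $\alpha^{\bar p i}$ generate extra terms of the form $F^{ij}\xi(\alpha^{\bar p i})g_{j\bar p}$, which a priori carry the unbounded factor $\lambda_p$. The identity $\sum_i f_i\lambda_i = h$ (the equation itself, contracted against $F^{ij}$) is what turns these into the admissible bound $C(1+\cF)$, and it is precisely the use of this identity — unavailable in the purely linear theory — that makes the Hermitian case work here exactly as in the K\"ahler one.
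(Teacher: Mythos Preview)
Your Step~1 contains a genuine gap. You claim $|L(D_j u)|\le C_1(1+\cF)$, asserting that the term $F^{ij}\xi(\alpha^{\bar p i})g_{j\bar p}$ ``reduces to $\sum_i f_i\lambda_i=h$ multiplied by bounded curvature/torsion coefficients.'' This is false: in coordinates where $\alpha$ is orthonormal and $A$ diagonal, that term becomes $\sum_i \xi(\alpha^{\bar i i})\,f_i\lambda_i$, and the coefficients $\xi(\alpha^{\bar i i})$ depend on $i$. The identity $\sum_i f_i\lambda_i=h$ therefore does \emph{not} apply; one only gets $|F^{ij}\xi(\alpha^{\bar p i})g_{j\bar p}|\le C\sum_i f_i|\lambda_i| =: C\,\cF|\lambda|$, exactly as the paper records in Lemma~\ref{i2-main} and \eqref{use-diff-eq}. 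A second source of $\cF|\lambda|$ (and of an uncontrolled first-derivative quadratic form) comes from the variable coefficient $\tilde r=-r_{t_j}/r_{x_n}$ in $D_j$, which produces the cross terms $L^{\bar p j}\tilde r_j(u_n+u_{\bar n})_{\bar p}$ (the $I_2$ of \eqref{cp-f1}); these again yield $\cF|\lambda|$ and a term $L^{\bar p j}\bar V_j V_{\bar p}$.

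Because $Lb\le -\tfrac12\cF$ only dominates $\cF$, your barrier $\Psi=A\,b+B|z|^2\pm D_ju$ cannot absorb $\cF|\lambda|$; no choice of $A,B$ makes $L\Psi\le 0$. This is precisely why the paper follows Guan and augments the auxiliary function with the quadratic gradient terms $-\sum_{k<n}|u_k|^2-|u_n-u_{\bar n}|^2$: via Lemma~\ref{est-sum-term} these generate the good contribution $-c\sum_{i\ne s}f_i\lambda_i^2$, and Lemma~\ref{el-lem} then shows $\cF|\lambda|\le\varepsilon\sum_{i\ne s}f_i\lambda_i^2+C_\varepsilon\cF$, which closes the argument. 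The $-|u_n-u_{\bar n}|^2$ piece is needed separately to cancel the $L^{\bar p j}\bar V_jV_{\bar p}$ term coming from $I_2$. Your simpler CKNS barrier would work if $\alpha$ were the flat metric (so $\xi(\alpha^{\bar p i})\equiv 0$), but not in the Hermitian setting of the paper.
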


\begin{proof} Without loss of generality we fix $j =1$ and we shall show that 
\[ \notag
	|D_{2n} D_1 u(0)| \leq C.
\]
The derivative $D_1$, acting on functions, is equal to 
\[ \notag
	\partial_1 + \partial_{\bar 1} + \tilde r (\partial_n + \partial_{\bar n}),
\]
where $\partial$ denotes the usual partial derivatives and 
$ \tilde r := - \frac{r_{x_1}}{r_{x_n}}$ is a smooth real-valued function near $0$. 
Recall that we use the subindex to denote usual derivatives in direction $\d/\d z_1,...,\d/\d z_n$ and their conjugates if there is no other indication. This gives
\[ \notag
	D_1u = u_1 + u_{\bar 1} + \tilde r (u_n + u_{\bar n}).
\]

Following Caffarelli, Nirenberg, Spruck \cite{CNS85} and Guan \cite{guan98,guan14}, our goal is to construct a function of form
\[ \notag
\begin{aligned}
	w &=D_1u  - \sum_{k<n} |u_k|^2 -  |u_n - u_{\bar n}|^2 + \mu_1 b+ \mu_2 |z|^2, \\
\end{aligned}
\]
satisfying the following:
\begin{itemize}
\item[(i)]
$w(0) =0;$
\item[(ii)]
$w \geq 0$ on  $\d U$; 
\item[(iii)]
$L w = L^{\bar p j}\partial_j \partial_{\bar p} w \leq 0 $ in the interior of $U$,
\end{itemize}
where $b$ is the barrier function constructed in Lemma~\ref{bfct}, constants $\mu_1, \mu_2>0$ are to be determined later.

To see the first property $(i)$ we note that, for $i<n$,
\[ \notag
	2u_i(0) = \frac{\d u}{\d x_i}(0) - \ii \frac{\d u}{\d y_i}(0) =0,
\] 
and 
\[ \notag
	u_n(0) - u_{\bar n}(0)= -\ii \frac{\d u}{\d y_n}(0) =0.
\]
Moreover, $D_1u(0)=b(0) = \tilde r (0) =0$. Therefore, the first property follows.

Next, we verify the second property $(ii)$. We claim that there exists a constant $\mu_2>0$  such that
\[\notag
	w \geq 0 \quad \mbox{on} \quad \d U.  
\]
To see this  consider two parts $\d \Omega \cap U$ and $\d U \setminus (\d\Omega  \cap U)$ of the boundary $\d U$ separately.

{\bf Part 1:} On $\d \Omega \cap U$. We know that  
$D_1u = b = 0$, and near $0$
\[ \notag
	x_n = \sum_{j,k=1}^n a_{jk} z_j \bar z_k + O(|z|^3).
\] 
 By writing $x_n = \rho (t_1,...,t_{2n-1}) = \rho(t)$ we deduce that 
\[ \notag
	\rho(t) = \sum_{i,j <2n} k_{ij} t_i t_j + O(|t|^3),
\]
where $(k_{ij}) = \left[\frac{\d^2 x_n}{\d t_i \d t_j}(0) \right]$ is uniformly bounded. Since $u(t, \rho(t)) =0$, 
\[ \notag
	\d u/\d t_i + \d u/ \d x_n \cdot \d \rho/ \d t_i =0
\]
for $i<2n$. Applying for $y_n = t_{2n-1}$ gives
\[ \notag
 	 |\d u/\d y_n|^2 \leq C |t|^2 \leq C |z|^2. %\quad \mbox{on}\quad \d B \cap U.
\]
Similarly, for $i<n$,
\[ \notag
	|u_i|^2 \leq C|z|^2.% \quad \mbox{on}\quad \d B \cap U.
\]
Therefore, $w\geq 0$ on $\d\Omega \cap U$ for $\mu_2>0$ large enough.

{\bf Part 2:} On $\d U \setminus (\d\Omega \cap U)$. On this piece  $|z|^2 = \tau^2$ with $\tau$ being the radius of $U$. Since $b \geq 0$ on $U$, we have $w\geq 0$ as soon as
\[ \notag \mu_2 \tau^2 \geq |D_1 u| + \sum_{i=1}^n |u_i|^2. 
\] 
This is done by choosing $\mu_2>0$ large as the right hand side is under control by the $C^1-$estimate. Thus, the second property is satisfied.

To verify the third property $(iii)$,  $L^{\bar p j} w_{j\bar p} \leq 0$ in the interior of $U$, we fix an interior point  $z_0 \in U$. 
Below we compute at this fixed point. The estimation will be split into several steps. 

{\bf (1) Estimate for $D_1u$.} We start by computing
\[ \label{cp-f1}
\begin{aligned}
	L^{\bar p j} (D_1u)_{j\bar p} 
&= 	L^{\bar p j} \left[ u_1 + u_{\bar 1} + \tilde r (u_n+ u_{\bar n})\right]_{j\bar p} \\
&=	 L^{\bar p j} [u_{1 j \bar p} + u_{\bar 1 j \bar p} + \tilde r (u_{n i \bar p} + u_{\bar n j \bar p})] \\
&\quad + L^{\bar p j} [\tilde r_j (u_{n}+ u_{\bar n})_{\bar p} + \tilde r_{\bar p} (u_n + u_{\bar n})_j] \\
&\quad + L^{\bar p j} \tilde r_{j \bar p}(u_n + u_{\bar n}) \\
&=: I_1 + I_2 + I_3.
\end{aligned}
\]
Let us denote $K:= \sup_{\Omega}|\nabla u|^2$, which is bounded by the $C^1-$estimate.

\begin{lem} \label{i2-main} There exists a constant $C$ depending only on $\alpha$ such that for any fixed $j,q$,
\[ \notag
	|L^{\bar p j} g_{q\bar p}| \leq C \sum_{i=1}^n f_i |\lambda_i|.
\]
Similarly,
\[ \notag
	|L^{\bar j p} g_{p\bar q}| \leq C \sum_{i=1}^n f_i |\lambda_i|.
\]
\end{lem}

\begin{proof} Recall that we have $
	\alpha_{i\bar j} = x_{ai} \overline{x_{aj}}, $ $
	g_{i\bar j} = x_{ai} \lambda_a  \overline{x_{aj}}, $
and $L^{\bar pj} = \overline{x^{pa}} f_a x^{ja}.$ Therefore,
\[\notag
	L^{\bar p j} g_{q\bar p} = \overline{x^{pa}} f_a x^{ja} x_{bq}\lambda_b \overline{x_{bp}} = x^{ja} f_a \lambda_a x_{aq}. 
\]
Thus, the conclusion follows. The second inequality is proved in the same way.
\end{proof}

{\bf (1a) Estimate $I_2$ and $I_3.$}  We first easily have
\[ \label{est-i3}
\begin{aligned}
	|I_3| = |L^{\bar p j} \tilde r_{j \bar p}(u_n + u_{\bar n})| 
&\leq	C K^\frac{1}{2} \cF \\
&\leq C \cF.
\end{aligned}
\]
Since two terms in $I_2$ are conjugate, so we will estimate one of them. We proceed as follows:
\[ \notag
\begin{aligned}
	\tilde r_j (u_n + u_{\bar n})_{\bar p} 
&=	\tilde r_j[2 u_n -(u_n- u_{\bar n})]_{\bar p} \\
&=	2  \tilde r_j u_{n\bar p} - \tilde r_j (u_n-u_{\bar n})_{\bar p} \\
&=	2 \tilde r_j g_{n \bar p} - 2 \tilde r_j \chi_{n \bar p} - \tilde r_jV_{\bar p},
\end{aligned}
\]
where we wrote $V= u_n-u_{\bar n}.$ 

By Lemma~\ref{i2-main}, we have for $\cF|\lambda|:= \sum_{i}f_i|\lambda_i|$,
\[ \notag
	|2L^{\bar p j}\tilde r_j g_{n\bar p}| \leq C|L^{\bar p j} g_{n\bar p}	| \leq C \cF|\lambda|.
\]
A straightforward estimate gives
\[ \notag
	|2L^{\bar p j} \tilde r_j \chi_{n\bar p}| \leq C\cF.
\]
Cauchy-Schwarz's inequality implies that
\[ \notag
\begin{aligned}
	|L^{\bar p j} \tilde r_jV_{\bar p}| 
&\leq \frac{1}{2}L^{\bar p j}\tilde r_j\tilde r_{\bar p} + \frac{1}{2} L^{\bar p j} (\bar V)_{j} V_{\bar p} \\
&\leq C \cF + \frac{1}{2} L^{\bar p j} (\bar V)_{j} V_{\bar p}.
\end{aligned}
\]
Thus, the above estimates give
\[ \label{est-i2}
	|I_2| \leq C(\cF + \cF|\lambda|) 
	+ L^{\bar p j} (\bar V)_{j} V_{\bar p}.
\]

{\bf (1b) Estimate $I_1$.} We have
\[ \notag
\begin{aligned}
&	u_{1j\bar p} = u_{j\bar p 1} = g_{j\bar p 1} - \chi_{j\bar p 1} .\\
%&	u_{\bar l j\bar p} = g_{j\bar p \bar l} - \chi_{j\bar p \bar l}
\end{aligned}
\]
Covariant differentiation in direction $\partial/\partial z_1$ of the equation $F(A) =h$ gives
\[ \label{eq:diff-c2-extra}
	F^{ij} \alpha^{\bar p i} \nabla_1 g_{j\bar p} 
	= L^{\bar p j} \left[g_{j\bar p 1} - \Gamma_{1j}^q g_{q\bar p}\right]= h_1.
\]
It follows that
\[ \label{use-diff-eq}
\begin{aligned}
	|L^{\bar p j} u_{1 j\bar p}| 
&=	|L^{\bar p j}  (g_{j\bar p 1} - \chi_{j\bar p 1})| \\
&= 	|h_k + L^{\bar p j}\Gamma_{1j}^q g_{q\bar p}  - L^{\bar p j} \chi_{j\bar p 1}| \\
&\leq		C(1 + \cF)   + |L^{\bar pj}\Gamma_{1j}^q g_{q\bar p}| \\
&\leq 	C(1+ \cF + \cF|\lambda|),
\end{aligned}
\]
where we used  Lemma~\ref{i2-main} for the last  inequality.

The remaining terms in $I_1$ are estimated similarly, when the index $1$ is replaced by $\bar 1, \bar n$ or $n$. Therefore,
\[\label{est-i1}
|I_1| \leq C(1+ \cF + \cF|\lambda|).
\]
Combining \eqref{est-i3}, \eqref{est-i2} and \eqref{est-i1} yields 
\[\label{all-123}
|L^{\bar p j} (D_1u)_{j\bar p}| \leq C(1+ \cF + \cF|\lambda|) 
+  L^{\bar p j} (\bar V)_{j} V_{\bar p} .
\] 

We continue to estimate the other terms in the formula for $w$.

{\bf (2) Estimate for $-\sum_{k<n} |u_k|^2$.} By computing

\[ \label{sum-term}
\begin{aligned}
	(u_k u_{\bar k})_{j\bar p} 
&= u_{k j\bar p}u_{\bar k} + u_{k} u_{\bar k j \bar p} 
	+ u_{kj} u_{\bar k \bar p} + u_{k\bar p} u_{\bar k j}. \\
\end{aligned}
\]
Similarly to the estimation of $I_1$, we have
\[ \notag \begin{aligned}
\sum_{k<n} |L^{\bar p j} (u_{kj\bar p} u_{\bar k} + u_{k} u_{\bar k j\bar p})| 
&\leq C K^\frac{1}{2} (1+ \cF + \cF|\lambda|) \\
&\leq C(1+ \cF + \cF|\lambda|).
\end{aligned}\]
For the third term, with $k$  fixed,
$
	L^{\bar p j} u_{kj} u_{\bar k \bar p} \geq 0.
$
The last term in \eqref{sum-term} will give a good positive term. By using Lemma~\ref{i2-main},
\[ \label{po-sum-term}
\begin{aligned}
	L^{\bar p j} u_{k\bar p} u_{\bar k j} &= L^{\bar p j} (g_{k\bar p} - \chi_{k\bar p})(g_{j\bar k} -\chi_{j\bar k}) \\
&\geq L^{\bar p j} g_{k\bar p} g_{j \bar k} - C(\cF + \cF|\lambda|).
\end{aligned}
\]

The following result is similar to Guan's \cite[Proposition 2.19]{guan14} in the real case.
\begin{lem} \label{est-sum-term}
There exists an index $s$ such that
\[\notag
	\sum_{k<n} L^{\bar p j} g_{k \bar p} g_{j\bar k}
\geq 	\frac{\min_i\tau_{i}}{2} \sum_{i\neq s} f_{i} \lambda_{i}^2,
\]
where $\tau_i$'s are the eigenvalues of the matrix $\alpha_{i\bar j}$.
\end{lem}

\begin{proof}[Proof of Lemma~\ref{est-sum-term}]
First at the given point let $E=(e_{ij})$ be a unitary matrix such that $\alpha=E^t\Lambda \bar E$, where $\Lambda=diag(\tau_1,\dots, \tau_n).$ Without loss of generality, we can assume $X=\Lambda^{\frac{1}{2}}E$,  so that $\alpha=X^t\bar X$ and $x_{ij}=\tau_i^\frac{1}{2}e_{ij}$. Again we have formulas $\alpha_{i\bar j} = x_{ai} \overline{x_{aj}}$ and $\alpha^{\bar i j} = \overline{x^{ia}} x^{ja}$. Moreover,
\[\notag
	L^{\bar p j} = \overline{x^{pa}} f_a x^{ja},\quad
	g_{i\bar j} = x_{ib} \lambda_b \overline{x_{b j}}.
\]
Thus, for a fixed $k<n$,
\[\notag
\begin{aligned}
L^{\bar p j} g_{k \bar p} g_{j\bar k} 
&=	 \overline{x^{pa}} f_a x^{ja} \; x_{bk}\lambda_b \overline{x_{bp}} \;x_{cj} \lambda_c  \overline{x_{ck}} \\
	&= \sum_{i=1}^n f_i \lambda_i^2 |x_{ik}|^2. \\
\end{aligned}
\]
As 
\[\notag
\sum_{k<n} |x_{ik}|^2 =\sum_{k=1}^n|x_{ik}|^2 - |x_{in}|^2=\tau_i(1-|e_{in}|^2),
\]
 we have
\[\notag
S:= \sum_{k<n} L^{\bar p j} g_{k \bar p} g_{j\bar k}  =
	\sum_{i=1}^n f_{i} \lambda_i^2\tau_i (1-|e_{in}|^2). 
\]
 If for every $1\leq i \leq n$
we have $|e_{in}|^2 \leq \frac{1}{2}$, then
\[\notag
	S  \geq \frac{\min_i\tau_{i}}{2} \sum_{i=1}^n f_i \lambda_{i}^2.
\]
Otherwise, there exists an index $s$ such that $|e_{sn}|^2> \frac{1}{2}$. It follows that
 \[ \notag
 \sum_{i \neq s} |e_{in}|^2 \leq \frac{1}{2}.\] Then,
\[\notag
S=	\sum_{i=1}^n f_{i} \lambda_i^2\tau_i (1-|e_{in}|^2) \geq 
	\sum_{i\neq s} f_{i} \lambda_i^2\tau_i (1-|e_{in}|^2)\geq
	   \frac{\min_i\tau_{i}}{2} \sum_{i\neq s} f_{i} \lambda_{i}^2.
\]
Thus, the lemma follows.
\end{proof}

It follows from Lemma~\ref{est-sum-term} and \eqref{po-sum-term} that for some index $s$,
\[ \notag
	\sum_{k<n} L^{\bar p j} u_{k\bar p} u_{\bar k j} \geq \frac{\min_i\tau_{i}}{2} \sum_{i\neq s} f_{i} \lambda_{i}^2 - C(\cF + \cF|\lambda|).
\]
Therefore,
\[ \label{ne-sum-term-2}
L(-\sum_{k<n}|u_k|^2) \leq -  \frac{\min_i\tau_{i}}{2} \sum_{i\neq s} f_{i} \lambda_{i}^2 + C(\cF + \cF|\lambda|).
\]

{\bf (3) Estimate for  $-|V|^2 = - |u_n-u_{\bar n}|^2$.} We compute 
\[\notag
\begin{aligned}
	(V\bar V)_{j\bar p} 
&=	(u_{nj\bar p} - u_{\bar n j\bar p}) \bar V + V (u_{\bar n j\bar p} - u_{n j\bar p}) \\
&\quad	+	V_j (\bar V)_{\bar p} + (\bar V)_{j}V_{\bar p}. 
\end{aligned}
\]
Since $L^{\bar pj} V_j (\bar V)_{\bar p} \geq 0$,  we get, similarly to \eqref{use-diff-eq}, the following 
\[ \label{ad-fctn3} 
\begin{aligned}
	L^{\bar p j}(-|V|^2)_{j \bar p} 
\leq  - L^{\bar p j} (\bar V)_j V_{\bar p}+ C (1 + \cF +  \cF|\lambda|).
\end{aligned}
\end{equation}

Combining \eqref{all-123}, \eqref{ne-sum-term-2} and \eqref{ad-fctn3}  gives us
\[ \notag
Lw \leq -  \frac{\min_i\tau_{i}}{2} \sum_{i\neq s} f_{i} \lambda_{i}^2 +C(1+ \cF + \cF|\lambda|) +\mu_1 Lb + \mu_2 L(|z|^2).
\]
By this and Lemma~\ref{bfct} we get that, for some index $s$,
\[ \label{end1}
	Lw \leq - 	\frac{\min_i\tau_{i}}{2} \sum_{i\neq s} f_{i} \lambda_{i}^2 + C\cF |\lambda| + (C+ \mu_2  -\frac{\mu_1}{2}) \cF.
\]
Recall that $\mu_2$ was chosen to have the property $(ii)$ and $\mu_1>0$ can be chosen freely. To archive the third property of $w$ we need the following 

\begin{lem} \label{el-lem}
Let $\vepsilon>0$. There is a constant $C_\vepsilon>0$ such that for any index $s$,
\[ \notag
	\cF|\lambda| = \sum_{i=1}^n f_i |\lambda_i| \leq \vepsilon \sum_{i\neq s} f_i \lambda_i^2 + C_\vepsilon \cF.
\]
\end{lem}
\begin{proof}[Proof of Lemma~\ref{el-lem}] 
Since $\sum_{i=1}^n f_i \lambda_i = h,$ we have 
\[\notag
\begin{aligned}
	\cF |\lambda| &\leq  2 \sum_{i\neq s} f_i |\lambda_i|  + h \\
&\leq		\sum_{i\neq s} f_i(\vepsilon   \lambda_i^2 + \frac{1}{\vepsilon}) +h \\
&\leq 	\vepsilon \sum_{i\neq s} f_i \lambda_i^2 + C_\vepsilon \cF,
\end{aligned}
\]
where we used the fact that $\cF$ is uniformly bounded below by a positive constant.
\end{proof}

Using Lemma~\ref{el-lem} we get from  \eqref{end1} that
\[ \notag
Lw \leq (-\frac{\min_i\tau_i}{2}+ C \vepsilon) \sum_{i\neq s} f_{i} \lambda_{i}^2  + (\mu_2 + C+C_\vepsilon - \frac{\mu_1}{2}) \cF.
\]
Thus, we choose $\vepsilon$ so small that the first term on the right hand side is negative, and then choose $\mu_1$ so large that the second term is also negative. The third property $(iii)$ is proved.

We are ready to conclude the bound for tangential-normal second derivatives. By the maximum principle we have $w \geq 0$ on $U$. Therefore, as $w(0) =0$,
$
	D_{2n} w(0) \leq 0.
$
It follows that \[ \notag D_{2n}D_1u(0) \leq C.\] 

The properties $(i), (ii)$ and $(iii)$ also hold, with the same argument, if we replace $w$ by the function
\[\notag
	\tilde w = - D_1u  -\sum_{k<n} |u_k|^2 - |u_n-u_{\bar n}|^2 + \mu_1b + \mu_2 |z|^2.
\]
Therefore, 
$
	D_{2n}D_1u(0) \geq - C.
$
Thus, we get the desired bound for $|D_{2n}D_1u(0)|$. 
\end{proof}

The last estimate we need is the normal-normal derivative bound.

\begin{lem} \label{n-n-der} We have
\[ \notag\left|\frac{\d^2 u}{\d x_n^2 } (0)\right| \leq C,\]
where $C$ depends on $h,C_0, C_1,$ and the bounds of tangential-normal derivatives.
\end{lem}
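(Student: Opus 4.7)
\medskip

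The plan follows the classical Caffarelli--Nirenberg--Spruck strategy as adapted to the complex setting (Li \cite{Li04}, B\l ocki \cite{blocki05}, Guan \cite{guan98, guan14}). The lower bound for $u_{n\bar n}(0)$ is immediate: since $\lambda(A(0)) \in \Gamma_m \subset \Gamma_1$, the trace $\sum_i \alpha^{\bar j i} g_{i\bar j}(0)$ is positive, and with all other diagonal entries of $g_{i\bar j}(0)$ bounded by the tangential--tangential estimates (forced by $u \equiv 0$ on $\d\Omega$), this gives $u_{n\bar n}(0) \geq -C$.

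For the upper bound, I would work in an orthonormal frame for $\alpha$ at $0$, so that $\alpha_{i\bar j}(0) = \delta_{ij}$. Write $G := [g_{i\bar j}(0)]$ in block form with principal $(n-1)\times(n-1)$ submatrix $G'$ and last column $v = (g_{i\bar n}(0))_{i<n}$. By the tangential--tangential bounds and by Theorem~\ref{nt-bound}, the entries of $G'$ and $v$ are uniformly bounded; only $g_{n\bar n}(0)$ is a priori unknown. The Cauchy interlacing theorem for Hermitian matrices forces $n-1$ of the eigenvalues $\lambda_1 \geq \cdots \geq \lambda_n$ of $G$ to lie between the (bounded) eigenvalues of $G'$; hence $\lambda_2, \ldots, \lambda_n$ are uniformly bounded, and only $\lambda_1$, which is comparable to $g_{n\bar n}(0)$ for large $g_{n\bar n}(0)$, can potentially blow up. Expanding along the distinguished eigenvalue gives
\[ \notag
 S_m(\lambda_1, \ldots, \lambda_n) = \lambda_1 \, S_{m-1}(\lambda_2, \ldots, \lambda_n) + S_m(\lambda_2, \ldots, \lambda_n),
\]
so the equation $S_m(\lambda) = h^m$ yields
\[ \notag
 \lambda_1 = \frac{h^m - S_m(\lambda_2, \ldots, \lambda_n)}{S_{m-1}(\lambda_2, \ldots, \lambda_n)},
\]
and a uniform upper bound on $\lambda_1$ will follow at once from a uniform lower bound $S_{m-1}(\lambda_2, \ldots, \lambda_n) \geq c_0 > 0$.

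The main obstacle is proving this lower bound, and this is where the normalization $\chi \geq \alpha$ (so that $0$ is a subsolution) plays the decisive role. The eigenvalues of $[\chi_{i\bar j}(0)]$ are all $\geq 1$, so the purely geometric part of $G'$ lies in a compact subset of the interior of $\Gamma_{m-1}$; the perturbation coming from $u_{i\bar j}(0)$ with $i,j<n$ is controlled by the previous tangential--tangential estimates. I would then argue by contradiction, following Guan \cite{guan14}: if $S_{m-1}(\lambda_2, \ldots, \lambda_n) \to 0$, one extracts a limiting configuration in which $(\lambda_2, \ldots, \lambda_n)$ touches $\partial \Gamma_{m-1}$, while $\lambda \in \overline{\Gamma_m}$ and the subsolution matrix is bounded away from $\partial \Gamma_m$. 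The concavity of $S_m^{1/m}$ on $\Gamma_m$ together with the strict subsolution inequality $S_m([\alpha^{\bar pi}\chi_{j\bar p}(0)]) \geq h^m(0)$ rule out such degeneration, giving the required $c_0 > 0$. With $\lambda_1 \leq C$ in hand, we conclude $g_{n\bar n}(0) \leq C$ and therefore $|u_{n\bar n}(0)| \leq C$, which together with the tangential bounds and Theorem~\ref{nt-bound} yields the global $C^2$ estimate \eqref{c2-global}.
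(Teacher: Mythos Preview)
Your lower bound argument coincides with the paper's. For the upper bound, however, the paper takes a different and more elementary route than your Cauchy interlacing/CNS expansion strategy. After observing that $u_{j\bar k}(0)=\sigma(0)\,r_{j\bar k}(0)$ for $j,k<n$ (from $u=\sigma r$), the paper uses the strict plurisubharmonicity of $r$ to conclude that the tangential block satisfies $[g_{j\bar k}]_{j,k<n}\geq [\chi_{j\bar k}]_{j,k<n}>0$. Assuming $u_{n\bar n}(0)$ large, the full matrix $g$ is then positive definite; the Maclaurin inequality $(\det A)^{1/n}\leq C_{m,n}\,S_m(\lambda)^{1/m}=C_{m,n}\,h$ bounds $\det g$, and the expansion $\det g=g_{n\bar n}\det([g_{j\bar k}]_{j,k<n})+O(1)$ together with the uniform lower bound on $\det([g_{j\bar k}]_{j,k<n})$ yields $g_{n\bar n}\leq C$ directly.

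Your route is in principle a valid alternative, but the crucial step---the uniform lower bound $S_{m-1}(\lambda_2,\ldots,\lambda_n)\geq c_0$---is not established by the contradiction sketch you give. Knowing that $\chi'$ lies in a compact subset of the interior of $\Gamma_{m-1}$ and that the perturbation $[u_{i\bar j}(0)]_{i,j<n}$ is \emph{bounded} does not keep $G'$ inside $\Gamma_{m-1}$, let alone uniformly away from its boundary. The hand-wave ``concavity of $S_m^{1/m}$ together with the strict subsolution inequality rule out such degeneration'' does not close: the subsolution hypothesis $S_m(\lambda(\chi))\geq h^m$ is a condition on the full $n\times n$ matrix, whereas the quantity you must bound from below lives on an $(n-1)$-tuple, and no link between the two has been supplied. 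The actual CNS/Trudinger/Guan argument hinges on the structural identity $G'=\chi'+\sigma(0)\,[r_{j\bar k}]_{j,k<n}$, i.e.\ the fact that the tangential complex Hessians of the solution and of the subsolution differ by a scalar multiple of the Levi form of $\partial\Omega$ (because both vanish on the boundary); only with this one-parameter relation in hand can the subsolution be leveraged to preclude $S_{m-1}\to 0$. That key geometric input is absent from your proposal. In the present setting the paper's positivity-plus-Maclaurin argument is both shorter and self-contained.
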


\begin{proof} 
Since 
$
	4u_{n \bar n} = \d^2u/ \d x_n^2 + \d^2u/ \d y_n^2,
$
the normal-normal estimate is equivalent to   \[ \notag |u_{n \bar n}(0)| \leq C.\]
Moreover, as $|u_{i\bar j}| < C$ with $i+j < 2n$, we get that for $j<n$,
\[ \notag
	|A^{i}_j| = |\alpha^{\bar p i} (\chi_{j\bar p} + u_{j\bar p})| < C. 
\]
Hence, it follows from  \[\notag\sum_{i=1}^n A^{i}_i = \sum_{i=1}^n \lambda_i \geq 0\] 
that $A^n_n \geq - C$, so is $g_{n\bar n} \geq -C$. It implies that 
$
	u_{n\bar n} \geq - C.
$
Therefore, it remains to prove that
$
	u_{n\bar n} \leq C.
$
By 
$
	u = \sigma r,
$ with $\sigma>0$, 
we have for $j, k <n$, 
\[ \notag
	u_{j \bar k} (0) = \sigma(0) r_{j \bar k}(0).
\]
Let $S$ be a $(n-1) \times (n-1)$ unitary matrix diagonalising $[u_{j\bar k}]_{j,k<n}$. It means that for $j, k<n$,
\[ \notag
	u_{j\bar k}(0) = \sum_{p} S^*_{j p} d_p S_{pk}
\]
Since $r$ is strictly plurisubharmonic  in $U$, we get that $d_p>0$, $p=1,...,n-1$. By elementary matrix computation we have for $D=(d_1, ..,d_{n-1})$ a diagonal matrix and the column vector $V = (u_{1\bar n},..., u_{(n-1) \bar n})^t$, 
\[ \notag
\begin{pmatrix}
	S & 0 \\
	0 & 1
\end{pmatrix}
\times [u_{i\bar j}]_{i,j\leq n} \times
\begin{pmatrix}
	S^* & 0 \\
	0 & 1
\end{pmatrix}
= \begin{pmatrix}
 D & S V \\
 V^* S^* & u_{n\bar n}
\end{pmatrix}.
\]
By $|u_{j \bar n}|, |u_{n\bar j}| <C $ for $j<n$ and $\chi_{i\bar j} >0$, we may assume that $u_{n\bar n}$ is so large (otherwise we are done) that 
$
g_{i\bar j} = \chi_{i\bar j} + u_{i\bar j}(0) >0, 
$ i.e., positive definite. So 
\[ \notag
	\lambda_i(A) >0
\]
for every $i =1,..,n$. Hence,
\[ \notag
	(\det A)^\frac{1}{n} \leq C_{m,n} [S_m(\lambda(A))]^\frac{1}{m} =C_{m,n} h.
\]
By 
$
	\det g_{i\bar j} = \det \alpha_{i\bar j} \cdot \det A^{i}_j
$
we get that $\det g_{i\bar j} \leq C$. Since \[\notag [g_{i\bar j}]_{i,j<n}\geq [\chi_{i\bar j}]_{i,j<n} >0\] and
\[ \notag
	\det g_{i\bar j} = g_{n\bar n} \det ([g_{i\bar j}]_{i,j<n}) + O(1),
\]
we have $g_{n\bar n} \leq C$.
Thus, the normal-normal derivative bound at a boundary point is proven.
\end{proof}

Altogether, we have proven the $C^2-$estimate \eqref{c2-global} and completed the proof of Theorem~\ref{thm:intr-1}.

\section{Appendix} \label{sec:appendix}

The results in this section are classical. It is a natural generalisation of properties of  subharmonic functions (see e.g \cite{H94}). However, we could not find the the precise forms that we need in the literature. Some of them  have been pointed out recently by Harvey-Lawson \cite{hl13}. Our setup here is simpler than the one in \cite{hl13}, therefore we have several finer properties. We emphasize here the use of a theorem of Littman \cite{littman63}. For the readers' convenience  we give results with proofs here.

\subsection{Littman's theorem} \label{sec:littman}

We briefly recall a simpler version of a result of Littman \cite{littman59,littman63}. Roughly speaking it allows to approximate a generalised subharmonic function (with respect to a uniformly elliptic operator $L$) in a constructive way.

 Let $D$ be a smoothly bounded domain in $\bR^n$, $n\geq 3$. Consider the partial differential operator $L$ defined by
\[\notag
	L u = \big(b^{ij} u\big)_{x_ix_j} - \big( b^i(x) u\big)_{x_i}
\]
and assumed to be uniformly elliptic there. Its formal adjoint $L^*$ is given by 
\[\notag
	L^*v = b^{ij}(x) v_{x_i x_j} + b^i(x) v_{x_i},
\]
where coefficients $b^{ij}(x)$, $b^i(x)$ are smooth function on $D$.

We say that $u \in L^1_{loc}(D)$ satisfies $Lu \geq 0$ weakly if
\[\label{eq:w-lsh}
	\int u(x) L^* v(x) \geq 0
\]
for any non-negative function $v$ in $C^2(D)$ with compact support in $D$. The natural question is to find a sequence of smooth functions $u_j$ such that $L u_j \geq 0$ and $u_j$ decreases to $u$. The usual convolution with a smooth kernel will not give us the desired sequence.

Before stating Littman's theorem let us introduce some notations.
We denote by $g(x,y)$ the Green function of the operator $L_x$ with respect to domain $D$ and with singularity at $y\in D$; as constructed for example in \cite{miranda}. The subindex $x$ means that $L$ acts on functions of $x$. The basic properties of $g$ are:
\[\notag
	L^*_x g(x,y) = 0 \quad  \mbox{on } D \setminus \{y\},
\]
and 
$$ \quad g(x,y) = O\big(|x-y|^{2-n}\big) \mbox{ as } x\to y.
$$
In particular, $g(x,y)\to \infty$ as $x\to y$. Furthermore, let us denote $\Delta = \{(x,x): x \in \bar D\}$, then 
\[ \notag
g \in C^0(\bar D \times \bar D \setminus \Delta) \cap C^2(D\times D \setminus \Delta);
\]  
also $g(x,y) =0$ for $x \in \d D$ and a fixed $y\in D$. If we denote $r = |x-y|$. Then
\[\notag\begin{aligned}
	g(x,y) = O(r^{2-n}), \quad g_{x_i} = O(r^{1-n}), \quad g_{x_i x_j} = O(r^{-n}).
\end{aligned}
\]

Fix a function $p(t) = 1 -t^2$ for $t\in \bR$. So $p(0) =1$ and $p(t) \geq \delta_0>0$ for $|t| < \delta_0$ small enough. It is easy to see that
\[\notag
	L^*_x p(|x-y|) < 0 \quad \mbox{for } |x-y| < 2 \delta_0 \mbox{ and } x,y \in D.
\]
Let $\Phi(t) \geq 0$ be a smooth function supported on $[0,1]$ satisfying
\[\notag\begin{aligned}
&	\Phi(t) \to 0 \quad \mbox{exponetially as } t \to +\infty; \\
&	\int_{-\infty}^{\infty} \Phi(t) dt =1. 
\end{aligned}
\]

For $\delta>0$ we write $D_\delta = \{z\in D : dist(z, \d D) >\delta\}$. For $h\geq 0$, $x\in D$, $y\in D_{2\delta}$ we define a function $G_h(x,y)$ on $D \times D_{2\delta}$ by letting
\[\notag
	G_h(x,y) := 0  \quad \mbox{for} \quad |x-y| \geq 2\delta,
\]
and for $|x-y| < 2 \delta,$
\[\notag
	G_h(x,y) := 
	\int_{-\infty}^\infty \Phi(s -h) \max\{g(x,y)- s p(x,y),0\} ds. 
\]

Notice that $G_h(x,y) =0$ for $|x-y| \geq \delta$ and $h \geq h_\delta$, where
\[ \label{eq: h-delta}
	h_\delta := \frac{1}{\delta_0} \max\big\{g(x,y) :\delta \leq |x-y| \leq 2\delta, (x,y) \in D\times D_{2\delta}\big\}.
\]
Another remark is that 
\[\label{eq:a-levi}
	G_h(x,y) - g(x,y) = g(x,y) \int_{g/p}^\infty \Phi(s-h) ds - p \int_{-\infty}^{g/p} \Phi(s -h) ds
\]
is continuous for $x\in \bar D$ and $y \in D_{2\delta}$ and it belongs to $C^2(D\times D_{2\delta})$ as the rate of $g(x,y)$ growing to $\infty$ is polynomially while $\Phi(t) \to 0$ exponentially. In particular, $G_h(x,y) \to +\infty$ as $x \to y$ with the same order of growth as $g(x,y)$.

By a direct computation we get that
\[ \label{eq: dgreen}
	\frac{\d G_h}{\d h} = - p \int_{-\infty}^{g/p} \Phi(s -h) ds \leq 0.
\]
The formula also shows that $\frac{\d G_h}{\d h} \in C^2(D)$ and compactly supported as a function of $x$. Hence,
\[\notag
J_h:= \int L^*_x G_h dx =1
\]
for every $h \geq h_\delta$. Indeed, by the property \cite[4.f]{littman63} we have $ \lim_{h \to \infty} J_h =1$, and  for any constant $c$ we have $L c =0$. Therefore,
\[\notag
	\d J_h / \d h = \int L^*_x \big(\d G_h/\d h \big)dx= \int \d G_h/\d h \; L_x 1=0.
\]

Since coefficients $b^{ij}(x), b^i(x)$ are smooth, we have
\[\notag
 	G_h(x,y) - g(x,y) \in C^2(D_{2\delta})
\]
as a function of $y$ uniformly with respect to $x$ (c.f \cite[4.e]{littman63}). Hence,  $G_h$ is a {\em Levi function} satisfying
\[\notag
	L^*_x G_h(x,y) = O(|x-y|^{\lambda - n})
\]
for any $0 < \lambda \leq 1$ (c.f \cite[(8.5) p. 18]{miranda}). Therefore, for  $u \in L^1_{loc}(D)$ and $h\geq h_\delta$,
\[ \label{eq:u_h}
	u_h(y) = \int u(x) L^*_x G_h (x,y) dx = \int_{|x-y| \leq \delta} u(x) L^*_xG_h(x,y) dx
\]
is well defined. Notice that the support of $G_h(x,y)$, as a function in $x$, shrinks to $y$ as $h\to +\infty$.

We are ready to state a theorem of Littman \cite{littman63}. 

\begin{thm}[Littman]\label{thm:littman} Let $u \in L^1_{loc}(D)$ be such that $Lu\geq 0$ weakly in $D$ in the sense of \eqref{eq:w-lsh}. Then, $\{u_h(x)\}_{h \geq h_\delta}$, defined by \eqref{eq:u_h}, are smooth functions satisfying:
\begin{itemize}
\item
$L u_h \geq 0$;
\item
$u_h$ is a nonincreasing sequence as $h \to +\infty$, $u_h$ converges to $u$ in $L^1(D_{2\delta})$;
\item
$U(x) := \lim_{h\to \infty} u_h(x)$ is upper semicontinuous, and $U(x) = u(x)$ almost everywhere.
\end{itemize}
\end{thm}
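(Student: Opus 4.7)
The plan is to follow Littman's scheme from \cite{littman63}, splitting the argument into smoothness, $L$-subharmonicity, monotonicity, and convergence steps. All four are driven by Fubini and the three structural properties of $G_h$ already recorded: the Levi-type regularity \eqref{eq:a-levi}, the monotonicity \eqref{eq: dgreen}, and the normalisation $J_h=1$.

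\textbf{Smoothness of $u_h$.} For each fixed $y\in D_{2\delta}$, the kernel $x\mapsto L^*_x G_h(x,y)$ is supported in $|x-y|\leq \delta$ with a mild Levi-type singularity $O(|x-y|^{\lambda-n})$ for some $0<\lambda\leq 1$, hence is integrable in $x$. Smoothness in $y$ follows by differentiation under the integral sign, using \eqref{eq:a-levi} to split off the singular part $g(x,y)$ (on which $L^*_x$ vanishes away from the diagonal) from a $C^2$-in-$(x,y)$ remainder whose $y$-derivatives are bounded in $L^1_x$ on compact subsets of $D_{2\delta}$.

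\textbf{The inequality $Lu_h\geq 0$.} This is the technical heart. I would test against a nonnegative $\phi\in C^2_c(D_{2\delta})$; two applications of Fubini together with one integration by parts in the $y$-variable (legitimate since $\phi$ has compact support) yield
\[ \notag
\int u_h(y)\, L^*\phi(y)\,dy \;=\; \int u(x)\, L^*_x \Psi_\phi(x)\,dx, \qquad \Psi_\phi(x) \;:=\; \int L_y G_h(x,y)\,\phi(y)\,dy.
\]
Pulling $L^*_x$ outside the $y$-integral is the delicate step and uses the $L^1_x$-bounds on the second $x$-derivatives of $G_h-g$. The construction of $G_h$ via the truncation $\max\{g-sp,0\}$ is tailored precisely so that $L_y G_h(x,y)\geq 0$; granting this positivity, $\Psi_\phi$ is a nonnegative $C^2$ function compactly supported in $x$, so by the weak hypothesis \eqref{eq:w-lsh} the right-hand side is nonnegative. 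Since $u_h$ is smooth, this upgrades from a distributional to a pointwise inequality.

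\textbf{Monotonicity and convergence.} For monotonicity, I would differentiate \eqref{eq:u_h} in $h$; by \eqref{eq: dgreen}, $-\d G_h/\d h$ is a nonnegative smooth function of $x$ with compact support in $|x-y|\leq\delta$, hence a legitimate test function in \eqref{eq:w-lsh}, so $\d u_h/\d h\leq 0$. For the $L^1$-convergence, the identity $J_h=1$ together with the shrinking support of $L^*_x G_h(\cdot,y)$ as $h\to\infty$ realises $L^*_x G_h(x,y)\,dx$ as an approximate identity at $y$, and a standard mollifier estimate gives $u_h\to u$ in $L^1(D_{2\delta})$. Finally, $U(x):=\lim_{h\to\infty}u_h(x)$ is upper semicontinuous as the pointwise decreasing limit of continuous functions, and this monotone convergence paired with $L^1$-convergence forces $U=u$ almost everywhere. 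The main obstacle is the positivity $L_y G_h\geq 0$, which is the technical core of Littman's construction and the reason for the specific form of the truncation.
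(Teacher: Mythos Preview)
The paper does not prove this theorem; it is stated as Littman's result with a citation to \cite{littman63} and used as a black box. So there is nothing in the paper to compare your sketch against. Your outline is recognisably Littman's scheme, and the smoothness, monotonicity, and convergence arguments are fine as written: in particular, the monotonicity step via \eqref{eq: dgreen} is exactly right, since $-\partial G_h/\partial h$ is a legitimate nonnegative $C^2_c$ test function in \eqref{eq:w-lsh}.

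The one place where your sketch is loose is the $Lu_h\geq 0$ step. As you have set it up, after Fubini and integration by parts in $y$ you claim $\Psi_\phi(x)=\int L_y G_h(x,y)\,\phi(y)\,dy$ is a nonnegative $C^2_c$ test function. But $G_h$ inherits the $|x-y|^{2-n}$ singularity of $g$, and the integration by parts in $y$ across that singularity produces a point contribution $-\phi(x)$ (the Green-function identity $L_y g(x,\cdot)=-\delta_x$), so $\Psi_\phi$ is not obviously nonnegative. The cleaner route, and the one Littman actually takes, is to apply $L_y$ pointwise to the smooth function $u_h$ and differentiate under the integral in \eqref{eq:u_h} directly, obtaining
\[
L_y u_h(y)=\int u(x)\,L^*_x\bigl[L_y(G_h-g)(x,y)\bigr]\,dx,
\]
where one uses $L^*_x g=0$ off the diagonal and $L_y g=0$ off the diagonal. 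The test function is now $x\mapsto L_y(G_h-g)(x,y)$, which is $C^2$ by \eqref{eq:a-levi}, compactly supported in $|x-y|\leq\delta$ (since $L_y G_h=L_y(G_h-g)=0$ once $G_h\equiv 0$), and nonnegative because the truncation $\max\{g-sp,0\}$ is built precisely so that $L_y$ applied to it is a nonnegative measure. This avoids the delta-function leak and makes the weak hypothesis \eqref{eq:w-lsh} apply cleanly.
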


\subsection{Properties of $\omega-$subharmonic functions} \label{sec:omega-sub}

Let $\omega$ be a Hermitian metric on a bounded open set $\Omega \subset \bC^n$. %As 
%\[\begin{aligned}
%&\d\bar\d (e^G \omega^{n-1}) \\
%&= e^G(\d\bar\d G \wed \omega^{n-1} + \d G\wed\bar\d  G \wed \omega^{n-1}+ 2 \d  G \wed \bar\d \omega^{n-1} + \d\bar\d \omega^{n-1}). 
%\end{aligned}\]
%Thus, for any smooth bounded domain $\Omega' \subset\subset \Omega$, by quasi-linear elliptic PDE theory, we can find a function $ G$ smooth on $\Omega'$ such that
%\[\notag
%	dd^c (e^G \omega^{n-1}) =0.
%\]
%Therefore, without loss of generality, we study the basic properties of $\omega-$ subharmonic  functions with $\omega$ is Gauduchon on $\Omega$. 
Let us denote
\[
	\Delta_\omega := \omega^{\bar j i}(z) \frac{\d^2}{\d z^i \d\bar z^j}.
\]
We first recall 

\begin{defn} \label{def-1-omega-sh}
A function $u: \Omega \to [-\infty,+\infty[$ is called $\omega-$subharmonic  if 
\begin{enumerate}
\item[(a)]  $u$ is upper semicontinuous and $u\in L^1_{loc}(\Omega)$. 
\item[(b)] for every relatively compact open set $D\subset\subset \Omega$ and every $h \in C^0(\bar D)$ and $\Delta_\omega h =0$ in $D$, if $h \geq u$ on $\d D$, then $h \geq u$ on $\bar D$.
\end{enumerate}
\end{defn}

As in the case of subharmonic functions we have the following properties. These properties are proved by using the above definition (see also \cite[Theorem~3.2.2]{H94}). 

\begin{prop} \label{proa:limit-stable}
Let $\Omega$ be a bounded open set in $\bC^n$. 
\begin{enumerate}
\item[(a)]
If $u_1\geq u_2 \geq\cdots$ is a decreasing sequence of $\omega-$subharmonic functions, then $u := \lim_{j\to +\infty} u_j$ is either $\omega-$subharmonic or $\equiv-\infty$.
\item[(b)]
If $u, v$ belong to $SH(\omega)$, then so does $\max\{u,v\}$.
\end{enumerate}
\end{prop}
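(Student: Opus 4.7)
The plan is to verify, for each item, the two conditions of Definition~\ref{def-1-omega-sh}: upper semicontinuity and local integrability, together with the maximum-principle condition~(b).

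\medskip

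\emph{Part (b).} The function $\max\{u,v\}$ is upper semicontinuous as a finite maximum of upper semicontinuous functions, and since $|\max\{u,v\}|\le|u|+|v|$ with $u,v\in L^1_{loc}(\Omega)$, it lies in $L^1_{loc}(\Omega)$. If $h\in C^0(\bar D)$ satisfies $\Delta_\omega h=0$ in $D$ and $h\ge\max\{u,v\}$ on $\partial D$, then $h\ge u$ and $h\ge v$ on $\partial D$ separately; applying~(b) of Definition~\ref{def-1-omega-sh} to each of $u$ and $v$ gives $h\ge u$ and $h\ge v$ on $\bar D$, hence $h\ge\max\{u,v\}$ on $\bar D$.

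\medskip

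\emph{Part (a).} Assume $u\not\equiv-\infty$; the goal is to show $u\in SH(\omega)$. The pointwise limit $u=\lim_j u_j$ is upper semicontinuous as a decreasing limit of upper semicontinuous functions. To check the maximum-principle condition, fix $D\subset\subset\Omega$ and $h\in C^0(\bar D)$ with $\Delta_\omega h=0$ in $D$ and $h\ge u$ on $\partial D$, and fix $\epsilon>0$. The sets $O_j:=\{z\in\bar D:u_j(z)<h(z)+\epsilon\}$ are open (since $u_j-h$ is upper semicontinuous) and increasing in $j$. For each $z\in\partial D$ one has $u_j(z)\searrow u(z)\le h(z)<h(z)+\epsilon$, so $z\in O_j$ for some $j$. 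Compactness of $\partial D$ then yields $j_0$ with $\partial D\subset O_{j_0}$, i.e.\ $u_{j_0}<h+\epsilon$ on $\partial D$. Since constants annihilate the purely second-order operator $\Delta_\omega$, the function $h+\epsilon$ is still $\omega$-harmonic in $D$, so property~(b) applied to $u_{j_0}$ yields $u_{j_0}\le h+\epsilon$ on $\bar D$; therefore $u\le u_{j_0}\le h+\epsilon$ on $\bar D$, and letting $\epsilon\searrow0$ gives $h\ge u$ on $\bar D$.

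\medskip

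The main obstacle is proving $u\in L^1_{loc}(\Omega)$: a decreasing limit of locally integrable functions can fail to be locally integrable even when it is not identically $-\infty$, and the definition offers no a priori mean-value inequality with which to control $u^-$. The plan is to invoke the dichotomy already cited in the first Remark following Definition~\ref{def:alpha-sh}, namely Harvey--Lawson~\cite[Theorem 9.3(A)]{hl13}: an upper semicontinuous function satisfying the maximum-principle condition~(b) is either identically $-\infty$ or locally integrable. Since we have verified~(b) for $u$ and assumed $u\not\equiv-\infty$, this yields $u\in L^1_{loc}(\Omega)$, completing the proof.
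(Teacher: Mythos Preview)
Your proof is correct. For part~(a) you are simply filling in the argument the paper declares ``obvious,'' including the appeal to Harvey--Lawson for local integrability, which the paper itself cites in the Remark after Definition~\ref{def:alpha-sh}; this is the expected route.

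For part~(b) your argument is genuinely different from the paper's. The paper writes $\max\{u,v\}$ as the decreasing limit of $\tfrac{1}{j}\log(e^{ju}+e^{jv})$, verifies by a direct $dd^c$ computation that each approximant is $\omega$-subharmonic, and then invokes part~(a). Your approach instead checks Definition~\ref{def-1-omega-sh}(b) directly: if $h\ge\max\{u,v\}$ on $\partial D$ then $h$ dominates each of $u,v$ separately, and the maximum-principle condition for $u$ and $v$ individually gives the conclusion. Your route is more elementary and stays entirely within the sub/super-solution framework of the definition, with no need for the distributional inequality $\Delta_\omega w\ge 0$ (whose equivalence with Definition~\ref{def-1-omega-sh} is only established later, in Lemma~\ref{lem:positive-dis-omega-sh}). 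The paper's computation, by contrast, is literally valid only for $C^2$ functions and implicitly relies on that later equivalence (or on smooth approximation) to make sense for general $u,v\in SH(\omega)$; your argument avoids that circularity. The trade-off is that the log-sum-exp identity is a reusable trick that also handles convex combinations and more general nonlinear operations, whereas your argument is specific to the pointwise maximum.
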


\begin{proof} $(a)$ is obvious. We shall prove $(b)$.
It is rather standard (see \cite{GZ05}), but probably it is not so well known. We include the proof for the sake of completeness. Observe that
\[\notag
	\max\{u,v\} = \lim_{j \to +\infty} \frac{\log (e^{j u} + e^{jv})}{j} 
\]
By a simple computation we get that
\[\notag \begin{aligned}
	dd^c \log(e^u + e^v) &= \frac{e^u dd^c u+e^v dd^c v}{(e^u+e^v)} + \frac{e^{u+v}d(u-v)\wed d^c(u-v)}{(e^u+e^v)^2}.
\end{aligned}\]
It follows that $\frac{1}{j}\log (e^{ju}+ e^{jv})$ is $\omega$-subharmonic. So is $\max\{u,v\}$.
\end{proof}

The subharmonicity is a local notion meaning that a function is subharmonic in a open set if and only if at every point there exists a neighbourhood such that the function is subharmonic in that neighbourhood. The precise statement is  

\begin{prop} The following statements are equivalent for an upper semicontinuous  and locally integrable function $u$ in $\Omega$.
\begin{itemize}
\item[(1)]
$u$ is an $\omega-$subharmonic function in $\Omega$.
\item[(2)]
In a neighbourhood $U$ of a given point $a$, if $q \in C^2(U)$ such that $q - u \geq 0$ and $q(a) = u(a)$, then 
\[\notag
	\Delta_\omega q(a) \geq 0.
\] 
%\item[(3)] Let $D \subset \subset \Omega$ and let $h\in C^0(\bar D)$ be such  that $\Delta_\omega h =0$ in D, and satisfying  \[\limsup_{z\to \zeta} (u - h)(z)\leq 0 \] for every $\zeta\in\d D$. Then, $u\leq h$ on $\bar D$.
\end{itemize}
\end{prop}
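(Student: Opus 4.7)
The plan is to prove the equivalence in two directions, both reducing to classical weak and strong maximum principle arguments for the uniformly elliptic linear operator $\Delta_\omega$ (which has smooth coefficients since $\omega$ is a Hermitian metric), together with solvability of the Dirichlet problem $\Delta_\omega h = 0$ with continuous boundary data on small Euclidean balls.

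For $(1) \Rightarrow (2)$, I would argue by contradiction: suppose $q \in C^2(U)$ touches $u$ from above at $a$ with $q(a) = u(a)$ but $\Delta_\omega q(a) < 0$. By continuity there is a small ball $B = B(a,r) \subset U$ with $\Delta_\omega q < 0$ throughout $B$. Solve for $h \in C^0(\bar B)$ satisfying $\Delta_\omega h = 0$ in $B$ and $h = q$ on $\partial B$ (by elliptic regularity $h$ is smooth inside). Since $q \geq u$ on $\partial B$, the classical $\omega$-subharmonicity of $u$ gives $u \leq h$ on $\bar B$, whence $h(a) \geq u(a) = q(a)$. The function $v := q - h$ satisfies $\Delta_\omega v < 0$ in $B$ and $v = 0$ on $\partial B$, so the weak minimum principle yields $v \geq 0$ on $\bar B$; combined with $v(a) \leq 0$ this forces $v(a) = 0$, an interior minimum. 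The strong minimum principle then makes $v \equiv 0$ in $B$, contradicting $\Delta_\omega v < 0$.

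For $(2) \Rightarrow (1)$, fix $D \subset\subset \Omega$ and $h \in C^0(\bar D) \cap C^\infty(D)$ with $\Delta_\omega h = 0$ in $D$ and $h \geq u$ on $\partial D$, and assume for contradiction that $M := \sup_{\bar D}(u - h) > 0$. The idea is a quadratic perturbation: for $\varepsilon > 0$ small, the upper semicontinuous function $v_\varepsilon := u - h + \varepsilon |z|^2$ has its maximum over $\bar D$ strictly larger than $\sup_{\partial D} v_\varepsilon$ (since $u - h \leq 0$ on $\partial D$), hence attained at some interior point $a_\varepsilon \in D$. Setting
\[ \notag q_\varepsilon(z) := h(z) - \varepsilon |z|^2 + \bigl[ u(a_\varepsilon) - h(a_\varepsilon) + \varepsilon |a_\varepsilon|^2 \bigr], \]
one checks $q_\varepsilon \in C^2$, $q_\varepsilon \geq u$ on $\bar D$, and $q_\varepsilon(a_\varepsilon) = u(a_\varepsilon)$. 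Hypothesis $(2)$ then yields
\[ \notag 0 \leq \Delta_\omega q_\varepsilon(a_\varepsilon) = -\varepsilon\, \Delta_\omega |z|^2 (a_\varepsilon) = -\varepsilon \sum_i \omega^{\bar i i}(a_\varepsilon) < 0, \]
since the inverse matrix $[\omega^{\bar j i}]$ of $[\omega_{i \bar j}]$ is positive definite. This is the desired contradiction.

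The only point requiring attention is the quadratic perturbation in $(2) \Rightarrow (1)$: the naive test $q = h + (u(a) - h(a))$ at a maximum point $a$ of $u - h$ would give $\Delta_\omega q(a) = 0 \geq 0$, which is consistent with $(2)$ and therefore useless; subtracting $\varepsilon |z|^2$ creates a strict negative Laplacian while the choice of the maximizer $a_\varepsilon$ of the perturbed function preserves $q_\varepsilon \geq u$. This is the standard viscosity-classical equivalence for a linear uniformly elliptic operator without zeroth-order term, so I expect no genuine obstacle beyond this observation.
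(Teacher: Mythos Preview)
Your proposal is correct and follows essentially the same route as the paper: both directions are proved by contradiction using the maximum principle for the linear elliptic operator $\Delta_\omega$, with a quadratic perturbation $-\varepsilon|z|^2$ in $(2)\Rightarrow(1)$ to force a strictly negative $\Delta_\omega$ on the test function. The only cosmetic difference is that in $(2)\Rightarrow(1)$ the paper replaces $h$ by its second-order Taylor polynomial $H$ at the interior maximum point and adds a further correction $\tfrac{\varepsilon}{2}|z-a|^2$ to absorb the $O(|z-a|^3)$ error, whereas you use $h$ itself (which is $C^\infty$ in $D$ by interior elliptic regularity) and so avoid this extra step; your version is slightly cleaner for that reason.
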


\begin{proof} 
$(1)\Rightarrow (2).$ We argue by contradiction. Suppose that there exist a neighbourhood $U$ of a point $a$ and $q\in C^2(U)$ satisfying $q \geq u$ and $q(a) = u(a)$, but 
\[\notag
	\Delta_\omega q(a) < 0.
\]
By Taylor's formula we may assume that $q$ is  quadratic and  there exists $\vepsilon>0$ such that $\Delta_\omega q <-\vepsilon$ on a small ball $B(a, r)$. Solve
\[\notag
	\Delta_\omega v(z) =  - \Delta_\omega q,  \quad v = 0 \quad\mbox{on } \d B(a,r).
\]
Notice that by maximum principle we get that $v(a) < 0$. Let $h= v+q$. Then, $\Delta_\omega h =0$, and $h \geq u$ on $\bar B(a,r)$. However,  $h(a) = u(a) + v(a) < u(a)$, which is impossible. The first direction follows.

$(2)\Rightarrow (1).$ We also argue by contradiction. Suppose that there exist an open set $D \subset \subset \Omega$ and a function $h\in C^0(\bar D)$ and $\Delta_\omega h = 0$ in $D$, which satisfies $u \leq h$ on $\d D$, such that  $\{u>h\}$ is non-empty.  
Without loss of generality we may assume that $D$ is a small ball $B$ and $h$ is continuous on $\bar B$. Set for $\vepsilon>0$
\[\notag
	v_\vepsilon(z) = h(z) - \vepsilon |z|^2.
\]
Then, the upper semicontinuous function $(u-v_\vepsilon)$ takes its maximum at a point $a \in B$, so
\[\notag
	u(z) \leq v_\vepsilon(z) + u(a) - v_\vepsilon(a) \quad \mbox{for } z\in B.
\]
By Taylor's formula 
\[\notag\begin{aligned}
	h(z) &= h(a) + \Re (P(z)) + \frac{1}{2} \frac{\d^2 h}{\d z_i \d \bar z_j}(a) (z_i -a_i) \overline{(z_j-a_j)} + O(|z-a|^3) \\
& =: H(z) + O(|z-a|^3),
\end{aligned}\]
where $P(z)$ is a holomorphic polynomial. Therefore, $\Delta_\omega H(a) = 0$. Consider the function 
\[\notag
q(z)= u(a) - v_\vepsilon(a) +  H(z) -\vepsilon |z|^2 + \frac{\vepsilon}{2} |z-a|^2.
\]
Then, it is easy to check that $\Delta_\omega q(a) < 0$, $q(a) = u(a)$ and $q(z) \geq u(z)$ in a neighbourhood of $a$. This is impossible and the proof is completed.
\end{proof}

Since $\omega-$subharmonicity is a local property  we easily get the gluing lemma.

\begin{lem} Let $U \subset V$ be two open sets. Let $u \in SH(\omega, U)$ and $v \in SH(\omega, V)$. Assume that \[ \limsup_{z\to \zeta} u(z) \leq v(\zeta) \quad \forall \zeta \in \d U\cap V. \]
Then,  $\tilde u \in SH(\omega, V)$, where
\[\notag
\tilde u = \begin{cases} \max\{u,v\} \quad &\mbox{on } U, \\
v\quad &\mbox{on } V\setminus U. 
\end{cases}
\]
\end{lem}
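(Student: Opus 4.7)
The strategy is to exploit the local nature of $\omega$-subharmonicity established in the previous proposition: it suffices to check that $\tilde u$ is upper semicontinuous, in $L^1_{loc}(V)$, and satisfies the local characterisation (property (2), the maximum principle against $C^2$ test functions from above) at every point of $V$. Integrability is immediate, since $\tilde u = v$ on $V\setminus U$ while on $U$ we have $|\tilde u| \leq |u| + |v|$ with both terms in $L^1_{loc}$.

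The first observation is that $\tilde u \geq v$ on all of $V$: this is clear on $V\setminus U$ (equality), and on $U$ from the definition of the maximum. This fact will be the key to handling boundary points.

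Next I would split $V$ into three regions and verify the required properties on each:
\begin{itemize}
\item[(i)] On the open set $U$, $\tilde u = \max\{u,v\}$, so $\tilde u$ is $\omega$-subharmonic there by Proposition~\ref{proa:limit-stable}(b).
\item[(ii)] On the open set $V \setminus \overline{U}$, $\tilde u = v$, hence $\omega$-subharmonic.
\item[(iii)] The only delicate case is at $\zeta \in \d U \cap V$; since $U$ is open, $\zeta \notin U$, so $\tilde u(\zeta) = v(\zeta)$.
\end{itemize}
For upper semicontinuity at such a $\zeta$, I would split the $\limsup$ into approaches from within $U$ and from within $V\setminus U$. The second gives $\limsup_{z\to\zeta, z\in V\setminus U} v(z) \leq v(\zeta)$ by upper semicontinuity of $v$. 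For the first,
\[ \notag
\limsup_{z\to\zeta,\,z\in U}\tilde u(z) \leq \max\Big\{\limsup_{z\to\zeta,\,z\in U} u(z),\; \limsup_{z\to\zeta,\,z\in U} v(z)\Big\} \leq v(\zeta),
\]
where the first term is controlled by the hypothesis and the second by upper semicontinuity of $v$. Combining, $\limsup_{z\to\zeta} \tilde u(z) \leq v(\zeta) = \tilde u(\zeta)$.

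Finally, to verify the local maximum-principle characterisation at $\zeta \in \d U \cap V$, take any neighbourhood $W \ni \zeta$ and any $q \in C^2(W)$ with $q \geq \tilde u$ on $W$ and $q(\zeta) = \tilde u(\zeta)$. Because $\tilde u \geq v$ on $V$, we automatically have $q \geq v$ on $W$, and because $\tilde u(\zeta) = v(\zeta)$, we have $q(\zeta) = v(\zeta)$. Since $v$ is $\omega$-subharmonic, the implication $(1)\Rightarrow(2)$ of the preceding proposition yields $\Delta_\omega q(\zeta) \geq 0$. Thus $\tilde u$ satisfies property (2) at $\zeta$. Applying $(2)\Rightarrow(1)$ locally at each point of $V$ concludes that $\tilde u \in SH(\omega, V)$. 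The only conceptual step that uses the hypothesis is the upper semicontinuity check at boundary points; every other step is formal. I do not expect a serious obstacle.
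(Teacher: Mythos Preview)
Your argument is correct and complete. You use the viscosity-type characterisation from the preceding proposition: after checking upper semicontinuity and local integrability, you verify the test-function condition pointwise, the key observation being that $\tilde u \geq v$ on all of $V$ with equality at $\zeta \in \partial U \cap V$, so any $C^2$ function touching $\tilde u$ from above at $\zeta$ automatically touches $v$ from above there.

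The paper takes a different, more classical route: it perturbs to $u_\varepsilon$, defined as $\max\{u, v+\varepsilon\}$ on $U$ and $v+\varepsilon$ on $V\setminus U$. The point of the $\varepsilon$-shift is that the boundary hypothesis becomes strict, $\limsup_{z\to\zeta} u(z) \leq v(\zeta) < v(\zeta)+\varepsilon$, so near any $\zeta \in \partial U \cap V$ one has $u_\varepsilon = v+\varepsilon$ on a full neighbourhood; thus $u_\varepsilon$ is locally either $\max\{u,v+\varepsilon\}$ or $v+\varepsilon$, hence $\omega$-subharmonic by Proposition~\ref{proa:limit-stable}(b). Letting $\varepsilon \searrow 0$ and invoking Proposition~\ref{proa:limit-stable}(a) gives $\tilde u \in SH(\omega,V)$. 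Your approach is more direct once the viscosity proposition is available and avoids the limiting argument; the paper's approach is self-contained with respect to the earlier and more elementary Proposition~\ref{proa:limit-stable} and does not need the viscosity characterisation at all.
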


\begin{proof} Consider 
\[\notag
	u_\vepsilon = \begin{cases}
	\max\{u, v+\vepsilon\} \quad \mbox{on } U, \\
	v+\vepsilon \quad \mbox{on } V\setminus U.
	\end{cases}
\]
If $x \in U$, then there is a small ball $B(x,r) \subset U$. Hence, \[
 u_\vepsilon =  \max\{u, v+\vepsilon\} \] is $\omega$-subharmonic in $B(x,r).$ Similarly, for $x\in V\setminus U$ by the assumption on $\d U \cap V$, there is $B(x,r) \subset V$ such that $u_\vepsilon = v+ \vepsilon$ on $B(x,r)$. Thus, $u_\vepsilon \in SH(\omega,V).$ Since $u_\vepsilon \searrow  u$ we can apply Proposition~\ref{proa:limit-stable} getting the lemma.
\end{proof}

The proposition above shows that we only need to check the $\omega-$subharmonicity of a function on a small ball, but it is not clear whether a sum of two subharmonic functions is again  subharmonic. We shall need another criterion. 

By linear PDEs potential theory, e.g. see \cite{miranda}, for any ball $B(a,r)$, there exists a Poisson kernel $P_{a,r}$ for the operator $\Delta_\omega$. Namely, for every continuous function $\vphi$ on $\d B(a,r)$, the function
\[\notag
	h(z) = \int_{\d B(a,r)} \vphi(w) P_{a,r}(z,w) d\sigma_r(w),
\]
is the unique continuous solution to the Dirichlet problem 
\[\notag
	\Delta_\omega h(z) = 0 \quad \mbox{in } B(a,r), \quad
	h = \vphi \mbox{ on } \d B(a,r),
\]
where $d\sigma_r(z)$ is the standard surface measure on $\d B(a,r)$.

\begin{lem} Let $u: \Omega \to [-\infty, +\infty[$ be a locally integrable upper semicontinuous function. For $\Omega_\delta = \{z\in \Omega : dist(z, \d\Omega) >\delta\}$, $\delta >0$, consider the function 
\[\notag
	M(u,a,r) =	\int_{\d B(a,r)} u(z) P_{a,r}(a,z) d\sigma_r(z), \quad a \in \Omega_\delta,
\]
where $r \in [0,\delta]$.
Then,  $u$ is an $\omega-$subharmonic function if and only if  
\[\notag
 u(a) \leq M(u,a,r)
\] for $a \in \Omega_\delta$, $r \in [0,\delta].$ Furthermore, $M(u,a,r)$ decreases to $u(a)$ as $r$ goes to $0$.
\end{lem}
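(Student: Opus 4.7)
The plan is to prove the two implications and the monotonicity-limit claim separately.

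\textbf{Forward direction.} Assume $u \in SH(\omega)$. Fix $a \in \Omega_\delta$ and $r \in (0,\delta]$. Since $u$ is upper semicontinuous and bounded above on the compact set $\d B(a,r)$, there is a decreasing sequence $\vphi_j \in C^0(\d B(a,r))$ with $\vphi_j \searrow u$. Let $h_j \in C^0(\overline{B(a,r)})$ be the unique solution of the Dirichlet problem $\Delta_\omega h_j = 0$ in $B(a,r)$, $h_j = \vphi_j$ on $\d B(a,r)$, which exists by the standard theory of uniformly elliptic operators. Since $h_j = \vphi_j \geq u$ on $\d B(a,r)$, Definition~\ref{def-1-omega-sh}(b) yields $h_j \geq u$ on $\overline{B(a,r)}$; in particular $u(a) \leq h_j(a) = \int_{\d B(a,r)} \vphi_j\, P_{a,r}(a,\cdot)\, d\sigma_r$. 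Monotone convergence (after subtracting an upper bound) sends the right-hand side to $M(u,a,r)$.

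\textbf{Reverse direction.} Assume the sub-mean-value inequality. Let $D \subset\subset \Omega$ be open and connected (otherwise argue componentwise), $h \in C^0(\bar D)$ satisfy $\Delta_\omega h = 0$ in $D$ and $h \geq u$ on $\d D$; I want $v := u - h \leq 0$ on $\bar D$. Being USC on the compact $\bar D$, $v$ attains its supremum $M$ at some $a \in \bar D$. If $a \in \d D$, then $M \leq 0$. Otherwise $a \in D$; choose $r>0$ so small that $\overline{B(a,r)} \subset D$. Then $u(a) \leq M(u,a,r)$ by hypothesis and $h(a) = M(h,a,r)$ by the Poisson representation of the $\omega$-harmonic $h$. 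Subtracting and using $\int P_{a,r}(a,\cdot)\,d\sigma_r = 1$ together with $v \leq M = v(a)$ on $\d B(a,r)$ forces $M(v,a,r) = v(a)$, hence $v = v(a)$ almost everywhere on $\d B(a,r)$ by strict positivity of $P_{a,r}(a,\cdot)$. Upper semicontinuity of $v$ promotes this to $v \equiv v(a)$ pointwise on the sphere: a single $z_0$ with $v(z_0) < M$ would produce, via USC, an open spherical neighborhood of $z_0$ on which $v$ is bounded away from $M$, contradicting the a.e.\ identity. Letting $r$ vary shows that $\{v = M\} \cap D$ is open; it is also relatively closed by USC, hence equal to $D$. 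Taking $\limsup$ as $z$ approaches any $\zeta \in \d D$ and using $u \leq h$ there gives $M \leq 0$.

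\textbf{Monotonicity and limit.} For $0 < r_1 < r_2$ with $\overline{B(a,r_2)} \subset \Omega_\delta$, approximate $u|_{\d B(a,r_2)}$ by continuous $\vphi_j \searrow u$ and let $h_j$ be the $\omega$-harmonic extension on $B(a,r_2)$. By the forward direction, $h_j \geq u$ on $\overline{B(a,r_2)}$, hence
$$M(u,a,r_1) \leq M(h_j,a,r_1) = h_j(a) = M(h_j,a,r_2),$$
using the Poisson reproducing identity for $\omega$-harmonic functions on the smaller sphere, and monotone convergence sends the last expression to $M(u,a,r_2)$. For the limit: USC gives $u(z) < u(a) + \vepsilon$ on a neighborhood of $a$, so $M(u,a,r) \leq u(a) + \vepsilon$ for small $r$, while the sub-mean-value gives $M(u,a,r) \geq u(a)$; hence $M(u,a,r) \searrow u(a)$.

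\textbf{Main obstacle.} The only delicate step is the promotion from a.e.\ equality to pointwise equality on $\d B(a,r)$ in the reverse direction, which relies on the strict positivity of the Poisson kernel $P_{a,r}(a,\cdot)$ (a consequence of Harnack/Hopf for the uniformly elliptic operator $\Delta_\omega$) and on exploiting that sublevel sets of a USC function are open. The remaining work is a direct adaptation of classical subharmonic-function arguments, relying only on solvability of the Dirichlet problem for $\Delta_\omega$ on small balls with continuous data and the normalization $\int P_{a,r}(a,\cdot)\, d\sigma_r = 1$.
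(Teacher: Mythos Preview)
Your proof is correct. The forward direction and the monotonicity/limit argument match the paper's proof essentially line for line: approximate $u$ from above by continuous boundary data, solve the Dirichlet problem, invoke Definition~\ref{def-1-omega-sh}(b), and pass to the limit by monotone convergence; for the limit $M(u,a,r)\to u(a)$ both you and the paper use upper semicontinuity together with $\int P_{a,r}(a,\cdot)\,d\sigma_r=1$.

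The reverse direction is where your argument genuinely differs. The paper argues by contradiction: assuming $c:=\sup_{\bar D}(u-h)>0$, it picks $a$ in the compact set $F=\{u-h=c\}$ \emph{closest to} $\d D$, observes that this forces some nearby point to satisfy $v<c$, and then integrates the sub-mean-value inequality over $r\in[0,\delta]$ to obtain the strict inequality $\delta c<\delta c$. You instead run a strong-maximum-principle/connectedness argument: from $M(v,a,r)=v(a)$ and positivity of $P_{a,r}(a,\cdot)$ you deduce $v\equiv M$ on each small sphere, hence $\{v=M\}\cap D$ is open, and being also closed it equals $D$; then you push to $\d D$ by upper semicontinuity. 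Your route is slightly cleaner conceptually and avoids the ``closest point'' geometric choice and the extra integration in $r$; the paper's route, on the other hand, does not need the connectedness reduction and never invokes strict positivity of the Poisson kernel (it only uses $\int P\,d\sigma=1$ and the existence of an open set where $v<c-\epsilon$). Both are standard and of comparable length.
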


\begin{proof} We first prove that it is a necessary condition. Take $\phi \geq u$ to be a continuous function on $\d B(a,r)$. Then,
\[\notag
	h(z) = \int_{\d B(a,r)} \phi(w) P_{a,r} (z,w) d\sigma_r(w)
\]
satisfies $\Delta_\omega h =0$ and $h=\phi \geq u$ on $\d B(a,r)$. It follows from definition that $h \geq u$ on $B(a,r)$. In particular,
\[\notag
	u(a) \leq \int_{\d B(a,r)} \phi(w) P_{a,r} (a,w) d\sigma_r(w).
\]
As $u$ is upper semicontinuous, we can let $\phi \searrow u$. By monotone convergence theorem we  get the desired inequality.

Now we prove the other direction by contradiction. Assume that there exist a relatively compact open set $D \subset \Omega$, $h \in C^0(\bar D)$ with $\Delta_\omega h =0$ and $h \geq u$ on $\d D$, but 
\[\notag
	c:= \sup_{\bar D} (u -h) >0.
\]
As $v= u-h$ is upper semicontinuous, $c$ is finite  and $F:=\{v = c\}$ is a compact set in $D$. We choose $a\in F$ such that it is the closest point to the boundary $\d D$. Assume that $dist (a, \d D) = 2\delta>0.$ Since there exists $x \in B(a,\delta)$ such that $v(x) < c$, so there is $B(x,\epsilon') \subset \{v<c-\epsilon\} \cap B(a,\delta)$ for some $\epsilon,\epsilon'>0$. It follows from $\Delta_\omega h =0$ on $D$ that
\[\notag
	v(a) \leq M(v, a,r) \quad \forall z\in B(a,r), \forall r \leq \delta. 
\]
Notice that in our case $\Delta_\omega 1 =0$ and
\[\notag
	\int_{\d B(a,r)} P_{a,r} (z,w) d\sigma_r(w) =1.
\]
Integrating from $0$ to $\delta$ we get that
\[\notag
	\delta v(a) \leq \int_{[0,\delta]} \int_{\d B(a, r)} v(z) P_r(a,z) d\sigma_r(z) dr < \delta c.
\]
This is impossible. Thus, the sufficient condition is proved.

For the last assertion, let $0 \leq r<\delta$.  Fix a continuous function $\phi \geq u$ on $\d B(0,\delta)$. As $\Delta_\omega h =0$ in $B(a,\delta)$ for
\[\notag
	h(z) = \int_{\d B(a, \delta)} \phi(w) P_\delta(z, w) d\sigma_\delta(w),
\]
we get that $u(z) \leq h(z)$ on $B(a,\delta)$. Therefore,
\[
M(u,a,r) \leq \int_{\d B(a,r)} h(w) P_r(a, w) d\sigma_r(w) = h(a).
\]
Moreover,
\[\notag
	h(a) = \int_{\d B(a,\delta)} \phi(w) P_\delta(a, w) d\sigma_\delta(w).
\]
Letting $\phi \searrow u$, we get the monotonicity of $M(u,a,r)$ in $r \in [0,\delta].$
Moreover, as $u$ is upper semicontinuous, 
\[\notag
	\lim_{r\to 0} M(u,a,r) \leq u^*(a)= u(a),
\]
where we used the fact above that $\int _{\d B(a,r)}P_r d\sigma_r =1.$ 
\end{proof}

An immediate consequence of the last assertion in the above lemma is 
\begin{cor} \label{cor:almost-equal} If two $\omega-$subharmonic functions are equal almost everywhere, then they are equal everywhere.
\end{cor}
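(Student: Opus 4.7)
The plan I would follow is to exploit the last assertion of the preceding lemma, which gives the pointwise recovery formula $u(a)=\lim_{r\to 0^+}M(u,a,r)$ for every $\omega$-subharmonic $u$ and every $a\in\Omega$. The problem then reduces to producing, for each fixed $a$, a sequence $r_n\searrow 0$ along which $M(u,a,r_n)=M(v,a,r_n)$; taking the limit forces $u(a)=v(a)$.

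To produce such radii, I would fix $a\in\Omega$, pick $\delta>0$ so that $\bar B(a,\delta)\subset\Omega$, and write Lebesgue measure near $a$ in polar coordinates: for every Borel set $E\subset B(a,\delta)$,
$$|E|=c_n\int_0^\delta \sigma_r\bigl(E\cap\partial B(a,r)\bigr)\,r^{2n-1}\,dr.$$
Applied to the exceptional set $E=\{u\neq v\}$, which has Lebesgue measure zero by hypothesis, this gives $\sigma_r(\{u\neq v\}\cap\partial B(a,r))=0$ for almost every $r\in(0,\delta)$. For each such generic $r$, the integrand in
$$M(u,a,r)-M(v,a,r)=\int_{\partial B(a,r)}\bigl(u(w)-v(w)\bigr)\,P_{a,r}(a,w)\,d\sigma_r(w)$$
vanishes $d\sigma_r$-almost everywhere, so the difference is zero. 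Therefore the set of radii $r\in(0,\delta)$ on which $M(u,a,r)=M(v,a,r)$ has full one-dimensional Lebesgue measure, and in particular contains a sequence $r_n\searrow 0$.

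Passing to the limit along this sequence and invoking the monotone convergence $M(u,a,r_n)\searrow u(a)$ and $M(v,a,r_n)\searrow v(a)$ from the preceding lemma then yields $u(a)=v(a)$; since $a\in\Omega$ was arbitrary, $u\equiv v$. The argument is essentially a Fubini / coarea computation combined with the recovery formula, and I do not anticipate any serious obstacle: the only point that needs to be stated explicitly is the polar decomposition of Lebesgue measure, which is classical and valid on any Euclidean ball.
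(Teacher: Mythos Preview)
Your proposal is correct and is precisely the argument the paper has in mind: the paper simply states that the corollary is ``an immediate consequence of the last assertion in the above lemma,'' namely the pointwise recovery $M(u,a,r)\searrow u(a)$, and you have correctly filled in the one nontrivial step (the Fubini/polar-coordinate observation that $u=v$ $\sigma_r$-a.e.\ on $\partial B(a,r)$ for almost every $r$, hence $M(u,a,r)=M(v,a,r)$ along a sequence $r_n\searrow 0$).
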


We are ready to state a consequence of Littman's theorem, which says that we can always find a smooth approximation for $\omega-$subharmonic functions.
\begin{cor} \label{cor:littman}
Let $u\in SH(\omega, \Omega)$ and $\Omega' \subset \subset \Omega$.There exists a  sequence of smooth $\omega-$subharmonic functions  $[u]_\vepsilon$ decreasing to $u$ as $ \vepsilon \to 0$ on $\Omega'.$
\end{cor}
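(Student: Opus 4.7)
The plan is to apply Theorem~\ref{thm:littman} to the operator $L := \Delta_\omega$ on a relatively compact open set $D$ with $\Omega' \subset\subset D \subset\subset \Omega$. Written in real coordinates on $\bR^{2n}$, $\Delta_\omega$ is a uniformly elliptic second-order linear operator with smooth coefficients. First I would cast it in Littman's divergence form $Lv = (b^{ij}v)_{x_i x_j} - (b^i v)_{x_i}$ by matching the principal symbol via $b^{ij}$, choosing $b^i$ to absorb the stray first-order terms produced by differentiating $b^{ij}$, and using the identity $\Delta_\omega 1 = 0$ to rule out any leftover zero-order contribution.

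Next, the definition of $u \in SH(\omega,\Omega)$ has to be translated into the weak inequality $\int u\, L^* v\, dx \geq 0$ for every non-negative $v\in C^2_c(D)$. This is exactly the content of Lemma~\ref{lem:positive-dis-omega-sh} in the appendix, and in turn follows by integrating the mean-value inequality $u(a) \leq M(u,a,r)$ from the Poisson-kernel characterisation against a non-negative test function and letting $r \to 0$; the local integrability of $u$ ensures the integrals make sense.

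With the hypotheses of Theorem~\ref{thm:littman} in force, Littman's construction yields a family $\{u_h\}_{h \geq h_\delta}$ of smooth functions on $D_{2\delta}$ satisfying $\Delta_\omega u_h \geq 0$ pointwise, non-increasing in $h$, and converging to $u$ in $L^1(D_{2\delta})$. Smoothness together with the pointwise inequality $\Delta_\omega u_h \geq 0$ gives $u_h \in SH(\omega, D_{2\delta})$ via the $C^2$ touching-function characterisation established earlier. Setting $[u]_\vepsilon := u_{1/\vepsilon}$ for small $\vepsilon > 0$ produces a non-increasing family of smooth $\omega$-subharmonic approximants on $\Omega' \subset\subset D_{2\delta}$.

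It remains to upgrade $L^1$ convergence to pointwise decreasing convergence. The limit $U(z) := \lim_{\vepsilon \to 0} [u]_\vepsilon(z)$ is upper semicontinuous by Theorem~\ref{thm:littman} and equals $u$ almost everywhere. As a decreasing limit of smooth $\omega$-subharmonic functions, $U$ is either identically $-\infty$ or itself in $SH(\omega, D_{2\delta})$ by Proposition~\ref{proa:limit-stable}(a); the former alternative is ruled out since $u \in L^1_{loc}$. Corollary~\ref{cor:almost-equal} then forces $U = u$ everywhere, finishing the proof. The main obstacle I anticipate is the bookkeeping in the first step: fitting $\Delta_\omega$, which is not formally self-adjoint when $\omega$ is not flat, into the rigid divergence form demanded by Littman while preserving the precise correspondence between the two notions of weak subharmonicity. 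Everything else is a direct combination of Theorem~\ref{thm:littman} with the monotone-limit and identity results already established in this appendix.
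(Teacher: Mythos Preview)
Your approach matches the paper's exactly: apply Theorem~\ref{thm:littman} on a smooth $D$ with $\Omega' \subset D_{2\delta}$, set $[u]_\vepsilon := u_{1/\vepsilon}$, and invoke Corollary~\ref{cor:almost-equal} to pass from $U = u$ almost everywhere to $U = u$ everywhere. Two small remarks: citing Lemma~\ref{lem:positive-dis-omega-sh} here would be circular (its proof already uses $[u]_\vepsilon$), so your fallback to the Poisson-kernel mean-value inequality is the correct move; and regarding your bookkeeping worry, in Littman's original 1963 paper the operator is taken directly in non-divergence form $a^{ij}\partial_i\partial_j + b^i\partial_i + c$, so $\Delta_\omega$ slots in without any rewriting---the paper's divergence-form presentation of $L$ (with $L^*$ in non-divergence form) is just one way of packaging the same result.
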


\begin{proof} We simply choose a smooth domain $D \subset \Omega$ and $\delta>0$ small such that $\Omega' \subset D_{2\delta}$ and let 
\[\label{eq:u_h-epsilon}
	[u]_\vepsilon(z) := u_{h}(z)
\]
where $u_h(z)$, $h = 1/\vepsilon > h_{\delta}$, is defined in Theorem~\ref{thm:littman}. As $U(z):=\lim_{\vepsilon} [u]_\vepsilon$ is equal to $u(z)$ almost everywhere and $u$ is also $\omega-$subharmonic, it follows from Corollary~\ref{cor:almost-equal} that $U=u$ everywhere.
\end{proof}

\begin{cor} \label{cor:sup-family}
Let $\{u_\alpha\}_{\alpha \in I} \subset SH(\omega)$ be a family that is locally bounded from above. Let $u(z) := \sup_\alpha u_\alpha(z)$. Then, the upper semicontinuous regularisation $u^*$ is $\omega-$subharmonic.
\end{cor}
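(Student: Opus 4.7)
The plan is to verify directly the two conditions of Definition~\ref{def-1-omega-sh} for $u^*$, exploiting the fact that the maximum-principle characterisation of $\omega$-subharmonicity interacts cleanly with suprema. In particular, no invocation of Choquet's lemma will be necessary, since the test objects in condition~(b) are continuous and therefore already coincide with their own USC regularisations.

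First I would check that $u^*$ is upper semicontinuous and locally integrable. Upper semicontinuity is immediate from the definition of the USC regularisation. For local integrability, pick any $\alpha_0 \in I$, so that $u_{\alpha_0} \leq u \leq u^*$ pointwise; the local-boundedness hypothesis furnishes a local constant $M$ with $u^* \leq M$, which controls the positive part, while $u_{\alpha_0} \in L^1_{\mathrm{loc}}(\Omega)$ controls the negative part via the pointwise bound $(u^*)_- \leq (u_{\alpha_0})_-$. Hence $u^* \in L^1_{\mathrm{loc}}(\Omega)$, and in particular $u^* \not\equiv -\infty$.

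Next I would verify condition~(b). Let $D \subset\subset \Omega$ and $h \in C^0(\bar D)$ with $\Delta_\omega h = 0$ in $D$ and $h \geq u^*$ on $\partial D$. For every $\alpha \in I$ we have $u_\alpha \leq u \leq u^*$, hence $h \geq u_\alpha$ on $\partial D$; the $\omega$-subharmonicity of each $u_\alpha$ then gives $h \geq u_\alpha$ on $\bar D$. Taking the supremum over $\alpha$ yields $h \geq u$ on $\bar D$, and since $h$ is continuous (in particular upper semicontinuous) we may pass to the USC regularisation on the right-hand side of the inequality $u \leq h$ to conclude $h \geq u^*$ on $\bar D$. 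I do not anticipate a real obstacle here: the main subtlety is simply to keep track of which pointwise inequalities persist under USC regularisation, and the only place the hypotheses enter nontrivially is the local-boundedness assumption, without which $u^*$ would not even be a legitimate $\omega$-subharmonic function in the paper's sense.
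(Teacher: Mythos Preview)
Your proof is correct and is genuinely different from the paper's. The paper invokes Choquet's lemma to pass to an increasing countable subfamily, then applies Littman's smoothing (Corollary~\ref{cor:littman}) to show that the pointwise limit $U := \lim_{\varepsilon \to 0} [u]_\varepsilon$ is $\omega$-subharmonic and coincides with $u^*$; the argument runs through the inequalities $u_j \leq [u_j]_\varepsilon \to [u]_\varepsilon$ and the fact that $J_h = 1$ forces $\lim_\varepsilon [u]_\varepsilon \leq u^*$.

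Your route bypasses all of this machinery by working straight from Definition~\ref{def-1-omega-sh}: since the comparison function $h$ in condition~(b) is continuous, the pointwise bound $u \leq h$ on $\bar D$ survives under USC regularisation to give $u^* \leq h$ on $D$, and the boundary inequality is already assumed. This is more elementary and self-contained. The paper's approach has the incidental benefit of identifying $u^*$ explicitly as the limit of Littman smoothings, which is consonant with the Appendix's systematic use of that construction, but for the bare statement your argument is cleaner.
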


\begin{proof} By Choquet's lemma one can choose an increasing sequence $u_j \in SH(\omega)$ such that $u = \lim_{j} u_j$. Then, by Littman's theorem and the notation in Corollary~\ref{cor:littman}, $\lim_\vepsilon [u]_\vepsilon =U \in SH(\omega)$ and  $u = U$ almost everywhere. As $u_j \in SH(\omega)$ we have
\[\notag
	u_j \leq [u_j]_\vepsilon \to [u]_\vepsilon \quad \mbox{as } j\to +\infty
\]
uniformly on compact subsets of $\Omega$. It follows that $u \leq U$.  By upper semicontinuous of $U$ we have $u^* \leq U$. By the formula \eqref{eq:u_h} and $J_h =1$, $\lim_\vepsilon [u]_\vepsilon \leq u^*$. Thus, $u^*=U$.
\end{proof}

\begin{lem} \label{lem:positive-dis-omega-sh}
Let $u$ be an $\omega-$subharmonic function in $\Omega$. Then,
\[\notag
	\Delta_\omega u \geq 0
\]
in the distributional sense. Conversely, if $v\in  L^1_{loc}(\Omega)$ and  $\Delta_\omega v \geq 0$ (as a distribution), then there exists a unique function $V\in SH(\omega)$ such that $V = v$ in $L^1_{loc}(\Omega)$.
\end{lem}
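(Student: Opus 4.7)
The plan is to handle the forward and converse implications separately, using smooth approximation in both cases. First, for the forward direction, I would dispatch the $C^2$ case directly: when $u \in C^2(\Omega) \cap SH(\omega,\Omega)$, applying the test-function characterisation (the proposition preceding Corollary~\ref{cor:almost-equal}) with the trivial choice $q := u$ yields $\Delta_\omega u(a) \geq 0$ at every point, since $q - u \equiv 0$ and $q(a) = u(a)$ hold automatically. For a general $u \in SH(\omega,\Omega)$, Corollary~\ref{cor:littman} provides a decreasing sequence $[u]_\vepsilon$ of smooth $\omega$-subharmonic functions converging to $u$ on any $\Omega' \Subset \Omega$. By the smooth case each satisfies $\Delta_\omega [u]_\vepsilon \geq 0$ pointwise, so that for any nonnegative $\phi \in C_c^\infty(\Omega')$, integration by parts gives
\[\notag
  \int [u]_\vepsilon \cdot (\Delta_\omega)^* \phi \, dV = \int (\Delta_\omega [u]_\vepsilon) \cdot \phi \, dV \geq 0.
\]
Monotone convergence (using $[u]_\vepsilon \searrow u$ and $u \in L^1_{loc}$) then transfers the inequality to $u$, giving $\Delta_\omega u \geq 0$ in the distributional sense.

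For the converse, given $v \in L^1_{loc}(\Omega)$ with $\Delta_\omega v \geq 0$ distributionally, I would apply Littman's theorem (Theorem~\ref{thm:littman}) to produce smooth functions $v_h$ that decrease monotonically to a pointwise limit $V$, with $V$ upper semicontinuous, $V = v$ almost everywhere, and $\Delta_\omega v_h \geq 0$ pointwise for each $h$. The pointwise inequality, combined with the classical weak maximum principle for uniformly elliptic operators without zeroth-order term, implies each $v_h$ is $\omega$-subharmonic: for any $D \Subset \Omega$ and continuous $\omega$-harmonic $h \in C^0(\bar D)$ with $h \geq v_h$ on $\partial D$, the difference $w := v_h - h$ is $C^2$ on $D$ with $\Delta_\omega w \geq 0$ and nonpositive boundary values, forcing $w \leq 0$ on $\bar D$. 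Since $V = v \not\equiv -\infty$, Proposition~\ref{proa:limit-stable}(a) then gives $V \in SH(\omega,\Omega)$; uniqueness of such $V$ is immediate from Corollary~\ref{cor:almost-equal}.

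The main technical obstacle is matching $\Delta_\omega$—a non-divergence form operator with no zeroth-order term—to Littman's framework, where $L$ is in divergence form and $L^*$ in non-divergence form. One must identify $\Delta_\omega$ with Littman's $L^*$ by choosing $b^{ij}$ to encode the real matrix of $\omega^{\bar j i}$ and calibrating $b^i$ from derivatives of the metric, and then verify that the distributional hypothesis $\int v \cdot (\Delta_\omega)^* \phi \geq 0$ matches Littman's weak formulation. Because $\omega^{\bar j i}$ is smooth this reduces to tracking harmless lower-order correction terms, but the care is essential: one needs the approximants produced by Theorem~\ref{thm:littman} to satisfy exactly $\Delta_\omega v_h \geq 0$ pointwise (rather than an inequality for some operator differing from $\Delta_\omega$ by first- or zeroth-order terms) in order for the classical maximum-principle argument above to apply verbatim.
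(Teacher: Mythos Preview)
Your proposal is correct and follows the same route as the paper's (terse) proof: both directions hinge on Littman's smooth monotone approximation, passing to the weak limit for the forward implication and invoking Proposition~\ref{proa:limit-stable}(a) together with Corollary~\ref{cor:almost-equal} for the converse. One small slip in your final paragraph: since Theorem~\ref{thm:littman} outputs $L u_h \geq 0$, it is $L$ (not $L^*$) that must be identified with $\Delta_\omega$---your own matching of the weak formulations in the next clause already gets this right.
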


\begin{proof} Let $[u]_\vepsilon$, $\vepsilon>0$, be the smooth decreasing approximation of $u$. As $\Delta_\omega [u]_\vepsilon \geq 0$ and the family weakly converges to $\Delta_\omega u$, we get the first statement. Conversely, by Littman's theorem we know that $V(z) = \lim_{\vepsilon \to 0} [v]_\vepsilon(z) \in SH(\omega)$ and $V(z)= v(z)$ almost everywhere. Therefore, we get the existence. The uniqueness follows from the fact that two $\omega-$subharmonic functions are equal almost everywhere.
\end{proof}

The following result is rather simple but it is important.
\begin{lem} \label{lem:l1-cpt-sh}
Let $u \in SH(\omega)$. Let $K\subset\subset D\subset\subset \Omega$ be a compact set and an open set. Then,
\[\notag
	\int_K dd^c u \wed \omega^{n-1} \leq C(D,\Omega) \|u\|_{L^1(D)}.
\]
\end{lem}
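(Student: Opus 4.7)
The strategy is to reduce the estimate to a standard fact about positive distributional Laplacians and then apply one integration by parts against a cutoff. First, I would record the pointwise identity obtained by diagonalising $\omega$ in a unitary frame at any given point: with the normalisation $dd^c = \tfrac{\ii}{\pi}\d\bar\d$,
\[
n\,dd^c u\wed \omega^{n-1} \;=\; \frac{1}{\pi}\,(\Delta_\omega u)\,\omega^n
\]
as smooth forms when $u$ is smooth, and hence as distributions in general. Thus it suffices to bound the total mass of the positive measure $\Delta_\omega u\cdot \omega^n$ on $K$ by $C\|u\|_{L^1(D)}$.

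Next, since $u\in SH(\omega,\Omega)$, Lemma~\ref{lem:positive-dis-omega-sh} gives $\Delta_\omega u\geq 0$ in the sense of distributions on $\Omega$, so by the F.~Riesz representation $\Delta_\omega u$ is a positive Radon measure. I would then choose a cutoff $\chi\in C_0^\infty(D)$ with $0\leq \chi\leq 1$ and $\chi\equiv 1$ on a neighbourhood of $K$. Positivity of the measure and the distributional identity yield
\[
\int_K \Delta_\omega u\cdot \omega^n \;\leq\; \int_D \chi\, \Delta_\omega u\cdot \omega^n \;=\; \int_D u\cdot \Delta_\omega^{*}(\chi)\,\omega^n,
\]
where $\Delta_\omega^{*}$ is the formal adjoint, a second-order linear operator with smooth coefficients determined by $\omega$. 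Since $\Delta_\omega^{*}(\chi)$ is smooth and compactly supported in $D$, it is bounded by a constant $C$ depending only on $\chi$ and $\omega$ (hence, once the cutoff is fixed between $K$ and $D$, on $K, D, \omega$), and the right-hand side is at most $C\|u\|_{L^1(D)}$. Combining with the identity above gives the claim.

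There is no real obstacle in the argument; the only delicate point is justifying the integration by parts for the $L^1_{loc}$ function $u$, which is precisely what the distributional interpretation of $\Delta_\omega u$ from Lemma~\ref{lem:positive-dis-omega-sh} supplies. As a parallel route one can instead start from the smooth decreasing approximants $[u]_\vepsilon\searrow u$ from Corollary~\ref{cor:littman}, carry out the cutoff integration by parts classically for each $[u]_\vepsilon$, and then pass to the limit using monotone convergence of the positive measures $dd^c [u]_\vepsilon\wed \omega^{n-1}$ on $K$ together with $L^1(D)$ convergence of $[u]_\vepsilon$ on the right. Either approach delivers the stated inequality with the same form of constant.
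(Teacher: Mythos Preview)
Your proposal is correct and follows essentially the same approach as the paper: choose a cutoff between $K$ and $D$, use positivity of the measure, and integrate by parts to throw the derivatives onto the smooth compactly supported test function. The paper's version is marginally more direct in that it integrates by parts at the level of forms, writing $\int \phi\, dd^c u \wedge \omega^{n-1} = \int u\, dd^c(\phi\,\omega^{n-1})$, without the intermediate passage through the scalar identity $n\, dd^c u \wedge \omega^{n-1} = \pi^{-1}(\Delta_\omega u)\,\omega^n$ and its formal adjoint; but the underlying idea is identical.
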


\begin{proof} Let $\phi$ be a cut-off function of $K$ and $supp \phi \subset D$. Then,
\[\notag\begin{aligned}
	\int_K dd^c u\wed \omega^{n-1} 
&\leq \int \phi dd^c u \wed \omega^{n-1} \\
&= \int u dd^c (\phi \omega^{n-1}) \\
&\leq C(D, \Omega) \|u\|_{L^1(D)},
\end{aligned}
\]
where we used that $\phi$ is smooth and has compact support in $\Omega$.
\end{proof}

\begin{lem} The convex cone $SH(\omega)$ is closed in $L^1_{loc}(\Omega),$ and it has a property that every bounded subset is relatively compact.
\end{lem}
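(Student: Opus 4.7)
The proof rests on the distributional characterization in Lemma~\ref{lem:positive-dis-omega-sh}: modulo almost-everywhere equivalence, $u \in SH(\omega)$ is the same as $u \in L^1_{loc}(\Omega)$ with $\Delta_\omega u \geq 0$ in the distributional sense. The convex cone property is immediate from this, since the inequality is preserved by nonnegative linear combinations. For closedness, if $u_j \in SH(\omega)$ converges to $u$ in $L^1_{loc}(\Omega)$, then $\Delta_\omega u_j \to \Delta_\omega u$ in $\mathcal{D}'(\Omega)$, and positivity of the distributional limit follows from positivity of each term. Lemma~\ref{lem:positive-dis-omega-sh} then produces an $\omega$-subharmonic representative $U$ with $U = u$ almost everywhere, unique by Corollary~\ref{cor:almost-equal}.

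For relative compactness, let $\{u_\alpha\} \subset SH(\omega)$ satisfy $\|u_\alpha\|_{L^1(D)} \leq M$ for some $D \subset\subset \Omega$. Lemma~\ref{lem:l1-cpt-sh} already provides uniform mass bounds on the positive measures $\mu_\alpha := dd^c u_\alpha \wedge \omega^{n-1}$ over each compact subset of $D$. Fixing a smoothly bounded ball $B \subset\subset D$ and using the Green function $g(z,y)$ for $\Delta_\omega$ on $B$ (as constructed in the discussion preceding Theorem~\ref{thm:littman}), I would write the Riesz-type decomposition
\[
u_\alpha = h_\alpha + p_\alpha, \qquad p_\alpha(z) := -\int_B g(z,y) \, d\mu_\alpha(y), \qquad \Delta_\omega h_\alpha = 0 \text{ in } B.
\]
With the usual sign convention $g \geq 0$, the potential $p_\alpha \leq 0$; the condition $\|g\|_{L^1(B \times B)} < \infty$ combined with the mass bound on $\mu_\alpha$ shows that $\{p_\alpha\}$ is bounded in $L^1(B)$, and then so is $\{h_\alpha\} = \{u_\alpha - p_\alpha\}$.

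Both families are relatively compact in $L^1_{loc}(B)$, which will finish the argument by a diagonal procedure over an exhaustion of $\Omega$ by balls. For $\{p_\alpha\}$, I would extract a weak-$*$ subsequential limit $\mu$ of the measures via Banach--Alaoglu and invoke Fubini together with the uniform integrability of $g(\cdot, y)$ to obtain $L^1$-convergence to the corresponding potential. For $\{h_\alpha\}$, interior elliptic regularity for the uniformly elliptic operator $\Delta_\omega$ converts the $L^1(B)$ bound into uniform $C^k_{loc}$ bounds, and Arzel\`{a}--Ascoli applies. The main technical subtlety is the $L^1$-convergence of Green potentials through a weak-$*$ limit of their source measures; this is standard but relies essentially on the singular-kernel estimates for $g$ recalled in Section~\ref{sec:littman}.
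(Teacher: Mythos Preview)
Your proof is correct and follows essentially the same route as the paper: closedness via the distributional characterization in Lemma~\ref{lem:positive-dis-omega-sh}, and relative compactness via a Riesz--Green decomposition $u = h + p$ on small domains, where weak-$*$ compactness of the Laplacian measures handles the potential part and interior elliptic regularity handles the harmonic part. The only cosmetic differences are that the paper inserts a cutoff $\psi$ into the Green potential (writing $\int G(z,w)\psi\,d\mu_j(w)$ rather than integrating over all of $B$) and invokes the Poisson representation instead of general interior estimates for the harmonic piece; neither changes the substance of the argument.
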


\begin{proof} Let $u_j$ be a sequence in $SH(\omega)$. If $u_j \to u$ in $L^1_{loc}(\Omega)$, then $\Delta_\omega u_j \to \Delta_\omega u$ in weak topology of distributions, hence $\Delta_\omega u \geq 0$, and $u$ can be represented by an $\omega-$subharmonic function thanks to Lemma~\ref{lem:positive-dis-omega-sh}. 

Now suppose that $\|u_j\|_{L^1(K)}$ is uniformly bounded for every compact subset $K$ of $\Omega$. Let $\mu_j = \Delta_\omega u_j \geq 0$. Let  $\psi$ be a test function such that $0\leq \psi \leq 1$ and $\psi =1$ on $K$. Then, by Lemma~\ref{lem:l1-cpt-sh}
\[\notag
	\mu_j (K) \leq \int_\Omega \psi \Delta_\omega u_j \leq C  \|u_j\|_{L^1(K')},
\]
where $K' = \mbox{Supp } \psi$. By weak compactness $\mu_j$ weakly converges to a positive measure $\mu$. Let $G(x, y)$ be the Green kernel for the smooth domain $D$, where $K' \subset D \subset \Omega$. Consider 
\[\notag
	h_j:=u_j(z) - \int G(z,w) \psi \mu_j(w).
\]
Notice that since $G(x,y) \in L^1(d\lambda(z))$ and $\psi$ has compact support in $D$,
\[\notag
	\int G(z,w) \psi \mu_j(w) \to \int G(z,w) \psi \mu(w)
\]
in $L^1$ as $j$ goes to $+\infty$. Therefore, $\Delta_\omega h_j =0$ in $K$ and $\|h_j\|_{L^1} \leq C$. Since
\[\notag
	h_j(z) = \int_{\d D} h_j(w) P(z,w) d\sigma(w),
\]
it follows that $\|h_j\|_{C^1} \leq C$. Then, there exists a subsequence $h_j$ converging to $h$ uniformly.  Therefore,
\[\notag
	h_j + \int G(z,w) \psi \mu_j(w) \to u = h + \int G(z,w) \psi \mu(w)
\]
in $L^1(K)$ as $j$ goes to $\infty$.
\end{proof}

\begin{lem} \label{lem:hartogs-b}
Let $u_j$ be a sequence of $\omega-$subharmonic functions which are uniformly bounded above.  If $u$ is an $\omega-$subharmonic function and $u_j \to u$ in $\cD'(\Omega)$, then $u_j \to u$ in $L^1_{loc}(\Omega),$ and 
\[\notag
	\overline\lim_{j \to \infty} u_j (z) \leq u(z), \quad z\in \Omega,
\]
(where two sides are equal and finite almost everywhere).
\end{lem}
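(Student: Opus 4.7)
The plan is to first upgrade distributional convergence to $L^1_{loc}$ convergence by a compactness argument, and then derive the pointwise Hartogs bound by averaging the sub-mean-value inequality $u_j(a)\leq M(u_j,a,s)$ against a weight chosen to cancel the singularity of the Poisson kernel at the centre of a small ball.

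For the $L^1_{loc}$ convergence, I would pick a smooth cut-off $\phi\geq 0$ compactly supported in $\Omega$ and equal to $1$ on a prescribed compact set $K\subset\subset\Omega$. Distributional convergence then gives $\int\phi u_j\to\int\phi u$, so these integrals are bounded, and combining with the upper bound $u_j\leq M$ yields $\int_K|u_j|\leq C_K$. The compactness part of the preceding lemma (bounded subsets of $SH(\omega)$ are relatively compact in $L^1_{loc}$) ensures that every subsequence of $\{u_j\}$ admits a further subsequence converging in $L^1_{loc}$ to some $\tilde u\in SH(\omega)$; uniqueness of the distributional limit forces $\tilde u=u$ almost everywhere, and Corollary~\ref{cor:almost-equal} promotes this to $\tilde u=u$ everywhere, since both functions lie in $SH(\omega)$. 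As every subsequence has a further subsequence converging to $u$ in $L^1_{loc}$, the whole sequence converges to $u$ in $L^1_{loc}$.

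For the Hartogs inequality, fix $a\in\Omega$ and $R>0$ with $\overline{B(a,R)}\subset\Omega$. The sub-mean-value property gives $u_j(a)\leq M(u_j,a,s)$ for every $s\in(0,R]$. Averaging against the probability density $\chi(s)=\frac{2n\,s^{2n-1}}{R^{2n}}\mathbf{1}_{[0,R]}(s)$ and switching from spherical to Euclidean coordinates $w=a+s\xi$ yields
\begin{equation*}
u_j(a)\leq\int_0^R\chi(s)\,M(u_j,a,s)\,ds=\int_{B(a,R)}u_j(w)\,K(w)\,dV(w),
\end{equation*}
with kernel $K(w)=\chi(|w-a|)\,P_{a,|w-a|}(a,w)$. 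The factor $s^{2n-1}$ in $\chi$ exactly cancels the $s^{-(2n-1)}$ behaviour of the Poisson kernel of $\Delta_\omega$ at the centre of the ball, so $K$ is bounded on $B(a,R)$. The $L^1_{loc}$ convergence from the first part then allows passing to the limit, giving $\limsup_j u_j(a)\leq\int_0^R\chi(s)M(u,a,s)\,ds$. Since $M(u,a,s)\searrow u(a)$ as $s\searrow 0$, for any $\vepsilon>0$ we may shrink $R$ so that $M(u,a,s)\leq u(a)+\vepsilon$ throughout $[0,R]$, whence the right-hand side is at most $u(a)+\vepsilon$. Letting $\vepsilon\to 0$ gives $\limsup_j u_j(a)\leq u(a)$.

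Finally, the a.e.\ equality of both sides is automatic: the $L^1_{loc}$ convergence supplies a subsequence converging to $u$ pointwise a.e., so $\limsup_j u_j\geq u$ a.e., and together with the pointwise upper bound this forces $\limsup_j u_j=u$ almost everywhere, with both sides finite a.e.\ since $u\in L^1_{loc}$. The main analytic input I expect to need is the pointwise upper bound $P_{a,s}(a,\cdot)\leq C\,s^{-(2n-1)}$ for the Poisson kernel of $\Delta_\omega$ at the centre of the ball; this is a standard consequence of elliptic regularity, coming from comparability of the Green function of $\Delta_\omega$ (as constructed in the Miranda reference invoked earlier in the appendix) with the Euclidean Green function for smoothly variable uniformly elliptic operators.
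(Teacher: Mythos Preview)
Your argument is correct, but it takes a different route from the paper. The paper proves both conclusions directly from Littman's smooth approximation (Corollary~\ref{cor:littman}): since $u_j\leq[u_j]_\vepsilon$ and $[u_j]_\vepsilon\to[u]_\vepsilon$ uniformly on compact sets (by $L^1_{loc}$-continuity of the kernel in \eqref{eq:u_h}), one gets $\overline\lim u_j\leq[u]_\vepsilon$ for every $\vepsilon$, hence $\overline\lim u_j\leq u$; the $L^1_{loc}$ convergence is obtained by sandwiching $|u-u_j|$ between $0$ and $2([u]_\vepsilon+\delta-u)$ using that the integrand $[u]_\vepsilon+\delta-u_j$ is eventually positive; and the a.e.\ equality comes from reverse Fatou. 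Your approach instead invokes the compactness half of the preceding lemma for the $L^1_{loc}$ step, and for the pointwise bound you integrate the spherical sub-mean-value inequality radially to produce a bounded solid kernel. Both are valid; the paper's version is shorter because it reuses the Littman machinery already set up, while yours is more self-contained in spirit but imports an extra analytic fact, namely the uniform bound $P_{a,s}(a,\cdot)\leq C\,s^{-(2n-1)}$ on the Poisson kernel of $\Delta_\omega$ for small balls. That bound is indeed standard (freeze coefficients at $a$ and scale to the unit ball), but note that it is not established anywhere in this appendix, so in the context of the paper your proof has a small external dependency that the paper's own proof avoids.
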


\begin{proof} By Corollary~\ref{cor:littman} for $\vepsilon>0$ small enough,
\[\label{eq:appendix-13-1}
	u_j \leq [u_j]_\vepsilon \to [u]_\vepsilon
\]
uniformly on compact sets in $\Omega$ as $j\to \infty$.   If $0\leq \phi \in C^\infty_c$ then
\[\notag
	\int ([u]_\vepsilon + \delta - u_j) \phi d\lambda(z) \to \int ([u]_\vepsilon + \delta - u) \phi d\lambda(z)
\]
as $j\to \infty$ and if $\delta >0$ the integrand is positive for $j$ large. Hence,
\[\notag
	\overline\lim_{j\to \infty} \int |u - u_j| \phi d\lambda(z) \leq 2 \int |[u]_\vepsilon + \delta - u| \phi d\lambda(z).
\]
Since $\vepsilon, \delta$ are arbitrary it follows that $u_j\to u$ in $L^1_{loc}$. 

By \eqref{eq:appendix-13-1} it is easy to see that 
$
 	\overline\lim_{j\to \infty} u_j \leq u
$ 
in $\Omega$.
Furthermore, Fatou's lemma gives
\[\notag
	\int \overline\lim \,u_j \phi d\lambda \geq \overline\lim \int u_j \phi d\lambda = \int u \phi d\lambda,
\]
so we conclude that $\overline\lim_j \,u_j = u$ almost everywhere.
\end{proof}

\begin{lem}[Hartogs] \label{lem:hartogs} Let $f$ be a continuous function on $\Omega$ and $K \subset\subset \Omega$ be a compact set. Suppose that $\{v_j\}_{j\geq 1} \subset SH(\omega)$ decrease point-wise to $v\in SH(\omega)$. Then, for any $\delta>0$, there exists $j_\delta$ such that
\[ \notag
	\sup_K (v_j -f) \leq \sup_K (v-f) + \delta 
\]
for $j \geq j_\delta$.
\end{lem}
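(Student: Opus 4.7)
The plan is to establish the equivalent estimate $\limsup_{j\to\infty}\sup_K(v_j-f)\le\sup_K(v-f)$, from which the stated conclusion follows immediately from the definition of $\limsup$. First, since $v_j-f$ is upper semicontinuous on the compact set $K$ (each $v_j\in SH(\omega)$ is upper semicontinuous and $f$ is continuous), there is a maximizer $a_j\in K$ realising $v_j(a_j)-f(a_j)=\sup_K(v_j-f)$. Passing to subsequences, not renamed, I may assume $\sup_K(v_j-f)\to L:=\limsup_j\sup_K(v_j-f)$ and, by compactness of $K$, that $a_j\to a_\ast\in K$.

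The heart of the argument is the claim $\limsup_{j\to\infty}v_j(a_j)\le v(a_\ast)$, which uses only the monotonicity $v_j\searrow v$ together with upper semicontinuity of each single function in the sequence. Fix any index $j_0$; for $j\ge j_0$ the monotonicity yields $v_j(a_j)\le v_{j_0}(a_j)$, while upper semicontinuity of $v_{j_0}$ at $a_\ast$ gives $\limsup_j v_{j_0}(a_j)\le v_{j_0}(a_\ast)$. Hence $\limsup_j v_j(a_j)\le v_{j_0}(a_\ast)$ for every $j_0$, and letting $j_0\to\infty$ with $v_{j_0}(a_\ast)\searrow v(a_\ast)$ establishes the claim. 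Combining with continuity of $f$ (so that $f(a_j)\to f(a_\ast)$) gives $L=\lim_j[v_j(a_j)-f(a_j)]\le v(a_\ast)-f(a_\ast)\le\sup_K(v-f)$, as required.

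The only genuine obstacle is that one cannot directly compare $v_j$ with the limit $v$ at the moving points $a_j$, since $v$ is merely upper semicontinuous and an estimate of the form $v_j(a_j)\le v(a_j)+o(1)$ is unavailable. The resolution is precisely the ``peel-off'' step in the key claim: one first applies upper semicontinuity of the fixed function $v_{j_0}$ and only afterwards sends $j_0\to\infty$. Notice that this argument bypasses both the sub-mean-value inequalities and the $L^1_{\mathrm{loc}}$ convergence in Lemma~\ref{lem:hartogs-b}; pointwise monotonicity and upper semicontinuity of the members of the decreasing sequence alone suffice.
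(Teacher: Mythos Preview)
Your proof is correct and takes a genuinely different, more elementary route than the paper's. The paper relies on the Littman smoothing $[v_j]_\vepsilon$, $[v]_\vepsilon$ from Corollary~\ref{cor:littman}: it uses that $v_j\to v$ in $L^1_{loc}$ forces $[v_j]_\vepsilon\to[v]_\vepsilon$ uniformly on compacta for each fixed $\vepsilon$, and then invokes Dini's theorem on the continuous family $\max\{M,[v]_\vepsilon-f\}\searrow M$ to descend from the smoothings back to $v$. Your argument bypasses all of this: by extracting a convergent subsequence of maximizers $a_j\to a_\ast$ and applying the ``peel-off'' step $v_j(a_j)\le v_{j_0}(a_j)$ followed by upper semicontinuity of the \emph{fixed} function $v_{j_0}$, you obtain the bound using only monotone pointwise convergence and upper semicontinuity of the individual $v_j$. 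As you correctly observe, the $\omega$-subharmonic structure plays no role whatsoever---the statement holds for any decreasing sequence of upper semicontinuous functions on a compact set. The paper's approach, while heavier, exercises the Littman machinery that is genuinely indispensable elsewhere in the appendix; your approach has the virtue of exposing that this particular lemma is purely topological.
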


\begin{proof} Let $[v_j]_\vepsilon$ and $[v]_\vepsilon$ be decreasing approximations defined in Corollary~\ref{cor:littman} for $v_j$ and $v$, respectively. As $v_j$ converges to $v$ in $L^1_{loc}(\Omega)$, for any fixed $\vepsilon>0$, 
\[\label{eq:uni-con-h}
	[v_j]_\vepsilon \to [v]_\vepsilon
\]
uniformly on compact sets of $\Omega$ as $j$ goes to $+\infty$. Since $v_j\leq [v_j]_\vepsilon$, we have 
\[\notag
	\sup_K(v_j -f) \leq \sup_K ([v_j]_\vepsilon - f)
\]
Let $M:= \sup_K (v-f)$. By Dini's theorem $\max\{ M, [v]_\vepsilon(z) -f(z)\}$ decreases uniformly to $M$ on $\Omega$ as $\vepsilon$ goes to $0$. Hence, for $\vepsilon>0$ small enough,
\[\notag
	\sup_K([v]_\vepsilon - f) \leq M + \delta/2.
\]
Let us fix such a small $\vepsilon$. By uniform convergence \eqref{eq:uni-con-h}, for $j \geq j_1$
\[\notag
	\sup_K ([v_j]_\vepsilon -f) \leq \sup_K([v]_\vepsilon -f) + \delta/2.
\]
Thus, altogether we get the desired inequality.
\end{proof}

A direct consequence of this lemma is 
\begin{cor} \label{cor:hartogs2}Let $\gamma$ be a real $(1,1)-$form in $\Omega$. Let $v\in SH_{\gamma, 1}(\omega) \cap L^\infty(\Omega)$. Let $\{v_j\}_{j\geq 1} \subset SH_{\gamma, 1}(\omega) \cap L^\infty(\Omega)$ be such that
\[ \notag
\lim_{j \to +\infty} v_j(z) = v(z) \quad \forall z\in \Omega.
\]
Let $K \subset \Omega$ be a compact set and $\delta>0$. Then,  there exists $j_\delta$ such that for $j\geq j_\delta$,
\[ \notag
	v_j (z) \leq \sup_K v + \delta.
\]
\end{cor}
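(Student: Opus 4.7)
The plan is to reduce Corollary~\ref{cor:hartogs2} to the $\omega$-subharmonic case treated by Lemma~\ref{lem:hartogs}. By Definition~\ref{def:ga-m-sub} with $m=1$ (which makes the family of auxiliary $(1,1)$-forms $\{\ga_1,\ldots,\ga_{m-1}\}$ empty and forces $\tilde\alpha=\omega$), a function $u$ is $(\ga,1)$-$\omega$-subharmonic on an open set if and only if $u+\rho$ is $\omega$-subharmonic there, where $\rho$ is any smooth function satisfying $dd^c\rho\wed\omega^{n-1}=\ga\wed\omega^{n-1}$, i.e.\ $\Delta_\omega\rho=\mathrm{tr}_\omega\ga$. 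First I would choose a smoothly bounded open set $U$ with $K\subset U\Subset\Omega$ and solve this linear Poisson-type equation on $U$ with, say, zero boundary data; standard elliptic theory produces a smooth $\rho$ on $\bar U$, which is automatically bounded.

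Set $h_j:=v_j+\rho$ and $h:=v+\rho$. Then $h_j,h\in SH(\omega,U)\cap L^\infty(U)$, since $v_j,v\in L^\infty(\Omega)$ and $\rho$ is smooth on $\bar U$. Pointwise convergence $v_j\to v$ transfers to $h_j\to h$ pointwise on $U$, and the uniform bound on $\{h_j\}$ allows us to invoke dominated convergence to upgrade this to convergence in $L^1_{\mathrm{loc}}(U)$. This is the only input needed to drive the approximation machinery.

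Now I would apply Lemma~\ref{lem:hartogs} to the sequence $h_j\to h$ on $U$ with the continuous function $f=\rho$. The conclusion of that lemma, for $j$ sufficiently large, reads
\[ \sup_K(h_j-\rho)\leq \sup_K(h-\rho)+\delta, \notag \]
which is exactly $\sup_K v_j\leq\sup_K v+\delta$, giving the required pointwise estimate $v_j(z)\leq\sup_K v+\delta$ on $K$.

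The only subtlety is that Lemma~\ref{lem:hartogs} is stated for a decreasing sequence, whereas here $\{v_j\}$ is merely pointwise convergent. However, inspection of its proof shows that the decreasing hypothesis is used exclusively to produce $L^1_{\mathrm{loc}}$ convergence of $v_j$ to $v$ (via monotone convergence) and upper semicontinuity of the limit; both properties are available in our setting (the latter because $v+\rho\in SH(\omega)$ is upper semicontinuous and $\rho$ is continuous, so $v$ is upper semicontinuous). So the proof transfers verbatim. If one prefers to avoid this minor extension, the argument can be redone inline: combine $h_j\leq[h_j]_\vepsilon$ from Corollary~\ref{cor:littman}, the uniform convergence $[h_j]_\vepsilon\to[h]_\vepsilon$ on compact subsets of $U$ for fixed $\vepsilon$ (a direct consequence of the $L^1_{\mathrm{loc}}$ convergence via formula~\eqref{eq:u_h}), and Dini's theorem applied to the decreasing sequence of continuous functions $\max\{\sup_K h,\,[h]_\vepsilon-\rho\}$ on $K$, whose pointwise limit $\sup_K h$ is continuous. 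This is the main technical step; once it is in place, the rest is bookkeeping.
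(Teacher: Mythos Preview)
Your approach is essentially identical to the paper's: solve $dd^c w\wed\omega^{n-1}=\gamma\wed\omega^{n-1}$ for a smooth $w$ (your $\rho$), so that $u_j:=v_j+w$ and $u:=v+w$ lie in $SH(\omega)$, and then apply Lemma~\ref{lem:hartogs} with $f=w$. The paper's proof is two lines and simply asserts that the hypotheses of Lemma~\ref{lem:hartogs} are met.

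You are in fact more careful than the paper on one point: Lemma~\ref{lem:hartogs} is stated for \emph{decreasing} sequences, while the corollary only assumes pointwise convergence, and you correctly isolate where monotonicity enters (only to secure $L^1_{\mathrm{loc}}$ convergence, which feeds into the uniform convergence $[h_j]_\vepsilon\to[h]_\vepsilon$). Your substitute---dominated convergence---does require a uniform $L^\infty$ bound on $\{v_j\}$, which is not literally part of the stated hypotheses (each $v_j\in L^\infty$ individually). This is a genuine but harmless wrinkle: in the paper's only use of the corollary (inside Lemma~\ref{hartogs-v}) the sequence is decreasing, hence uniformly bounded, and the paper's own proof glosses over exactly the same point. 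If you want to be fully rigorous under the hypotheses as written, either add ``locally uniformly bounded'' to the statement or note that in the intended application the sequence is monotone.
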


\begin{proof}  We can find a smooth function $w$ in $\Omega$ such that
\[ \notag
	dd^c w \wed \omega^{n-1} = \gamma\wed \omega^{n-1}.
\]
As  $u_j = v_j+ w$ and $u = v+ w$ satisfy assumptions of  Lemma~\ref{lem:hartogs}, we can apply it for $f = w$ to get the statement of the corollary.
\end{proof}

\begin{cor}\label{cora:limsup-closed}
Let $\{u_j\}_{j\geq 1} \subset SH(\omega)$ be a sequence that is locally uniformly bounded above. Define $u(z) = \limsup_{j\to +\infty} u_j(z)$. Then, the upper semicontinuous regularisation $u^*$ is either $\omega-$subharmonic or $\equiv -\infty$.
\end{cor}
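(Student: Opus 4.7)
The plan is to identify $u^*$ with a decreasing limit of $\omega$-subharmonic functions to which Proposition~\ref{proa:limit-stable}-(a) applies. For each $k \geq 1$ I would set $v_k(z) := \sup_{j \geq k} u_j(z)$, giving a pointwise-decreasing sequence with $\lim_k v_k = u$. Since $\{u_j\}_{j \geq k}$ is locally uniformly bounded above, Corollary~\ref{cor:sup-family} yields $v_k^* \in SH(\omega)$, and an inspection of its proof shows $v_k^*$ coincides with the Littman limit $\lim_\vepsilon [v_k]_\vepsilon$, so Theorem~\ref{thm:littman} gives $v_k^* = v_k$ almost everywhere. Since $v_k$ decreases in $k$, so does $v_k^*$, and Proposition~\ref{proa:limit-stable}-(a) then yields that $V := \lim_k v_k^*$ is either $\omega$-subharmonic or $\equiv -\infty$.

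The remaining task is to verify $V = u^*$. One direction is easy: $u = \lim_k v_k \leq V$ and $V$ is upper semicontinuous (being $\omega$-subharmonic or $\equiv -\infty$), so $u^* \leq V$. For $V \leq u^*$, set $N := \bigcup_{k \geq 1} \{v_k^* > v_k\}$, a Lebesgue null set as a countable union of null sets. For every $y \notin N$ one has $V(y) = \lim_k v_k^*(y) = \lim_k v_k(y) = u(y) \leq u^*(y)$. Fix $z \in \Omega$ and $r > 0$; upper semicontinuity of $V$ implies that $\{V > \sup_{B(z,r)} V - \vepsilon\} \cap B(z,r)$ is a nonempty open subset of $B(z,r)$ for every $\vepsilon > 0$, hence has positive Lebesgue measure and meets $B(z,r) \setminus N$. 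This gives $\sup_{B(z,r)} V = \sup_{B(z,r) \setminus N} V$, and therefore
\[
V(z) = \lim_{r \to 0} \sup_{B(z,r)} V = \lim_{r \to 0} \sup_{B(z,r) \setminus N} V \leq \lim_{r \to 0} \sup_{B(z,r)} u^* = u^*(z),
\]
where the last equality uses the upper semicontinuity of $u^*$. Hence $V = u^*$, and the claim follows.

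The main technical subtlety is the final step: promoting the a.e. inequality $V \leq u^*$ to a pointwise inequality. This relies essentially on the upper semicontinuity of $V$ together with the fact that any nonempty open set has positive Lebesgue measure; apart from this measure-theoretic lift, the proof is a direct combination of Proposition~\ref{proa:limit-stable} and Corollary~\ref{cor:sup-family}.
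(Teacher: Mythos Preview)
Your overall strategy coincides with the paper's: set $v_k=\sup_{j\ge k}u_j$, invoke Corollary~\ref{cor:sup-family} to get $v_k^*\in SH(\omega)$ with $v_k^*=v_k$ a.e., pass to the decreasing limit $V=\lim_k v_k^*$ via Proposition~\ref{proa:limit-stable}, and then identify $V$ with $u^*$. The inequality $u^*\le V$ is handled correctly.

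The gap is in your argument for $V\le u^*$. You assert that upper semicontinuity of $V$ makes $\{V>\sup_{B(z,r)}V-\vepsilon\}\cap B(z,r)$ an \emph{open} set. This is false: upper semicontinuity gives openness of sublevel sets $\{V<c\}$, not of superlevel sets $\{V>c\}$. For a merely upper semicontinuous function (take $V=\chi_{\{0\}}$ on $\bR^n$) one can certainly have $\sup_B V>\sup_{B\setminus N}V$ for a Lebesgue null set $N$, so the measure-theoretic lift cannot rest on upper semicontinuity alone.

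What rescues the conclusion is the $\omega$-subharmonicity of $V$, which you have established but do not invoke at this point. One direct fix: by the sub-mean-value lemma in the appendix, $V(z)\le M(V,z,r)$ for every small $r>0$; integrating in $r$ yields $V(z)$ bounded by a weighted volume average of $V$ over $B(z,\delta)$ against a positive kernel absolutely continuous with respect to Lebesgue measure. Since $V=u$ a.e.\ and $u\le u^*$, this average equals the corresponding average of $u$ and is at most that of $u^*$, which in turn is $\le u^*(z)+\eta$ for $\delta$ small by upper semicontinuity of $u^*$. The paper reaches the same inequality through the Littman regularisation: as $V=u$ a.e.\ one has $[V]_\vepsilon=[u]_\vepsilon$, whence $V=\lim_\vepsilon[V]_\vepsilon=\lim_\vepsilon[u]_\vepsilon\le u^*$.
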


\begin{proof} Let $v_k = \sup_{j\geq k} u_j$. Thanks to Corollary~\ref{cor:sup-family}, $v_k^* \in SH(\omega)$ and $v_k^*$ decreases to $v \in SH(\omega)$ or $\equiv -\infty.$ Clearly, $v\geq u,$ and thus $v\geq u^*\geq u$. Since $v_k = v_k^*$ almost everywhere, so $v=u$ almost everywhere. Furthermore, it is easy to see that $\Delta_\omega u\geq 0$. By Lemma~\ref{lem:hartogs-b}
\[
	v = \lim_\vepsilon [v]_\vepsilon \leq \limsup_\vepsilon [u]_\vepsilon \leq u^*.
\]
Therefore, $v = u^*$ everywhere.
\end{proof}

We now prove  that our definition is indeed equivalent to the  definition given by Lu-Nguyen \cite[Definition 2.3]{chinh-dong}, (see also Dinew-Lu \cite{chinh-dinew}).

\begin{lem} \label{def-2-omega-sh}
A function $u:\Omega \to [-\infty, + \infty[$ is $\omega-$subharmonic if and only if it satisfies the following two conditions:
\begin{itemize}
\item[(i)] upper semicontinuous, locally integrable and $\Delta_\omega u \geq 0$ in $\Omega$.
\item[(ii)] if $v$ satisfies the condition $(i)$ and $v\geq u$ almost everywhere, then $v\geq u$ everywhere.
\end{itemize} 
\end{lem}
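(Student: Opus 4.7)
My plan is to use the $\omega$-subharmonic representative $V$ of $v$ provided by Lemma~\ref{lem:positive-dis-omega-sh}: in both directions the nontrivial claim reduces to comparing an arbitrary USC function satisfying (i) with its SH-representative, and the two comparisons ($V \geq v$ and $v \geq V$) come from different sources. The forward direction then follows from a standard ``SH versus SH'' comparison, and the backward direction collapses to $v = V$.

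\emph{Forward direction.} Condition (i) is immediate: upper semicontinuity and local integrability are built into Definition~\ref{def-1-omega-sh}, and $\Delta_\omega u \geq 0$ distributionally is exactly the content of Lemma~\ref{lem:positive-dis-omega-sh}. For (ii), suppose $v'$ satisfies (i) with $v' \geq u$ a.e. and let $V' \in SH(\omega)$ be the SH-representative of $v'$. Then $V' \geq u$ a.e.; since both $V'$ and $u$ are in $SH(\omega)$, by Fubini the inequality $M(V',z_0,r) \geq M(u,z_0,r)$ holds for a.e.\ $r$, and letting $r\to 0$ along an a.e.-sequence and using the decreasing limit formula $M(\cdot,z_0,r)\searrow \cdot(z_0)$ (shown earlier for SH functions) yields $V' \geq u$ everywhere. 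It remains to prove $v' \geq V'$ everywhere: this is the key lemma, proved below.

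\emph{Backward direction.} Given $v$ satisfying (i) and (ii), let $V \in SH(\omega)$ be the SH-representative of $v$. Since $V$ satisfies (i) and $V = v$ a.e. (so $V \geq v$ a.e.), condition (ii) applied with test function $V$ forces $V \geq v$ everywhere. The same key lemma (applied to $v$ in place of $v'$) gives $v \geq V$ everywhere, hence $v = V \in SH(\omega)$.

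\emph{Key lemma and main obstacle.} The technical heart is: \emph{if $v'$ is USC, locally integrable, and $\Delta_\omega v' \geq 0$ distributionally, then $v' \geq V'$ pointwise, where $V'$ is its SH-representative.} Fix $z_0$ and consider, for $r>0$ small, the set $A_r := \{w \in \partial B(z_0,r) : V'(w) \geq V'(z_0)\}$. I claim $A_r$ has positive surface measure: otherwise $V' < V'(z_0)$ a.e. on the sphere, so the Poisson-weighted average satisfies $M(V',z_0,r) < V'(z_0)$ (since the kernel $P_{z_0,r}(z_0,\cdot)$ is strictly positive), contradicting the sub-mean-value inequality $M(V',z_0,r) \geq V'(z_0)$ that $V' \in SH(\omega)$ satisfies. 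Since $E := \{v' = V'\}$ has full Lebesgue measure in $\Omega$, by Fubini the sphere $\partial B(z_0,r)$ meets $E$ in a full surface measure set for a.e.\ $r$. Intersecting with $A_r$, we obtain for a.e.\ $r$ a point $w_r \in \partial B(z_0,r)$ with $v'(w_r) = V'(w_r) \geq V'(z_0)$. Passing $r\to 0$ along such an a.e.\ sequence, upper semicontinuity of $v'$ gives $v'(z_0) \geq \limsup_{r\to 0} v'(w_r) \geq V'(z_0)$, as required. The main obstacle is precisely this argument: one must be careful that the exceptional null set where $v'$ differs from $V'$ does not annihilate the sphere integrals, which is why I pass to a Fubini-good radius rather than working pointwise, and why the strict positivity of the Poisson kernel in the contradiction step is essential.
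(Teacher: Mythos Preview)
Your proof is correct, but takes a different path from the paper's. The paper argues both directions directly through Littman's smoothing $[v]_\vepsilon$: for the forward direction, from $v\geq u$ a.e.\ one has $[v]_\vepsilon\geq [u]_\vepsilon$ (integral operator), while upper semicontinuity together with $J_h=1$ and the shrinking support of the kernel give $\lim_\vepsilon[v]_\vepsilon(z)\leq v(z)$; combining with $[u]_\vepsilon\searrow u$ yields $v\geq u$ everywhere in one line. The backward direction is the same observation applied with $U=\lim_\vepsilon[u]_\vepsilon$. By contrast, you route everything through the $SH(\omega)$-representative $V'$ and prove the key inequality $v'\geq V'$ by a sphere-selection argument: the sub-mean-value inequality forces $\{V'\geq V'(z_0)\}$ to have positive surface measure on every sphere, Fubini puts $\{v'=V'\}$ on almost every sphere with full measure, and you pick a witness $w_r$ and let $r\to 0$ using upper semicontinuity. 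This is a genuine alternative: the paper's argument is shorter because the heavy lifting (positivity-like behaviour of the Littman kernel, $J_h=1$, shrinking support) is already packaged in Section~\ref{sec:littman}, whereas your argument is more elementary in that it uses only the Poisson-mean characterization and a measure-theoretic selection, at the cost of the extra detour through $V'$. Both approaches use upper semicontinuity at the decisive step, just in different guises. One small point worth making explicit in your write-up: the claim on $A_r$ assumes $V'(z_0)>-\infty$, but when $V'(z_0)=-\infty$ the inequality $v'(z_0)\geq V'(z_0)$ is trivial.
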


\begin{proof} We first show that it is a necessary conditions. The only thing that remains to be checked is the condition $(ii)$. Pick $v$ satisfying $(i)$ and $v\geq u$ almost everywehre, we wish to show that $v\geq u$ everywhere. As 
$J_h =1$, it follows from  the formulas \eqref{eq:u_h}, \eqref{eq:u_h-epsilon}, and the upper semicontinuity of $v$  that
\[\notag
	\lim_{\vepsilon\to 0} [v]_\vepsilon (z) \leq v(z).
\]
Since $[v]_\vepsilon \geq [u]_\vepsilon$ for $\vepsilon >0$, letting $\vepsilon \to 0$, we get that $v \geq u$ everywhere.

Suppose that $u$ satisfies $(i)$ and $(ii)$ above.  By Littman's theorem
$U(z) = \lim_\vepsilon [u]_\vepsilon = u(z)$ almost everywhere,
where $U(z)$ is an $\omega-$subharmonic function, which also satisfies $(i)$. Hence, $u(z) \leq U(z)$ everywhere in $\Omega.$ Moreover, using the upper semicontinuity of $u$ as above, we have $ u(z) \geq U(z)$ in  $\Omega.$
\end{proof}

We define the capacity for Borel sets $E\subset \Omega$,

\[\notag
	{\bf c}_1(E) = \sup\left\{\int_E dd^c v \wed \omega^{n-1}: 
	0\leq v \leq 1, 
	v\in SH(\omega)
\right\}.
\]
According to Lemma~\ref{lem:l1-cpt-sh} ${\bf c}_1(E)$ is finite as long as $E$ is relatively compact in $\Omega$.

The quasi-continuity of $\omega-$subharmonic functions was used in \cite{KN3}. We give here the details of the proof. First, the decreasing convergence implies the convergence in capacity.

\begin{lem} \label{con-cap-decrea}
Suppose that $u_j \in SH(\omega) \cap L^\infty(\Omega)$ and $u_j \searrow u \in SH(\omega) \cap L^\infty(\Omega).$ Then, for any compact $K\subset \Omega$ and $\delta >0$,
\[\notag
	\lim_{j\to +\infty} {\bf c}_1 (\{u_j > u+ \delta\} \cap K) =0.
\]
\end{lem}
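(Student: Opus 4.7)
The strategy is a Chebyshev-type argument combined with integration by parts, reducing convergence in capacity to $L^1$-convergence of $u_j - u$. Fix a cutoff $\phi \in C_c^\infty(\Omega)$ with $0\le \phi\le 1$ and $\phi\equiv 1$ on $K$, and write $E_j := \{u_j > u + \delta\} \cap K$. Since $u_j\ge u$ everywhere on $\Omega$ with $u_j - u > \delta$ on $E_j$, for any admissible test $v\in SH(\omega)\cap L^\infty(\Omega)$ with $0\le v\le 1$ one has
\[
\delta \int_{E_j} dd^c v \wedge \omega^{n-1} \;\le\; \int \phi(u_j - u)\, dd^c v \wedge \omega^{n-1}. \notag
\]

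The main step is then to prove the uniform bound
\[
\int \phi(u_j - u)\, dd^c v \wedge \omega^{n-1} \;\le\; C(\phi,\omega) \int_{\mathrm{supp}\,\phi}(u_j - u)\,\omega^n, \notag
\]
with $C$ independent of $v$ and $j$. To this end I would smooth $v$ by the Littman approximant $v^\varepsilon = [v]_\varepsilon$ (smooth and $\omega$-subharmonic, decreasing to $v$, by Corollary~\ref{cor:littman}) and smooth $u_j - u \ge 0$ by standard mollification $(u_j - u)_\varepsilon \ge 0$. On the resulting smooth integrand $\int \phi(u_j-u)_\varepsilon\, dd^c v^\varepsilon\wedge\omega^{n-1}$ I would apply Stokes' theorem twice to transfer both $d$-operators off $v^\varepsilon$. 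The outcome is a finite sum of terms of the schematic form $\int v^\varepsilon (u_j-u)_\varepsilon\, \Theta$, where each $\Theta$ is a smooth $(n,n)$-form built from $\phi$, $\omega$, and their derivatives, including the torsion contributions $d\omega^{n-1}$ and $dd^c\omega^{n-1}$ arising from $\omega$ being only Hermitian. Since $\|v^\varepsilon\|_\infty\le 1$ and each $|\Theta|\le C\omega^n$ on $\mathrm{supp}\,\phi$, the sum is bounded by $C\int_{\mathrm{supp}\,\phi}(u_j-u)_\varepsilon\,\omega^n$; letting $\varepsilon\to 0$ (using weak convergence $\Delta_\omega v^\varepsilon \to \Delta_\omega v$, implied by $v^\varepsilon \to v$ in $L^1_{\mathrm{loc}}$, on the left, and $L^1$-convergence of the mollification on the right) recovers the displayed bound for $v$ itself.

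Taking the supremum over admissible $v$ yields $\delta\cdot\mathbf{c}_1(E_j)\le C\int_{\mathrm{supp}\,\phi}(u_j-u)\,\omega^n$, and the conclusion follows from monotone convergence applied to $u_j - u\searrow 0$ (as $u_j-u\ge 0$ and $u_j,u\in L^\infty$).

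The main obstacle is the double integration by parts in the Hermitian setting: the non-closedness of $\omega$ produces torsion corrections from $d\omega$ and $dd^c\omega$, and some intermediate terms initially still carry a derivative on $(u_j-u)_\varepsilon$ or $v^\varepsilon$. A further Stokes against the remaining smooth factors removes those derivatives, so that every resulting contribution collapses to the controllable form $\int v^\varepsilon (u_j-u)_\varepsilon\,\Theta$ with $|\Theta|\le C\omega^n$.
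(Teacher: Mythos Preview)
There is a genuine gap in the reduction step. You claim that after two applications of Stokes, the integral $\int \phi(u_j-u)_\varepsilon\, dd^c v^\varepsilon\wedge\omega^{n-1}$ decomposes entirely into terms of the form $\int v^\varepsilon (u_j-u)_\varepsilon\,\Theta$ with $\Theta$ a fixed smooth $(n,n)$-form. This is not the case. Moving $dd^c$ off $v^\varepsilon$ produces $\int v^\varepsilon\, dd^c\!\big(\phi(u_j-u)_\varepsilon\,\omega^{n-1}\big)$, and expanding the $dd^c$ yields, among others, the term
\[
\int v^\varepsilon\,\phi\, dd^c(u_j-u)_\varepsilon \wedge \omega^{n-1},
\]
which is \emph{not} of the claimed schematic form: the factor $dd^c(u_j-u)_\varepsilon$ depends on $j$ and $\varepsilon$ and is certainly not a fixed smooth $\Theta$. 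Your proposed ``further Stokes against the remaining smooth factors'' cannot dispose of it: integrating by parts again to remove $dd^c$ from $(u_j-u)_\varepsilon$ forces the derivatives onto $v^\varepsilon\phi\,\omega^{n-1}$, and the leading contribution $\int (u_j-u)_\varepsilon\,\phi\, dd^c v^\varepsilon\wedge\omega^{n-1}$ is precisely the integral you started from. The cross terms such as $\int v^\varepsilon\, d\phi\wedge d^c(u_j-u)_\varepsilon\wedge\omega^{n-1}$ suffer from the same circularity. Consequently the displayed uniform bound $\int\phi(u_j-u)\,dd^c v\wedge\omega^{n-1}\le C\int_{\mathrm{supp}\,\phi}(u_j-u)\,\omega^n$ is not established by the argument given.

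The paper's proof supplies exactly the missing idea. After localising so that $u_j-u$ has compact support, one Stokes gives
\[
\int (u_j-u)\, dd^c v\wedge\omega^{n-1} = -\int d(u_j-u)\wedge d^c v\wedge\omega^{n-1} + \int (u_j-u)\, d^c v\wedge d\omega^{n-1},
\]
and then the Schwarz inequality separates $v$ from $u_j-u$: the first term is bounded by $\big(\int d(u_j-u)\wedge d^c(u_j-u)\wedge\omega^{n-1}\big)^{1/2}\big(\int_D dv\wedge d^c v\wedge\omega^{n-1}\big)^{1/2}$, where the second factor is uniformly bounded over admissible $v$ and the first factor is an energy of $u_j-u$ alone, shown to tend to $0$ by another Stokes and dominated convergence. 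It is this Cauchy--Schwarz separation of energies, not repeated integration by parts, that yields the required uniformity in $v$.
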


\begin{proof} Applying the localisation principle \cite[p. 7]{kol05}, we assume that $\Omega$ is a ball and $u_j = u = h$ outside a neighbourhood of $K$. Let $0\leq v \leq 1$ be $\omega-$ subharmonic in $\Omega$. We have
\[\notag
	\int_{\{u+\delta < u_j\}\cap K} dd^cv \wed \omega^{n-1} \leq \frac{1}{\delta} \int (u_j- u) dd^c v \wed \omega^{n-1}.
\]
By Stokes's theorem,
\[\label{eq:a-decrease-cap}\begin{aligned}
\int (u_j - u) dd^c v\wed \omega^{n-1} &=  -\int d(u_j-u) \wed d^c v\wed \omega^{n-1} \\
&\quad + \int(u_j -u) d^c v \wed d \omega^{n-1}.
\end{aligned}\]

We shall show that both integrals on the right hand side tend to $0$ as $j$ goes to $+\infty$. Hence, we get the lemma.
The second one is easier. Indeed, by Schwarz's inequality \cite{cuong15}, 
\[\notag\begin{aligned}
	\left|\int (u_j -u) d^c v \wed d\omega^{n-1}\right| &\leq C \left(\int(u_j -u) dv \wed d^c v \wed \omega^{n-1}\right)^\frac{1}{2} \times \\
	&\quad \times\left(\int (u_j -u) \omega^n\right)^\frac{1}{2}.
\end{aligned}\]
Therefore the second integral of the right hand side in \eqref{eq:a-decrease-cap} goes to $0$ as $j\to +\infty$.

Similarly, we use the Schwarz inequality for the first integral in \eqref{eq:a-decrease-cap}. Let $K \subset D \subset \subset \Omega$ such that $u_j = u$ on $\Omega\setminus D$.
\[\notag\begin{aligned}
\left|\int d (u_j -u) \wed d^c v \wed \omega^{n-1}\right| &\leq C \left(\int d (u_j -u) \wed d^c(u_j-u) \wed \omega^{n-1}\right)^\frac{1}{2} \times \\
	&\quad \times\left(\int_D dv\wed d^c v  \wed \omega^{n-1}\right)^\frac{1}{2}.
\end{aligned}\]
Again by Stokes's theorem
\[\notag \begin{aligned}
&	\int d (u_j -u) \wed d^c(u_j-u) \wed \omega^{n-1} \\
&= - \int (u_j -u) dd^c(u_j-u) \wed \omega^{n-1} + \int (u_j -u) d^c(u_j-u) \wed d\omega^{n-1} \\
&= \int (u_j -u) dd^c u \wed \omega^{n-1} - \int (u_j-u) dd^c u_j \wed \omega^{n-1} \\
&\quad + \frac{1}{2}\int d^c (u_j-u)^2 \wed d \omega^{n-1} \\
&\leq \int (u_j -u) dd^c u \wed \omega^{n-1} + \frac{1}{2}\int d^c (u_j-u)^2 \wed d \omega^{n-1}.
\end{aligned}
\]
Thus, the fist integral goes to $0$ as $j\to +\infty$ by the Lebesgue dominated convergence theorem. For the second integral we use Stokes' theorem once more
\[\notag \begin{aligned}
	\int d^c (u_j-u)^2 \wed d \omega^{n-1} 
&= - \int  (u_j-u)^2 dd^c\omega^{n-1} \\
&\leq C \int (u_j - u)^2 \omega^n.
\end{aligned}
\]
The right hand side also goes to $0$ as $j\to \infty$. Thus, we get the lemma.
\end{proof}

\begin{lem} \label{lema:quasi-con} Let $u\in SH(\omega) \cap L^\infty(\Omega)$. Then for each $\vepsilon>0,$ there is an open subset $\cO$ of $\Omega$ such that ${\bf c}_1(\cO, \Omega)<\vepsilon$ and $u$ is continuous on $\Omega\setminus \cO$.
\end{lem}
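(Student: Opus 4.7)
The strategy is the standard Bedford--Taylor quasi-continuity argument, where the ingredients we would use have already been set up: the Littman-type approximation (Corollary~\ref{cor:littman}) and the convergence in capacity for decreasing sequences (Lemma~\ref{con-cap-decrea}).

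\textbf{Reduction to a relatively compact set.} Exhaust $\Omega$ by open sets $\Omega_k \subset\subset \Omega_{k+1} \subset\subset \Omega$. If we can construct, for each $k$, an open $\cO_k \subset \Omega_k$ with ${\bf c}_1(\cO_k) < \vepsilon/2^k$ such that $u|_{\Omega_k \setminus \cO_k}$ is continuous, then $\cO := \bigcup_k \cO_k$ works by subadditivity of ${\bf c}_1$ (which is immediate from its definition as a supremum of masses of positive measures, together with the fact that each admissible $v$ gives a positive measure). So fix one $\Omega' := \Omega_k \subset\subset \Omega$ and it suffices to find $\cO$ open in $\Omega'$ with ${\bf c}_1(\cO)$ as small as desired such that $u$ is continuous on $\Omega' \setminus \cO$.

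\textbf{Smooth decreasing approximation and fast-decreasing subsequence.} By Corollary~\ref{cor:littman} there is a decreasing sequence of smooth $\omega$-subharmonic functions $u_j \searrow u$ on $\Omega'$, and by hypothesis $u$ is bounded, so the sequence is uniformly bounded on $\Omega'$. For each integer $k \geq 1$, apply Lemma~\ref{con-cap-decrea} with $\delta = 1/k$ to the compact set $\overline{\Omega'}$ (replacing $\Omega'$ by a slightly larger relatively compact set if needed) to get ${\bf c}_1(\{u_j > u + 1/k\} \cap \Omega') \to 0$ as $j \to \infty$. Choose $j_k$ so large that this capacity is less than $\vepsilon/2^{k+1}$, and put
\[ \notag
\cO := \bigcup_{k \geq 1} \{ z \in \Omega' : u_{j_k}(z) > u(z) + 1/k \}.
\]
Each set in the union is open because $u_{j_k}$ is continuous while $u$ is upper semi-continuous, so $u_{j_k} - u$ is lower semi-continuous. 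Subadditivity of ${\bf c}_1$ gives ${\bf c}_1(\cO) < \vepsilon$.

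\textbf{Continuity on the complement.} On $\Omega' \setminus \cO$ we have $u_{j_k} \leq u + 1/k$ for every $k$; combined with $u_{j_k} \geq u$ (which holds because $u_j$ is decreasing to $u$), this yields $0 \leq u_{j_k} - u \leq 1/k$ pointwise on $\Omega' \setminus \cO$. Hence $u_{j_k} \to u$ uniformly on $\Omega' \setminus \cO$; as a uniform limit of restrictions of continuous functions, $u$ is continuous on $\Omega' \setminus \cO$, completing the proof after the exhaustion step.

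\textbf{Main obstacle.} The only genuine input is the convergence in capacity supplied by Lemma~\ref{con-cap-decrea}; everything else is a soft packaging (openness of the exceptional set from upper semi-continuity of $u$, uniformity of convergence on the complement, and countable subadditivity of the capacity). The subadditivity of ${\bf c}_1$ is the one ``obvious'' ingredient worth checking, but it follows directly from the supremum definition since any admissible $v$ produces a positive Radon measure $dd^c v \wed \omega^{n-1}$, whose evaluation on a countable union is bounded by the sum of evaluations on its pieces, uniformly in $v$.
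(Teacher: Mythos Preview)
Your proof is correct and follows essentially the same route as the paper: both reduce to relatively compact subsets (you via an exhaustion, the paper via a reduction to small balls), invoke the Littman-type smooth decreasing approximation, apply Lemma~\ref{con-cap-decrea} to extract a subsequence with fast-decaying exceptional sets, and use subadditivity of ${\bf c}_1$ to assemble the open set $\cO$ on whose complement the approximants converge uniformly. The only cosmetic difference is that the paper defines its exceptional set as a tail union $G_k=\bigcup_{l>k}\cO_l$ before passing to an exhaustion by compacta, whereas you take the full union directly over a single relatively compact piece; both yield the same conclusion.
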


\begin{proof} We may assume that $\Omega$ is a small ball because of the properties of capacity: 
\begin{itemize}
\item
if $E \subset \Omega_1\subset \Omega_2$, then ${\bf c}_1(E, \Omega_2) \leq {\bf c}_1(E,\Omega_1)$. 
\item
${\bf c}_1 (\bigcup_j E_j) \leq \sum_j {\bf c}_1(E_j)$.
\end{itemize}
Let $SH(\omega) \cap C^\infty(\Omega) \ni u_j \searrow u$ and fix a compact set $K\subset \Omega$. By Lemma~\ref{con-cap-decrea} there exists an integer $j_k$ and an open set 
\[	
	\cO_l = \{u_{j_l} >u + \frac{1}{l}\} \subset \Omega,
\]
such that ${\bf c}_1(\cO_k \cap K,\Omega)<2^{-k}$. If $G_k = \cup_{l>k} \cO_l$. Then, $u_{j_k}$ decreases uniformly to $u$ on $K\setminus G_k$. Hence, $u$ is continuous on $K\setminus G_k$.

Applying the argument above for a sequence of compact sets $K_j$ increasing to  $\Omega$ we get  open sets $G_j$ that ${\bf c}_1(G_j,\Omega)<\vepsilon 2^{-j}$.  Let $\cO = \cup_j G_j$, the lemma follows.
\end{proof}

\bigskip

%{\noindent Faculty of Mathematics and Computer Science, Jagiellonian University 30-348 Krak\'ow, \L ojasiewicza 6, Poland;\\ e-mail: {\tt dongwei.gu@im.uj.edu.pl}\\ \\

%{\noindent Faculty of Mathematics and Computer Science, Jagiellonian University 30-348 Krak\'ow, \L ojasiewicza 6, Poland;\\ e-mail: {\tt Nguyen.Ngoc.Cuong@im.uj.edu.pl} 

\end{document}